\newtheorem{theorem}{Theorem}[section]
\newtheorem{lemma}[theorem]{Lemma}
\newtheorem{corollary}[theorem]{Corollary}
\newtheorem{proposition}[theorem]{Proposition}
\newtheorem{itdefinition}[theorem]{Definition}
\newenvironment{definition}{\begin{itdefinition}\rm}{\end{itdefinition}}
\newenvironment{proof}[1][Proof]{\par\noindent{\em #1}.
}{\hfill\framebox(6,6) \par\medskip}
\newcommand{\Irr}{\mathop{\rm Irr}\nolimits}
\newcommand{\Fr}{\mathop{\rm Fr}\nolimits}
\newcommand{\sgn}{\mathop{\rm sgn}\nolimits}
\renewcommand{\mod}{\mathop{\rm mod}\nolimits}
\newcommand{\pdeg}{\mathop{\rm pdeg}\nolimits}
\newcommand{\wdeg}{\mathop{\rm wdeg}\nolimits}
\begin{document}

\title{Modular representations
of the special linear  groups with small weight multiplicities}

\author{A.A.~Baranov\\
\small\it Department of  Mathematics, University of Leicester,  \\
\small\it Leicester, LE1 7RH, UK \\
\small\it ab155@le.ac.uk\\
\\
A.A.~Osinovskaya\\
\small\it Institute of Mathematics, National Academy of Sciences of Belarus,\\
\small\it 11 Surganov street, Minsk, 220072, Belarus\\
\small\it anna@im.bas-net.by\\
\\
I.D.~Suprunenko\thanks{
The second and the third authors were
supported by the Institute of Mathematics of the National Academy
of Sciences of Belarus in the framework of the State Basic
Research Programme ``Mathematical Models'' (2006--2010) and the
State Research Programme ``Convergence'' (2011--2015) and partially supported
by the Belarus Basic Research Foundation, Project F04-242.}\\
\small\it Institute of Mathematics, National Academy of Sciences of Belarus,\\
\small\it 11 Surganov street, Minsk, 220072, Belarus\\
\small\it suprunenko@im.bas-net.by}

\date{}

\maketitle

\begin{center} 
\small\it 
Dedicated with admiration to A.E.\ Zalesski on the occasion of his $75$th birthday
\end{center}

\medskip

\begin{abstract}
We classify irreducible representations of the
special linear  groups in positive characteristic
with small weight multiplicities with
respect to the group rank and give
estimates for the maximal weight multiplicities.
For the natural embeddings of the classical groups, inductive systems of representations with totally bounded weight multiplicities are classified. An analogue of the Steinberg tensor product theorem for arbitrary indecomposable inductive systems for such embeddings is proved.
\end{abstract}

\section{Introduction}

In what follows $K$ is an algebraically closed field of
characteristic $p>0$;
$G_n$ is a classical
algebraic group of rank $n$ over $K$;
$\Irr G_n$ is the set of all
rational irreducible representations (or simple modules) of $G_n$
up to equivalence, $\Irr^p G_n\subset \Irr G_n$ is the subset of
$p$-restricted ones;
$\Irr M\subset \Irr G_n$ is the set of composition factors of a module
$M$ (disregarding the multiplicities),
$\omega(M)$ is the highest weight of a simple module $M$;
$L(\omega)$ is the simple
$G_n$-module with highest weight $\omega$; $\omega_1^n, \ldots,
\omega_n^n$ are the fundamental weights of $G_n$;
$\omega_0^n=\omega_{n+1}^n=0$ by convention. A weight
$\sum_{i=1}^n a_i\omega_i^n$ is $p$-restricted if all $a_i<p$. By the {\em weight degree} of
a module $M$ we mean the maximal dimension of the weight subspaces
in $M$, i.e.
$$
\wdeg M=\max_{\mu\in\Lambda(M)}\dim M^\mu
$$
where $\Lambda(M)$ is the set of weights of $M$.
In particular, we say that $M$ has a small weight degree if $\wdeg M$ is small with respect to $n$.

For the classical algebraic groups modular representations  of
weight degree 1 were classified in~\cite{Seitz,ZSVesti}. To state
the result, first define the following sets of weights of the
group $G_n=A_n(K)$, $B_n(K)$, $C_n(K)$, or $D_n(K)$:
\begin{eqnarray*}
\Omega_p(A_n(K))&=&
\{0,\omega_k^n,
(p-1-a)\omega_k^n+a\omega_{k+1}^n\mid 0\leq k\leq n,\  0\leq a\leq p-1 \},
\\
\Omega_p(B_n(K))&=&
\{0, \omega^n_1, \omega^n_n\}, \\
\Omega_p(C_n(K))&=&
\{0, \omega_1^n, \frac{p-1}{2}\omega_n^n,
  \omega_{n-1}^n+\frac{p-3}{2}\omega_n^n \} \quad (p>2),\\
\Omega_p(D_n(K))&=&
\{0,\omega_1^n, \omega_{n-1}^n, \omega_n^n\}, \\
\Omega(G_n)&=&\{\sum_{j=0}^k p^j\lambda_j\mid
k\ge0,\ \lambda_j\in\Omega_p(G_n)\}.
\end{eqnarray*}

\begin{theorem}[{\cite[6.1]{Seitz}, \cite[Proposition 2]{ZSVesti}}] \label{one}
Let $G_n$ be a classical algebraic group of rank $n\ge 4$
and let $M$ be a rational simple $G_n$-module. Assume $p>2$ for $G=B_n(K)$ or $C_n(K)$. Then $\wdeg M =1$ if and only if $\omega(M)\in\Omega(G_n)$.
\end{theorem}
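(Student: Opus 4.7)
\medskip
\noindent\emph{Proof proposal.}
The plan is to use Steinberg's tensor product theorem to reduce to the $p$-restricted case, and then to treat the resulting restricted problem by a direct check on one side and a rank induction on the other. Write $\omega(M)=\sum_{j=0}^{k} p^j\lambda_j$ with each $\lambda_j$ $p$-restricted; Steinberg gives $M\simeq\bigotimes_{j} L(\lambda_j)^{[j]}$, so every weight of $M$ has the form $\nu=\sum_{j} p^j\mu_j$ with $\mu_j\in\Lambda(L(\lambda_j))$. Fixing $\mu_i=\lambda_i$ for $i\neq j$ and varying $\mu_j$ yields $\wdeg M\ge\wdeg L(\lambda_j)$ for each $j$, while the Smith--Premet uniqueness of the $p$-adic decomposition $\nu=\sum_{j} p^j\mu_j$ for restricted $\lambda_j$ upgrades this to $\wdeg M=\prod_{j}\wdeg L(\lambda_j)$. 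Hence $\wdeg M=1$ iff every $\wdeg L(\lambda_j)=1$, and one need only prove the theorem for $p$-restricted $\omega(M)$.

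For the ``if'' direction I would verify $\wdeg L(\lambda)=1$ for each $\lambda\in\Omega_p(G_n)$ one by one. The minuscule entries (all $\omega_k^n$ for $A_n$; the natural weight $\omega_1^n$ for every type; and the spin weights $\omega_n^n$ for $B_n,D_n$ together with $\omega_{n-1}^n$ for $D_n$) have irreducible Weyl modules whose weights form a single Weyl orbit, so multiplicity one is immediate. For the $A_n$-weights $(p-1-a)\omega_k^n+a\omega_{k+1}^n$ I would realise $L(\lambda)$ as an explicit irreducible subquotient of a Frobenius-contracted exterior power, from which each weight space is directly seen to be one-dimensional. The two exceptional $C_n$-weights $\tfrac{p-1}{2}\omega_n^n$ and $\omega_{n-1}^n+\tfrac{p-3}{2}\omega_n^n$ in odd characteristic are settled by an explicit character computation on the relevant spin-like symplectic modules.

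The ``only if'' direction is the substantive part and the main obstacle. I would induct on $n$: restricting $L(\lambda)$ to a maximal standard Levi subgroup of type $A_{n-1}$, $B_{n-1}$, $C_{n-1}$, or $D_{n-1}$, any composition factor with a multiplicity-$\ge 2$ weight forces $\wdeg L(\lambda)\ge 2$. The inductive hypothesis at once kills every $p$-restricted $\lambda$ whose support touches three or more fundamental weights, together with two-support $A_n$-weights $a\omega_i^n+b\omega_j^n$ with $|i-j|\ge 2$ and the analogous boundary weights in the other classical types. This leaves a short residual list, chiefly the $A_n$-weights $a\omega_k^n+b\omega_{k+1}^n$ with $a+b\ne p-1$, for which I would exhibit an explicit subdominant weight $\mu$ in the convex hull of $W\lambda$ whose Freudenthal multiplicity, controlled via the Jantzen sum formula, must exceed one. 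The delicate point is precisely this residual analysis, which needs careful bookkeeping in all characteristics; the hypothesis $n\ge 4$ provides the room to start the Levi induction and to avoid small-rank degeneracies, and the restriction $p>2$ in types $B_n,C_n$ bypasses the characteristic-two collapses of the spin and symplectic Weyl modules.
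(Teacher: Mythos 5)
This theorem is not proved in the paper; it is quoted from \cite[6.1]{Seitz} and \cite[Proposition~2]{ZSVesti}, so there is no ``paper proof'' against which to compare your argument. Judged on its own terms, your proposal has two genuine problems.

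First, the intermediate claim that ``Smith--Premet uniqueness of the $p$-adic decomposition'' gives $\wdeg M=\prod_{j}\wdeg L(\lambda_j)$ is false in general. The decomposition $\nu=\sum_j p^j\mu_j$ with $\mu_j\in\Lambda(L(\lambda_j))$ is \emph{not} unique just because the $\lambda_j$ are $p$-restricted: the $\mu_j$ range over the whole weight set of $L(\lambda_j)$, which is not a set of $p$-restricted weights. For instance in type $A_2$ with $p=2$, the weight $\varepsilon_1+\varepsilon_2$ of $L(\omega_1+\omega_2)\otimes L(\omega_1)^{[1]}$ has two distinct decompositions $\alpha_1+2\varepsilon_2=-\alpha_1+2\varepsilon_1$, so the multiplicity is not the product of the two one-dimensional contributions. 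What one actually needs is the bound $\delta(\lambda_j)<p$ (see the paper's Lemma~\ref{tau}), which then forces uniqueness by a radical-weight argument; this bound does hold for every $\lambda_j\in\Omega_p(G_n)$, but it is a specific property of those weights, not a general fact. In the other direction only the inequality $\wdeg M\ge\prod_j\wdeg L(\lambda_j)$ (Lemma~\ref{degtenz}) is available, and that is what is used; the equality you assert is neither needed nor true. So your reduction to the $p$-restricted case can be salvaged, but not on the grounds you give.

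Second, the ``only if'' direction --- the real content of the theorem --- is left essentially unproved. You propose a Levi induction, acknowledge a ``short residual list,'' and say it ``needs careful bookkeeping in all characteristics.'' That residual analysis is precisely where the theorem lives: ruling out, for every $p$-restricted weight outside $\Omega_p(G_n)$ and every classical type, that all weight spaces are one-dimensional, including borderline two-support weights and the low-rank anchoring of the induction, is the bulk of Seitz's argument. A sketch that names the difficulty without resolving it does not constitute a proof. If your aim is to re-derive the theorem rather than cite it, you would need either to reproduce the case analysis of \cite{Seitz} and \cite{ZSVesti} in full, or to find a genuinely different mechanism for the hard direction; the present proposal does neither.
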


Obviously, a simple module $M$ is $p$-restricted with $\wdeg M =1$ if and only if
$\omega(M)\in\Omega_p(G_n)$. The $A_n(K)$-modules
$L((p-1-a)\omega^n_k+a\omega^n_{k+1})$ are truncated symmetric powers
of the natural module~\cite[Proposition 1.2]{ZS}. Thus, the only $p$-restricted
modules of weight degree 1 for type $A$ are the fundamental
modules and truncated symmetric powers of the natural module. Recall that $B_n(K)\cong C_n(K)$ for $p=2$ (as abstract groups). So we do not consider groups of type $B_n$ in characteristic 2. For groups of type $C_n$ in this case the description of irreducible modules of weight degree 1 is more involved (see details in Section \ref{inductive2}).

In this paper we classify irreducible representations of the special linear groups of small weight degree. For other classical groups this was done by the authors earlier. In particular, it was shown that for these groups and odd $p$ no irreducible modules $M$ exist with $1<\wdeg M < n-7$.

\begin{theorem}[{\cite[Theorem 1.1]{BCD}, \cite[Theorem 1]{Dchar2},\cite[Theorem 1]{C_small_char}}]
\label{BCD} Let $n\geq 8$ and let $G_n=B_n(K)$, $C_n(K)$ or
$D_n(K)$. Let $M$ be a rational simple $G_n$-module with
$\omega(M)\notin\Omega(G_n)$. Suppose that $p>2$ for $G_n=B_n(K)$
or $C_n(K)$. Then $\wdeg M \geq
n-4-[n]_4$ where $[n]_4$ is the residue of $n$ modulo $4$. In
particular, $\wdeg M\geq n-7$.
\end{theorem}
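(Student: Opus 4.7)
The plan is to reduce first to the $p$-restricted case via Steinberg's tensor product theorem, and then to establish the multiplicity bound by induction on $n$, restricting the irreducible $L(\lambda)$ to a maximal Levi subgroup of the form $GL_k\times G_{n-k}$ and combining the inductive hypothesis with the explicit description of $\Omega_p(G_n)$.

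For the reduction, write $\omega(M)=\sum_{i=0}^{r}p^i\lambda_i$ with each $\lambda_i$ $p$-restricted, so that
\[
 M\cong L(\lambda_0)\otimes L(\lambda_1)^{(p)}\otimes\cdots\otimes L(\lambda_r)^{(p^r)}.
\]
Fix an index $i_0$ and choose a weight $\mu_{i_0}$ attaining $\wdeg L(\lambda_{i_0})$. The weight $\nu=p^{i_0}\mu_{i_0}+\sum_{i\ne i_0}p^i\lambda_i$ of $M$ admits the decomposition in which every tensor factor uses its own highest weight except the $i_0$th, which uses $\mu_{i_0}$. Since each $L(\lambda_i)$ has one-dimensional highest-weight space, this single decomposition already contributes $\wdeg L(\lambda_{i_0})$ to $\dim M^{\nu}$, giving $\wdeg M\ge\max_i\wdeg L(\lambda_i)$. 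The hypothesis $\omega(M)\notin\Omega(G_n)$ forces at least one $\lambda_{i_0}\notin\Omega_p(G_n)$, reducing the problem to proving the bound for a single $p$-restricted $\lambda\notin\Omega_p(G_n)$.

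For such $\lambda$, I would induct on $n$ by restricting $L(\lambda)$ to a maximal Levi $L\cong GL_k\times G_{n-k}$ whose choice depends on the shape of $\lambda$. Composition factors of $L(\lambda)\downarrow L$ have the form $V_k\otimes W$ with $W$ a $G_{n-k}$-irreducible, and the multiplicity of each torus weight in $L(\lambda)$ is at least its multiplicity in any composition factor. The central dichotomy is: either some $W$ arising in this way has $\omega(W)\notin\Omega_p(G_{n-k})$, in which case the induction hypothesis yields a weight subspace of $W$ of dimension at least $(n-k)-4-[n-k]_4$ that immediately produces a weight subspace of $L(\lambda)$ of at least the same dimension; or every such $W$ lies in $\Omega_p(G_{n-k})\cup\{0\}$, a very restrictive condition which, combined with the action of the $GL_k$ factor and the explicit shape of $\Omega_p(G_n)$, forces $\lambda\in\Omega_p(G_n)$ modulo a short list of small exceptions handled by direct computation.

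The principal obstacle is calibrating the induction so as to recover the exact bound $n-4-[n]_4$. Since this quantity is constant on residue classes modulo $4$ and drops by $4$ only when $n$ crosses a multiple of $4$, a single branching step with $k=1$ can lose up to $4$ units of slack precisely when $n$ enters a new residue class, so the argument must choose $k$ carefully and, in the worst cases, absorb several rank-reducing steps without progress before producing a $W$ outside $\Omega_p(G_{n-k})$. This forces type- and characteristic-specific analysis, particularly for the spin and half-spin fundamental weights of $B_n$ and $D_n$ and for weights of $C_n$ concentrated near the short end of the Dynkin diagram, where direct character computation (via the Jantzen sum formula or the convex-hull description of the support of $L(\lambda)$) supplants the Levi branching step. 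Small-rank base cases are checked by explicit computation, and the three cited papers \cite{BCD,Dchar2,C_small_char} partition this work by type and characteristic.
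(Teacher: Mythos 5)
This statement is not proved in the present paper: it is a composite of results from \cite{BCD}, \cite{Dchar2}, and \cite{C_small_char}, cited as external input, so there is no internal proof to compare against. Your reduction to the $p$-restricted case is correct and matches the mechanism the paper itself uses elsewhere (Lemma~\ref{degtenz}): if $\omega(M)=\sum_i p^i\lambda_i$ with each $\lambda_i$ $p$-restricted, then $\wdeg M\geq\prod_i\wdeg L(\lambda_i)\geq\max_i\wdeg L(\lambda_i)$, and by the definition of $\Omega(G_n)$ some $\lambda_{i_0}\notin\Omega_p(G_n)$ whenever $\omega(M)\notin\Omega(G_n)$. So the first paragraph of your argument is sound.

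The remainder, however, is an outline rather than a proof, and its load-bearing steps are not established. The assertion that ``every $W$ in $\Omega_p(G_{n-k})\cup\{0\}$ forces $\lambda\in\Omega_p(G_n)$ modulo a short list of small exceptions'' is precisely the content that would require a careful case analysis on the possible highest weights --- it is not a consequence of anything you have set up, and it is exactly where the type- and characteristic-dependent work in \cite{BCD}, \cite{Dchar2}, \cite{C_small_char} lives. You correctly diagnose the calibration problem: $f(n)=n-4-[n]_4$ drops by $4$ under a single rank-one restriction step when $n\equiv 0\pmod 4$, and by Proposition~\ref{deg*} one only gets $\wdeg M\geq\wdeg N\geq f(n-k)$, which is strictly weaker than the target for small $k$. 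But you do not say how the deficit is recovered; ``absorb several rank-reducing steps without progress'' names the difficulty without resolving it. Similarly, replacing the Levi-branching step by ``direct character computation via the Jantzen sum formula or the convex-hull description'' for the spin/half-spin and short-root cases is not a method one can execute uniformly in $n$: the convex hull of $\Lambda(L(\lambda))$ bounds which weights \emph{occur}, not the dimension of a weight space, and the Jantzen sum formula gives characters of Weyl-module layers, from which extracting a sharp lower bound on a specific $\dim L(\lambda)^\mu$ across all ranks is itself a substantial argument. In short, the reduction is right and the general shape (induction on rank via restriction to natural subsystem subgroups, together with a closure analysis of $\Omega_p$) is the correct one, but the key lemmas --- the branching dichotomy, the slack-recovery across residue classes, and the base cases --- are stated as goals rather than proved, so this does not yet constitute a proof of the theorem.
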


The main case ($p>2$ for $G_n=B_n(K)$ or $D_n(K)$ and $p>7$ for $G_n=C_n(K)$) was settled in \cite{BCD}; \cite{Dchar2} deals with type $D$ for $p=2$; and \cite{C_small_char} gives a new proof for type $C$ for all $p$. For $G=C_n(K)$ and $p=2$ a new exceptional series of modules with $\wdeg=2^s$ appears (see details in Section \ref{inductive2}).

Now  assume that $G_n=A_n(K)$.  Let $M\in\Irr G_n$, $\omega(M)=a_1\omega^n_1+\ldots+a_n\omega^n_n$, and $M^*$ be the dual of $M$. Note that
$\omega(M^*)=a_n\omega^n_1+a_{n-1}\omega^n_2 +\ldots+a_1\omega^n_n$ and $\wdeg M=\wdeg M^*$. Define
the {\em polynomial degree} of $M$ as the polynomial degree of the corresponding
polynomial representation of $GL_{n+1}(K)$, i.e.
\begin{equation}
\label{pdegg}
\pdeg M=\sum_{k=1}^n ka_k.
\end{equation}
Denote by $V_n$ the natural module
for $G_n$. Note that every simple module of polynomial degree $d$ can be
obtained as a composition factor of the $d$th tensor power
$V_n^{\otimes d}$. More exactly, we have the following. Set
\begin{equation}
\label{Ldn}
\mathcal{L}^d_n=\cup_{j\leq d} \Irr  V_n^{\otimes j}, \quad
\mathcal{R}^d_n=\cup_{j\leq d} \Irr ( V_n^*)^{\otimes j}.
\end{equation}
Then $\mathcal{L}_n^d=\{ M\in \Irr G_n \mid \pdeg M\leq d \}$ and
$\mathcal{R}_n^d=\{ M\in \Irr G_n \mid \pdeg M^* \leq d \}$
(Proposition \ref{LR}). For $d\le n$, it is not difficult to see
that $\wdeg V_n^{\otimes d}=d!$ (Lemma \ref{wdegVd}). This means
that modules of small polynomial degree $d$ (with, say, $d!<n$)
have small weight degree ($<n$), which gives many more small
weight degree modules for  type $A$ in addition to those described
in Theorem \ref{one}. This makes situation more difficult than in
the case of other classical groups, especially for non
$p$-restricted modules. Our first main result describes
$p$-restricted irreducible representations of the special linear
groups of small weight degree.

\begin{theorem}
\label{main1}
Let $M\in \Irr^p A_n(K)$ and $d=\min\{\pdeg M, \pdeg M^*\}$. Assume
$\omega(M)\not\in \Omega_p(A_n(K))$. Then the following hold.
\begin{itemize}
\item[$(i)$] If $n\geq16$ and $d>n$, then
$$\wdeg M>\sqrt{n}/p -1.$$
\item[$(ii)$] If $d\le n$, then
$$d-2\le \wdeg M\le d!.$$
Moreover, $M\cong L(a_1\omega^n_1+\ldots+a_d\omega^n_d)$ or
$L(a_d\omega^n_{n-d+1}+\ldots+a_1\omega^n_n)$ with $a_1+2a_2+\dots+da_d=d$,
and $\wdeg M$
is determined by the sequence $(a_1,\ldots,a_d)$ only and
does not depend on $n$.
\end{itemize}
In particular, if $n\geq 16$ and $\wdeg M\le \sqrt{n}/p -1$, then $M$
is as in part $(ii)$ with $d\le \sqrt{n}/p +1$.
\end{theorem}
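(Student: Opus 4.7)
The plan is to use the duality $\wdeg M=\wdeg M^*$ together with the formula $\omega(M^*)=a_n\omega_1^n+\cdots+a_1\omega_n^n$ recorded just before the theorem to reduce, without loss of generality, to the case $d=\pdeg M$; by Proposition~\ref{LR}, $M$ is then a composition factor of $V_n^{\otimes d}$. This places both parts in a common setup and the proof splits according to whether $d\le n$ (part~(ii)) or $d>n$ (part~(i)).

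For part~(ii), formula~(\ref{pdegg}) and $\sum k a_k=d\le n$ force $a_k=0$ for $k>d$, giving the stated shape of $\omega(M)$ (or the dual version, depending on which of $\pdeg M,\pdeg M^*$ realizes $d$). The upper bound $\wdeg M\le d!$ is immediate from $M\in\mathcal{L}_n^d$ combined with the evaluation $\wdeg V_n^{\otimes j}=j!$ for $j\le n$ in Lemma~\ref{wdegVd}. The stabilization claim---that $\wdeg M$ depends only on the tuple $(a_1,\ldots,a_d)$---is a Schur--Weyl-type statement: polynomial representations of $GL_{n+1}(K)$ of degree $d$ are classified by partitions of $d$ for all $n+1\ge d$, and every weight of $L(\omega)$ has $\varepsilon$-support contained in the first $d+1$ coordinates, so its multiplicity does not see $n$.

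The essential content of (ii) is the lower bound $\wdeg M\ge d-2$ under $\omega(M)\notin\Omega_p(A_n(K))$. The plan is to produce an explicit sub-dominant weight $\mu=\omega(M)-\alpha_{i_1}-\cdots-\alpha_{i_r}$ with $\dim M^\mu\ge d-2$. Since $\Omega_p(A_n(K))$ consists only of $0$, the fundamental weights $\omega_k^n$, and the truncated-symmetric weights $(p-1-a)\omega_k^n+a\omega_{k+1}^n$, its complement forces at least one of the following: three or more nonzero $a_k$, non-adjacent support, or an adjacent pair with $a_k+a_{k+1}\ne p-1$. In each such configuration one counts the distinct reduced expressions for $\mu$ obtained by successive simple-root subtractions inside the Weyl module $V(\omega(M))$---this count scales linearly with $d$---and then argues that the multiplicity descends to the quotient $L(\omega(M))$, using the $p$-restriction $a_k<p$ to ensure that the maximal submodule of $V(\omega(M))$ is generated in weights strictly below $\mu$.

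The main obstacle is part~(i). When $d>n$ and all $a_k<p$, the constraint $\sum ka_k=d>n$ forces the support of $\omega(M)$ to be quantitatively wide and/or tall; an elementary pigeonhole shows that either the number of nonzero $a_k$ or their average size must grow at least like $\sqrt{n/p}$. The plan is to construct a weight $\mu$ reachable by root subtractions in many distinct ways from $\omega(M)$, bound $\dim V(\omega(M))^\mu>\sqrt{n}/p-1$ by this pigeonhole count, and then transfer the multiplicity to $L(\omega(M))$. The numerical threshold $n\ge 16$ appears as the value needed for the pigeonhole estimate to beat the possible contribution of subdominant Weyl composition factors. Finally, the ``In particular'' clause is the contrapositive of~(i), which yields $d\le n$, combined with the bound $d-2\le\wdeg M$ from~(ii); together these give $d\le\sqrt{n}/p+1$, as required.
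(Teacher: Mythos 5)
Your overall framing matches the paper: reduce to $d=\pdeg M$ by duality, place $M$ inside $V_n^{\otimes d}$ via Proposition~\ref{LR}, get $\wdeg M\le d!$ from Lemma~\ref{wdegVd}, note that $\omega(M)$ has support in $\{1,\dots,d\}$ when $d\le n$, and derive the ``In particular'' clause from (i) and (ii). The $n$-independence also rests on the same Smith-factor idea the paper uses. So the easy parts of~(ii) and the final deduction are fine.

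The central steps of both~(i) and the lower bound in~(ii), however, rest on a mechanism that does not work. You propose to count weight vectors in the Weyl module $V(\omega(M))$ (``distinct reduced expressions for $\mu$'') and then transfer the resulting bound to $L(\omega(M))$ on the grounds that $p$-restriction of $\omega(M)$ guarantees the maximal submodule of $V(\omega(M))$ is ``generated in weights strictly below $\mu$.'' That is false: the radical of a Weyl module with $p$-restricted highest weight can contain primitive vectors at weights arbitrarily close to the top, and nothing about $a_k<p$ controls how low the radical sits. Weight multiplicities of $L(\lambda)$ can be drastically smaller than those of $V(\lambda)$, and establishing lower bounds for irreducible weight multiplicities is precisely the hard problem here; the count in $V(\omega(M))$, which you also leave unquantified, does not descend. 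This is a genuine gap, not a missing detail.

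The paper never touches the Weyl module. For $\wdeg M\ge d-2$ it restricts to the Smith factor $N$ of $M$ with respect to a subgroup $SL_d(K)$ (Theorem~\ref{Sm}, Proposition~\ref{deg*} preserve the relevant multiplicities), applies the exact Schur functor to obtain the irreducible $\Sigma_d$-module $D^{\lambda'}\otimes\sgn$, and then invokes James's theorem: an irreducible $\Sigma_d$-module of dimension $<d-2$ is trivial or sign, which translates exactly to $\omega(M)\in\Omega_p$, contradiction. For part~(i) it first applies Kleshchev's explicit recursive lower bound for $\wdeg L(\omega)$ in terms of the width of the support (Lemma~\ref{Kle}) to dispose of the wide-support case; in the narrow-support case it shows $a=\sum a_t<\sqrt n$, takes a Smith factor $L_s$ of polynomial degree just above $n/2-\sqrt n$ on a subgroup of rank $>n/2$, and applies the bound from~(ii) to $L_s$. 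Both ingredients (James and Kleshchev) give lower bounds that hold directly for irreducible modules, which is what your sketch lacks.
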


The $\sqrt{n}/p -1$ estimate in part $(i)$ was obtained by
applying the Schur functor. It is a quick and rough estimate and can
probably be improved if one uses a more thorough analysis, similar
to that of \cite{BCD}. One should expect something close to $n$,
as in Theorem \ref{BCD}. Unfortunately, this seems to be very
difficult to obtain at the moment as too many modules of small
weight degree exist for  type $A$ and the methods used in
\cite{BCD} fail to work. But our estimate is good enough to
identify the modules with small weight degree and get a full
classification of the inductive systems of representations for
$A_\infty$ with bounded weight multiplicities (see below).

In what follows for all classical groups
$\Fr$ is the Frobenius morphism of $G_n$ associated with raising
the elements of $K$ to the $p$th power; $M^{[k]}$ denotes a
$G_n$-module $M$ twisted by the $k$th power of $\Fr$.
Let $M\in\Irr G_n$. Assume that $\omega(M)=\sum^s_{k=0} p^k\lambda_k$ with $p$-restricted dominant weights $\lambda_k$ of $G_n$. Put $M_k=L(\lambda_k)$. By the Steinberg tensor product
theorem~\cite{Stein},
\begin{equation}
\label{Fr} M\cong\otimes^s_{k=0}M_k^{[k]}.
\end{equation}
It is obvious that $\wdeg M\geq \wdeg M_0\cdot\ldots\cdot \wdeg M_s$ (Lemma~\ref{degtenz}).
Therefore, the question of describing non $p$-restricted
$G_n$-modules of small weight degree is essentially reduced to combining various Frobenius twists
of $p$-restricted modules of small weight degree and making sure that the weight degree does not become too large
(see Corollary \ref{1.5}, Theorem \ref{CnonpA}, and Proposition \ref{tenz}).

Note that the results above can be
considered as a modular analogue of the following problem solved by Mathieu
\cite{mathieu}: describe all infinite dimensional weight modules with bounded
weight multiplicities for a finite dimensional simple Lie algebra over $\mathbb{C}$.
Some particular cases, including so-called completely pointed modules (i.e. with one dimensional
weight spaces) were previously considered in \cite{BBL,BL,F}.
It is interesting to note that by specializing $p$ to $0$ in the weights in the set $\Omega_p(G_n)$
we get highest weights of completely pointed modules
(e.g. $(-1-a)\omega^n_k+a\omega^n_{k+1}$ for type $A_n$ and
$\omega^n_{n-1}-\frac{3}{2}\omega^n_n$ and $-\frac{1}{2}\omega^n_n$ for type $C_n$).

Estimates of weight multiplicities obtained above can be used for
recognizing linear groups containing matrices with small
eigenvalue multiplicities. Indeed, it occurs that  only for some
special classes of representations of simple classical algebraic
groups, their images can contain matrices all whose eigenvalue
multiplicities are small enough with respect to the group rank.

At the end of the paper we classify inductive systems of representations
with bounded weight multiplicities for the natural embeddings of the classical groups. In what follows $\mathbb{N}$ is the set of positive integers.
For a group $G$, a subgroup $H\subset G$ and a $G$-module $M$ denote by
$M{\downarrow}H$ the restriction of $M$ to $H$.
Let
\begin{equation}
\label{seq} \Gamma_1 \subset \Gamma_2 \subset \dots \subset \Gamma_n
\subset \dots
\end{equation}
be a chain of fixed embeddings of algebraic groups $\Gamma_n$ over $K$ and let
$\Phi_n$, $n\in \mathbb{N}$, be a nonempty finite subset of $\Irr \Gamma_n$, for each $n$.
Recall that the system $\Phi=\{ \Phi_n \mid n \in \mathbb{N} \}$ is called an
{\em inductive system} of representations (or modules) for (\ref{seq}) if
\[
\bigcup_{\varphi\in\Phi_{n+1}} \Irr(\varphi{\downarrow}\Gamma_n)=\Phi_n
\]
for all $n\in\mathbb{N}$.
Inductive systems have been introduced by A. Zalesskii in~\cite{Zal}.
They can be regarded as an asymptotic version of the branching
rules for the embeddings (\ref{seq}). Observe that in positive characteristic one cannot expect to find explicit analogues of the classical branching rules in characteristic 0 which have quite a lot of applications, so their asymptotic versions can be useful.
Moreover, inductive systems can be applied to the study
of ideals in group algebras of locally finite groups. It is proved
in \cite{ZSt} that there exists a bijective correspondence between
the inductive systems for a locally finite group and the
semiprimitive ideals of the corresponding group algebra. So far we
know little about the structure of inductive systems. Minimal and
minimal nontrivial inductive systems of modular representations for
natural embeddings of algebraic and finite groups of type $A_n$ were
classified in~\cite{BarSupr}. For other classical groups the
question on the minimal inductive systems seems substantially more
difficult. For natural embeddings of symplectic groups in positive
characteristic examples of such systems that have no analogues in
the characteristic 0 case were constructed in~\cite{ZSVesti}
and~\cite{BS}.

Let $\alpha_1,\ldots,\alpha_n$ be the simple roots of $G_n$ labeled as
in~\cite{Burb} (it will always be clear from the context what group is considered). It is well known that the root subgroups associated with the roots $\pm\alpha_{n-k+1},\ldots,\pm\alpha_n$ generate a subgroup isomorphic to $G_k$. If we identify $G_k$ with this subgroup, we obtain a sequence of natural embeddings
\begin{equation}
\label{seq1}
G_1\subset G_2\subset \ldots \subset G_n \subset \ldots.
\end{equation}
In this paper we consider only inductive systems for the sequence~(\ref{seq1}).

\begin{definition}
Let $\Phi$ be an inductive system of representations. We say that $\Phi$ is a
{\em BWM-system} (bounded weight multiplicities system) if there exists
$m\in\mathbb{N}$ such that $\wdeg\varphi\le m$ for all $\varphi\in\Phi_n$ and
all $n$.
For a BWM-system $\Phi$ we define $\wdeg\Phi=\max_{\varphi\in\Phi}
\wdeg\varphi$.
\end{definition}

In Sections~\ref{inductive} and~\ref{inductive2} we classify all BWM-systems for all four
types of classical groups. To state the main results,  we need to
introduce some notation.
For any dominant weight $\omega$ of $G_n$ denote by $\delta(\omega)$
the value of $\omega$ on the maximal root of the root system of
$G_n$. For a simple module $M\cong L(\omega)$ put
$\delta(M)=\delta(\omega)$.
Let $T\subset\mathbb{N}$ be infinite.
Assume that $R_t\subset\Irr G_t$ is nonempty for each $t\in T$ and
that there exists $k\in\mathbb{N}$ such that $\delta(M)<k$ for all
$M\in R_t$ and for all $t$. Denote by $\Pi_n$ the set of all
$G_n$-modules $Q$ such that $Q$ is a composition factor of the
restriction $Y{\downarrow}G_n$ for some $t>n$, $t\in T$, and $Y\in
R_t$. Assume that $R_t\subset \Pi_t$ for all $t$. By Lemma~\ref{par3},  $\Pi=\{\Pi_n \mid n\in\mathbb{N}\}$ is an inductive system for the groups $G_n$. We will write
$\Pi=\langle R_t\mid t\in T\rangle$ and call $\Pi$ the {\em
inductive system generated by $R_t$}. If every $R_t$ consists of a
single module $Y_t$, we use a simplified notation $\Pi=\langle
Y_t\mid t\in T\rangle$. Let $\Phi$ be an inductive system. We say
that $\Phi$ is a {\em $p$-restrictedly generated system} if
$\Phi=\langle \Lambda_t\mid t\in T\rangle$ with $\Lambda_t\subset
\Irr^p G_t$ for all $t\in T$.

For arbitrary inductive systems $\Phi$ and $\Psi$ define the
collections $\Fr(\Phi)$ and $\Phi\otimes\Psi$ in a natural way:
\begin{eqnarray*}
\Fr(\Phi)_n &=& \{ \varphi^{[1]} \mid \varphi\in\Phi_n\},\\
(\Phi\otimes\Psi)_n &=& \bigcup_{\varphi\in\Phi_n,\ \psi\in\Psi_n}
\Irr (\varphi\otimes\psi).\\
\end{eqnarray*}
By Lemma~\ref{pten}, $\Fr(\Phi)$ and $\Phi\otimes\Psi$ are inductive
systems. The union  of inductive systems $\Phi$
and $\Psi$ and the inclusion relation for such systems are defined
in a natural way. An inductive system $\mathcal{T}$ is called
{\em decomposable} if $\mathcal{T}$ is the union of inductive systems
$\Phi$ and $\Psi$ that do not coincide with $\mathcal{T}$, and
{\em indecomposable} otherwise.
For an inductive system
$\Phi$ put
\[
\delta(\Phi_n)=\max\{\delta(\varphi) \mid \varphi\in\Phi_n \}.
\]
Then $\delta(\Phi_n)$ does not depend on $n$ (Lemma~\ref{delta}), so we can define $\delta(\Phi)$ as $\delta(\Phi_n)$.

In Section \ref{steinberg}  we prove the following analogue of the Steinberg product theorem
 for inductive systems, which is of independent interest.

\begin{theorem}
\label{anSt} Let $\Phi$ be an indecomposable inductive system for the sequence~(\ref{seq1}). 
Then there exist $p$-restrictedly generated inductive systems $\Phi^j$, $0\leq j\leq k$, such that
$\Phi=\otimes_{j=0}^k \Fr^j(\Phi^j)$.
\end{theorem}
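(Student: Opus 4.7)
The plan is to apply the Steinberg tensor product theorem~(\ref{Fr}) pointwise to every $\varphi\in\Phi$, assemble the $j$-th $p$-restricted factors into candidate $p$-restrictedly generated inductive systems $\Phi^{j}$, and then upgrade the obvious inclusion $\Phi\subseteq\bigotimes_{j=0}^{k}\Fr^{j}(\Phi^{j})$ to equality via the indecomposability hypothesis. I first bound the Frobenius depth uniformly: for $\varphi\in\Phi_n$ write $\omega(\varphi)=\sum_{j\geq 0}p^{j}\lambda_{j}(\varphi)$ with $\lambda_{j}(\varphi)$ $p$-restricted. Since $\delta$ is linear in the weight, $\delta(\varphi)=\sum_{j}p^{j}\delta(\lambda_{j}(\varphi))$, and by Lemma~\ref{delta} this quantity is bounded by $D:=\delta(\Phi)$ uniformly in $n$ and $\varphi$. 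As $\delta(\lambda_{j}(\varphi))\geq 1$ whenever $\lambda_{j}(\varphi)\neq 0$, every nontrivial factor satisfies $p^{j}\leq D$, yielding a uniform cutoff $k=\lfloor\log_{p}D\rfloor$.

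Next I set $\Phi^{j}_{n}=\{L(\lambda_{j}(\varphi))\mid\varphi\in\Phi_{n}\}\subseteq\Irr^{p}G_n$ and verify that $\Phi^{j}:=\{\Phi^{j}_n\}_{n}$ is an inductive system. For $\varphi\in\Phi_{n+1}$, Steinberg gives $\varphi{\downarrow}G_n\cong\bigotimes_i(L(\lambda_i(\varphi)){\downarrow}G_n)^{[i]}$; the composition factors of this tensor product have the form $\bigotimes_i\chi_i^{[i]}$ with $\chi_i$ a composition factor of $L(\lambda_i(\varphi)){\downarrow}G_n$, and uniqueness of the Steinberg decomposition identifies the $j$-th $p$-restricted piece of any such composition factor with a composition factor of $L(\lambda_j(\varphi)){\downarrow}G_n$. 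Combined with the inductive property of $\Phi$, this yields $\bigcup_{\psi\in\Phi^{j}_{n+1}}\Irr(\psi{\downarrow}G_n)=\Phi^{j}_n$, so $\Phi^{j}$ is an inductive system, and since all its members are $p$-restricted, it is $p$-restrictedly generated.

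Finally, the inclusion $\Phi\subseteq\Psi:=\bigotimes_{j=0}^{k}\Fr^{j}(\Phi^{j})$ is immediate, and $\Psi$ is itself an inductive system by Lemma~\ref{pten}. For the reverse inclusion, suppose some tuple $(\psi_0,\ldots,\psi_k)\in\prod_j\Phi^{j}_n$ satisfies $\bigotimes_j\psi_j^{[j]}\notin\Phi_n$, even though each $\psi_j$ occurs as the $j$-th Steinberg factor of some $\varphi^{(j)}\in\Phi_n$. I would use this absent tuple to produce a proper decomposition $\Phi=\Phi_1\cup\Phi_2$ into two non-empty inductive subsystems by choosing an index $j$, splitting $\Phi^{j}$ into disjoint parts each closed under restriction, and reassembling the corresponding Frobenius tensor products; the existence of a missing tuple is precisely what permits such a compatible bipartition, contradicting the indecomposability of $\Phi$. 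This last step is the technical heart of the proof: one must track how the branching rules couple different Steinberg factors of a restriction and show that, once a replacement of one Steinberg factor is possible within $\Phi$, every tuple in $\prod_j\Phi^{j}_n$ must appear in $\Phi_n$. Steps~1 and~2 are by contrast essentially formal bookkeeping once the Steinberg/restriction compatibility is set up.
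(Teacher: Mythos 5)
There are two genuine gaps in your proposal, and both are exactly what the paper's proof of Theorem~\ref{anSt} is designed to circumvent.

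\textbf{Gap 1: the sets $\Phi^j_n=\{L(\lambda_j(\varphi))\mid\varphi\in\Phi_n\}$ need not form an inductive system.} Your argument for this relies on the claim that a composition factor $\chi_i$ of $L(\lambda_i(\varphi)){\downarrow}G_n$ must be $p$-restricted, so that $\bigotimes_i\chi_i^{[i]}$ is the Steinberg decomposition of each composition factor of $\varphi{\downarrow}G_n$. That claim is not automatic. If, say, $\chi_0$ is not $p$-restricted and decomposes as $\chi_{00}\otimes\chi_{01}^{[1]}\otimes\cdots$, then $\chi_0\otimes\chi_1^{[1]}\otimes\cdots=\chi_{00}\otimes(\chi_{01}\otimes\chi_1)^{[1]}\otimes\cdots$, and the level-$1$ Steinberg factor of a composition factor of the restriction is a composition factor of $\chi_{01}\otimes\chi_1$, which need not lie in $\Irr_n(L(\lambda_1(\varphi)))$. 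In other words, the Steinberg levels can \emph{mix} under branching. This is precisely why the paper does not take all $j$-th Steinberg factors; it instead restricts to the subsets $T_n=S_n^{0,\dots,k-1}$ of representations with lexicographically maximal $\delta$-data and proves, by a delicate induction on $j$ (their Formula~(\ref{rescomp})), that for $\varphi\in T_n$ and $\psi\in\Phi_m$ with $\varphi\in\Irr_n\psi$ every intermediate factor $\tau^j$ is forced to be $p$-restricted, so that indeed $\varphi_j\in\Irr_n\psi_j$ and $\delta(\varphi_j)=\delta(\psi_j)$. Without some maximality hypothesis of this kind your ``formal bookkeeping'' step fails, and $\Phi^j$ may not be closed under the required branching.

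\textbf{Gap 2: the reverse inclusion is only sketched.} You write that a missing tuple ``is precisely what permits such a compatible bipartition, contradicting indecomposability'' and explicitly acknowledge that this is ``the technical heart of the proof'' while leaving it unproven. The paper's actual argument is quite different and does not go through an explicit bipartition: it uses Lemma~\ref{index} and Corollary~\ref{i} (which are separate preparatory results, themselves proved by a decomposition argument) to find, for any finite collection of representations lying in the system, a common $\xi\in\Phi_m$ restricting to all of them, and then applies the key compatibility claim (\ref{rescomp}) to transfer this to the $T^j_n$ and assemble arbitrary tuples $\otimes_j\theta_j^{[j]}$ inside $\Phi_m$. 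Both the intermediate lemmas and the compatibility claim are load-bearing and cannot be replaced by the gesture you make. Your Step~1 (the uniform bound $k$ via $\delta(\Phi)$ and Lemma~\ref{delta}) is correct and is the same as the paper's, but Steps~2 and~3 as written would not compile into a proof.
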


Now assume that $G_n=A_n(K)$. Recall the sets $\mathcal{L}_n^d$ and $\mathcal{R}_n^d$ defined in  (\ref{Ldn}).
Lemma~\ref{ind} implies that
$\mathcal{L}^d=\{ \mathcal{L}_n^d\mid n\in\mathbb{N} \}$ and
$\mathcal{R}^d=\{\mathcal{R}_n^d\mid n\in\mathbb{N} \}$
are
inductive systems. Note that $\mathcal{L}_n^1=\{L(0),  V_n\}$.
Set
\begin{eqnarray}
\mathcal{F}_n&=&\{ L(\omega_0^n), L(\omega_1^n), \ldots,
L(\omega_n^n)\}, \label{Fn} \\
\mathcal{T}_n&=&\{ L((p-a-1)\omega_i^n+a\omega_{i+1}^n) \mid 0\leq
a<p, \quad 0\leq i\leq n\} \label{Tn}
\end{eqnarray}
($\omega_{n+1}^n$ is treated as $0$). By Lemma~\ref{ind},  $\mathcal{F}=\{ \mathcal{F}_n \mid
n\in\mathbb{N}\}$ and $\mathcal{T}=\{ \mathcal{T}_n \mid
n\in\mathbb{N}\}$ are inductive systems. Note that the
representations of $\mathcal{T}$ are realized exactly in the truncated symmetric powers of the natural module.

Let $d\in \mathbb{N}$. Fix any integers $a_i\ge 0$ for $0\leq i\leq d$. For $n\geq d$ let
$M_{n,L}(a_1,\ldots,a_d)$ be a simple $G_n$-module with highest
weight $a_1\omega^n_1+\ldots+a_d\omega^n_d$ and
$M_{n,R}(a_1,\ldots,a_d)$ be a simple $G_n$-module  with highest
weight $a_d\omega^n_{n-d+1}+\ldots+a_1\omega^n_n$. Set
\begin{eqnarray*}
C_L(a_1,\ldots,a_d)&=&\langle M_{n,L} (a_1,\ldots, a_d)\mid n\geq
d\rangle,\\
C_R(a_1,\ldots,a_d)&=&\langle M_{n,R} (a_1,\ldots,
a_d)\mid n\geq d\rangle.
\end{eqnarray*}
By Lemma~\ref{lLR}, the systems $C_L(a_1,\ldots,a_d)$ and $C_R(a_1,\ldots,a_d)$ are well defined.

\begin{theorem}
\label{t1A} Let $G_n=A_n(K)$. Assume that $\Phi$ is a
$p$-restrictedly generated indecomposable BWM-system. Then
$\Phi=\mathcal{F}$, $\mathcal{T}$, $C_L(a_1,\ldots,a_d)$ or  $C_R(a_1,\ldots,a_d)$
for some integers $a_1,\ldots,a_d<p$.
\end{theorem}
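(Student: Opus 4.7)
The plan is to combine Theorem~\ref{main1} with the generator structure of inductive systems to force $\Phi$ into one of the four listed shapes.

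First, write $\Phi = \langle \Lambda_t \mid t \in T \rangle$ with $\Lambda_t \subset \Irr^p G_t$ and set $m = \wdeg \Phi$. Dropping finitely many small levels from $T$ leaves the generated system unchanged, so I may assume $t \ge 16$ and $\sqrt{t}/p - 1 > m$ for every $t \in T$. Since $\wdeg Y \le m$ for each generator $Y \in \Lambda_t$, Theorem~\ref{main1} applies and forces $\omega(Y)$ to take one of the following four shapes: (F) a fundamental weight $\omega_k^t$; (T) a non-fundamental truncated symmetric power weight $(p-1-a)\omega_k^t + a\omega_{k+1}^t$ with $0 < a < p-1$; (L) a left-concentrated weight $a_1\omega_1^t + \ldots + a_d\omega_d^t$ with $d \le m+2$; or (R) the symmetric right-concentrated weight $a_d\omega_{t-d+1}^t + \ldots + a_1\omega_t^t$.

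Next, I would show that for each fixed type and parameter sequence, generators appearing at cofinal levels collectively produce one of the four systems: a cofinal family of type-(F) generators yields a sub-system of $\mathcal{F}$; cofinal type-(T) generators yield a sub-system of $\mathcal{T}$; cofinal type-(L) generators $M_{t,L}(a_1,\ldots,a_d)$ sharing a common sequence $(a_1,\ldots,a_d)$ yield precisely $C_L(a_1,\ldots,a_d)$; and symmetrically for (R). This requires a branching analysis of each type under $G_t \supset G_{t-1}$. The key property is that restriction preserves the shape: for (F), $L(\omega_k^t)\downarrow G_{t-1}$ has composition factors among $\{L(\omega_{k-1}^{t-1}),\, L(\omega_k^{t-1})\}$; for (L), $M_{t,L}(a_1,\ldots,a_d)\downarrow G_{t-1}$ has composition factors only among left-concentrated modules with the same coefficient sequence $(a_1,\ldots,a_d)$; and analogously for (T) and (R). Once this is in hand, each of $\mathcal{F}$, $\mathcal{T}$, $C_L(a_1,\ldots,a_d)$, $C_R(a_1,\ldots,a_d)$ is seen to be indecomposable, and distinct systems from the list are pairwise disjoint at every level, apart from the obvious inclusions $C_L(0,\ldots,0,1)\subset \mathcal{F}$ (with the $1$ at position $k$) arising from a type-(F) weight being formally a left-concentrated weight.

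Finally, I would invoke the indecomposability of $\Phi$. If $\Phi$ contained generators producing two distinct maximal sub-systems from the list, then $\Phi$ would split as the union of the corresponding closed sub-systems, each an inductive system in its own right, contradicting indecomposability. So all generators must fall in a single class with consistent parameters, and $\Phi$ coincides with $\mathcal{F}$, $\mathcal{T}$, $C_L(a_1,\ldots,a_d)$, or $C_R(a_1,\ldots,a_d)$. The main obstacle is the branching step for types (L) and (R): one must verify in characteristic $p$ that the composition factors of restrictions of left-concentrated $p$-restricted modules remain left-concentrated with the same parameter sequence, with no unexpected leakage into other types. A natural approach is via the Schur functor, which relates these modules to $p$-modular representations of the symmetric group and preserves polynomial degree; combined with the fact that the natural restriction $V_t\downarrow G_{t-1} = L(0) \oplus V_{t-1}$ cannot shift the support of the highest weight rightward under the chosen embedding, this should yield the required closure.
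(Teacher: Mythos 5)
Your high-level strategy is the right one and matches the paper's in outline: use Theorem~\ref{main1} to sort $p$-restricted generators into fundamental (F), truncated-symmetric-power (T), left-concentrated (L), and right-concentrated (R) types, then use indecomposability to force a single type. However, there is a concrete false step in the (L)/(R) branching analysis, which you yourself flag as ``the main obstacle.''

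You claim that for a left-concentrated module, ``$M_{t,L}(a_1,\ldots,a_d)\downarrow G_{t-1}$ has composition factors only among left-concentrated modules with the same coefficient sequence $(a_1,\ldots,a_d)$,'' and later that one must verify this ``remains left-concentrated with the same parameter sequence.'' This is false. Already for $M_{n,L}(2)=L(2\omega_1^n)$ with $p>3$, Corollary~\ref{csym} gives $\Irr_{n-1}L(2\omega_1^n)=\{L(0),\,L(\omega_1^{n-1}),\,L(2\omega_1^{n-1})\}$, so the restriction produces composition factors with sequences $(0)$, $(1)$ and $(2)$. In general, restriction of left-concentrated modules changes the coefficient sequence; what is preserved is membership in $\mathcal{L}^d$ (bounded polynomial degree, Lemma~\ref{A1} and Lemma~\ref{ind}), not the sequence. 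The correct asymmetric fact is the one in Lemma~\ref{Hn}: $M_{n,L}(\sigma)\in\Irr_n M_{n+1,L}(\sigma)$, i.e.\ the same sequence reappears going \emph{up} one level. Because of this asymmetry, the system $C_L(a_1,\ldots,a_d)$ at level $n$ is strictly larger than $\{M_{n,L}(a_1,\ldots,a_d)\}$; it contains all composition factors of $M_{t,L}(a_1,\ldots,a_d)\downarrow G_n$ for $t>n$, most of which do \emph{not} carry the sequence $(a_1,\ldots,a_d)$.

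Because of this, your disjointness claim also fails: $C_L(a_1,\ldots,a_d)$ systems with different sequences overlap at every finite level (each contains the smaller ones as subsets), and the systems $C_L$ of fundamental type sit inside $\mathcal{F}$, not just the single chain you mention. The way the paper closes this gap is Proposition~\ref{rgen}, which first establishes the coarse statement $\Phi\subset\mathcal{F}\cup\mathcal{T}\cup\mathcal{L}^{k+2}\cup\mathcal{R}^{k+2}$, then Lemma~\ref{chain} to split an $\mathcal{L}\cup\mathcal{R}$ piece, and Proposition~\ref{t2} which proves by a counting/cofinality argument on the finite set $S_d$ of admissible sequences (using Lemma~\ref{Hn}) that any subsystem of $\mathcal{L}^d$ is a finite union of $C_L(\sigma)$'s, without ever asserting that restriction preserves $\sigma$. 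Your Schur-functor idea does preserve polynomial degree, but polynomial degree is exactly what is being used; it does not fix the parameter sequence and so does not rescue the stated branching claim. To repair the proposal you would have to replace the sequence-preservation claim with the $\mathcal{L}^d$-stability claim, prove a version of Proposition~\ref{t2}, and add a separation argument in the spirit of Lemma~\ref{chain} to disentangle the $\mathcal{L}$ and $\mathcal{R}$ pieces before invoking indecomposability.
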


Let $\Phi$ be an inductive system. Assume that
\[
\Phi=\otimes_{k=0}^s \Fr^k (\Phi^k),
\]
where $\Phi^k$ are $p$-restrictedly generated systems. We say that
$\Phi$ is {\em special} if each $\Phi^k$ is equal to one of the systems $C_L(a_1,\ldots,a_d)$,
$C_R(a_1,\ldots,a_d)$,  $\mathcal{F}$, or $\mathcal{T}$.

Let $\Phi$ be special. Then for every $k$,  either $\Phi^k=
\mathcal{F},\mathcal{T}$ or there exists $d$ such that
$\Phi^k\subset \mathcal{L}^d$ or $\mathcal{R}^d$. Therefore,
$\Phi$ can be represented in the form
\[
\Phi=\Psi^0\otimes\ldots\otimes\Psi^l
\]
with
\begin{equation}
\label{Fr1ind} \Psi^f=\otimes_{k=i_{f-1}+1}^{i_f} \Fr^k (\Phi^k),
\end{equation}
where the indices $i_f$, $0\leq f \leq l$,  satisfy the following: $i_{-1}=-1$ and for each
$f$,  either all $\Phi^k$ have the form $C_L(a_1,\ldots,a_d)$ for
$i_{f-1}+1\leq k\leq i_f$, or all $\Phi^k$ have the form $C_R(a_1,\ldots,a_d)$ for $i_{f-1}+1\leq k\leq i_f$, or $i_{f-1}+1=k=i_f$ and $\Phi^k=\mathcal{F}$ or $\mathcal{T}$.
Fix minimal $l$ with this property. Then the systems $\Psi^f$ are uniquely determined.

\begin{theorem}
\label{a} Let $G_n=A_n(K)$. Indecomposable BWM-systems are exhausted
by special inductive systems with the following property
$\delta(\Psi^f)<p^{i_f+1}$ for all $\Psi^f$ with $f<l$ ($i_f$ are
such as in~$(\ref{Fr1ind})$). An arbitrary
BWM-system is a finite union of indecomposable ones.
\end{theorem}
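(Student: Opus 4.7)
The plan is to combine Theorem \ref{anSt} and Theorem \ref{t1A} to reduce the classification of indecomposable BWM-systems to the $p$-restricted case, then to characterise when the resulting tensor factorisation itself remains indecomposable. Given an indecomposable BWM-system $\Phi$ for $A_n(K)$, Theorem \ref{anSt} provides a decomposition $\Phi = \otimes_{j=0}^{s}\Fr^j(\Phi^j)$ with each $\Phi^j$ a $p$-restrictedly generated system. Iterated use of Lemma \ref{degtenz} together with the Steinberg tensor product theorem gives $\wdeg\Phi \geq \prod_j \wdeg\Phi^j$, so each $\Phi^j$ inherits the BWM property. If some $\Phi^j$ decomposed as $A\cup B$, this would induce a matching union decomposition of $\Phi$; therefore, after refining, each $\Phi^j$ may be assumed indecomposable. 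Theorem \ref{t1A} then identifies each such $\Phi^j$ as one of $\mathcal{F}$, $\mathcal{T}$, $C_L(a_1,\ldots,a_d)$, or $C_R(a_1,\ldots,a_d)$, and thus $\Phi$ is special.

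Next I would form the coarsest grouping $\Psi^f=\otimes_{k=i_{f-1}+1}^{i_f}\Fr^k(\Phi^k)$ by coalescing maximal runs of $C_L$-type factors and maximal runs of $C_R$-type factors, each $\mathcal{F}$ or $\mathcal{T}$ forming a singleton block; this yields the minimal $l$ and uniquely determined $\Psi^f$, since any two adjacent blocks have distinct types. The remaining task is to establish the inequality $\delta(\Psi^f)<p^{i_f+1}$ for each $f<l$. For necessity, suppose $\delta(\Psi^f)\geq p^{i_f+1}$. Then the $p$-adic expansion of some highest weight in $\Psi^f$ overflows into the slot $p^{i_f+1}$ belonging to $\Psi^{f+1}$; since $\Psi^{f+1}$ has a structurally different type from $\Psi^f$, the modules of $\Phi$ can be sorted by whether or not this overflow occurs into a nontrivial partition into two disjoint inductive subsystems, contradicting indecomposability. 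Conversely, if all the $\delta$-inequalities hold, the $p$-adic block structure of the highest weight of any module in $\Phi$ uniquely recovers the $\Psi^f$, so any splitting of $\Phi$ would force a splitting of some individual $\Psi^f$; within a single $C_L$- or $C_R$-run, the tensor product of same-type indecomposable systems is itself indecomposable by a direct analysis of how the left-truncated or right-truncated highest-weight shapes concatenate under branching.

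For the second statement, the argument is short: every inductive system $\Phi$ decomposes canonically into its maximal indecomposable subsystems (the equivalence classes under the branching-generated relation on modules), and each such subsystem is itself an inductive system, so it contributes at least one module to $\Phi_n$ for every $n$. Since $\Phi_n$ is required to be a finite subset of $\Irr G_n$, the number of indecomposable subsystems of $\Phi$ is at most $|\Phi_n|$ and hence finite. The main obstacle will be the necessity direction of the inequality in the middle paragraph: one must construct explicitly the two disjoint inductive subsystems witnessing decomposability when $\delta(\Psi^f)\geq p^{i_f+1}$, which will require careful $p$-adic bookkeeping combined with an understanding of how the differing branching behaviours of $C_L$, $C_R$, $\mathcal{F}$, and $\mathcal{T}$ interact across a Frobenius boundary.
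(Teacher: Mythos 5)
Your opening reduction is exactly the paper's: apply Theorem \ref{anSt} to obtain $\Phi=\otimes_{k}\Fr^k(\Phi^k)$, use Lemma \ref{degtenz} to conclude that each $\Phi^k$ is a BWM-system, note that indecomposability passes to the factors, and invoke Theorem \ref{t1A} to identify each $\Phi^k$. That part is fine.

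The treatment of the inequality $\delta(\Psi^f)<p^{i_f+1}$, however, rests on a wrong dichotomy. You argue that if $\delta(\Psi^f)\geq p^{i_f+1}$ one can sort modules by whether a ``carry'' occurs into the $p^{i_f+1}$ slot and thereby split $\Phi$ into two disjoint inductive subsystems, contradicting indecomposability. That cannot work: the paper shows (via Lemmas \ref{Hn}, \ref{LL} and Corollaries \ref{CF}, \ref{cp2}) that \emph{every} special system is generated by a single module at each level and hence indecomposable, regardless of the $\delta$-condition. The $\delta$-inequality is not an indecomposability criterion but a bounded-weight criterion. The correct necessity argument (which the paper gives) is that when $\delta(\Psi^f)\geq p^{i_f+1}$ for some $f<l$, one can exhibit in $\Phi_n$ a module of the form $L_n\otimes Q_n^{[i]}\otimes S_n^{[i_f+2]}$ with $Q_n=M_n\otimes T_n^{[h+1]}$ and $\delta(M_n)\geq p^{h+1}$, and then Proposition \ref{tenz} yields $\wdeg Q_n\geq n-t-q$, unbounded in $n$; Lemma \ref{degtenz} then forces $\wdeg\Phi$ to be unbounded. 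Similarly your sufficiency argument aims at indecomposability, but what must be shown is that the special system with the $\delta$-condition \emph{is} a BWM-system; that comes from Theorem \ref{CnonpA}, which controls $\wdeg$ of tensor products with bounded $\delta$-blocks, not from a concatenation analysis of highest-weight shapes.

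For the finite-union statement, your argument is far too quick and contains a genuine gap. You propose a canonical decomposition into ``maximal indecomposable subsystems'' as equivalence classes under a branching-generated relation, and then bound their number by $|\Phi_n|$. First, indecomposable subsystems of $\Phi$ need not be disjoint — $L(0)$, and more generally any module of weight degree one, can lie in many of them (e.g.\ it lies in both $\mathcal{F}$ and $\mathcal{T}$) — so ``common ancestor'' is not an equivalence relation whose classes partition $\Phi$, and the transitive closure would merge components that must stay separate. Second, even granting some decomposition, the bound ``at most $|\Phi_n|$'' fails because distinct indecomposable subsystems share modules. The paper instead runs a nontrivial recursive procedure: it either shows $\mathcal{B}\subset\mathcal{L}^d\cup\mathcal{R}^d$ for some $d$ (handled by Proposition \ref{t2} and Lemma \ref{chain}), or it explicitly constructs a finite-union subsystem $\Psi\subset\mathcal{B}$ built from tuples $(V_1,V_2)$ of Frobenius positions carrying $\mathcal{F}/\mathcal{T}$-type factors, passes to the difference system $\mathcal{B}^1=D(\mathcal{B},\Psi)$, and iterates, with termination controlled by the strictly decreasing invariant $q(\mathcal{C}^i)$; Proposition \ref{pp} is the technical engine ensuring the Frobenius layers are preserved under the branching needed in this recursion. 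None of this is present or replaceable by the cardinality count you sketch.
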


Theorems~\ref{BCD} and~\ref{12new} allow us to find the BWM-systems for the remaining
series of classical groups.
Put
\[
\mathcal{S}_n=\left\{
\begin{array}{ll}
\{L(\omega_n^n)\} &\mbox{ for } G_n=B_n(K),\\ \{L(\omega_{n-1}^n),
L(\omega_n^n)\} &\mbox{ for } G_n=D_n(K),\\
\{L(\frac{p-1}{2}\omega_n^n),
L(\omega_{n-1}^n+\frac{p-3}{2}\omega_n^n )\} & \mbox{ for }
G_n=C_n(K), p>2
\end{array}
\right.
\]
and $\mathcal{L}_n=\{L(0),L(\omega_1^n)\}$.
Lemmas~\ref{l*} and~\ref{ind2} imply that $\mathcal{L}=\{ \mathcal{L}_n\mid n \in \mathbb{N}  \}$ and $\mathcal{S}=\{ \mathcal{S}_n\mid n \in \mathbb{N}  \}$
are inductive systems. Obviously,  the collection
$\mathcal{O}=\{\mathcal{O}_n\mid n \in \mathbb{N}  \}$ with $\mathcal{O}_n=\{L(0)\}$ is an
inductive system for all types.

\begin{theorem}
\label{t1} Let $G_n=B_n(K)$, $C_n(K)$ or  $D_n(K)$,  and let $p>2$ for $G_n\ne D_n(K)$.
Set $\mathcal{P}=\{\mathcal{O}, \mathcal{L}, \mathcal{S}\}$.  An
indecomposable inductive system $\Phi$ is a BWM-system  if and
only if $\Phi=\otimes_{j=0}^s \Fr^j(\Phi^j)$, where $\Phi^j\in
\mathcal{P}$.  BWM-systems are finite unions of indecomposable
ones  and consist of modules with one dimensional weight spaces.
\end{theorem}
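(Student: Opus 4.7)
The plan is to combine Theorem~\ref{anSt} with Theorem~\ref{BCD}. For the ``if'' direction I first note that the highest weights of the modules in $\mathcal{O}_n$, $\mathcal{L}_n$, and $\mathcal{S}_n$ together exhaust $\Omega_p(G_n)$, so Theorem~\ref{one} gives $\wdeg M=1$ for every such $M$. Lemma~\ref{degtenz} (multiplicativity of $\wdeg$ under Frobenius tensor products) then implies that any system $\otimes_{j=0}^s\Fr^j(\Phi^j)$ with $\Phi^j\in\mathcal{P}$ consists entirely of modules with one-dimensional weight spaces; in particular it is a BWM-system.

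For the ``only if'' direction, let $\Phi$ be an indecomposable BWM-system. By Theorem~\ref{anSt} we write $\Phi=\otimes_{j=0}^k\Fr^j(\Phi^j)$ with each $\Phi^j$ a $p$-restrictedly generated inductive system, and Lemma~\ref{degtenz} forces each $\Phi^j$ to be a BWM-system on its own. Every $M\in\Phi^j_n$ has $p$-restricted highest weight (a property built into $p$-restricted generation, since branching a $p$-restricted module along the natural embedding preserves $p$-restrictedness of composition factors); hence for $n\geq 8$, Theorem~\ref{BCD} forbids $\omega(M)\notin\Omega(G_n)$, as otherwise $\wdeg M\geq n-7$ would violate the uniform bound. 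Combined with $p$-restrictedness this yields $\omega(M)\in\Omega_p(G_n)=\mathcal{O}_n\cup\mathcal{L}_n\cup\mathcal{S}_n$ for $n$ large, and since each of $\mathcal{O}$, $\mathcal{L}$, $\mathcal{S}$ is closed under branching to $G_{n-1}$, the containment extends to all $n$. Thus $\Phi^j$ is the union of some subcollection of $\mathcal{P}$.

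The tensor product of inductive systems distributes over unions, so expanding gives
\[
\Phi=\bigcup_\epsilon\; \otimes_{j=0}^k \Fr^j(\Phi^{j,\epsilon}),
\]
where each $\Phi^{j,\epsilon}\in\mathcal{P}$. Indecomposability of $\Phi$ collapses this union to a single term, proving the characterization. For an arbitrary BWM-system the same analysis decomposes $\Phi$ into indecomposable subsystems of the stated form; finiteness of this decomposition follows from the identity $\delta(\otimes_{j=0}^k\Fr^j(\Phi^j))=\sum_{j=0}^k p^j\delta(\Phi^j)$ together with the fact that $\delta(\Phi)=\delta(\Phi_n)$ is a fixed finite integer (Lemma~\ref{delta}), which bounds both the twist depth $k$ and the number of possible signatures $(\Phi^{0,\epsilon},\ldots,\Phi^{k,\epsilon})\in\mathcal{P}^{k+1}$. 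The one-dimensional weight space claim then follows by applying the ``if'' direction componentwise.

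The main obstacle is pinpointing, from the abstract decomposition of Theorem~\ref{anSt}, the concrete fact that each factor $\Phi^j$ lies in $\mathcal{P}$ rather than being some more exotic subsystem. This requires (i) that $p$-restricted generation genuinely forces every member of $\Phi^j$ to be $p$-restricted so that Theorem~\ref{BCD} applies directly, and (ii) that each of $\mathcal{O}$, $\mathcal{L}$, $\mathcal{S}$ is already a minimal indecomposable inductive system --- the only nontrivial case being $\mathcal{S}$ in type $D$, where the two half-spin modules must be shown to be linked by the branching from $D_n$ to $D_{n-1}$.
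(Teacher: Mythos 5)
Your approach is genuinely different from the paper's: the paper does \emph{not} invoke Theorem~\ref{anSt} here but instead directly constructs candidate systems $\Psi(\mathcal{A})$ generated by single modules $\psi_n(\mathcal{A})=\otimes_k\varphi_k^{[k]}$ with $\varphi_k\in\{\tau_n,\lambda_n,\mu_n\}$, shows these are precisely the tensor products $\otimes_j\Fr^j(\Phi^j)$ with $\Phi^j\in\mathcal{P}$, and then proves any BWM-system is a finite union of them by using Theorems~\ref{BCD}, \ref{one} and the branching lemmas~\ref{rC}, \ref{l*}, \ref{lBD} to show that for $n$ large every $\varphi\in\Phi_n$ lies inside some $\psi_{n+1}(\mathcal{A})\downarrow G_n$ with $\psi_{n+1}(\mathcal{A})\in\Phi_{n+1}$. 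Your route via Theorem~\ref{anSt} is cleaner conceptually, but it leaves two gaps.

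First, in the ``if'' direction you cite Lemma~\ref{degtenz} as ``multiplicativity of $\wdeg$ under Frobenius tensor products,'' but the lemma gives only the \emph{lower} bound $\wdeg(M_1^{[k_1]}\otimes M_2^{[k_2]})\geq\wdeg M_1\cdot\wdeg M_2$; it does not bound $\wdeg$ from above and so cannot by itself show that the systems $\otimes_j\Fr^j(\Phi^j)$ consist of one-dimensional weight space modules. The correct observation is that each such module has highest weight $\sum_j p^j\lambda_j$ with $\lambda_j\in\Omega_p(G_n)$, hence the highest weight lies in $\Omega(G_n)$, and then Theorem~\ref{one} gives $\wdeg=1$ directly. (You also need, as the paper does via Lemma~\ref{LL}, to observe that each such tensor product is indecomposable; you assert it is a BWM-system but say nothing about indecomposability.)

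Second, and more seriously, your ``only if'' direction hinges on the parenthetical claim that ``branching a $p$-restricted module along the natural embedding preserves $p$-restrictedness of composition factors,'' which you then flag yourself as the main obstacle. This assertion is not proved anywhere in the paper and is not obviously true in general; a $p$-restrictedly generated system $\langle\Lambda_t\rangle$ has $p$-restricted \emph{generators} by definition, but Lemma~\ref{par3} produces as members all composition factors of restrictions, which a priori need not be $p$-restricted. What rescues the argument is a detour you did not take: use Lemma~\ref{degtenz} applied to the \emph{generators} (which are $p$-restricted, hence their Frobenius-twisted tensor product is irreducible and belongs to $\Phi$ by the construction in Theorem~\ref{anSt}) to conclude that each $\Phi^j$ is itself a BWM-system; then for $t$ large every generator $M\in\Lambda_t$ is $p$-restricted with $\wdeg M$ bounded, so Theorem~\ref{BCD} forces $\omega(M)\in\Omega(G_t)$, hence $\omega(M)\in\Omega_p(G_t)$, hence $M\in\mathcal{O}_t\cup\mathcal{L}_t\cup\mathcal{S}_t$; and finally the explicit branching in Lemmas~\ref{l*}, \ref{lBD}, \ref{rC} shows this latter class is closed under restriction, so the whole of $\Phi^j$ lies in $\mathcal{O}\cup\mathcal{L}\cup\mathcal{S}$. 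With that inserted, the remainder of your argument (distribute the tensor product over the union and collapse by indecomposability) is sound.
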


For $G_n=C_n(K)$ and $p=2$ the answer is more complicated, see Theorem~\ref{12new2}.

\section{Notation and preliminaries}

Let $\mathbb{Z}_{\ge0}$ be the set of nonnegative integers. For a simple algebraic group
$G$ over $K$ the symbol $\Lambda(G)$ denotes the set of weights of $G$, $R(G)$ is the  set of  roots of $G$; $\langle
\lambda,\alpha\rangle$ is the value of a weight
$\lambda\in\Lambda(G)$ on a root $\alpha\in R(G)$, and $\Irr G$ is
defined as for groups $G_n$. Throughout the text $\Lambda(M)$ is
the set of all weights of a $G$-module $M$. For a $G$-module $M$
denote by $v^+$ a nonzero highest weight vector of $M$ and by
$M^{\mu}$ the weight space in $M$ of a weight $\mu$. The
subspace of a linear space $L$ spanned by vectors $v_1,\ldots,
v_i$ is denoted by $\langle v_1,\ldots, v_i \rangle$, respectively. For
positive roots $\beta_1,\ldots,\beta_j$ denote by $G(\beta_1, \ldots,
\beta_j)$ the subgroup of $G$ generated by the root subgroups associated with $\pm\beta_1,\ldots,\pm\beta_j$.
In all cases where subgroups of
this form are considered, the roots $\beta_1,\ldots,\beta_j$ are
chosen such that they constitute a base of the root system of
$G(\beta_1,\ldots,\beta_j)$. In this situation the fundamental
weights of $G(\beta_1,\ldots,\beta_j)$ are determined with respect
to this base. If $H=G(\beta_1,\ldots, \beta_k)\subset G$ and
$\omega\in\Lambda(G)$, then $\omega{\downarrow}H$ is the
restriction of $\omega$ to $H$. For a $G$-module $M$ and a weight
vector $v\in M$ we denote the weight of $v$ with respect to a
subgroup $H\subset G$ by $\omega_H(v)$. Set
$\omega(v)=\omega_G(v)$.

In what follows $\varepsilon^n_i$ with $1\leq i\leq n+1$ for $G_n=A_n(K)$ and $1\leq i\leq n$ otherwise are weights of $V_n$, their labeling is standard and corresponds to \cite[Ch. VIII, \S 13]{Burb}. Put $G_n(i_1,\ldots,i_j)=G_n(\alpha_{i_1},\ldots,\alpha_{i_j})$.

We assume that $n>1$ in all cases where $n-1$ appears in formulas. For $k<n$ set $G_{n,k}=G_n(n-k+1,\ldots,n)$. As we have mentioned in the Introduction, $G_{n,k}\cong G_k$. Put $\Irr_k M=\Irr (M{\downarrow}G_{n,k})$.

\begin{theorem}[Jantzen~\cite{Jan1}, Smith~\cite{Smith}]
\label{Sm} Let $H=G_n(i_1,\dots,i_j)\subset G_n$. Then $K H v^+\subset
L(\omega)$ is an irreducible $H$-module with highest weight
$\omega_H(v^+)$ and a direct summand of the $H$-module $L(\omega)$.
\end{theorem}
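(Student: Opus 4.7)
The plan is to embed $H$ as the derived subgroup of the Levi factor of a standard parabolic subgroup of $G_n$ and to exploit the central torus of that Levi. Set $I = \{i_1,\dots,i_j\}$ and let $P = LU_P$ be the standard parabolic of $G_n$ whose Levi $L$ has simple roots $\alpha_i$ for $i \in I$, so that $H = [L,L]$. Write $L = HT'$, where $T' \subseteq T$ is the connected central torus of $L$; then $T'$ commutes with $H$, and since the restrictions $\alpha_i|_{T'}$ for $i \notin I$ are linearly independent, the $T'$-weight decomposition $L(\omega) = \bigoplus_\nu L(\omega)_\nu$ is automatically a decomposition into $H$-stable subspaces. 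Every summand is therefore an $H$-direct summand of $L(\omega)$, and the direct-summand part of the statement will follow as soon as $KHv^+$ is identified with one such summand.

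Let $\nu_0 = \omega|_{T'}$. Weights of $L(\omega)_{\nu_0}$ are exactly those of the form $\omega - \sum_k m_k \alpha_{i_k}$ with $m_k \geq 0$, and in particular $\nu_0$ is the maximal $T'$-weight occurring in $L(\omega)$. Acting by any root subgroup $X_\beta \subseteq U_P$ on a vector of $T'$-weight $\nu_0$ can only yield terms of $T'$-weight $\nu_0 + k\,\beta|_{T'}$ with $k \geq 0$, and the terms with $k \geq 1$ must vanish because nothing exceeds $\nu_0$; hence $L(\omega)_{\nu_0} \subseteq L(\omega)^{U_P}$. Moreover $v^+$ is killed by every positive root subgroup of $G_n$, in particular by those of $H$, so it is a highest weight vector of $KHv^+$ for $B \cap H$ of weight $\omega_H(v^+)$.

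The remaining and main obstacle is the irreducibility of $KHv^+$, and here I would invoke the classical Jantzen–Smith identification of $L(\omega)^{U_P}$ as the irreducible $L$-module with highest weight $\omega|_T$. Since this simple $L$-module has $T'$ acting by a single character and contains $v^+$, it lies inside $L(\omega)_{\nu_0}$; combined with the reverse inclusion above one obtains $L(\omega)^{U_P} = L(\omega)_{\nu_0}$, which is thereby a simple $L$-module. Because $T'$ now scales $L(\omega)_{\nu_0}$ uniformly, every $H$-submodule is automatically an $L$-submodule, and $L$-simplicity transfers to $H$-simplicity. Finally, $KHv^+$ is a nonzero $H$-submodule of this simple $H$-module, so it coincides with $L(\omega)_{\nu_0}$, proving all three assertions simultaneously. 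The hard input is the underlying Smith irreducibility statement itself, whose positive-characteristic proof proceeds via the contravariant form on $L(\omega)$ and an induction along the $T'$-grading, and is considerably more delicate than the weight-lattice bookkeeping that organises the rest of the argument.
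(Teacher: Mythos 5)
The paper does not supply a proof of Theorem~\ref{Sm}; it is quoted directly from Jantzen and Smith, so there is no in-paper argument to compare yours against. Evaluated on its own terms, your reduction is correct and efficient: identifying $H=[L,L]$ with $L$ the standard Levi, the $T'$-weight decomposition of $L(\omega)$ is $H$-stable and yields the direct-summand claim for free; the top piece $L(\omega)_{\nu_0}$ lies in $L(\omega)^{U_P}$ by the top-weight argument (here the linear independence of the $\alpha_i|_{T'}$ for $i\notin I$ is exactly what is needed to make ``nothing exceeds $\nu_0$'' precise); combined with the invoked identification of $L(\omega)^{U_P}$ as the simple $L$-module of highest weight $\omega$ this gives $L(\omega)_{\nu_0}=L(\omega)^{U_P}$; and since $T'$ acts on this space by a single character, $H$-submodules are automatically $L$-submodules, so $L$-simplicity transfers to $H$-simplicity and $KHv^+$ fills out the whole piece. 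You correctly flag that the genuine content --- the $L$-irreducibility of $L(\omega)^{U_P}$, whose positive-characteristic proof goes through the contravariant form --- is invoked rather than reproved; given that the theorem is attributed to Jantzen and Smith in the first place, that is the right place to stop, and the added value of your argument is the careful bookkeeping that transfers the statement from the full Levi to its derived subgroup and produces the direct-sum decomposition.
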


Call $K H v^+$ in the previous theorem the {\em Smith factor} of $L(\omega)$ (with respect to $H$).

\begin{lemma}
\label{utv*} Let $M\in\Irr G_n$,  and let $\alpha$ be a long root
of $G_n$. Then $\delta(M)=\max_{\lambda\in\Lambda(M)} \langle
\lambda,\alpha \rangle$.
\end{lemma}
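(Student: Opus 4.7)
The plan is to reduce to the maximal root $\alpha_0$ of $G_n$, where the identity is essentially the definition of $\delta$, and then transport it to an arbitrary long root $\alpha$ via Weyl conjugacy together with $W$-invariance of the weight set $\Lambda(M)$.

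For the first step, $\delta(M) = \langle \omega(M),\alpha_0\rangle$ by the definition of $\delta$, so it suffices to prove $\langle \omega(M),\alpha_0\rangle \ge \langle \lambda,\alpha_0\rangle$ for every $\lambda \in \Lambda(M)$. Writing $\omega(M) - \lambda = \sum_i c_i \alpha_i$ as a non-negative integer combination of simple roots, it is enough to check $\langle \alpha_i,\alpha_0\rangle \ge 0$ for each simple $\alpha_i$. Since $\alpha_0$ is maximal, $\alpha_0 + \alpha_i$ is not a root, and inspection of the $\alpha_i$-string through $\alpha_0$ forces $\langle \alpha_0,\alpha_i^\vee\rangle \ge 0$, hence $(\alpha_0,\alpha_i) \ge 0$ via the $W$-invariant bilinear form, and therefore $\langle \alpha_i,\alpha_0\rangle \ge 0$ (using that $\alpha_0$ is long, so $\alpha_0^\vee$ is a positive multiple of $\alpha_0$). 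Equality in the main inequality is attained at $\lambda = \omega(M)$.

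For the second step, write $\alpha = w\alpha_0$ for some $w \in W$, which is possible because the Weyl group acts transitively on the long roots. Since the pairing is $W$-invariant and $\Lambda(M)$ is $W$-stable, the substitution $\lambda \mapsto w^{-1}\lambda$ permutes $\Lambda(M)$, so
$$\max_{\lambda \in \Lambda(M)} \langle \lambda,\alpha\rangle = \max_{\mu \in \Lambda(M)} \langle \mu,\alpha_0\rangle = \delta(M).$$
There is no real obstacle: both the string-theoretic non-negativity and the $W$-transitivity on long roots are classical. The only point to watch is that the hypothesis of $\alpha$ being long is used essentially via Weyl conjugacy to $\alpha_0$, and indeed the statement genuinely fails for short roots (for instance, in the natural module of $B_2$ one computes $\max_\lambda \langle \lambda,\alpha_n\rangle = 2 \ne 1 = \delta(M)$).
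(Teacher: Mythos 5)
Your proof is correct and follows essentially the same route as the paper: establish $\delta(M)=\max_\lambda\langle\lambda,\alpha_{\max}\rangle$ using that $\alpha_{\max}$ is dominant, then transport to an arbitrary long root via $W$-transitivity on roots of the same length. (One cosmetic remark: the parenthetical ``using that $\alpha_0$ is long, so $\alpha_0^\vee$ is a positive multiple of $\alpha_0$'' is a red herring, since $\alpha^\vee$ is a positive multiple of $\alpha$ for \emph{every} root; the ``long'' hypothesis is used only in the Weyl-conjugacy step, exactly as your $B_2$ example illustrates.)
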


\begin{proof}
Denote by $\alpha_{\max}$ the maximal root in $R(G_n)$. As $\alpha_{\max}$ is a dominant
weight, $\langle\alpha_i,\alpha_{\max}\rangle\ge0$. This implies
$$
\delta(M)=\langle\omega(M),\alpha_{\max}\rangle=\max_{\lambda\in\Lambda(M)}\langle\lambda,
\alpha_{\max}\rangle.
$$
Since the
Weyl group acts transitively on the set of roots of the same length and $\alpha_{\max}$ is long,
$\max_{\lambda\in\Lambda(M)}\langle\lambda,\alpha\rangle=\max_{\lambda\in\Lambda(M)}\langle\lambda,\alpha_{\max}\rangle$
as required.
\end{proof}

\begin{corollary}
\label{cutv*}
In the assumptions of Lemma~$\ref{utv*}$ suppose that $\alpha$ is positive and set $H=G_n(\alpha)$. Then $\delta(M)=\max\{i\mid L(i\omega_1^1)\in \Irr(M{\downarrow}H)\}$.
\end{corollary}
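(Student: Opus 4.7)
The plan is to reduce the statement to a standard character argument for $SL_2$-modules via Lemma~\ref{utv*}. First I would observe that for the root $SL_2$-subgroup $H=G_n(\alpha)$, a $G_n$-weight vector of weight $\lambda$ is an $H$-weight vector of $H$-weight $\langle\lambda,\alpha\rangle\omega_1^1$, by the definition of the coroot pairing (the torus of $H$ is the image of the cocharacter $\alpha^{\vee}$, and $\lambda\circ\alpha^{\vee}=\langle\lambda,\alpha\rangle$). Consequently, the multiset of $H$-weights occurring in $M{\downarrow}H$ is $\{\langle\lambda,\alpha\rangle\omega_1^1\}$, each counted with multiplicity $\dim M^\lambda$, as $\lambda$ ranges over $\Lambda(M)$. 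Hence the maximum $H$-weight appearing in $M{\downarrow}H$ equals $\max_{\lambda\in\Lambda(M)}\langle\lambda,\alpha\rangle$, which by Lemma~\ref{utv*} equals $\delta(M)$; call this common value $m$.

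Next I would identify $m$ with $\max\{i\mid L(i\omega_1^1)\in\Irr(M{\downarrow}H)\}$. Each simple $H$-module $L(i\omega_1^1)$ has weights $\{-i,-i+2,\ldots,i-2,i\}$, with weight $i$ occurring exactly once, so the formal character of $M{\downarrow}H$ decomposes as a nonnegative integer combination of the characters $\mathrm{ch}\,L(i\omega_1^1)$ with multiplicities equal to the composition multiplicities. If some composition factor had highest weight $i>m$, then the weight $i$ would appear in the formal character of $M{\downarrow}H$, contradicting the maximality of $m$. Conversely, weight $m$ does appear in that character, and any composition factor $L(i\omega_1^1)$ with $i<m$ has all its weights strictly less than $m$; therefore at least one composition factor must have highest weight exactly $m$. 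This yields $\delta(M)=m=\max\{i\mid L(i\omega_1^1)\in\Irr(M{\downarrow}H)\}$, as required.

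The only delicate point, which I expect to be the main obstacle, is that in positive characteristic $M{\downarrow}H$ need not be semisimple, so one cannot decompose it as a direct sum of simples and read off the answer visually. However, $\Irr(M{\downarrow}H)$ here denotes the set of composition factors (as defined in the Introduction), and for $SL_2$ these multiplicities are completely determined by the formal character through the unitriangular relation between the $\mathrm{ch}\,L(i\omega_1^1)$ and the basis $\{e(j\omega_1^1)\}$ of the weight lattice group ring. The character-level argument therefore suffices, bypassing any semisimplicity issue.
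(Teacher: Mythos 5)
Your argument is correct and matches the paper's proof, which compresses the same two observations---that the $H$-weights of $M{\downarrow}H$ are precisely the values $\langle\lambda,\alpha\rangle$ for $\lambda\in\Lambda(M)$, and that the maximum such value equals the largest $i$ with $L(i\omega_1^1)\in\Irr(M{\downarrow}H)$---into a single chain of equalities before invoking Lemma~\ref{utv*}. One small inaccuracy: in characteristic $p>0$ the simple $SL_2$-module $L(i\omega_1^1)$ does not in general have the full weight string $\{-i,-i+2,\ldots,i\}$ when $i\geq p$ (e.g.\ $L(p\omega_1^1)\cong L(\omega_1^1)^{[1]}$ has only the weights $\pm p$), but your argument uses only that $i$ is the top weight with multiplicity one and that all other weights are strictly smaller, both of which do hold, so the proof is unaffected.
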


\begin{proof}
Obviously,
\[
\max_{\lambda\in\Lambda(M)}\langle\lambda,\alpha\rangle=\max_{\mu\in\Lambda(M{\downarrow}H)} \langle\mu,\alpha\rangle=\max\{i\mid L(i\omega_1^1)\in \Irr(M{\downarrow}H)\}.
\]
It remains to apply Lemma~\ref{utv*}.
\end{proof}

\begin{corollary}
\label{ogr} Let $k<n$, $M\in\Irr G_n$, and $N\in\Irr_k M$. Assume that $k>1$ for $G_n=B_n(K)$.
Then $\delta(N)\le\delta(M)$.
\end{corollary}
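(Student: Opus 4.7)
The plan is to apply Lemma \ref{utv*} simultaneously to $M$ (as a $G_n$-module) and to $N$ (as a $G_{n,k}$-module) with respect to a single root $\alpha$, and then observe that $\Lambda(N)\subseteq\Lambda(M)$ forces the maxima to satisfy the desired inequality.

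The one preparatory step is to exhibit a root $\alpha$ that is long in $G_n$ and also long in $G_{n,k}\cong G_k$. For $G_n$ of type $A_n$ or $D_n$ every root has the same length, so any root of the subsystem of $G_{n,k}$ is automatically long in $G_n$. For $G_n=C_n(K)$ the root $\alpha_n=2\varepsilon^n_n$ is a simple root of $G_{n,k}$ (of type $C_k$) and is long in $C_n$. For $G_n=B_n(K)$ the long roots of $G_{n,k}$ (of type $B_k$) are of the form $\pm\varepsilon^n_i\pm\varepsilon^n_j$ with $n-k+1\le i<j\le n$, and each such root is long in $B_n$; the existence of such a root requires two indices in the range, which is exactly the hypothesis $k>1$ imposed in this case. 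Hence in every situation covered by the statement a common long root $\alpha$ exists.

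With such an $\alpha$ fixed, Lemma \ref{utv*} applied to the simple $G_n$-module $M$ and to the simple $G_{n,k}$-module $N$ gives
\[
\delta(M)=\max_{\lambda\in\Lambda(M)}\langle\lambda,\alpha\rangle,\qquad \delta(N)=\max_{\mu\in\Lambda(N)}\langle\mu,\alpha\rangle.
\]
Since $N$ is a composition factor of $M{\downarrow}G_{n,k}$, every $G_{n,k}$-weight of $N$ occurs as a $G_{n,k}$-weight of $M{\downarrow}G_{n,k}$ and therefore, via the compatible choice of maximal tori, as the restriction of a $G_n$-weight of $M$. The pairing $\langle\,\cdot\,,\alpha\rangle$ depends only on this restriction since $\alpha$ lies in the root system of $G_{n,k}$, so the inclusion of weight sets translates into an inequality of the two maxima, which is precisely $\delta(N)\le\delta(M)$.

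The only genuine obstacle is the root-length bookkeeping in the previous paragraph; once a common long $\alpha$ is produced the argument collapses to a one-line comparison. The failure of this choice in the case $G_n=B_n(K)$, $k=1$, where $G_{n,1}$ contains only the short root $\alpha_n$, explains why this case must be excluded from the statement.
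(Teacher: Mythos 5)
Your proof is correct and follows essentially the same route as the paper: both arguments produce a root that is long simultaneously in $G_n$ and in $G_{n,k}$, apply Lemma~\ref{utv*} to $M$ and $N$ with respect to that root, and observe that the inclusion of weight sets transfers the inequality to the maxima. The paper simply picks the concrete root $\beta=\alpha_{n-1}$ (for $B_n$) or $\beta=\alpha_n$ (otherwise), whereas you classify the available long roots and let the $k>1$ hypothesis emerge from that classification; the content is the same.
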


\begin{proof}
Put
\[
\Lambda'=\{\lambda{\downarrow}G_k \mid \lambda\in \Lambda(M)\},
\]
$\beta=\alpha_{n-1}$ for $G_n=B_n(K)$ and $\beta=\alpha_n$ otherwise.
It is clear that $\Lambda(N)\subset \Lambda'$. By Lemma~\ref{utv*},
\[
\delta(N)=\max_{\lambda\in\Lambda(N)} \langle
\lambda,\beta \rangle\leq \max_{\lambda\in\Lambda'} \langle \lambda,\beta \rangle=\delta(M).
\]
\end{proof}

Recall the set of $A_n(K)$-modules $\mathcal{F}_n$ defined in~(\ref{Fn}).

\begin{lemma}
\label{o1} Let $G_n=A_n(K)$.
\begin{itemize}
\item[$(i)$]
For $1\leq i\leq n$ the set $\Irr_{n-1} L(\omega^n_i)=\{L(\omega^{n-1}_{i-1}),
L(\omega^{n-1}_i)\}$.
\item[$(ii)$]
Let $k<i\leq n-k+1$, $M\in\Irr G_n$, and
$\omega(M)=\omega_i^n$. Then $\Irr_k M=\mathcal{F}_k$.
\end{itemize}
\end{lemma}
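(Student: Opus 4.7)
The plan is to exploit the fact that for type $A_n$ every fundamental weight is minuscule, so $L(\omega_i^n)\cong\bigwedge^i V_n$ in every characteristic. This reduces both parts to a direct calculation with exterior powers of the natural module restricted to the standard Levi.

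For part~(i), the subgroup $G_{n,n-1}$ is generated by the root subgroups of $\pm\alpha_2,\ldots,\pm\alpha_n$ and therefore fixes the first standard basis vector $e_1$ of $V_n$ while acting on the remaining vectors as its natural module. Hence $V_n{\downarrow}G_{n,n-1}\cong L(0)\oplus V_{n-1}$. Applying the canonical isomorphism $\bigwedge^i(A\oplus B)\cong\bigoplus_{j}\bigwedge^j A\otimes\bigwedge^{i-j}B$ with $A=Ke_1$, I get
\[
\bigwedge^i V_n{\downarrow}G_{n,n-1}\cong\bigwedge^i V_{n-1}\oplus\bigwedge^{i-1}V_{n-1}\cong L(\omega_i^{n-1})\oplus L(\omega_{i-1}^{n-1}),
\]
using the conventions $\omega_0^{n-1}=\omega_n^{n-1}=0$. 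The composition factors are therefore exactly the two modules claimed.

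For part~(ii), the analogous observation is that $G_{n,k}$ fixes pointwise the first $n-k$ standard basis vectors, so $V_n{\downarrow}G_{n,k}\cong K^{n-k}\oplus V_k$ and
\[
\bigwedge^i V_n{\downarrow}G_{n,k}\cong\bigoplus_{j}\binom{n-k}{i-j}\bigwedge^j V_k.
\]
Each $\bigwedge^j V_k$ is irreducible (either trivial for $j\in\{0,k+1\}$, or $L(\omega_j^k)$ for $1\leq j\leq k$), so the composition factors form the set $\{L(\omega_j^k) \mid \max(0,i-(n-k))\leq j\leq\min(k+1,i)\}$. The hypothesis $k<i\leq n-k+1$ forces $n\geq 2k$; under it, this range equals $\{0,1,\ldots,k+1\}$ if $i\leq n-k$ and $\{1,2,\ldots,k+1\}$ if $i=n-k+1$. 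In either case, invoking $\omega_0^k=\omega_{k+1}^k=0$, the set of composition factors is precisely $\mathcal{F}_k$.

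The only real care needed is in the index bookkeeping for part~(ii): at the upper endpoint $i=n-k+1$ of the hypothesis the summand with $j=0$ drops out, and one must verify that $L(0)$ is recovered via the $j=k+1$ summand through the convention $\omega_{k+1}^k=0$; otherwise the argument is immediate.
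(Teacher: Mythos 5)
Your proof is correct. For part (i), your argument via $\bigwedge^i(Ke_1\oplus V_{n-1})\cong\bigwedge^i V_{n-1}\oplus\bigwedge^{i-1}V_{n-1}$ is essentially the same decomposition the paper uses (the paper's subspaces $U_1,U_2$ are exactly your two summands). For part (ii), however, you take a genuinely different and more elementary route. The paper establishes the inclusion $\mathcal{F}_k\subset\Irr_k M$ by applying Smith's theorem (Theorem~\ref{Sm}) to $k+1$ different conjugates $H_0,\dots,H_k$ of $G_k$, obtaining each $L(\omega_j^k)$ as the Smith factor with respect to $H_j$, and then closes the argument from above with the invariant $\delta$: since $\delta(M)=1$, Corollary~\ref{ogr} forces every composition factor of $M{\downarrow}G_k$ to have $\delta\le 1$, i.e.\ to lie in $\mathcal{F}_k$. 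You instead compute $\bigwedge^i V_n{\downarrow}G_{n,k}$ explicitly as a direct sum of exterior powers of $V_k$ (with multiplicities given by binomial coefficients), each of which is irreducible because minuscule, and then check that the hypothesis $k<i\le n-k+1$ is precisely what makes all of $\bigwedge^0 V_k,\dots,\bigwedge^{k+1}V_k$ appear in the range of the sum. Your approach is more self-contained and gives the actual direct-sum decomposition of the restriction, not just the set of composition factors; the paper's approach is less computational and reuses the $\delta$-bounding machinery that is needed throughout the rest of the paper anyway. Both are correct; your bookkeeping at the endpoint $i=n-k+1$ (where the $j=0$ summand vanishes but $j=k+1$ still supplies the trivial factor) is handled properly.
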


\begin{proof}
$(i)$ Denote by $\wedge^i  V_n$ the $i$th wedge power of $V_n$. One has
$L(\omega^n_i)=\wedge^i  V_n$~\cite[Part~II, 2.15]{Jan}. Let $v_1,\ldots,v_{n+1}\in V_n$ and
$\omega(v_i)=\varepsilon^n_i$. Set $\Gamma= G_{n,n-1}$. One can assume that
$\varepsilon^n_1{\downarrow}\Gamma=0$ and $\Gamma$ fixes
$\langle v_2,\ldots, v_{n+1} \rangle$ and $v_1$. Then the
$\Gamma$-module $\langle v_2,\ldots, v_{n+1} \rangle$ is isomorphic
to $V_{n-1}$. Set
\[
U_1=\langle v_{k_1}\wedge\ldots\wedge v_{k_i}\mid 1< k_1<\ldots<k_i \leq n+1 \rangle
\]
and
\[
U_2=\langle v_1\wedge v_{l_1}\wedge\ldots\wedge v_{l_{i-1}} \mid 1< l_1<\ldots<l_{i-1} \leq n+1 \rangle.
\]
Then $\wedge^i V_n=U_1\oplus U_2$. One easily observes that
$\Gamma$ fixes $U_1$ and $U_2$, the $\Gamma$-module $U_1\cong
L(\omega^{n-1}_i)$ and $U_2\cong L(\omega^{n-1}_{i-1})$.

$(ii)$ Put $H_j=G_n(i-j+1,i-j+2,\ldots,i-j+k)$ for $1\leq j\leq k$ and
$H_0=G_n(1,\ldots,k)$. The subgroups $H_j$ are conjugate to $G_k$. Hence $\Irr(M{\downarrow}H_j)=\Irr(M{\downarrow}G_k)$.
By Theorem~\ref{Sm}, $L(\omega_j^k)\in\Irr(M{\downarrow}H_j)$ for $0\le j\le k$. Hence $\mathcal{F}_k\subset\Irr_k M$. It is well known that the maximal root $\alpha_{\max}=\alpha_1+\ldots+\alpha_n$ for $G_n=A_n(K)$. So $\delta(M)=1$. By Corollary~\ref{ogr}, $\delta(N)\leq \delta(M)$ for $N\in\Irr_k M$. Therefore $N\in \mathcal{F}_k$. This completes the proof.
\end{proof}

\begin{lemma}[{\cite[Proposition 1.4]{ZS}}]
\label{o2} Let $G_n=A_n(K)$, $H=G_n(1,\ldots,m,
m+2,\ldots,n)\subset G_n$, $0\leq c\leq p-1$, $0\leq i\leq n$. Then
\[
\begin{split}
& L(c\omega_i^n+(p-1-c)\omega_{i+1}^n){\downarrow}H=\\
& \quad =\oplus_{N(i,c)}L(c_1\omega_{i_1}^m+(p-1-c_1)\omega_{i_1+1}^m)\otimes
L(c_2\omega_{i_2}^{n-m-1}+(p-1-c_2)\omega_{i_2+1}^{n-m-1})
\end{split}
\]
with
\[
\begin{split}
N(i,c)=\{ (i_1,c_1), (i_2,c_2) \mid\quad  & 0\leq c_j<p, \quad 0\leq (p-1)(i_1+1)-c_1 \leq (p-1)(m+1), \\
& 0\leq (p-1)(i_2+1)-c_2\leq (p-1)(n-m),\\
& (p-1)(i_1+i_2+2)-c_1-c_2=(p-1)i+p-1-c \}.\\
\end{split}
\]
\end{lemma}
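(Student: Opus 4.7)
The plan is to realize $L(\lambda)$ for $\lambda=c\omega_i^n+(p-1-c)\omega_{i+1}^n$ as the truncated symmetric power $T^d V_n$ of the natural module, where $d=\pdeg L(\lambda)=(p-1)(i+1)-c$, and then to decompose $T^d V_n$ under $H$ using the natural splitting of $V_n$. By \cite[Proposition 1.2]{ZS}, $L(\lambda)\cong T^d V_n$ as a polynomial $GL_{n+1}(K)$-module. The subgroup $H=G_n(1,\ldots,m,m+2,\ldots,n)$ is exactly the stabilizer in $G_n$ of the decomposition $V_n=U_1\oplus U_2$, where $U_1=\langle v_1,\ldots,v_{m+1}\rangle$ and $U_2=\langle v_{m+2},\ldots,v_{n+1}\rangle$ are the natural modules for the Levi factors of types $A_m$ and $A_{n-m-1}$ respectively.

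Next I would decompose $T^d V_n$ concretely. Realize $T^d V_n$ as the image of the canonical map $\Gamma^d V_n \to S^d V_n$ from divided to symmetric powers. Since both $\Gamma^\bullet$ and $S^\bullet$ are exponential Schur bifunctors, the splitting $V_n=U_1\oplus U_2$ yields natural $H$-module bigraded decompositions $\Gamma^d V_n=\bigoplus_{d_1+d_2=d}\Gamma^{d_1}U_1\otimes\Gamma^{d_2}U_2$ and $S^d V_n=\bigoplus_{d_1+d_2=d}S^{d_1}U_1\otimes S^{d_2}U_2$; the canonical transformation respects this bigrading, and summand by summand one obtains
\[
T^d V_n=\bigoplus_{d_1+d_2=d}T^{d_1}U_1\otimes T^{d_2}U_2
\]
as $H$-modules, with $T^{d_j}U_j=0$ unless $0\le d_j\le (p-1)\dim U_j$. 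A second application of \cite[Proposition 1.2]{ZS} identifies each nonzero $T^{d_j}U_j$, upon writing $d_j=(p-1)(i_j+1)-c_j$ with $0\le c_j<p$, with the irreducible $H$-module of highest weight $c_j\omega_{i_j}^{\dim U_j-1}+(p-1-c_j)\omega_{i_j+1}^{\dim U_j-1}$. The constraints $0\le d_j\le (p-1)\dim U_j$ together with $d_1+d_2=d$ translate directly to the defining conditions of $N(i,c)$.

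The main obstacle is the compatibility of the canonical map $\Gamma^d V_n\to S^d V_n$ with the bigraded decomposition, together with the identification of its image in each slot as exactly $T^{d_1}U_1\otimes T^{d_2}U_2$: these are routine Hopf-algebraic facts about exponential functors but require genuine verification. If one prefers to avoid divided powers, an alternative is to compare characters: splitting the variables in the explicit character formula for $T^d V_n$ factors it as $\sum_{d_1+d_2=d}\mathrm{ch}(T^{d_1}U_1)\mathrm{ch}(T^{d_2}U_2)$, after which complete reducibility of $L(\lambda){\downarrow}H$, together with the fact that each putative summand is $p$-restricted for $H$ and can be exhibited as a Smith factor of an appropriate highest-weight vector via Theorem~\ref{Sm}, upgrades the character identity to the required direct sum decomposition.
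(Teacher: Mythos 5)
The paper itself does not prove this lemma: it is cited verbatim from \cite{ZS}, whose title is precisely about truncated symmetric powers and their restrictions, so your strategy of realizing $L(c\omega_i^n+(p-1-c)\omega_{i+1}^n)$ as a truncated symmetric power $T^d V_n$ and then splitting along $V_n=U_1\oplus U_2$ is almost certainly the intended one. The combinatorics check out: with $d=(p-1)(i+1)-c$ and $d_j=(p-1)(i_j+1)-c_j$, the constraints $0\le d_j\le (p-1)\dim U_j$ and $d_1+d_2=d$ translate exactly into $N(i,c)$, and the identification of each nonzero $T^{d_j}U_j$ with $L(c_j\omega_{i_j}+(p-1-c_j)\omega_{i_j+1})$ for the relevant $A$-factor of $H$ is correct.

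Two remarks. First, the detour through divided powers is both heavier than necessary and has the arrow pointing the wrong way: the truncated symmetric power is the \emph{image of the norm map $S^dV\to\Gamma^dV$}, not of $\Gamma^dV\to S^dV$; the latter is simply surjective in every degree and does not cut out $T^dV$ (already for $\dim V=1$ it is an isomorphism $\Gamma^dV\cong S^dV$ for all $d$). Second, and more usefully, the bigraded splitting you want falls out of a one-line Frobenius computation that avoids divided powers entirely. The truncated symmetric algebra is $T^\bullet V_n = S^\bullet V_n/(v^p : v\in V_n)$. For $v=u_1+u_2$ with $u_j\in U_j$ we have $v^p=u_1^p+u_2^p$ in $S^\bullet V_n$, so the ideal $(v^p:v\in V_n)$ is generated by $(u^p:u\in U_1)\cup(u^p:u\in U_2)$. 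Hence
\[
T^\bullet V_n = \bigl(S^\bullet U_1/(u^p)\bigr)\otimes\bigl(S^\bullet U_2/(u^p)\bigr)=T^\bullet U_1\otimes T^\bullet U_2
\]
as bigraded $H$-algebras, and passing to degree $d$ gives $T^d V_n=\bigoplus_{d_1+d_2=d}T^{d_1}U_1\otimes T^{d_2}U_2$. Since each nonzero factor is already irreducible by \cite[Proposition 1.2]{ZS} and the external tensor products are irreducible, this is a complete decomposition into irreducibles, so the character/Smith-factor fallback you sketched at the end is not needed.
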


Here $H=H_1\times H_2$ with $H_1=G_n(1,\ldots,m)\cong A_m(K)$ and $H_2=G_n(m+2,\ldots,n)\cong A_{n-m-1}(K)$;
and the tensor product is the (external) product of 
 $H_1$- and $H_2$-modules.

Recall the set of $G_n$-modules $\mathcal{T}_n$ defined in (\ref{Tn}).

\begin{corollary}
\label{oc1} If $G_n=A_n(K)$, $k+1\leq i< n-k$, and
$\omega=c\omega_i^n+(p-1-c)\omega_{i+1}^n$, then $\Irr_k L(\omega)=\mathcal{T}_k$.
\end{corollary}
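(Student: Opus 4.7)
\medskip\noindent\emph{Proof plan.} My plan is to apply Lemma~\ref{o2} once, choosing $m$ so that the ``right'' Levi factor matches $G_{n,k}$ exactly. Take $m=n-k-1$: then $H=H_1\times H_2$ with $H_1=G_n(1,\ldots,n-k-1)\cong A_{n-k-1}(K)$ and $H_2=G_n(n-k+1,\ldots,n)=G_{n,k}$. Restricting each summand in Lemma~\ref{o2} further from $H$ to $G_{n,k}$ replaces the first tensor factor by a trivial scalar action, so the composition factors of $L(\omega){\downarrow}G_{n,k}$ are exactly the second tensor factors $L(c_2\omega_{i_2}^k+(p-1-c_2)\omega_{i_2+1}^k)$ indexed by $(i_1,c_1,i_2,c_2)\in N(i,c)$. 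Setting $j=i_2$ and $a=p-1-c_2$, each such module lies in $\mathcal{T}_k$, giving $\Irr_k L(\omega)\subseteq\mathcal{T}_k$ immediately.

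For the reverse inclusion, I would fix an arbitrary pair $(i_2,c_2)$ with $0\le i_2\le k$ and $0\le c_2<p$ and look for $(i_1,c_1)$ so that $(i_1,c_1,i_2,c_2)\in N(i,c)$. The defining equation of $N(i,c)$ becomes
\[
(p-1)(i_1+1)-c_1 \;=\; (p-1)(i-i_2-1)+(p-1-c)+c_2 \;=:\; D.
\]
As $i_1$ ranges over $\{0,\ldots,n-k-1\}$ and $c_1$ over $\{0,\ldots,p-1\}$, the left-hand side sweeps the whole interval $\{0,1,\ldots,(p-1)(n-k)\}$, so the task reduces to a single bounds check $0\le D\le(p-1)(n-k)$; the conditions of $N(i,c)$ involving $(i_2,c_2)$ alone are automatic from $0\le i_2\le k$ and $0\le c_2<p$.

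The bounds check is precisely where the hypothesis $k+1\le i<n-k$ enters. The lower bound follows from $i-i_2-1\ge(k+1)-k-1=0$ together with $(p-1-c)+c_2\ge 0$; the upper bound from $i-i_2-1\le(n-k-1)-0-1=n-k-2$ together with $(p-1-c)+c_2\le 2(p-1)$, giving $D\le(p-1)(n-k-2)+2(p-1)=(p-1)(n-k)$. The main obstacle is conceptual rather than computational: one must notice that the two-sided inequality on $i$ is exactly what makes a single application of Lemma~\ref{o2} with $m=n-k-1$ sufficient, so that no inductive peeling of simple roots is needed and the entire problem collapses to this clean interval-membership question.
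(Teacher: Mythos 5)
Your proof is correct and follows essentially the same route as the paper's: a single application of Lemma~\ref{o2} with a well-chosen $m$ so that one Levi factor gives $G_k$, after which everything reduces to reading off the parameter set $N(i,c)$. The paper sets $m=k$ and uses the factor $H_1=G_n(1,\ldots,k)\cong G_k$ (relying on the conjugacy of $H_1$ with $G_{n,k}$ already invoked in the proof of Lemma~\ref{o1}(ii)), whereas you set $m=n-k-1$ so that $H_2=G_n(n-k+1,\ldots,n)=G_{n,k}$ on the nose; this is a symmetric variant of the same idea, not a genuinely different argument. The only substantive addition in your write-up is the explicit bounds check $0\le D\le (p-1)(n-k)$ showing that every $(i_2,c_2)$ is realized — a verification the paper leaves to the reader with the phrase ``follows immediately.'' That check is carried out correctly, and you correctly identify where the hypothesis $k+1\le i< n-k$ is used.
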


\begin{proof}
In Lemma~\ref{o2} take $m=k$ and observe that $H_1\cong G_k$. Now the corollary follows immediately from this lemma.
\end{proof}

\begin{corollary}
\label{csym} Let $G_n=A_n(K)$, $k<n$, and $\omega=a\omega_1^n$ with $0<a<p$. Then
$\Irr_k L(\omega)=\{L(b\omega_1^k)\mid 0\leq b\leq a\}$. \end{corollary}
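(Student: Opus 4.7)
The plan is to realize $L(\omega)=L(a\omega_1^n)$ explicitly as the $a$-th symmetric power $S^a(V_n)$ of the natural module. This identification is legitimate because the hypothesis $0<a<p$ guarantees that $S^a(V_n)$ is irreducible with highest weight $a\omega_1^n$ (no truncation occurs in this range). Fixing a weight basis $v_1,\dots,v_{n+1}$ of $V_n$ with $\omega(v_j)=\varepsilon_j^n$, I would then observe that the root subgroups generating $G_{n,k}=G_n(n-k+1,\dots,n)$ fix each of $v_1,\dots,v_{n-k}$ (these root subgroups only involve $v_{n-k+1},\dots,v_{n+1}$) and act on $\langle v_{n-k+1},\dots,v_{n+1}\rangle$ as the natural $G_{n,k}\cong A_k(K)$-module $V_k$. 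Consequently
\[
V_n{\downarrow}G_{n,k}\cong V_k\oplus T,
\]
where $T$ is a trivial $G_{n,k}$-module of dimension $n-k$.

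Next I would apply the standard, characteristic-free identity
\[
S^a(V_k\oplus T)\cong\bigoplus_{b=0}^{a} S^b(V_k)\otimes S^{a-b}(T).
\]
Since $T$ is trivial, $S^{a-b}(T)$ is a direct sum of trivial $G_{n,k}$-modules; and since $0\le b\le a<p$, $S^b(V_k)\cong L(b\omega_1^k)$ is irreducible. Hence $L(\omega){\downarrow}G_{n,k}$ decomposes as a direct sum of copies of $L(b\omega_1^k)$ for $0\le b\le a$, each summand $b$ appearing with positive multiplicity (namely $\dim S^{a-b}(T)\ge 1$). Disregarding multiplicities yields $\Irr_k L(\omega)=\{L(b\omega_1^k)\mid 0\le b\le a\}$ as required.

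I do not anticipate a serious obstacle: the assumption $a<p$ sidesteps all the subtleties that otherwise afflict symmetric powers in positive characteristic (irreducibility and truncation), so the argument reduces to the straightforward decomposition of $V_n{\downarrow}G_{n,k}$ into a natural plus a trivial piece together with the elementary identity for symmetric powers of direct sums. If one preferred to avoid the symmetric-power realization, one could alternatively use Corollary~\ref{ogr} applied to $\delta(L(\omega))=a$ to obtain the inclusion $\Irr_k L(\omega)\subseteq\{L(b\omega_1^k)\mid 0\le b\le a\}$, and then exhibit explicit $G_{n,k}$-highest weight vectors $v_1^{a-b}v_{n-k+1}^b\in S^a(V_n)$ to obtain the reverse inclusion; but the symmetric-power route is cleaner.
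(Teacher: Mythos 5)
Your proof is correct, and it follows a more self-contained route than the paper's. The paper's proof simply cites Lemma~\ref{o2} — the decomposition formula of Zalesskii and Suprunenko ([ZS, Proposition~1.4]) for a truncated symmetric power restricted to the Levi subgroup $G_n(1,\ldots,m)\times G_n(m+2,\ldots,n)$ — specialized to $m=k$, $i=0$, $c=p-1-a$, as in the proof of Corollary~\ref{oc1}. You instead identify $L(a\omega_1^n)$ with the ordinary symmetric power $S^a(V_n)$ (legitimate because $a<p$, so the truncated symmetric power mentioned in the Introduction via [ZS, Proposition~1.2] is untruncated), decompose $V_n{\downarrow}G_{n,k}\cong V_k\oplus T$ with $T$ a trivial module of dimension $n-k$, and read off the answer from the characteristic-free identity $S^a(V_k\oplus T)\cong\bigoplus_{b=0}^a S^b(V_k)\otimes S^{a-b}(T)$, where each $S^b(V_k)$ (for $b\le a<p$) is the irreducible $L(b\omega_1^k)$ and each $S^{a-b}(T)$ is a nonzero trivial module since $n-k\ge 1$. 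This proves from scratch exactly the special case of the ZS formula that the corollary needs, and it works directly with $G_{n,k}=G_n(n-k+1,\ldots,n)$, which matches the definition of $\Irr_k$, rather than with the conjugate subgroup $H_1=G_n(1,\ldots,k)$ that appears in Lemma~\ref{o2}. Your fallback sketch — Corollary~\ref{ogr} for the upper bound together with the explicit $G_{n,k}$-maximal vectors $v_1^{a-b}v_{n-k+1}^b\in S^a(V_n)$ of weight $b\omega_1^k$ for the lower bound — is also sound. What your route buys is that the reader need not unwind the index set $N(i,c)$ of Lemma~\ref{o2}; what the paper's route buys is brevity, since the heavy lifting is delegated to a cited result.
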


\begin{proof} Argue as in the proof of Corollary~\ref{oc1} taking $m=k$, $i=0$, and $c=p-1-a$.
\end{proof}

\begin{lemma}[{\cite[Theorem, part C]{ZSVesti}}]
\label{rC} Let $p>2$, $n>1$, and
$G_n=C_n(K)$. Set
$M_1^n=L(\omega_{n-1}^n+\frac{p-3}{2}\omega_n^n)\in\Irr G_n$ and
$M_2^n=L(\frac{p-1}{2}\omega_n^n)\in\Irr G_n$.  Then
 $\Irr_{n-1} M_j^n=\{M_1^{n-1}, M_2^{n-1}\}$ for $j=1,2$.
\end{lemma}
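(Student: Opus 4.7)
The result is quoted from \cite{ZSVesti}, so I outline how an independent proof would proceed. I would prove the two inclusions $\{M_1^{n-1}, M_2^{n-1}\} \subseteq \Irr_{n-1}M_j^n \subseteq \{M_1^{n-1}, M_2^{n-1}\}$ separately. Set $H = G_{n,n-1} = G_n(\alpha_2,\ldots,\alpha_n)$; its Cartan is the hyperplane $\varepsilon_1 = 0$ inside the Cartan of $G_n$, and one identifies $\omega_i^{n-1}$ with $\varepsilon_2+\cdots+\varepsilon_{i+1}$ in ambient coordinates. A direct arithmetic check then gives $\omega(M_j^n){\downarrow}H = \omega(M_j^{n-1})$, so by Theorem~\ref{Sm} the Smith factor $KHv^+ \subset M_j^n$ contributes $M_j^{n-1}$ as a composition factor.

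To locate the other factor $M_{3-j}^{n-1}$, I would construct an explicit $H$-singular weight vector. Consider the short positive roots $\alpha = \varepsilon_1+\varepsilon_n = \alpha_1+\cdots+\alpha_n$ (for $j=2$) and $\alpha = \varepsilon_1-\varepsilon_n = \alpha_1+\cdots+\alpha_{n-1}$ (for $j=1$). Direct calculation shows $(\omega(M_j^n)-\alpha){\downarrow}H = \omega(M_{3-j}^{n-1})$. Since $\langle\omega(M_j^n),\alpha^\vee\rangle \le p-1$, the $\alpha$-string through $v^+$ is irreducible in characteristic $p$, so $\omega(M_j^n) - \alpha \in \Lambda(M_j^n)$; by $\wdeg M_j^n = 1$ this weight space is one-dimensional. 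Let $v$ be a nonzero vector there. For every positive root $\beta$ of $H$ (of the form $\varepsilon_i\pm\varepsilon_{i'}$ or $2\varepsilon_i$ with $i, i' \ge 2$), the weight $\omega(M_j^n)-\alpha+\beta$ has some $\varepsilon_k$-coordinate of absolute value strictly greater than $\frac{p-1}{2}$, so it lies outside the convex hull of the Weyl-group orbit of $\omega(M_j^n)$ and hence outside $\Lambda(M_j^n)$. Therefore $E_\beta v = 0$, $v$ is $H$-singular, and it generates an $H$-submodule with irreducible top $M_{3-j}^{n-1}$.

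For the reverse inclusion, Corollary~\ref{ogr} gives $\delta(N) \le \delta(M_j^n) = \frac{p-1}{2} < p$ for any $N \in \Irr_{n-1}M_j^n$, which forces $\omega(N)$ to be $p$-restricted (as weights in $\Omega(C_{n-1}) \setminus \Omega_p(C_{n-1})$ have $\delta \ge p$). Combined with Theorem~\ref{one} applied to the restricted module, whose weight-space dimensions are uniformly bounded by the fibre sizes of the projection $\lambda \mapsto \lambda{\downarrow}H$ together with $\wdeg M_j^n = 1$, one gets $\omega(N) \in \Omega_p(C_{n-1}) = \{0,\, \omega_1^{n-1},\, \omega(M_1^{n-1}),\, \omega(M_2^{n-1})\}$. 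To exclude $L(0)$ and $L(\omega_1^{n-1})$, the cleanest route is dimension counting: with $\wdeg M_j^n = 1$ one has $\dim M_j^n = |\Lambda(M_j^n)|$, computable via an explicit enumeration of the weight set, and one checks that the resulting dimension matches exactly the contribution of one copy each of $M_1^{n-1}$ and $M_2^{n-1}$ (with the multiplicities forced by weight-space dimensions on both sides), leaving no room for a trivial or natural composition factor.

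The main obstacle is this last dimension comparison: determining $|\Lambda(M_j^n)|$ requires careful bookkeeping of the convex-hull constraints and $\mathfrak{sl}_2$-string behaviour at each positive root in characteristic $p$, going well beyond what the $\delta$-bound and $\wdeg$-bound immediately provide; the small-rank cases $n \in \{2,3,4\}$, where Theorem~\ref{BCD} does not apply to $C_{n-1}$, also need separate direct verification.
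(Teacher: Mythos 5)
The paper itself offers \emph{no} proof of Lemma~\ref{rC}: it is quoted verbatim from \cite[Theorem, part~C]{ZSVesti}, and the authors simply cite it. So there is no internal argument to compare against; what you have produced is a sketch of an independent proof, and it should be judged on its own terms.

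Your forward inclusion $\{M_1^{n-1},M_2^{n-1}\}\subseteq\Irr_{n-1}M_j^n$ is essentially complete and correct. The Smith-factor computation $\omega(M_j^n){\downarrow}H=\omega(M_j^{n-1})$ checks out in $\varepsilon$-coordinates, and your explicit singular-vector construction is sound: for $j=1$ one has $\langle\omega(M_1^n),\alpha^\vee\rangle=1$ and for $j=2$ one has $\langle\omega(M_2^n),\alpha^\vee\rangle=p-1$, both $<p$, so the $\alpha$-string through $v^+$ is the full Smith factor for $G_n(\alpha)\cong SL_2$ (Steinberg module in the $j=2$ case) and reaches $\omega-\alpha$; the weight $\omega(M_j^n)-\alpha$ has all $\varepsilon$-coordinates of absolute value $\le\frac{p-1}{2}$, and adding any positive root $\beta$ of $H$ pushes some coordinate to $\frac{p+1}{2}$ or beyond, hence outside the convex hull of $W\omega(M_j^n)$, so $v$ is indeed $H$-singular. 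The restriction $(\omega(M_j^n)-\alpha){\downarrow}H=\omega(M_{3-j}^{n-1})$ is also correctly computed.

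The reverse inclusion, however, has a genuine gap, which you yourself flag. The reduction to $\omega(N)\in\Omega_p(C_{n-1})$ is fine for $n\ge5$ (the cleanest route is: $\wdeg N\le\wdeg M_j^n=1$ by Proposition~\ref{deg*}, then Theorem~\ref{one}; your ``fibre size of the projection'' phrasing is essentially the proof of Proposition~\ref{deg*}, and the parenthetical explanation for $p$-restrictedness is off --- the correct reason is simply that $\delta(\omega)=\sum a_i<p$ forces all $a_i<p$). But excluding $L(0)$ and $L(\omega_1^{n-1})$ is not actually carried out. Your ``dimension counting'' appeal presupposes knowing $\dim M_j^k=\frac{p^k\pm 1}{2}$ and, more seriously, the multiplicities $a$, $b$ with which $M_1^{n-1}$ and $M_2^{n-1}$ occur in the restriction; these are \emph{not} $1$ (one finds $a,b$ of order $p$), and they are not determined by the argument you have given. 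Without pinning these down --- e.g.\ by analysing $|\Lambda(M_j^n)|$ and the fibres of $\lambda\mapsto\lambda{\downarrow}H$ weight by weight, or by exploiting the explicit Weil-representation model used in \cite{ZSVesti} --- the dimension equation is underdetermined and the exclusion of a trivial or natural factor does not follow. The small-rank cases $n\in\{2,3,4\}$ (where Theorem~\ref{one} is unavailable) are a further, separate obligation that you correctly note but do not address.
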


\begin{lemma}
\label{l*} Let $n>2$ for $G_n=B_n(K)$ and $n>4$ for $G_n=D_n(K)$. Then $\Irr_{n-1} L(\omega_1^n)=\{L(0), L(\omega_1^{n-1}) \}$.
\end{lemma}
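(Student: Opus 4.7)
The plan is to realize $L(\omega_1^n)$ as the natural module $V_n$ for $G_n$ and compute its restriction to $G_{n,n-1}$ by direct inspection of a weight basis. Under the blanket convention $p>2$ for $B_n$ imposed throughout the paper, together with the hypotheses $n>2$ (for $B_n$) and $n>4$ (for $D_n$), the natural module $V_n$ is irreducible and coincides with $L(\omega_1^n)$; its dimension is $2n+1$ in the $B_n$-case and $2n$ in the $D_n$-case. Fix a weight basis $v_{\pm i}$ of weights $\pm\varepsilon_i^n$, $i=1,\dots,n$, augmented by a vector $v_0$ of weight $0$ in the $B_n$-case, with $v_1$ a highest weight vector.

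Each simple root $\alpha_j^n$ with $j\ge 2$, namely $\varepsilon_j^n-\varepsilon_{j+1}^n$, $\varepsilon_n^n$ (for $B_n$), or $\varepsilon_{n-1}^n+\varepsilon_n^n$ (for $D_n$), involves only the coordinates $\varepsilon_i^n$ with $i\ge 2$. Consequently, the root subgroups generating $G_{n,n-1}$ annihilate both $v_1$ and $v_{-1}$ and stabilise the complementary subspace $U$ spanned by the remaining basis vectors. Reading off the $G_{n,n-1}$-weights of $U$ identifies it with the natural module of $G_{n,n-1}\cong G_{n-1}$, whose highest weight is $\omega_1^{n-1}$; thus $U\cong L(\omega_1^{n-1})$. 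We therefore obtain a direct sum decomposition of $G_{n,n-1}$-modules
\[
V_n{\downarrow}G_{n,n-1}= \langle v_1\rangle \oplus \langle v_{-1}\rangle \oplus U \cong L(0)\oplus L(0)\oplus L(\omega_1^{n-1}),
\]
from which the conclusion $\Irr_{n-1} L(\omega_1^n)=\{L(0),L(\omega_1^{n-1})\}$ is immediate.

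Since the decomposition above is produced by exhibiting explicit invariant complementary subspaces, no appeal to complete reducibility is needed and there is no real obstacle. The only point to check carefully is the irreducibility of $V_n$ under the stated hypotheses: for $D_n$ this is automatic because the natural module has no zero weight, and for $B_n$ it uses only the paper's standing assumption $p>2$ (so that the defining quadratic form is non-degenerate and $V_n$ remains simple).
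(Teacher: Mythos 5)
Your proof is correct and fills in exactly what the paper omits: the paper's own ``proof'' simply declares the lemma obvious and well known, with the hypotheses on $n$ imposed only to sidestep low-rank coincidences between the classical series. Your explicit weight-basis decomposition $V_n{\downarrow}G_{n,n-1}=\langle v_1\rangle\oplus\langle v_{-1}\rangle\oplus U\cong L(0)\oplus L(0)\oplus L(\omega_1^{n-1})$ is the standard argument being invoked, and the two points you flag at the end (the need for $p>2$ so that $V_n=L(\omega_1^n)$ for $B_n$, and irreducibility of $V_n$ for $D_n$ via the absence of a zero weight, $\omega_1$ being minuscule there) are precisely the right things to check.
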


\begin{proof}
This is obvious and well known. We put some restrictions on $n$ to avoid complications connected with the isomorphisms between classical groups of small ranks from different series.
\end{proof}

The following lemma is also well known, but we fail to find an explicit reference.

\begin{lemma}
\label{lBD} If $G_n=B_n(K)$ and $n>2$ or $p=2$ and $G_n=C_n(K)$, then
$\Irr_{n-1} L(\omega_n^n)=\{L(\omega_{n-1}^{n-1})\}$. For $G_n=D_n(K)$ with $n>3$ one has
$\Irr_{n-1} L(\omega_n^n)=\Irr_{n-1} L(\omega_{n-1}^n)=\{L(\omega_{n-1}^{n-1}),
L(\omega_{n-2}^{n-1})\}$.
\end{lemma}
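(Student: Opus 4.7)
My plan is to combine Smith's theorem (Theorem~\ref{Sm}) with an explicit weight-multiplicity analysis of the (half-)spin modules. First I apply Theorem~\ref{Sm} with $H=G_{n,n-1}=G_n(\alpha_2,\ldots,\alpha_n)$, identified with $G_{n-1}$ in the usual way: the Smith factor $KHv^+$ is an irreducible direct summand of $L(\omega_n^n){\downarrow}H$ whose highest weight is $\omega_n^n{\downarrow}H$. A routine computation in the $\varepsilon^n_i$ basis (drop the $\varepsilon_1^n$-coordinate and re-index) gives $\omega_n^n{\downarrow}H=\omega_{n-1}^{n-1}$ in all three cases of the lemma, and additionally $\omega_{n-1}^n{\downarrow}H=\omega_{n-2}^{n-1}$ for $G_n=D_n(K)$. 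This produces at least one of the claimed constituents in every situation.

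Next I bound the composition factors from above by a weight-set argument. The weights of the (half-)spin module are $\tfrac12(\pm\varepsilon_1^n\pm\cdots\pm\varepsilon_n^n)$ (with the appropriate parity condition on signs for $D_n$), each of multiplicity one; for $G_n=C_n(K)$ with $p=2$ I will transport this structure from the spin module of $B_n$ via the exceptional isogeny underlying the abstract isomorphism $B_n(K)\cong C_n(K)$ recalled in the introduction. Restricting to $H$ simply drops $\varepsilon_1^n$, so every weight of the restriction lies in the weight set of $L(\omega_{n-1}^{n-1})$ for types $B_n, C_n$, and in the union of the weight sets of $L(\omega_{n-1}^{n-1})$ and $L(\omega_{n-2}^{n-1})$ for type $D_n$. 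Every composition factor has highest weight among these, and the only dominant choice in the first two cases is $\omega_{n-1}^{n-1}$, so $\Irr_{n-1}L(\omega_n^n)=\{L(\omega_{n-1}^{n-1})\}$ there.

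For $G_n=D_n(K)$ I finish with a parity argument. The two possible signs of the dropped $\varepsilon_1^n$-coefficient partition the $2^{n-1}$ weights of $L(\omega_n^n)$ into two halves of $2^{n-2}$ weights each, and each half — after dropping $\varepsilon_1^n$ — is precisely the weight set of one of the two half-spin modules of $D_{n-1}$. Since the Smith factor accounts for one half as $L(\omega_{n-1}^{n-1})$, a dimension and weight count forces the remaining $2^{n-2}$ weights to assemble into a copy of $L(\omega_{n-2}^{n-1})$; the outer automorphism of $D_n$ swapping $\omega_{n-1}^n$ and $\omega_n^n$ then gives the symmetric statement for $L(\omega_{n-1}^n)$.

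The main technical obstacle is the $C_n(K)$-case in characteristic~$2$, where the weight structure of $L(\omega_n^n)$ is less classical than the odd-characteristic spin module; the cleanest resolution is to pull it back from the $B_n$ spin module via the characteristic-$2$ isogeny. A minor subsidiary issue for type $D_n$ in characteristic~$2$ is to confirm that $\dim L(\omega_n^n)=2^{n-1}$, so that the dimension count in the parity step closes; this follows from the standard integral construction of the half-spin modules.
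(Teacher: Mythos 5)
Your proposal is correct and takes essentially the same approach as the paper's proof. The paper likewise establishes that $\Lambda(L(\omega_n^n))$ is the single Weyl orbit of the highest weight (by the microweight property for $B_n$, $D_n$, and for $C_n$ in characteristic 2 by a dimension computation via the special isogeny, citing Humphreys), then decomposes the restriction as $M_+\oplus M_-$ according to the sign of the dropped $\varepsilon_1^n$-coefficient and reads off the composition factors from the resulting weight sets. Your explicit use of Smith's theorem to seed the lower bound and your upper bound via ``only available dominant weights'' are a mild repackaging of the same weight-orbit analysis; the parity argument for $D_n$ is likewise the same as the paper's.
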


\begin{proof}
Let $M$ be one of the modules in question. If $G_n=B_n(K)$ or $D_n(K)$, it is well known that
$\omega(M)$ is a microweight and hence $\Lambda(M)$ coincides with
the orbit of  $\omega(M)$ under the action of the Weyl group.
Therefore $\Lambda(M)=\{(\pm\varepsilon^n_1+\ldots+\pm\varepsilon^n_n)/2\}$ with
all possible combinations of the ``plus'' and ``minus'' signs for
$G_n=B_n(K)$. If $G_n=D_n(K)$, then $\Lambda(M)$ consists of all
such weights with an odd or even number of the ``minus'' signs for
$M=L(\omega_{n-1}^n)$ or $L(\omega_n^n)$, respectively.

Let $p=2$ and $G_n=C_n(K)$. It is well known that in this case $\Lambda(M)$ is such as for $B_n(K)$. Indeed, using a special isogeny from $C_n(K)$ to $B_n(K)$, one easily concludes that $\dim M=2^n$ (as for the relevant $B_n(K)$-module), see~\cite[Subsection 5.3 and Theorem 5.4]{Humph}. Hence again $\Lambda(M)$ coincides with the orbit of $\omega(M)$.

The following arguments concern all the groups considered in this lemma. Let $M_+\subset M$ ($M_-\subset M$) be the sum of all weight subspaces $M^{\lambda}$ with $\lambda=\varepsilon^n_1/2+\mu$ ($\lambda=-\varepsilon^n_1/2+\mu$, respectively) where $\mu$ is a linear combination of the weights $\varepsilon^n_2, \ldots,
\varepsilon^n_n$. For $2\leq i\leq
n$ one can identify the restriction of the weight $\varepsilon^n_i$
to $G_{n-1}$ with the weight $\varepsilon_{i-1}^{n-1}\in\Lambda(G_{n-1})$. Taking into account
that for $2\leq i\leq n$ the roots $\alpha_i$ are linear
combinations of the weights $\varepsilon^n_i$ with $2\leq i\leq n$,
one can observe  that $G_{n,n-1}$ fixes $M_+$ and $M_-$. Analyzing
the weight structure of these $G_{n,n-1}$-modules, we conclude that
they are irreducible and have desired highest weights. This proves
the lemma.
\end{proof}

\begin{corollary}
\label{cBD}
Let $p=2$, $n>2$, and $G_n=C_n(K)$. Then
\[
\Irr_{n-1} L(\omega_1^n+\omega_n^n)=\{L(\omega_1^{n-1}+\omega_{n-1}^{n-1}),L(\omega_{n-1}^{n-1})\}.
\]
\end{corollary}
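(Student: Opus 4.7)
Let $M=L(\omega_1^n+\omega_n^n)$, $H=G_{n,n-1}\cong C_{n-1}(K)$ and let $v^+$ be a highest weight vector of $M$; I use the identifications $\varepsilon_1^n{\downarrow}H=0$ and $\varepsilon_i^n{\downarrow}H=\varepsilon_{i-1}^{n-1}$ for $i\ge 2$. My plan is to produce both claimed composition factors by explicit $H$-maximal vectors and then bound the dominant $H$-weights which can occur.

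The first factor $L(\omega_{n-1}^{n-1})$ arises as the Smith factor $KHv^+$: by Theorem~\ref{Sm} this is a simple $H$-direct summand of $M{\downarrow}H$ with highest weight $\omega_H(v^+)=(2\varepsilon_1^n+\varepsilon_2^n+\dots+\varepsilon_n^n){\downarrow}H=\omega_{n-1}^{n-1}$. For the second factor I set $v_1:=f_{\alpha_1}v^+$ and check: $(i)$ $v_1\ne 0$, since $e_{\alpha_1}v_1=\langle\omega_1^n+\omega_n^n,\alpha_1^\vee\rangle v^+=v^+$; $(ii)$ $v_1$ is $H$-maximal, because every positive root $\beta$ of $H$ is a combination of $\varepsilon_2^n,\dots,\varepsilon_n^n$, so $\beta-\alpha_1$ carries a $-\varepsilon_1^n$ together with extra coordinates and hence is not a root of $C_n$, giving $[e_\beta,f_{\alpha_1}]=0$ and $e_\beta v_1=f_{\alpha_1}e_\beta v^+=0$; $(iii)$ $\omega_H(v_1)=(\omega(M)-\alpha_1){\downarrow}H=\omega_1^{n-1}+\omega_{n-1}^{n-1}$. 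This gives $L(\omega_1^{n-1}+\omega_{n-1}^{n-1})\in\Irr_{n-1}M$.

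For the exhaustion, Corollary~\ref{ogr} yields $\delta(N)\le\delta(M)=2$ for any composition factor $N$, and since $M$ is a composition factor of $V_n\otimes L(\omega_n^n)$ while $\Lambda(L(\omega_n^n))=\{(\pm 1,\dots,\pm 1)\}$ (from the proof of Lemma~\ref{lBD}), each weight of $M$ has exactly one coordinate in $\{-2,0,2\}$ and the remaining $n-1$ in $\{\pm 1\}$. Enumerating $C_{n-1}$-dominant weights of this shape with $\delta\le 2$ leaves only the three candidates $\omega_{n-1}^{n-1}$, $\omega_1^{n-1}+\omega_{n-1}^{n-1}$, and $\omega_{n-2}^{n-1}$. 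I then split $M{\downarrow}H=\bigoplus_{a\in\{0,\pm 1,\pm 2\}}M^{(a)}$ according to the $\varepsilon_1^n$-coordinate of weights (each $M^{(a)}$ is $H$-stable because $H$'s roots avoid $\varepsilon_1^n$). The pieces $M^{(\pm 2)}$ and $M^{(0)}$ all have $H$-weights of the form $(\pm 1,\dots,\pm 1)$, so their composition factors are copies of $L(\omega_{n-1}^{n-1})$; a Weyl-orbit count with multiplicity $\le 1$ (from the tensor product) gives in fact $\dim M^{(\pm 2)}=2^{n-1}$, so $M^{(2)}=KHv^+\cong L(\omega_{n-1}^{n-1})$ and (via the Weyl involution flipping the sign of $\varepsilon_1^n$) $M^{(-2)}\cong L(\omega_{n-1}^{n-1})$ as well. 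Finally, to rule out $L(\omega_{n-2}^{n-1})$ in $M^{(\pm 1)}$, I show that no nonzero $H$-maximal vector of $H$-weight $\omega_{n-2}^{n-1}$ lies in $M$: the natural candidate $f_{\varepsilon_1^n+\varepsilon_n^n}v^+$ is sent by $e_{2\varepsilon_n^n}\in R^+(H)$ to $\pm f_{\varepsilon_1^n-\varepsilon_n^n}v^+\ne 0$ via the identity $[e_{2\varepsilon_n^n},f_{\varepsilon_1^n+\varepsilon_n^n}]=\pm e_{\varepsilon_n^n-\varepsilon_1^n}$ combined with $\langle\omega(M),(\varepsilon_1^n-\varepsilon_n^n)^\vee\rangle=1$.

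The main obstacle is the last step: since the weight $(\pm 1,1,\dots,1,0)$ can have multiplicity greater than one in $M$, the commutator argument must cover the whole weight space rather than a single candidate, and one has to verify that the relevant Chevalley structure constants remain nonzero modulo $2$. Once this exclusion is in hand, $\Irr_{n-1}M=\{L(\omega_{n-1}^{n-1}),\,L(\omega_1^{n-1}+\omega_{n-1}^{n-1})\}$ follows immediately from the decomposition of $M{\downarrow}H$ described above.
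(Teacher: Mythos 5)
Your argument diverges from the paper's, and the paper's route is decisively cleaner. The paper invokes (from the corollary of Theorem~41 in Steinberg's Yale notes) that for $G_k=C_k(K)$, $p=2$, $k>1$,
\[
L(\omega_1^k+\omega_k^k)\cong L(\omega_1^k)\otimes L(\omega_k^k).
\]
With this equality in hand, the restriction to $G_{n,n-1}$ is literally the tensor product of the two known restrictions (the spin-type factor restricts to $L(\omega_{n-1}^{n-1})^{\oplus 2}$ by Lemma~\ref{lBD}, and the natural module to $L(\omega_1^{n-1})\oplus L(0)^{\oplus 2}$), and the corollary follows in one line by reapplying the same Steinberg isomorphism for $C_{n-1}$. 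You only use the \emph{weaker} fact that $M$ is a subquotient of $V_n\otimes L(\omega_n^n)$, and this weakening is precisely what creates the work: it leaves $\omega_{n-2}^{n-1}$ as a candidate highest weight that you must rule out by hand.

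That exclusion step is where the genuine gap sits, and you have already put your finger on it. The weight $\varepsilon_1^n+\cdots+\varepsilon_{n-1}^n$ has multiplicity two in $V_n\otimes L(\omega_n^n)$, so the corresponding weight space in $M$ can be two-dimensional; showing that $e_{2\varepsilon_n^n}$ does not kill one specific vector $f_{\varepsilon_1^n+\varepsilon_n^n}v^+$ does not show that the whole weight space contains no $H$-maximal vector. You also note the need to check the Chevalley structure constant $N_{2\varepsilon_n,\,-\varepsilon_1-\varepsilon_n}$ modulo $2$ (it is $\pm1$ here, so that particular worry is fine), but the multiplicity issue is not a technicality and is not resolved in the proposal as written. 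Everything else you do is correct: the Smith-factor identification of $L(\omega_{n-1}^{n-1})$, the verification that $v_1=f_{\alpha_1}v^+$ is nonzero, $H$-maximal, of $H$-weight $\omega_1^{n-1}+\omega_{n-1}^{n-1}$, the weight-shape analysis, and the decomposition of $M{\downarrow}H$ by the $\varepsilon_1^n$-grading are all sound. The shortest fix is simply to replace ``$M$ is a composition factor of $V_n\otimes L(\omega_n^n)$'' by the isomorphism the paper uses; that upgrades your decomposition into weight-graded pieces to an honest tensor-product decomposition and the candidate $\omega_{n-2}^{n-1}$ disappears automatically, since neither tensor factor restricts to a module containing $L(\omega_{n-2}^{n-1})$ as a factor.
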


\begin{proof}
By~\cite[the corollary of Theorem 41]{Stein1}, for $G_k=C_k(K)$ and $k>1$ the $G_k$-module $L(\omega_1^k+\omega_k^k)\cong L(\omega_1^k)\otimes L(\omega_k^k)$. It is well known that $L(\omega_1^n){\downarrow}G_{n,n-1}$ is the direct sum of $L(\omega_1^{n-1})$ and two copies of $L(0)$. It has been shown in the proof of Lemma~\ref{lBD} that $L(\omega_n^n){\downarrow}G_{n,n-1}\cong L(\omega_{n-1}^{n-1})\oplus L(\omega_{n-1}^{n-1})$. This yields the corollary.
\end{proof}

\begin{proposition}
\label{deg*} Let $k<n$, $M\in\Irr G_n$, and $N\in\Irr_k M$.
Then $\wdeg N\leq\wdeg M$.
\end{proposition}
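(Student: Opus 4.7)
My plan is to extend $G_{n,k}$ to the Levi subgroup $L = T\cdot G_{n,k}$ of $G_n$, where $T$ is the maximal torus of $G_n$, and to transfer the weight-multiplicity bound from $M$ to its $G_{n,k}$-composition factors by exploiting the extra $T$-action available on $L$-composition factors. Let $T_k$ denote the maximal torus of $G_{n,k}$ and let $Z$ be the connected centre of $L$; then $Z\subset T$ is a subtorus of dimension $n-k$ centralising $G_{n,k}$, one has $T = T_k\cdot Z$ with $T_k\cap Z$ finite, and consequently a character of $T$ is uniquely determined by its restrictions to $T_k$ and to $Z$.

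The first step is to fix a composition series $0 = M_r\subset M_{r-1}\subset\cdots\subset M_0 = M$ of $M$ as an $L$-module. Since $T\subset L$, every $M_i$ is $T$-stable and the quotients $N_i = M_{i-1}/M_i$ inherit a $T$-weight decomposition, giving
\[
\dim N_i^\lambda \;=\; \dim M_{i-1}^\lambda - \dim M_i^\lambda \;\le\; \dim M^\lambda \;\le\; \wdeg M
\]
for every $T$-weight $\lambda$.

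Next, a Schur-type argument shows that each $L$-irreducible quotient $N_i$ is already irreducible as a $G_{n,k}$-module: since $Z$ is central in $L$, it acts on $N_i$ by a single character $\chi_i$, so any $G_{n,k}$-submodule of $N_i$ is automatically $Z$-stable, hence $L$-stable, hence equals $0$ or $N_i$. The chosen $L$-composition series is therefore also a $G_{n,k}$-composition series of $M{\downarrow}G_{n,k}$, so every $N\in\Irr_k M$ is $G_{n,k}$-isomorphic to some $N_i$.

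It remains to compare $T_k$-weight spaces of $N_i$ with $T$-weight spaces. For a $T_k$-weight $\mu$, the space $N_i^\mu$ is the direct sum of the $T$-weight spaces $N_i^\lambda$ with $\lambda|_{T_k}=\mu$; but since $Z$ acts on $N_i$ by $\chi_i$, only those $\lambda$ with $\lambda|_Z=\chi_i$ contribute, and by the injectivity noted in the first paragraph at most one such $\lambda$ occurs. Hence $\dim N_i^\mu \le \wdeg M$, which yields $\wdeg N \le \wdeg M$. The main subtlety is matching $L$-composition factors with $G_{n,k}$-composition factors; this is handled by the Schur-centre argument above, after which the rest of the proof is a straightforward weight-space bookkeeping.
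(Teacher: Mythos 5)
Your proof is correct, and it is in essence the same argument as the paper's, reorganized through the Levi subgroup $L=T\cdot G_{n,k}$. For $k=n-1$ your grading of $M$ by the central character of $Z$ is precisely the paper's grading $M=\bigoplus_j U_j$ by the coefficient $b_1(\lambda)$ of $\alpha_1$, and the injectivity of the restriction map $X(T)\to X(T_k)\times X(Z)$ is exactly the paper's observation that distinct weights within a fixed $\Lambda_s$ restrict to distinct $T_{n-1}$-weights. Where you differ is in packaging: you handle arbitrary $k$ in a single step rather than inducting on $n-k$; you make explicit, via the Schur-type argument on the central torus $Z$, that the $L$-composition factors of $M$ are already $G_{n,k}$-irreducible (the paper leaves this implicit when it passes from the $G_{n,n-1}$-module $U_s$ to its composition factors); and you phrase the weight-injectivity structurally through characters of $T_k$ and $Z$ rather than by coordinate bookkeeping in the $b_i(\lambda)$. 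Neither route is materially shorter, but yours is a bit more conceptual and avoids the final induction.
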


\begin{proof}
First assume that $k=n-1$. Put $\omega=\omega(M)$. For every
$\lambda\in\Lambda(M)$ one has $\lambda=\omega-\sum_{i=1}^n
b_i(\lambda)\alpha_i$ with $b_i(\lambda)\in\mathbb{Z}_{\ge0}$. For
$j\in\mathbb{Z}_{\ge0}$ put
\[
\Lambda_j=\{\lambda\in\Lambda(M) \mid b_1(\lambda)=j\}.
\]
It is obvious that $\Lambda_j\cap\Lambda_t=\varnothing$ for $j\ne t$
and
\[
\Lambda(M)=\Lambda_0 \cup \ldots \cup \Lambda_l
\]
for some $l$. Set
\[
U_j=\oplus_{\lambda\in\Lambda_j} M^{\lambda}.
\]
Then $U_j$ are $G_{n,n-1}$-modules and $M=U_0\oplus\ldots\ldots\oplus U_l$ as a
$G_{n,n-1}$-module. Hence $N$ is realized in a composition factor
of some module $U_s$. So $\wdeg N$ is not bigger then the maximal
weight multiplicity of the $G_{n,n-1}$-module $U_s$. It remains to
observe that the restrictions of distinct weights in $\Lambda_s$ to $G_{n,n-1}$ are distinct.
Indeed, assume $\mu,\nu\in\Lambda_s$ and $\nu\ne \mu$.
Obviously $b_1(\mu)= b_1(\nu)$. Hence $b_i(\mu)\ne b_i(\nu)$ for some $i$ with $2\leq i\leq
n$. This yields that $\mu{\downarrow}G_{n,n-1}\ne \nu{\downarrow}G_{n,n-1}$ and proves the
lemma for $k=n-1$. To complete the proof, it remains to apply induction on $n-k$.
\end{proof}

The following lemma is obvious.

\begin{lemma}
\label{degtenz}
Let $M_1$ and $M_2$ be $G_n$-modules. Then
$$\wdeg M_1^{[k_1]}\otimes M_2^{[k_2]}\geq \wdeg M_1 \cdot \wdeg M_2.$$
\end{lemma}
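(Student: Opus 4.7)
The plan is to unpack the definitions of weight degree, Frobenius twist, and tensor product, and exhibit a single weight of $M_1^{[k_1]}\otimes M_2^{[k_2]}$ whose multiplicity is at least $\wdeg M_1 \cdot \wdeg M_2$.

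First I would recall that for any $G_n$-module $M$, the Frobenius twist $M^{[k]}$ has the same underlying vector space, but the torus acts on what was the $\mu$-weight space as the $p^k\mu$-weight space. Consequently $\Lambda(M^{[k]})=\{p^k\mu\mid\mu\in\Lambda(M)\}$ and $\dim(M^{[k]})^{p^k\mu}=\dim M^\mu$ for every $\mu\in\Lambda(M)$. In particular $\wdeg M^{[k]}=\wdeg M$ and the correspondence $\mu\mapsto p^k\mu$ preserves weight multiplicities.

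Next, for any two $G_n$-modules $A$ and $B$, the standard decomposition
\begin{equation*}
(A\otimes B)^\lambda=\bigoplus_{\mu+\nu=\lambda}A^\mu\otimes B^\nu
\end{equation*}
holds, so for each fixed pair $(\mu,\nu)$ the right-hand side contains $A^\mu\otimes B^\nu$ as a direct summand, giving the trivial bound $\dim (A\otimes B)^{\mu+\nu}\geq\dim A^\mu\cdot\dim B^\nu$.

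Now I would choose weights $\mu_i\in\Lambda(M_i)$ achieving the maxima $\dim M_i^{\mu_i}=\wdeg M_i$ for $i=1,2$, and apply the previous two observations to $A=M_1^{[k_1]}$, $B=M_2^{[k_2]}$ at the weight $\lambda=p^{k_1}\mu_1+p^{k_2}\mu_2$. This produces
\begin{equation*}
\wdeg(M_1^{[k_1]}\otimes M_2^{[k_2]})\geq \dim(M_1^{[k_1]})^{p^{k_1}\mu_1}\cdot \dim(M_2^{[k_2]})^{p^{k_2}\mu_2}=\wdeg M_1\cdot\wdeg M_2,
\end{equation*}
which is the required inequality. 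There is no real obstacle here: the only thing worth double-checking is that the Frobenius twist acts on weights as multiplication by $p^k$ (so that a genuine weight space of the twisted module appears at $p^{k_i}\mu_i$), which is immediate from the definition of $\Fr$ as raising torus-element coordinates to the $p$-th power. This is why the authors classify the statement as obvious.
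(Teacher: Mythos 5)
Your proof is correct and is exactly the unpacking of definitions that the authors are implicitly invoking when they state this lemma is obvious (the paper gives no proof at all). The two key observations — that $\Fr^k$ rescales weights by $p^k$ while preserving weight-space dimensions, and that $\dim(A\otimes B)^{\mu+\nu}\geq\dim A^{\mu}\cdot\dim B^{\nu}$ — combine at $\lambda=p^{k_1}\mu_1+p^{k_2}\mu_2$ to give the bound, just as you describe.
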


\section{Modules with small weight multiplicities for groups of type $A$}

In this section $G_n=A_n(K)$. For a module $M$  we assume
that $M^{\otimes 0}$ is the  trivial module. Recall the $\pdeg$ function defined in (\ref{pdegg}).

\begin{lemma}
\label{A1} \begin{itemize}
\item[(i)] Let $M\in\Irr G_n$ and $\pdeg M=d$. Then
$M\in\Irr  V_n^{\otimes d}$. If $N\in\Irr
 V_n^{\otimes d}$, then $\pdeg N\leq d$.

\item[(ii)] Let $M\in\Irr G_n$ and $\pdeg M^*=d$. Then
$M\in\Irr ( V_n^*)^{\otimes d}$. If $N\in\Irr
( V_n^*)^{\otimes d}$, then $\pdeg N^* \leq d$.
\end{itemize}
\end{lemma}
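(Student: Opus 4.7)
\textbf{Plan for the proof of Lemma~\ref{A1}.} The strategy is to pass from $A_n(K)=SL_{n+1}(K)$ to $GL_{n+1}(K)$, where polynomial degree is intrinsic (via the action of the center) and the statement becomes a standard fact about polynomial representations.

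First I would observe that $V_n$ lifts canonically to the natural $GL_{n+1}(K)$-module, which is polynomial of degree~$1$; hence $V_n^{\otimes d}$ is polynomial and homogeneous of degree~$d$. Since polynomial representations of a fixed degree form a Serre subcategory of the category of rational $GL_{n+1}$-modules (equivalently, they are the modules over the Schur algebra $S(n+1,d)$), every $SL_{n+1}$-composition factor $N$ of $V_n^{\otimes d}$ lifts to a polynomial $GL_{n+1}$-module whose highest weight, written in the $\varepsilon_i$-basis, is a partition $\lambda=(a_1\ge a_2\ge\dots\ge a_{n+1}\ge 0)$ of $d$.

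Next I would translate back to the $SL_{n+1}$-highest weight $\omega=\sum_{k=1}^n(a_k-a_{k+1})\omega_k^n$ and apply Abel summation:
\[
\pdeg N=\sum_{k=1}^n k(a_k-a_{k+1})=\sum_{i=1}^{n+1}a_i-(n+1)a_{n+1}=d-(n+1)a_{n+1}\le d,
\]
which gives the second assertion of~(i). For the first assertion, given $M=L(\sum_{k=1}^n b_k\omega_k^n)$ with $\pdeg M=d$, define $\mu_k=b_k+b_{k+1}+\dots+b_n$ for $1\le k\le n$ and $\mu_{n+1}=0$. Then $\mu$ is a partition of $\sum_k kb_k=d$, and the polynomial $GL_{n+1}$-module with highest weight $\mu$ restricts to $M$ on $SL_{n+1}$ (since $\mu_k-\mu_{k+1}=b_k$). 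The Weyl module $V(\mu)$ is a quotient of $V_n^{\otimes d}$ by the standard theory of polynomial representations (cf.\ \cite[Part~II, Ch.~2]{Jan}), and $L(\mu)$ is the head of $V(\mu)$; hence $M\in\Irr V_n^{\otimes d}$.

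Part~(ii) is immediate from~(i) by duality: $(V_n^{\otimes d})^*\cong (V_n^*)^{\otimes d}$, so $N\in\Irr(V_n^*)^{\otimes d}$ iff $N^*\in\Irr V_n^{\otimes d}$, and one applies~(i) to $N^*$ (respectively to $M^*$). The only nonroutine input is the surjection $V_n^{\otimes d}\twoheadrightarrow V(\mu)$ for every partition $\mu$ of $d$ with at most $n+1$ parts; this is the classical Schur--Weyl fact, and if one prefers a self-contained proof it can be obtained by writing down an explicit highest weight vector of weight $\mu$ in $V_n^{\otimes d}$ using a Young tableau of shape $\mu$. The remainder of the argument is pure bookkeeping between $SL_{n+1}$- and $GL_{n+1}$-weights.
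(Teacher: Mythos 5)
Your proposal is correct and follows essentially the same route as the paper: both invoke the standard fact that the Weyl module with highest weight $\omega(M)$ occurs as a subquotient of $V_n^{\otimes d}$ (you cite Jantzen/Schur--Weyl where the paper cites Green), both obtain the bound $\pdeg N\le d$ from the computation $\pdeg N=d-(n+1)a_{n+1}$ (the paper does it directly with $SL_{n+1}$-weights written as $d\omega_1^n-\sum c_i\alpha_i$, you via $GL_{n+1}$-partitions and Abel summation, but it is the identical calculation), and both reduce (ii) to (i) by dualizing $V_n^{\otimes d}$.
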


\begin{proof}
(i) By \cite[Subsection 5.2]{Green}, $V_n^{\otimes d}$ has a submodule isomorphic to the Weyl module with highest weight $\omega(M)$. This yields the first claim of (i).

Recall that $\omega_i^n=\varepsilon^n_1+\ldots+\varepsilon^n_i$,
$\alpha_i=\varepsilon^n_i-\varepsilon^n_{i+1}$ for $1\leq i\leq n$,
and $\varepsilon^n_1+\ldots+\varepsilon^n_{n+1}=0$. This implies that
if $\pdeg N=k$ and $\omega(N)=\sum_{i=1}^n b_i\varepsilon^n_i$, then
$\sum_{i=1}^n b_i=k$. It is clear that each weight $\mu\in\Lambda(V_n^{\otimes d})$ has the form $d\omega^n_1-\sum_{i=1}^n
c_i\alpha_i$ with $c_i\in\mathbb{Z}_{\ge0}$. This yields that $\pdeg
N\leq d$ for $N\in\Irr  V_n^{\otimes d}$ and completes the proof
of (i).

(ii) Take into account that $( V_n^*)^{\otimes d}\cong
( V_n^{\otimes d})^*$.
\end{proof}

Recall the sets $\mathcal{L}_n^d$ and $\mathcal{R}_n^d$ defined in (\ref{Ldn}).

\begin{proposition}
\label{LR} $\mathcal{L}_n^d=\{ M\in \Irr G_n \mid \pdeg M\leq
d \}$, $\mathcal{R}_n^d=\{ M\in \Irr G_n \mid \pdeg M^* \leq d
\}$.
\end{proposition}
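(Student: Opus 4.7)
The plan is to prove this as a direct corollary of Lemma~\ref{A1}, handling each equality by two easy containments.

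For the first equality, I would begin with the inclusion $\mathcal{L}_n^d \subseteq \{ M \in \Irr G_n \mid \pdeg M \leq d\}$. If $M \in \mathcal{L}_n^d$, then by definition $M \in \Irr V_n^{\otimes j}$ for some $j \leq d$. Lemma~\ref{A1}(i) then yields $\pdeg M \leq j \leq d$. For the reverse inclusion, take $M \in \Irr G_n$ with $\pdeg M = e \leq d$. By the first assertion of Lemma~\ref{A1}(i), $M \in \Irr V_n^{\otimes e}$, and since $e \leq d$ this gives $M \in \mathcal{L}_n^d$.

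For the second equality I would argue identically, using Lemma~\ref{A1}(ii) in place of~(i): the inclusion $\mathcal{R}_n^d \subseteq \{M \in \Irr G_n \mid \pdeg M^* \leq d\}$ comes from the bound $\pdeg N^* \leq d$ for $N \in \Irr (V_n^*)^{\otimes d}$, and the reverse inclusion from the fact that if $\pdeg M^* = e \leq d$ then $M \in \Irr (V_n^*)^{\otimes e} \subseteq \mathcal{R}_n^d$.

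Since the entire content is already packaged in Lemma~\ref{A1}, there is no substantive obstacle: the only thing to verify is that $\Irr V_n^{\otimes j} \subseteq \Irr V_n^{\otimes d}$ for $j \leq d$ is \emph{not} needed — one only needs membership in the union $\cup_{j\leq d}\Irr V_n^{\otimes j}$, which is immediate from the definitions. Thus the proof is essentially bookkeeping.
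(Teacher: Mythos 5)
Your proof is correct and follows exactly the same route as the paper, which simply states that the result ``follows immediately from Lemma~\ref{A1}''; you have merely unpacked the two inclusions that the paper leaves implicit.
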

\begin{proof}
This follows immediately from Lemma~\ref{A1}.
\end{proof}

\begin{proposition}
\label{deg} Let $M\in \Irr^p G_n$ and $\omega(M)\notin
\Omega_p(G_n)$. Assume that $\pdeg M\leq n$. Then $\wdeg M\geq
\pdeg M-2$.
\end{proposition}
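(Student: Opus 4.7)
Since $\pdeg M=d\le n$ and $\sum k a_k = d$ with $a_k\ge 0$, we have $a_k=0$ for $k>d$, so $\omega(M)=\sum_{k=1}^{d} a_k\omega_k^n$. The claim is trivial for $d\le 2$, so assume $d\ge 3$. My plan is to associate a partition $\lambda$ of $d$ to $M$ and to translate the weight multiplicity problem into a dimension estimate for simple symmetric group modules $D^\lambda$ via the Schur functor, with a direct combinatorial analysis handling the cases where this translation fails.

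First I would set $\lambda_k=\sum_{j\ge k}a_j$, obtaining a partition $\lambda=(\lambda_1,\lambda_2,\ldots)$ with $\lambda_k-\lambda_{k+1}=a_k$ and $|\lambda|=d$. Then $L(\omega)$ lifts to a polynomial $GL_{n+1}$-module of degree $d$, which by Lemma~\ref{A1} is a composition factor of $V_n^{\otimes d}$. The weight $\omega_d^n=\varepsilon_1+\ldots+\varepsilon_d$ is a $T$-weight of $V_n^{\otimes d}$ because $d\le n$, and the Schur functor identifies the $\omega_d^n$-weight space of $L(\omega)$ with the simple $KS_d$-module $D^\lambda$ when $\lambda$ is $p$-regular, giving
\[
\wdeg M \ge \dim L(\omega)^{\omega_d^n} = \dim D^\lambda.
\]

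The partitions $\lambda$ corresponding to weights in $\Omega_p(A_n)$ are exactly the ``hook/rectangular'' shapes $(a)$, $(1^k)$, $((p-1)^k)$ and $((p-1)^k,a)$ for $0\le a\le p-1$. For all other $p$-regular partitions of size $d$, a standard symmetric-group estimate (cf.\ James' dimension bounds for simple $KS_d$-modules) yields $\dim D^\lambda\ge d-2$. When $\lambda$ is $p$-singular (so that $D^\lambda=0$ and the Schur functor gives no information), I would compute weight multiplicities of $L(\omega)$ directly inside a composition series of a tensor product of truncated symmetric powers of wedge powers of $V_n$: choose a generic weight such as $\varepsilon_1+\ldots+\varepsilon_d$ (whose multiplicity in the ambient module is combinatorially large) and subtract off the contributions from the other composition factors to obtain a lower bound of $d-2$.

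The main obstacle is the symmetric-group dimension bound $\dim D^\lambda\ge d-2$ for all non-excluded $p$-regular partitions of $d$, which is extremally tight---for example, $\dim D^{(2,2)}=2=d-2$ in characteristic $\ne 2,3$. The case analysis of partitions near the excluded hook/rectangular family, together with the direct weight-space computation for $p$-singular $\lambda$ where the Schur functor vanishes, form the technical core of the argument.
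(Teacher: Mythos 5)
Your overall strategy matches the paper's: pass to a weight space via the Schur functor, quote James's dimension bound for simple $KS_d$-modules, and identify the excluded partitions with the set $\Omega_p(A_n)$. However, there is a concrete error in the key step.

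You assert that the Schur functor sends $L(\omega)$ to $D^\lambda$ (with $\lambda_k=\sum_{j\ge k}a_j$) when $\lambda$ is $p$-regular, and to $0$ otherwise, and then you set up a second, unproved branch of the argument to handle ``$p$-singular $\lambda$.'' This is the wrong formula. The correct statement (Green, \textit{Polynomial representations of} $GL_n$, 6.4) is that the Schur functor sends the simple module with partition $\lambda$ to $D^{\lambda'}\otimes\sgn$, which is nonzero precisely when $\lambda'$ is $p$-regular, i.e.\ when $\lambda$ has all column differences $\lambda_k-\lambda_{k+1}=a_k<p$. Since $M\in\Irr^p G_n$ is assumed $p$-restricted, this condition holds automatically, so the Schur functor \emph{never} vanishes here. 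A quick sanity check exposes the error: for $M=L(\omega_k^n)$ with $k\ge p$ your $\lambda=(1^k)$ is $p$-singular, so your formula would predict a zero weight space, yet $\wedge^k V$ has a one-dimensional $\omega_k^n$-weight space on which $S_k$ acts by sign.

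Consequently the ``phantom'' $p$-singular case you propose to handle by a direct weight-multiplicity computation inside tensor products of truncated symmetric powers of wedge powers simply does not occur, and the sketch you offer for it is in any case too vague to constitute a proof. Once the Schur functor formula is corrected, the argument collapses to the paper's: one applies James to $D^{\lambda'}\otimes\sgn$; if its dimension were $<d-2$ it would be trivial or $\sgn$, forcing $\lambda$ to be a single column (fundamental module) or a $((p-1)^k,r)$-shape (truncated symmetric power), both of which are excluded by $\omega(M)\notin\Omega_p(G_n)$. The paper also first restricts to the Smith factor for $H=SL_d$ and invokes Proposition~\ref{deg*} to transfer the bound from $N$ to $M$; this step is implicit in your weight-space inequality but worth making explicit so that the Schur functor is literally applied to a degree-$d$ polynomial $GL_d$-module.
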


\begin{proof}
Put $d=\pdeg M$ and $H=G_n(1,\ldots,d-1)$. Then $H\cong SL_d(K)$.
Note that $d>1$ as $\omega(M)\notin \Omega_p(G_n)$. Let $\omega(M)=\sum_{i=1}^n a_i\omega^n_i$. Since
$\omega(M)$ is not  fundamental, one easily observes that $a_j=0$ for $j>d-1$. Denote
by $N$ the Smith factor of $M$ associated with $H$ (see \ref{Sm}). It is clear that
$\pdeg M=\pdeg N=d$.

Now we can apply the Schur functor to the $H$-module $N$. Let $M(d,d)$ be the
category of the polynomial $GL_d(K)$-modules over $K$ which are
homogeneous of degree $d$, $\Sigma_d$ be the symmetric group of
degree $d$, and let $K\Sigma_d -\mod$ be the category of
$K\Sigma_d$-modules.
The Schur functor
\[
\mathcal{S}_d:M(d,d)\to K\Sigma_d -\mod
\]
sends a module $V\in M(d,d)$ to $V^0$ where $V^0$ is the
$(1,\dots,1)$-weight subspace in $V$ \cite[Chapter
6]{Green}. Alternatively, one can regard  $V$ as an $SL_d(K)$-module and define
$\mathcal{S}_d(V)$ as  the $0$-weight subspace of  $V$.

Let $\lambda=b_1\varepsilon^{d-1}_1+\ldots+b_d\varepsilon^{d-1}_d$ be the highest
weight of $N$. Note
that $b_1\geq\dots\geq b_d\geq 0$ and $b_1+\dots+b_d=d$. Hence
$\lambda=(b_1,\ldots,b_d)$ is a partition of $d$. The functor $\mathcal{S}_d$ is exact
and by \cite[6.4]{Green},
\[
\mathcal{S}_d(N)\cong D^{\lambda'}\otimes {\rm sgn}
\]
where $D^{\lambda'}$ is the irreducible $\Sigma_d$-module
corresponding to the partition $\lambda'$ dual to $\lambda$,
and $\sgn$ is the sign module for  $\Sigma_d$. Hence by Proposition~\ref{deg*},
$\wdeg M\geq\wdeg N\geq \dim D^{\lambda'}$.  If $\wdeg N<d-2$, then \cite{James} implies that $D^{\lambda'}\otimes {\rm sgn}$ is equal
to the trivial module  or $\sgn$. So $D^{\lambda'}$ is the trivial module  or $\sgn$ in
this case.  If $D^{\lambda'}$ is trivial,  then
its diagram is the row of $d$ boxes, therefore the diagram for $\lambda$ is the column of $d$ boxes and $N$ and $M$ are fundamental
modules  (recall that their highest weights are determined by the same formula).
By~\cite[Section~5, Example]{Klesch}, if $d=k(p-1)+r$ with $0\leq r<p-1$, then the  diagram for
$\sgn$ consists of $r$ rows of length $k+1$ and $p-1-r$ rows of
length $k$. In this case $\lambda$  has the diagram of $k$ rows of length $p-1$ and 1 row
of length $r$ and so $\omega(N)=(p-1-r)\omega^{d-1}_k+r\omega^{d-1}_{k+1}$
which implies  that $N$ and $M$ are truncated symmetric powers of the natural
modules. In both cases  $\omega(M)\in\Omega_p(G_n)$ which yields a contradiction. Hence
$\wdeg M\geq\wdeg N\geq d -2$.
\end{proof}

\begin{lemma}
\label{wdegVd}
Let $n\geq d$. Then $\wdeg V_n^{\otimes d}=d!$.
\end{lemma}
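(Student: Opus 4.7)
The plan is a direct count using the natural weight basis of the tensor power. First I would fix a basis $v_1,\ldots,v_{n+1}$ of $V_n$ with $\omega(v_i)=\varepsilon_i^n$. Then the tensors $v_{i_1}\otimes\cdots\otimes v_{i_d}$ with $1\le i_j\le n+1$ form a weight basis of $V_n^{\otimes d}$, and such a tensor has weight $\varepsilon_{i_1}^n+\cdots+\varepsilon_{i_d}^n$. Grouping these basis vectors by their multiplicity tuple $(c_1,\ldots,c_{n+1})$ with $c_i\in\mathbb{Z}_{\ge 0}$ and $\sum_i c_i=d$, the number of simple tensors giving the (formal) weight $\sum_i c_i\varepsilon_i^n$ is the multinomial coefficient $d!/(c_1!\cdots c_{n+1}!)$.

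Next I would verify that distinct tuples $(c_1,\ldots,c_{n+1})$ with $\sum_i c_i=d$ yield distinct weights of $G_n=A_n(K)$, so that the weight spaces just described do not merge upon restriction of characters from $GL_{n+1}(K)$ to $A_n(K)$. If two such tuples produced the same $A_n$-weight, their difference would lie in the kernel of that restriction, which is spanned by $\varepsilon_1^n+\cdots+\varepsilon_{n+1}^n$; but the coordinate sum of the difference equals $d-d=0$, so the difference must be zero. Consequently $\dim(V_n^{\otimes d})^{\sum_i c_i\varepsilon_i^n}=d!/(c_1!\cdots c_{n+1}!)$ for every admissible tuple.

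Finally I would maximize $d!/(c_1!\cdots c_{n+1}!)$ over the allowed tuples. Any tuple with some $c_i\ge 2$ has denominator at least $2$, so cannot be maximal; conversely, every tuple with $c_i\in\{0,1\}$ has denominator $1$ and hence multinomial coefficient $d!$. Such tuples exist exactly when we can set $d$ of the $n+1$ coordinates equal to $1$, i.e.\ when $n+1\ge d$, which follows from the hypothesis $n\ge d$. This yields $\wdeg V_n^{\otimes d}=d!$. The argument is routine; the only point requiring any care is the distinct-weights check in the second paragraph, since the multinomial count a priori records dimensions of $GL_{n+1}$-weight spaces rather than of $A_n$-weight spaces.
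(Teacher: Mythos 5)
Your proof is correct and follows the same approach as the paper: enumerate weight spaces of $V_n^{\otimes d}$ by multiplicity tuples, compute each dimension as the multinomial coefficient $d!/(c_1!\cdots c_{n+1}!)$, and observe that the maximum $d!$ is attained exactly when all $c_i\in\{0,1\}$, which is possible since $n\geq d$. The one point you make explicit that the paper's proof leaves implicit is the verification that distinct multiplicity tuples with fixed sum $d$ restrict to distinct $A_n(K)$-weights, so the $GL_{n+1}(K)$-weight spaces do not merge on passage to $SL_{n+1}(K)$; your kernel argument (the difference lies in $\mathbb{Z}(\varepsilon_1^n+\cdots+\varepsilon_{n+1}^n)$ but has coordinate sum zero) settles this cleanly.
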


\begin{proof} Set  $T=V_n^{\otimes d}$.
Note that each weight $\lambda$ of $T$ is of the shape
$\lambda=b_1\varepsilon^n_1+\dots+b_d\varepsilon^n_d$ where
$(b_1,\ldots,b_d)$ runs over all $b_i\ge 0$ with $b_1+\dots+b_d=d$
and $\dim T^\lambda=\frac{d!}{b_1! b_2! \ldots b_d!}\le d!$. On
the other hand, for $\lambda=\varepsilon^n_1+\dots+\varepsilon^n_d$,
this dimension is exactly $d!$. Therefore, $\wdeg V_n^{\otimes
d}=d!$.
\end{proof}

Recall the $G_n$-modules 
$M_{n,L}(a_1,\ldots,a_d)=L(a_1\omega_1^n+\ldots+a_d\omega_d^n)$
and
$M_{n,R}(a_1,\ldots,a_d)=L(a_d\omega_{n-d+1}^n+\ldots+a_1\omega_n^n)$ 
($n\geq d$) defined in the Introduction.

\begin{lemma}
\label{Hn}
Let $n\geq d$ and $M_n=M_{n,L}(a_1,\ldots,a_d)$ or $M_{n,R}(a_1,\ldots,a_d)$. Set $H_{n,L}=G_{n+1}(1,\ldots,n)$ and $H_{n,R}=G_{n+1,n}$. Then $M_n$ is isomorphic to the Smith factor of $M_{n+1}$ with respect to the subgroup $H_{n,L}$ or $H_{n,R}$ for $M_n=M_{n,L}(a_1,\ldots,a_d)$ or $M_{n,R}(a_1,\ldots,a_d)$, respectively. In particular, $M_n\in \Irr_n M_{n+1}$.
\end{lemma}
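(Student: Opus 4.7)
The plan is to invoke the Jantzen--Smith theorem (Theorem~\ref{Sm}) and to verify that the resulting Smith factor has the desired highest weight. First, I fix the natural identifications $H_{n,L}\cong G_n$ via $\alpha_i\leftrightarrow\alpha_i^n$ for $1\le i\le n$, and $H_{n,R}\cong G_n$ via $\alpha_{i+1}\leftrightarrow\alpha_i^n$ for $1\le i\le n$; the latter is the convention already fixed in the Introduction for $G_{n+1,n}$.

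Next, I would compute the restriction of the fundamental weights of $G_{n+1}$ to these two subgroups. Realising $G_{n+1}=SL_{n+2}$ with its diagonal maximal torus and using $\omega_i^{n+1}=\varepsilon_1^{n+1}+\ldots+\varepsilon_i^{n+1}$, the evaluation of $\langle\omega_j^{n+1}{\downarrow}H,(\alpha_i^n)^\vee\rangle$ against the chosen simple coroots of $H$ yields $\omega_i^{n+1}{\downarrow}H_{n,L}=\omega_i^n$ for $1\le i\le n$ and $\omega_i^{n+1}{\downarrow}H_{n,R}=\omega_{i-1}^n$ for $1\le i\le n+1$, with the convention $\omega_0^n=0$. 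Since $d\le n$, these formulas apply to every index appearing in $\omega(M_{n+1,L})=\sum_{i=1}^d a_i\omega_i^{n+1}$ and in $\omega(M_{n+1,R})=\sum_{i=1}^d a_i\omega_{n+2-i}^{n+1}$, and the restrictions come out to be exactly $\omega(M_{n,L}(a_1,\ldots,a_d))$ and $\omega(M_{n,R}(a_1,\ldots,a_d))$ respectively. Combined with Theorem~\ref{Sm}, this identifies the Smith factor with $M_n$ in each case.

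Finally, the clause $M_n\in\Irr_n M_{n+1}$, where $\Irr_n$ refers to restriction to $G_{n+1,n}=H_{n,R}$, is immediate in the $R$ case, since the Smith factor is a direct summand of $M_{n+1,R}{\downarrow}H_{n,R}$. In the $L$ case, I would note that $H_{n,L}$ and $H_{n,R}$ are conjugate in $G_{n+1}$ by a representative of the transposition $(1,n+2)$ in the Weyl group $S_{n+2}$; tracing this conjugation through the two natural identifications shows that the induced automorphism of $G_n$ acts as a cyclic permutation of its diagonal torus, i.e.\ as a Weyl group (hence inner) automorphism. Consequently $M_{n+1,L}{\downarrow}H_{n,L}$ and $M_{n+1,L}{\downarrow}H_{n,R}$ have the same composition factors as $G_n$-modules, which gives $M_{n,L}\in\Irr_n M_{n+1,L}$. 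The only mild piece of care required is precisely this bookkeeping -- checking that the two identifications differ by an inner rather than an outer (graph) automorphism; apart from that, the argument is a direct application of Theorem~\ref{Sm}.
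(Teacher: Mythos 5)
Your proof is correct and follows the same route the paper intends; the paper's own proof is just the one-liner ``This follows directly from Theorem~\ref{Sm}.'' You fill in the two things that are actually being used: the restriction of fundamental weights to $H_{n,L}$ and $H_{n,R}$, and, in the $L$ case, the fact that $H_{n,L}$ is conjugate in $G_{n+1}$ to $G_{n+1,n}=H_{n,R}$ with the induced identification of $G_n$ with itself differing from the standard one by an inner (Weyl group) automorphism rather than the graph automorphism --- a point the paper leaves implicit here and elsewhere (cf.\ the conjugacy step in the proof of Lemma~\ref{o1}(ii)). That last check is genuinely necessary, since for $A_n$ an outer twist would replace $M_{n,L}$ by its dual $M_{n,R}$; your verification that the composite acts on the $\varepsilon_i^n$ as a cyclic permutation settles it.
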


\begin{proof}
This follows directly from Theorem~\ref{Sm}.
\end{proof}

\begin{proposition}
\label{1.1} Let $n\geq d$ and
$M=L(a_1\omega_1^n+\ldots+a_d\omega_d^n)\in\mathcal{L}_n^d$ or
$M=L(a_d\omega_{n-d+1}^n+\ldots+a_1\omega_n^n)\in\mathcal{R}_n^d$.
Then $\wdeg M\leq d!$. Moreover, $\wdeg M$ is determined by the sequence $(a_1,\ldots,a_d)$ and
does not depend on $n$.
\end{proposition}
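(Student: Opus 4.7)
The two assertions decouple. For the bound $\wdeg M\le d!$, using $\wdeg M=\wdeg M^*$ I reduce to $M\in\mathcal{L}_n^d$. Then Proposition~\ref{LR} gives $D:=\pdeg M\le d$, and Lemma~\ref{A1}(i) realises $M$ as a composition factor of $V_n^{\otimes D}$. Since weight-space dimensions are additive along a composition series, $\wdeg M\le\wdeg V_n^{\otimes D}=D!\le d!$, the middle equality being Lemma~\ref{wdegVd} (applicable as $n\ge d\ge D$).

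For the independence of $\wdeg M$ on $n$, I work with $M_n=M_{n,L}(a_1,\ldots,a_d)$ (the $\mathcal{R}$-case follows by duality) and aim at $\wdeg M_{n+1}=\wdeg M_n$ for every $n\ge d$. Lemma~\ref{Hn} identifies $M_n$ as the Smith factor of $M_{n+1}$ with respect to $H_{n,L}\cong G_n$, so Theorem~\ref{Sm} makes it a direct summand of $M_{n+1}{\downarrow}H_{n,L}$, and the analogue of Proposition~\ref{deg*} for the Levi $H_{n,L}$ (filter by the coefficient of $\alpha_{n+1}$ in place of $\alpha_1$) supplies the easy half $\wdeg M_n\le\wdeg M_{n+1}$. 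For the reverse, let $\mu=\sum_i\mu_i\varepsilon_i^{n+1}$ be a polynomial weight of $M_{n+1}$ of maximal multiplicity; since $\sum\mu_i=D\le d\le n$, at most $n+1$ entries $\mu_i$ are nonzero, so after an $S_{n+2}$-permutation (preserving weight multiplicities) I may assume $\mu_{n+2}=0$ and set $\mu'=(\mu_1,\ldots,\mu_{n+1})$.

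The crux is to show $\dim M_{n+1}^\mu=\dim M_n^{\mu'}$. My plan is to identify the ``last-coordinate-zero'' subspace
\[
S=\bigoplus_{\nu:\,\nu_{n+2}=0} M_{n+1}^\nu
\]
with $M_n$ as an $H_{n,L}$-module. That $S$ is $H_{n,L}$-stable (since $\alpha_1,\ldots,\alpha_n$ do not involve $\varepsilon_{n+2}^{n+1}$), contains the highest weight vector $v^+$ (as $\ell(\lambda)\le d\le n+1$), and contains the Smith factor $M_n$ as a direct summand is immediate. The main obstacle is the reverse inclusion $S\subseteq M_n$. I would obtain it from the polynomial-character stabilisation
\[
\chi_\lambda(x_1,\ldots,x_{n+1},0)=\chi_\lambda(x_1,\ldots,x_{n+1}),
\]
valid for the simple polynomial character of $GL_{n+2}$ whenever $\ell(\lambda)\le n+1$; this is a standard consequence of the stability of Schur functions together with the stabilisation of decomposition numbers for Schur algebras. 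Granting it, $S$ and $M_n$ have equal $H_{n,L}$-characters, and the direct-summand inclusion then forces $S=M_n$, yielding $\dim M_{n+1}^\mu=\dim M_n^{\mu'}\le\wdeg M_n$ and closing the induction.
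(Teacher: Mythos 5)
Your argument is correct, but it takes a genuinely different route from the paper's, and the one nontrivial input you invoke is heavier than necessary. For the bound $\wdeg M\le d!$ your reasoning coincides with the paper's. For independence on $n$ the paper does \emph{not} proceed inductively. Since $\Lambda(M)\subset\Lambda(V_n^{\otimes j})$ for $j=\pdeg M$, every dominant $\lambda\in\Lambda(M)$ has the form $\lambda=\omega(M)-\sum_{i=1}^{j-1}d_i\alpha_i$ with $d_i\ge 0$; hence $M^\lambda$ lies in the Smith factor $M_S$ for the \emph{left} Levi $G_n(1,\dots,j-1)\cong G_{j-1}$, and $\dim M^\lambda=\dim M_S^{\lambda_S}$ with $\lambda_S=\lambda{\downarrow}G_n(1,\dots,j-1)$. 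As the maximal multiplicity is always attained at a dominant weight and $\wdeg M_S\le\wdeg M$ by Proposition~\ref{deg*}, this gives $\wdeg M=\wdeg M_S$ in a single step; and $M_S$, being the simple $G_{j-1}$-module with highest weight $a_1\omega_1^{j-1}+\dots+a_{j-1}\omega_{j-1}^{j-1}$, manifestly does not depend on $n$. The only tool beyond Lemma~\ref{A1}, Lemma~\ref{wdegVd} and Proposition~\ref{deg*} is Theorem~\ref{Sm}.

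Your proof instead descends one rank at a time through the right Levi $H_{n,L}$ and wants to show $S=\bigoplus_{\nu:\,\nu_{n+2}=0}M_{n+1}^\nu$ equals the Smith factor $M_n$. The structure is sound, but invoking stability of simple polynomial $GL_m$-characters (Green's deletion/truncation for Schur algebras) is more machinery than the situation demands. Indeed $S$ is exactly $M_{n+1}^{U_P}$ for the parabolic $P$ with Levi $H_{n,L}$: if $\nu$ has zero $\varepsilon^{n+1}_{n+2}$-coefficient and $\alpha\in R(U_P)$, then $\nu+k\alpha\not\le\omega(M_{n+1})$ for any $k\ge1$ (the coefficient of $\alpha_{n+1}$ becomes positive), so every divided power $e_\alpha^{(k)}$ annihilates $M_{n+1}^\nu$; and Smith's theorem in its original form asserts $M_{n+1}^{U_P}$ is irreducible over $H_{n,L}$, whence the nonzero submodule $M_n\subseteq S$ already forces $S=M_n$. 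This replaces your character-stabilisation step by the same ingredient the paper already uses. So your plan is valid and would close the argument, but it is an inductive detour that the paper's direct reduction to the rank-$(j-1)$ Levi avoids entirely.
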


\begin{proof}
Let $M\in\mathcal{L}_n^d$. We have $\pdeg M=j\leq d$ by
Lemma~\ref{A1}(i). Set $T= V_n^{\otimes j}$. Observe that
$M\in\Irr T$ by the same
lemma. Therefore $\wdeg M\le j!\le d!$ by Lemma \ref{wdegVd}.

Let $\lambda\in\Lambda(M)$ be dominant. As
$\lambda\in\Lambda(T)$, we have
$\lambda=b_1\varepsilon^n_1+\ldots+b_j\varepsilon^n_j$ with
$b_1\geq\ldots\geq b_j\geq 0$, $b_i\in\mathbb{Z}_{\ge0}$,  and
$b_1+\ldots+b_j=j$. Set $\omega=\omega(M)$. Then
$\lambda=j\omega_1^n-\sum_{i=1}^{j-1}
c_i\alpha_i=\omega-\sum_{i=1}^{j-1} d_i\alpha_i$ with $c_i$,
$d_i\in\mathbb{Z}_{\ge0}$. Denote by  $M_S$ the Smith factor of  $M$
associated with the subgroup $G_n(1,\ldots,j-1)\cong G_{j-1}$. By
Theorem~\ref{Sm},  $\dim M^{\lambda}=\dim M_S^{\lambda_S}$ for the
weight $\lambda_S=\lambda{\downarrow}G_n(1,\ldots,j-1)$. Since each weight in
$\Lambda(M)$ lies in the same orbit with a dominant weight under
the action of the Weyl group, we conclude that $\wdeg M=\wdeg M_S$
and hence does not depend on $n$.
To handle the case $M\in\mathcal{R}_n^d$,  consider $M^*$.
\end{proof}

\begin{lemma}
\label{Kle} Let  $1\leq j<k\leq n$, and let $\omega=\sum_{s=j}^k
a_s\omega_s^n$ be a dominant $p$-restricted weight of $G_n$ with
both $a_j$ and $a_k\ne 0$. Then
\[
\wdeg L(\omega)\geq k-j.
\]
\end{lemma}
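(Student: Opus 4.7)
\emph{Plan.} The idea is first to reduce to a rank-$m$ problem via a Smith factor, with $m=k-j+1$, and then to exhibit $m-1$ linearly independent vectors in a single weight space of the reduced module.

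I would begin by applying Theorem~\ref{Sm} to the Levi subgroup $H=G_n(\alpha_j,\alpha_{j+1},\ldots,\alpha_k)\cong A_{k-j+1}(K)$. The Smith factor $N=KHv^+\subseteq L(\omega)$ is an irreducible $H$-module whose highest weight, written in the fundamental weights $\tilde{\omega}_1,\ldots,\tilde{\omega}_m$ of $H$, equals $a_j\tilde{\omega}_1+a_{j+1}\tilde{\omega}_2+\cdots+a_k\tilde{\omega}_m$, with nonzero coefficients at both ends. By Proposition~\ref{deg*}, $\wdeg L(\omega)\geq \wdeg N$, so, relabeling $b_i=a_{i+j-1}$, the problem reduces to the following: if $\omega'=b_1\omega_1+\cdots+b_m\omega_m$ is a $p$-restricted dominant weight of $A_m(K)$ with $b_1,b_m\neq 0$, then $\wdeg L(\omega')\geq m-1$.

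For the reduced statement I would focus on the weight $\mu=\omega'-\theta$, where $\theta=\alpha_1+\cdots+\alpha_m$ is the highest root of $A_m$. Writing $\beta_{p,q}=\alpha_p+\cdots+\alpha_q$ and letting $y_\gamma$ denote a Chevalley root vector for $-\gamma$, I would consider the $m-1$ vectors
\[
v_i=y_{\beta_{1,i}}\,y_{\beta_{i+1,m}}\,v^+\in L(\omega')^{\mu},\qquad i=1,\ldots,m-1,
\]
and prove their linear independence. Using the standard commutator relations in $A_m$, one gets $e_{\beta_{1,i}}v_i=(b_1+\cdots+b_i+1)\,y_{\beta_{i+1,m}}v^+$ and $e_{\beta_{1,i}}v_j=\pm y_{\beta_{i+1,m}}v^+$ for $j<i$, while for $j>i$ the vector $e_{\beta_{1,i}}v_j=\pm y_{\beta_{i+1,j}}y_{\beta_{j+1,m}}v^+$ lies in the span of analogous ``rank-lowering'' vectors attached to the Levi $G(\alpha_{i+1},\ldots,\alpha_m)$. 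This yields a triangular system from which linear independence can be extracted.

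The main obstacle is that the scalar $b_1+\cdots+b_i+1$ can vanish modulo~$p$, so the triangular system may degenerate. To handle this I would proceed by induction on $m$, using the hypotheses $b_m\ne 0$ and (symmetrically) $b_1\ne 0$ at each step. Applying $e_{\alpha_m}$ yields $e_{\alpha_m}v_{m-1}=b_m\,y_{\beta_{1,m-1}}v^+$, a nonzero multiple, together with $e_{\alpha_m}v_i=\pm y_{\beta_{1,i}}y_{\beta_{i+1,m-1}}v^+$ for $i\le m-2$, which allows one to peel off $v_{m-1}$ and invoke the inductive hypothesis on the Smith factor for the Levi $G(\alpha_1,\ldots,\alpha_{m-1})$. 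By alternating with the symmetric reduction via $e_{\alpha_1}$ and $G(\alpha_2,\ldots,\alpha_m)$, one always has a nonzero scalar available at the next stage; this alternation, and hence both endpoint conditions $a_j\ne 0$ and $a_k\ne 0$, is what makes the argument go through in all characteristics.
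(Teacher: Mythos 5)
Your approach is genuinely different from the paper's. The paper does not do any hands-on linear algebra at all: it writes $\omega=a_j\omega^n_j+a_{i_1}\omega^n_{i_1}+\cdots+a_{i_t}\omega^n_{i_t}+a_k\omega^n_k$ with $a_{i_s}\neq0$, invokes Kleshchev's lower bound $\wdeg L(\omega)\geq f(j,i_1,\ldots,i_t,k)$ from \cite[Proposition~1.21]{KleschA}, and then proves $f(j,i_1,\ldots,i_t,k)\geq k-j$ by a short numerical induction on $t$ using only the recurrence defining $f$. Your plan, by contrast, reduces to a Levi via a Smith factor and tries to exhibit $m-1$ explicitly constructed vectors in the weight space $\omega'-\theta$ and prove their linear independence directly.

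Your reduction step and your computations of $e_{\beta_{1,i}}v_j$ are correct, but the linear independence argument as outlined has a real gap, and I do not see how the ``alternating'' strategy repairs it. The problem is not just the vanishing of $b_1+\cdots+b_i+1$ mod $p$, which you flag; it is that the rescue via $e_{\alpha_m}$ produces the system
\[
(\pm\lambda_0+b_m\lambda_{m-1})\,y_{\beta_{1,m-1}}v^+ + \sum_{i\leq m-2}\pm\lambda_i\,y_{\beta_{1,i}}y_{\beta_{i+1,m-1}}v^+ = 0,
\]
which lives in the Levi $A_{m-1}(K)$-Smith factor and involves $m-1$ vectors: the $m-2$ Levi analogues $v'_i$ of your $v_i$ \emph{plus} the extra vector $y_{\theta'}v^+$ with $\theta'$ the Levi highest root. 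The inductive hypothesis (independence of the $v'_i$ alone) says nothing about this enlarged set. Worse, the hypothesis as you state it requires \emph{both} endpoint coefficients of the Levi weight to be nonzero, but after restricting, the right-hand coefficient becomes $b_{m-1}$, which may well be $0$; so the hypothesis does not even apply. Trying to strengthen the inductive claim to ``the span of $\{y_{\theta}v^+,v_1,\ldots,v_{m-1}\}$ has dimension $\geq m-1$ under just $b_1\neq0$'' and chasing through what $e_{\alpha_m}$ gives you, one only gets a kernel of dimension $\leq 2$ and hence $\dim \geq m-2$, one short of what is needed. Switching ends via $e_{\alpha_1}$ does not fix this, because the same ``extra vector'' phenomenon recurs and you lose one dimension each time you peel; you need an extra idea to prevent this loss, and the proposal does not supply one. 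This is precisely the kind of delicate positive-characteristic independence question that the paper avoids by citing Kleshchev's result wholesale.
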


\begin{proof} Write
$\omega=a_j\omega^n_j+a_{i_1}\omega^n_{i_1}+\ldots+a_{i_t}\omega^n_{i_t}+a_k\omega^n_k$
with $j<i_1<\ldots<i_t<k$ and $a_{i_1},\ldots, a_{i_t}\neq0$ ($t$
can be zero).  By~\cite[Proposition~1.21]{KleschA}, $\wdeg
L(\omega)\geq f(j,i_1, \ldots,i_t,k)$, where for $l$-tuples
$(u_1,\ldots, u_l)$ with $u_1<\ldots<u_l$ the integers
$f(u_1,\ldots, u_l)$ are determined  by the following recurrent
relations:
\begin{eqnarray*}
 f(u_1)&=& 1;\\
 f(u_1,u_2) &=& u_2-u_1;\\
 f(u_1,u_2,\ldots,u_l) &=& (u_2-u_1)f(u_2,\ldots,u_l)+ f(u_3,\ldots,
u_l) \quad \mathrm{for} \ l>2.\\
\end{eqnarray*}
We claim that  $f(j,i_1, \ldots,i_t,k)\geq k-j$. For $t=0$ this
holds by definition. Then apply induction on $t$. Let $t>0$. One
easily concludes that $f(u_1,\ldots,u_l)\geq1$ for all positive
integers $u_1,\ldots,u_l$. Now the induction hypothesis yields
that
$$
f(j,i_1, \ldots,i_t,k)=(i_1-j)f(i_1, \ldots,i_t,k)+f(i_2,
\ldots,i_t,k)\geq(i_1-j)(k-i_1)+1.
$$
(For $t=1$ we have $f(j, i_1,
k)=(i_1-j)(k-i_1)+1$.)  Note that $ab\geq a+b$ for $a$ and
$b\in\mathbb{N}$ and $a$, $b>1$. Hence $ab+1\geq a+b$ for all $a$
and $b\in\mathbb{N}$. This yields our claim and completes the
proof.
\end{proof}

Propositions~\ref{deg} and~\ref{1.1} imply that for groups of type
$A_n$ there exist classes of simple modules $M$ with
$\wdeg M$ arbitrary large,  but small with respect to $n$. Note that
for a generic simple $p$-restricted module $\wdeg M$
grows with the growth of $n$.

\begin{proposition}
\label{Cp} Let $M\in \Irr^p G_n$, $\omega(M)\notin \Omega_p
(G_n)$, and $n\geq16$. Assume $\pdeg M>n$ and $\pdeg M^*>n$. Then
$\wdeg M>\sqrt{n}/p -1$.
\end{proposition}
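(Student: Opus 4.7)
The plan is a proof by contradiction using the Schur functor, in analogy with the proof of Proposition~\ref{deg}. Assume $\wdeg M \le \sqrt{n}/p - 1$. Writing $\omega(M) = \sum a_i \omega_i^n$ with $0 \le a_i \le p - 1$, and letting $j^*, i^*$ be the smallest and largest indices with $a_i \ne 0$, Lemma~\ref{Kle} gives $i^* - j^* \le \wdeg M$, so $L := i^* - j^* + 1 \le \sqrt{n}/p$. Using $a_i \le p - 1$, the bound $\pdeg M > n$ yields $(p - 1) L\, i^* \ge \pdeg M > n$, whence $i^* > p\sqrt{n}/(p - 1) \ge \sqrt{n}$; symmetrically $\pdeg M^* > n$ forces $n + 1 - j^* > \sqrt{n}$, and a slightly sharper version of these estimates (using $\pdeg M \le (p-1)L(j^*+i^*)/2$ and its dual) forces both $j^*$ and $n + 1 - i^*$ to be of order at least $\sqrt{n}$. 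Thus the support of $\omega(M)$ lies well inside the Dynkin diagram.

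I would then choose a shifted Levi $H = G_n(l, l + 1, \ldots, l + k - 2) \cong SL_k$ with $l \le j^*$ and $l + k - 2 \ge i^*$, so that the Smith factor $N$ of $M$ with respect to $H$ (via Theorem~\ref{Sm}) captures the whole support. The integer $k$ and shift $l$ are chosen so that $d := \pdeg N$ satisfies (a) $d \le k$, enabling the Schur functor $\mathcal{S}_{d, k}$ applied to $N$ (viewed as a polynomial $GL_k$-module of degree $d$) to have a nonzero image, and (b) $d > \sqrt{n}/p + 1$. Applying $\mathcal{S}_{d, k}$ as in Proposition~\ref{deg} produces a $\Sigma_d$-module isomorphic to $D^{\lambda'} \otimes \sgn$, where $\lambda = \omega(N)$ is viewed as a partition of $d$. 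The hypothesis $\omega(M) \notin \Omega_p(A_n(K))$ rules out $\lambda$ being a single row or column (which would correspond exactly to fundamental weights or truncated symmetric powers), so by James~\cite{James}, $\dim D^{\lambda'} \ge d - 2$. Combining, $\wdeg M \ge \wdeg N \ge d - 2 > \sqrt{n}/p - 1$, the first inequality following from Proposition~\ref{deg*} applied along a chain of natural embeddings between $H$ and $G_n$. This contradicts the initial assumption.

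The main obstacle is handling the degenerate situations where the support is so concentrated that a Smith factor satisfying both (a) and (b) is hard to find --- most notably $L = 1$ with $\omega(M) = c\,\omega_{j^*}^n$ and $c \in [2, p - 2]$, or $L = 2$ with $\omega(M) = \omega_{j^*}^n + \omega_{i^*}^n$. In these subcases the degree constraints $\pdeg M > n$ and $\pdeg M^* > n$ force $\pdeg M$ to be very close to $n + 1$ (indeed $j^* + i^* = n+1$ in the second subcase), and one can then apply the Schur functor directly to $M$ as a polynomial $GL_{n + 1}$-module of degree $\pdeg M \le n + 1$. The resulting partition $\lambda'$ is a hook or two-row shape for which $\dim D^{\lambda'}$ is known to grow at least linearly in $n$, so the bound $\wdeg M > \sqrt{n}/p - 1$ holds once $n \ge 16$, completing the contradiction.
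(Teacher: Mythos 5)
Your outline follows the paper's proof in its essentials: bound the spread $i^* - j^*$ via Lemma~\ref{Kle}, locate a Smith factor $N$ whose polynomial degree falls in a window where the Schur functor argument of Proposition~\ref{deg} applies, and invoke James's dimension bound. So the core strategy is the same; the issues lie in how the Smith factor is located and in a genuine error in the proposed fallback.

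The paper's choice of Levi is cleaner and already removes your existence worries. After passing to $M^*$ to ensure $j^* + i^* \le n+1$, it uses the full right tail $H_s = G_n(s, \ldots, n)$ and varies only $s$. Then the rank $n - s + 1 > n/2$ automatically for every $s \le j^*$, while $\pdeg L_s$ decreases from $\pdeg M > n$ at $s = 1$ down to $\pdeg L_{j^*} \le ka \le n(p-1)/p^2 < n/2$ in steps of size $a = \sum a_t < \sqrt{n}$; so the minimal $s$ with $\pdeg L_s \le n/2$ has $\pdeg L_s > n/2 - a > n/2 - \sqrt{n}$, and Proposition~\ref{deg} gives $\wdeg M \ge \pdeg L_s - 2 > n/2 - \sqrt{n} - 2 \ge \sqrt{n} - 2 > \sqrt{n}/p - 1$ for $n \ge 16$. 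This works uniformly, including the ``degenerate'' cases you flag, so no fallback is needed.

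The fallback you propose, moreover, contains a real gap: the assertion $\pdeg M \le n+1$ is false in general. Since $\pdeg M + \pdeg M^* = (n+1)\sum a_t$, the equality $\pdeg M = n+1$ is forced only when $\sum a_t = 2$; for $\sum a_t \ge 3$ even $\min(\pdeg M, \pdeg M^*)$ can exceed $n+1$. Concretely, take $p = 7$, $n = 100$, $\omega(M) = 3\omega_{50}^{100}$: then $\pdeg M = 150$ and $\pdeg M^* = 153$, both exceed $n$, and $\omega(M) \notin \Omega_7(A_{100}(K))$, yet $\pdeg M > n+1 = 101$, so $M$ is not a polynomial $GL_{n+1}$-module of degree at most $n+1$ and the Schur functor step cannot be performed. (The full-right-tail argument handles this example: with $s = 35$ one finds $\pdeg L_{35} = 48 \le n/2$ and $\wdeg M \ge 46$.)
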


\begin{proof}
Let $\omega=\sum^j_{t=i}a_t\omega^n_t$ with $a_ia_j\neq0$,  $1\leq
i\leq j\leq n$. Due to Lemma~\ref{Kle} one can assume that
$j-i\leq \sqrt{n}/p -1$ (otherwise $\wdeg L(\omega)\geq j-i
>\sqrt{n}/p -1$ as required). Put $k=j-i+1$ and
$a=\sum^j_{t=i}a_t$.  Then $k\leq \sqrt{n}/p$ and
\begin{equation}
\label{eq*} a\leq k(p-1)< \sqrt{n}.
\end{equation}
Passing to $M^*$ if necessary, one can assume $i-1\leq
n-j$. For $1\leq i\leq s$ denote by $H_s$  the subgroup
$G_n(s,\ldots,n)\cong A_{n-s+1}(K)$.  So $H_1=G$ and the rank of $H_s$
is equal to $n-s+1>n/2$ for all $s\leq i$.

Let $L_s$ be the Smith factor of $L(\omega)$ with respect to $H_s$. Then
$\pdeg L_s=\pdeg L_i+(i-s)a$ for $1\leq s\leq i$. Note that
$$\pdeg L_i\leq ka\leq k^2(p-1)\leq
n(p-1)/p^2< n/2$$
since $p\geq 2$.

Fix minimal $s$ such that $\pdeg L_s\leq n/2$. Since  $\pdeg
L_1=\pdeg L(\omega)>n$, we have $s>1$. Then $\pdeg L_{s-1}=\pdeg
L_s+a>n/2$, so $\pdeg L_s >n/2-a$. Applying~(\ref{eq*}), we get
$n/2-a>n/2-\sqrt{n}$. As  the rank of $H_s$ is greater than $n/2$,
by Proposition~\ref{deg},
$$
\wdeg L(\omega)\geq \wdeg L_s\geq \pdeg L_s-2>
n/2-\sqrt{n}-2=\sqrt{n}(\sqrt{n}/2-1)-2\geq\sqrt{n}-2>\sqrt{n}/p-1
$$
since $n\geq16$ and $p\geq2$.
\end{proof}

Now we are ready to prove our first main result.

\begin{proof}[Proof of Theorem~$\ref{t1A}$]
Part $(i)$ is proved in Proposition \ref{Cp}
and part $(ii)$ follows from Lemma~\ref{A1} and  Propositions \ref{deg} and \ref{1.1}.
\end{proof}

\begin{corollary}
\label{1.5} Let $M\cong\otimes^i_{k=0}M_k^{[k]}$. If at
least one of $M_k$ satisfies the assumptions of Proposition~$\ref{Cp}$,
then $\wdeg M>\sqrt{n}/p -1$.
\end{corollary}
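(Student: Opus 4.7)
The plan is very short: this is an immediate consequence of the tensor product estimate in Lemma~\ref{degtenz} combined with the lower bound from Proposition~\ref{Cp}.

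First I would record the iterated form of Lemma~\ref{degtenz}: an easy induction on $i$ gives
\[
\wdeg\bigl(\otimes_{k=0}^{i} M_k^{[k]}\bigr)\;\geq\;\prod_{k=0}^{i}\wdeg M_k.
\]
Since each $M_k$ is a nonzero module, $\wdeg M_k\geq 1$ for every $k$, so the right-hand side is bounded below by $\wdeg M_{k_0}$ for any single index $k_0$.

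Now let $k_0$ be the index for which $M_{k_0}$ satisfies the hypotheses of Proposition~\ref{Cp}: namely $M_{k_0}\in\Irr^p G_n$, $\omega(M_{k_0})\notin\Omega_p(G_n)$, $n\geq 16$, and both $\pdeg M_{k_0}>n$ and $\pdeg M_{k_0}^{*}>n$. Proposition~\ref{Cp} then yields $\wdeg M_{k_0}>\sqrt{n}/p-1$. Combining with the displayed inequality above gives
\[
\wdeg M \;\geq\; \prod_{k=0}^{i}\wdeg M_k \;\geq\; \wdeg M_{k_0} \;>\;\sqrt{n}/p-1,
\]
which is the desired conclusion. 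No step here is an obstacle; the only thing worth being explicit about is the trivial observation that $\wdeg M_k\geq 1$, which is what lets us throw away the other tensor factors in the lower bound.
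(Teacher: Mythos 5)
Your proof is correct and is essentially the paper's own argument: the paper states that the corollary ``follows immediately from Lemma~\ref{degtenz} and Proposition~\ref{Cp}'', and you have simply spelled out the short induction on the number of tensor factors together with the trivial bound $\wdeg M_k\geq 1$ that lets you isolate the factor covered by Proposition~\ref{Cp}. Nothing further to add.
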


\begin{proof}
This follows immediately from  Lemma~\ref{degtenz} and Proposition~\ref{Cp}.
\end{proof}

Now we pass to modules that are not $p$-restricted.

\begin{lemma}
\label{tau} Let $M\in\Irr G_n$, $M=N_1\otimes
N_2^{[s]}$, $N_1,N_2\in\Irr G_n$, and let
$\delta(N_1)<p^s$. Then for any weight $\lambda\in \Lambda(M)$
there exists a unique pair $(\mu,\nu)$ with $\mu\in\Lambda(N_1)$,
$\nu\in\Lambda(N_2^{[s]})$, and $\lambda=\mu+\nu$.
\end{lemma}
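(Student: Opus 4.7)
My plan is to handle existence and uniqueness separately. Existence is immediate from the general formula $\Lambda(N_1\otimes N_2^{[s]})=\Lambda(N_1)+\Lambda(N_2^{[s]})$ for tensor products of torus representations, so the real work is in uniqueness. Suppose $\mu_1+\nu_1=\mu_2+\nu_2$ with $\mu_i\in\Lambda(N_1)$ and $\nu_i=p^s\eta_i$ for some $\eta_i\in\Lambda(N_2)$; rearranging gives $\mu_1-\mu_2=p^s\rho$ where $\rho:=\eta_2-\eta_1$ lies in the root lattice of $A_n(K)$ (since two weights of a single irreducible module differ by an element of the root lattice). I would expand $\rho=\sum_{i=1}^{n+1}r_i\varepsilon_i^n$ with integers $r_i$ satisfying $\sum_i r_i=0$, and aim to show all $r_i=0$.

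The decisive input is the hypothesis $\delta(N_1)<p^s$ combined with Lemma~\ref{utv*}. Since the Weyl group acts transitively on long roots and all roots of $A_n$ are long, Lemma~\ref{utv*} upgrades to $|\langle\mu,\alpha\rangle|\le\delta(N_1)<p^s$ for every $\mu\in\Lambda(N_1)$ and every root $\alpha$ of $G_n$ (the negative bound coming from the reflection sending $\alpha$ to $-\alpha$). Taking $\alpha=\varepsilon_i^n-\varepsilon_j^n$ and applying this to both $\mu_1$ and $\mu_2$ yields
\[
p^s|r_i-r_j|=|\langle\mu_1-\mu_2,\varepsilon_i^n-\varepsilon_j^n\rangle|<2p^s,
\]
so $|r_i-r_j|\le 1$ for all $i,j$.

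To finish, I would observe that all the integers $r_i$ lie in a common pair of consecutive values $\{k,k+1\}$, and the constraint $\sum r_i=0$ then forces every $r_i$ to be $0$ by a short arithmetic check (if $m$ entries equal $k+1$ and the remaining $n+1-m$ equal $k$, then $(n+1)k+m=0$ with $0\le m\le n+1$, which has integer solutions only for $k=m=0$ or $k=-1,\ m=n+1$, both giving zero). Hence $\rho=0$, and therefore $\mu_1=\mu_2$ and $\nu_1=\nu_2$.

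The point I expect to be most delicate is that a direct use of Lemma~\ref{utv*} with one root yields only $\langle\mu_1-\mu_2,\alpha\rangle\in\{-p^s,0,p^s\}$, which is not enough on its own to conclude $\mu_1=\mu_2$. The resolution is to use the pairings with \emph{all} roots $\varepsilon_i^n-\varepsilon_j^n$ simultaneously together with the sum-to-zero relation characterising the type-$A$ root lattice; it is this global linear constraint, rather than any single inequality, that eliminates the stray values $\pm p^s$.
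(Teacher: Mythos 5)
Your proof is correct, and it takes a genuinely different route from the paper's. The paper's argument first acts by the Weyl group to make $\mu-\mu'$ dominant, then pairs with the \emph{single} maximal root $\alpha_m$ to conclude that $\xi'-\xi$ would have to be a fundamental weight, and derives a contradiction from the fact that no fundamental weight of $A_n$ lies in the root lattice. You instead bypass the reduction to dominance and pair $\mu_1-\mu_2$ against \emph{all} roots $\varepsilon_i^n-\varepsilon_j^n$, getting the bound $|r_i-r_j|\le 1$, and then finish by the arithmetic constraint $\sum r_i=0$ for elements of the type-$A$ root lattice. Both are correct and about equally short; the paper's version is slicker (one pairing, one lattice-theoretic fact), whereas yours is more explicit and avoids the Weyl-group conjugation step. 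Your observation at the end is exactly the right diagnosis of why a single root doesn't suffice without the dominance reduction: either you normalize $\mu_1-\mu_2$ to be dominant and then one well-chosen pairing plus the ``fundamental weight $\notin$ root lattice'' fact finishes, or you keep it unnormalized and compensate by using the pairings with all roots together with the sum-to-zero relation, as you do.

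One tiny stylistic remark: since the paper's Lemma~\ref{utv*} states $\delta(M)=\max_{\lambda}\langle\lambda,\alpha\rangle$ only as an upper bound, the two-sided bound $|\langle\mu,\alpha\rangle|\le\delta(N_1)$ deserves the one-line justification you allude to (apply the lemma to the reflected weight $s_\alpha\mu\in\Lambda(N_1)$); it would be worth spelling that out rather than calling it an ``upgrade,'' but the content is right.
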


\begin{proof}
It is obvious that $\lambda=\mu+\nu$ for some $\mu$ and $\nu$. Put
$N'=N_2^{[s]}$. Suppose that $\mu+\nu=\mu'+\nu'$ with
$\mu'\in\Lambda(N_1)$, $\nu'\in\Lambda(N')$, and $\mu\ne \mu'$.
Then $\mu-\mu'=\nu'-\nu$. Acting by the Weyl group, one can assume
that $\mu-\mu'$ (and hence $\nu'-\nu$) is dominant. Denote by
$\alpha_m$ the maximal root of $G_n$.
Note that  $\nu=p^s\xi$ and $\nu'=p^s\xi'$ with $\xi$ and
$\xi'\in\Lambda(N_2)$. Therefore
$$
p^s\langle \xi'-\xi,\alpha_m \rangle=
\langle \nu'-\nu,\alpha_m\rangle=
\langle
\mu-\mu',\alpha_m \rangle \leq 2\delta(N_1)<2p^s.
$$
This implies that $\langle \xi'-\xi,\alpha_m \rangle=1$, i.e. $\xi'-\xi$
is a fundamental weight. However, this difference is a radical
weight (i.e. a linear combination of roots). This yields a
contradiction and proves the lemma.
\end{proof}

Now consider tensor products of certain special modules with
relatively small $\wdeg M$.

\begin{theorem}
\label{CnonpA} Let $d\in\mathbb{N}$ and
\[
M=N_0\otimes\ldots\otimes N_l\in\Irr G_n.
\]
Assume that $\Omega(M)\notin\Omega(G_n)$,
\begin{equation}
\label{Fr1} N_t=\otimes_{s=i_{t-1}+1}^{i_t} M_s^{[s]}
\end{equation}
with $i_{-1}=-1$, $i_0<i_1<\ldots<i_l$, and for each $t$, $0\leq t\leq l$, one of the
following holds: $M_s\in \mathcal{L}^d_n$ for $i_{t-1}+1\leq s\leq
i_t$, or $M_s\in\mathcal{R}^d_n$ for all these $s$, or
$\omega(N_t)\in\Omega(G_n)$. Let $\delta(N_f)<p^{i_f+1}$ for all
$N_f$ with $f<l$ ($i_f$ are such as in~$(\ref{Fr1})$). Suppose that $\{
u_1,\ldots, u_k\}$ be the set of all indices $t$ for which
$\omega(N_t)\not\in\Omega(G_n)$. Set $l_j=i_{u_j}-i_{u_j-1}-1$ for $1\leq j\leq k$ and $d_j=d(1+p+\ldots+p^{l_j})$. Assume that $n\geq \max_{1\leq j\leq k}(d_j)$.
Then $\wdeg M\leq\prod_{j=1}^k d_j!$.
\end{theorem}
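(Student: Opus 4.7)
The plan is to bound $\wdeg M$ by splitting $M = N_0 \otimes \cdots \otimes N_l$ into three contributions: a unique weight decomposition argument gives $\wdeg M \le \prod_{t=0}^l \wdeg N_t$; factors with $\omega(N_t) \in \Omega(G_n)$ contribute $\wdeg N_t = 1$; and the remaining factors $N_{u_j}$ are bounded by $d_j!$ via Proposition~\ref{1.1}.

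The core step is to iterate Lemma~\ref{tau}. Writing $M = N_0 \otimes B_0$ with $B_0 = N_1 \otimes \cdots \otimes N_l$, the second factor is an irreducible Steinberg tensor product of $p$-restricted modules and has the form $Q_0^{[i_0+1]}$ for some irreducible $Q_0$. The hypothesis $\delta(N_0) < p^{i_0+1}$ matches the condition of Lemma~\ref{tau}, so every $\lambda \in \Lambda(M)$ decomposes uniquely as $\mu_0 + \nu_0$ with $\mu_0 \in \Lambda(N_0)$ and $\nu_0 \in \Lambda(B_0)$. Continuing with $B_{f-1} = N_f \otimes B_f$ at each stage and stripping the common Frobenius shift $p^{i_{f-1}+1}$ from $B_{f-1}$, Lemma~\ref{tau} applies whenever $\delta(N_f) < p^{i_f+1}$, which is exactly the hypothesis for $f < l$. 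After $l$ applications, every $\lambda \in \Lambda(M)$ admits at most one representation $\lambda = \sum_{t=0}^l \mu_t$ with $\mu_t \in \Lambda(N_t)$, whence $\dim M^\lambda = \prod_t \dim N_t^{\mu_t}$ and $\wdeg M \le \prod_t \wdeg N_t$.

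For $t$ with $\omega(N_t) \in \Omega(G_n)$, the Steinberg factors of $N_t$ have highest weights in $\Omega_p(G_n)$, realized by fundamental modules or truncated symmetric powers of $V_n$, each of weight degree $1$; hence $\wdeg N_t = 1$. For $t = u_j$, set $N'_{u_j} = \otimes_{s'=0}^{l_j} M_{s'+i_{u_j-1}+1}^{[s']}$, so that $N_{u_j} = (N'_{u_j})^{[i_{u_j-1}+1]}$ and $\wdeg N_{u_j} = \wdeg N'_{u_j}$. In the $\mathcal{L}^d_n$ case (the $\mathcal{R}^d_n$ case is symmetric by duality, since $\wdeg M = \wdeg M^*$), each $M_s$ has $\pdeg M_s \le d$, so $\omega(M_s)$ is supported on $\omega_1^n, \ldots, \omega_d^n$; the highest weight of $N'_{u_j}$ is then supported on the first $d$ fundamental weights, and by~(\ref{pdegg})
$$
\pdeg N'_{u_j} = \sum_{s'=0}^{l_j} p^{s'} \pdeg M_{s'+i_{u_j-1}+1} \le d(1 + p + \cdots + p^{l_j}) = d_j.
$$
Thus $N'_{u_j} \in \mathcal{L}^{d_j}_n$ by Proposition~\ref{LR}; since $n \ge d_j$, Proposition~\ref{1.1} gives $\wdeg N'_{u_j} \le d_j!$, and combining with the previous estimates yields the desired bound $\wdeg M \le \prod_{j=1}^k d_j!$.

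The principal obstacle I foresee is the bookkeeping of nested Frobenius twists in the iteration of Lemma~\ref{tau}: at each stage one must recast the remaining tensor product as a product of an irreducible module with an appropriately shifted Frobenius twist of another irreducible module, and verify that the translated $\delta$-condition is precisely $\delta(N_f) < p^{i_f+1}$. Once this is handled, the remaining polynomial-degree manipulations are direct computations from the definitions.
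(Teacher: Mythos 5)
Your proof is correct and follows essentially the same strategy as the paper: iterating Lemma~\ref{tau} to obtain a unique additive decomposition of weights (and hence $\wdeg M\le\prod_t\wdeg N_t$), disposing of the factors with $\omega(N_t)\in\Omega(G_n)$ as having weight degree $1$, and bounding $\wdeg N_{u_j}$ by $d_j!$ via the polynomial-degree computation together with Proposition~\ref{LR} and Proposition~\ref{1.1}. The only cosmetic difference is that you peel off factors from the left, whereas the paper runs an induction on $l$ that removes the last factor $N_l$; your bookkeeping of the shifted $\delta$-condition at each stage matches what the paper implicitly uses.
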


\begin{proof}
For $1\leq j\leq k$ set $s_j=i_{u_j-1}+1$ and $N'_j=\otimes_{g=0}^{l_j} M_{{s_j}+g}^{[g]}$. We have
$N_{u_j}=(N'_j)^{[s_j]}$. Hence $\wdeg N_{u_j}=\wdeg N_j'$.
Apply induction on $l$. If $l=0$, it is clear that $k=1$, $s_1=0$, $l_1=i_0$, and $d_1=d(1+p+\ldots+p^{i_0})$.
Then Proposition~\ref{LR} implies that $M\in\mathcal{L}^{d_1}$
or $\mathcal{R}^{d_1}$. Hence our assertion follows from
Proposition~\ref{1.1}. Assume that $l>0$ and the assertion holds
for $l-1$. Set $M'=N_0\otimes \ldots \otimes N_{l-1}$. Since
$\delta(N_j)<p^{i_j+1}$ for $j<l$, we get
$\delta(M')<p^{i_{l-1}+1}$. Then by Lemma~\ref{tau}, for each
$\lambda\in\Lambda(M)$ there exists a unique pair $(\mu, \nu)$ with
$\mu\in\Lambda(M')$,  $\nu\in\Lambda(N_l)$, and $\lambda=\mu+\nu$.
Then $\dim M^{\lambda}=\dim(M')^{\mu}\dim N_l^{\nu}$ and hence
$\wdeg M=\wdeg M'\wdeg N_l$. By the induction assumptions, $\wdeg
M'\leq\prod_{j=1}^{k-1} d_j!$ if $u_k=l$ and $\wdeg
M'\leq\prod_{j=1}^k d_j!$ otherwise. In the first case Proposition~\ref{LR} yields that $N'_l\in\mathcal{L}^{d_k}$
or $\mathcal{R}^{d_k}$. Hence $\wdeg N_l=\wdeg N'_l\leq d_k!$ by Proposition~\ref{1.1}.
In the second one $\omega(N_l)\in\Omega(G_n)$ and $\wdeg N_l=1$. This completes the proof.
\end{proof}

\noindent {\bf Remark}$\quad$ In some cases much stronger estimates
can be obtained. In particular, this holds if $n\geq d$,
$M=\otimes^f_{k=0}M_k^{[k]}$ with $M_k\in\Irr^p G_n$, and $\delta(M_k)<p$ for
all $k<f$. Then, applying Lemma~\ref{tau} and Proposition~\ref{1.1},  we can deduce that
$\wdeg M\leq (d!)^N$, where $N$ is the number of indices $k$ for
which $\omega(M_k)\not\in\Omega_p(G_n)$.

Proposition~\ref{tenz} shows that our
assumptions on $\delta(N_f)$ play a crucial role in
Theorem~\ref{CnonpA}.

\begin{proposition}
\label{tenz} Let $i,l\in \mathbb{N}$ with $i<l-1$ and $M,N\in\Irr
G_n$. Assume that $\omega(M)=\sum_{t=1}^ia_t\omega^n_t=\sum_{k=0}^jp^k\lambda_k$
with $p$-restricted $\lambda_k$
and $\omega(N)=\sum_{t=l}^nb_t\omega^n_t\neq0$ is $p$-restricted. Suppose
that $\delta(M)\ge p^{j+1}$. Set $Q=M\otimes N^{[j+1]}$. Then
$\wdeg Q\ge l-i-1$. The same holds if
$\omega(M)=\sum_{t=l}^nb_t\omega^n_t$, $\omega(N)=\sum_{t=1}^i
a_t\omega^n_t$,  and other assumptions of the proposition are valid. In
particular, in this situation $\wdeg Q\ge n-m-i$ if $M\in
\mathcal{L}^i_n$, $N\in\mathcal{R}^m_n$ or vice versa.
\end{proposition}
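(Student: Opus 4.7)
The plan is to show $\wdeg Q\ge l-i-1$ by producing a single weight $\lambda\in\Lambda(Q)$ with multiplicity at least $l-i-1$: we take $\lambda$ to be the common value of $\mu_r+p^{j+1}\xi_r$ over $l-i-1$ distinct pairs $(\mu_r,\xi_r)\in\Lambda(M)\times\Lambda(N)$ indexed by $r=i+1,\ldots,l-1$. The hypothesis $\delta(M)\ge p^{j+1}$ is exactly the negation of the uniqueness condition of Lemma~\ref{tau}, so this collapse of pairs is precisely what the assumption permits.

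Using the positive roots $\alpha_{s,t}=\alpha_s+\cdots+\alpha_t=\varepsilon^n_s-\varepsilon^n_{t+1}$ of $A_n$, I would define
\[
\mu_r=\omega(M)-p^{j+1}\alpha_{1,r-1},\qquad \xi_r=\omega(N)-\alpha_{r,n}.
\]
The identity $\alpha_{1,r-1}+\alpha_{r,n}=\alpha_{\max}$ makes $\mu_r+p^{j+1}\xi_r=\omega(M)+p^{j+1}\omega(N)-p^{j+1}\alpha_{\max}$ independent of $r$, and the pairs are pairwise distinct because the $\varepsilon^n_r$-entry of $\mu_r$ shifts with $r$. Verifying $\xi_r\in\Lambda(N)$ is straightforward: if $t^*\ge l$ denotes the largest index with $b_{t^*}\ne 0$, iteratively subtracting the simple roots $\alpha_{t^*},\alpha_{t^*-1},\ldots,\alpha_r$ and then $\alpha_{t^*+1},\ldots,\alpha_n$ from $\omega(N)$ keeps the pairing of the current weight with the next simple root $\ge 1$ at every step, so the standard $SL_2$-string/Smith-factor argument places each intermediate weight in $\Lambda(N)$.

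The main obstacle is verifying $\mu_r\in\Lambda(M)$. A direct Smith factor for $G(\alpha_{1,r-1})\cong SL_2$ gives $L(\delta(M)\omega_1^1)$, but in characteristic $p$ its weight set is constrained by the base-$p$ digits of $\delta(M)$ and in general does not contain a shift by exactly $p^{j+1}$. To get around this I would apply Steinberg's tensor product theorem $M=\bigotimes_{k=0}^{j}L(\lambda_k)^{[k]}$ and work level by level. Writing $(c_1)^{(k)}:=\langle\lambda_k,\alpha_{1,r-1}\rangle$ (the $\varepsilon^n_1$-coefficient of $\lambda_k$, independent of $r$ since $r>i$), the hypothesis reads $\sum_{k=0}^{j}p^k(c_1)^{(k)}=\delta(M)\ge p^{j+1}$, which admits, by a greedy top-down construction, an integer decomposition $p^{j+1}=\sum_{k=0}^{j}p^kf_k$ with $0\le f_k\le (c_1)^{(k)}$. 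Setting $\mu_r^{(k)}:=\lambda_k-f_k\alpha_{1,r-1}$, the bound $f_k\le(c_1)^{(k)}$ keeps all $\varepsilon^n$-coordinates of $\mu_r^{(k)}$ nonnegative, and a majorization argument (replacing the maximum entry by a smaller value and inserting a new entry weakly decreases partial sums in dominance order) gives $\mathrm{dom}(\mu_r^{(k)})\preceq\lambda_k$; Premet's theorem for type~$A$, applied to the $p$-restricted $\lambda_k$, then places $\mu_r^{(k)}\in\Lambda(L(\lambda_k))$. Hence $\mu_r=\sum_{k=0}^{j}p^k\mu_r^{(k)}\in\Lambda(M)$.

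The swapped-roles statement follows from the symmetric construction $\mu_r=\omega(M)-p^{j+1}\alpha_{r,n}$, $\xi_r=\omega(N)-\alpha_{1,r-1}$, with the analogous verifications. The final ``in particular'' conclusion $\wdeg Q\ge n-m-i$ is immediate upon taking $l=n+1-m$.
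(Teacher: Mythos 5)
Your approach is correct but genuinely different from the paper's. The paper's proof restricts $Q$ to the subgroup $G_{l-i}$, obtains the composition factor $L\bigl(p^{j+1}(\omega_1^{l-i}+b_l\omega_{l-i}^{l-i})\bigr)$ via a chain of Smith-factor reductions (Theorem~\ref{Sm}, Corollary~\ref{csym}), and then applies Lemma~\ref{Kle} together with Proposition~\ref{deg*}; this requires a separate case split according to whether the $(j{+}1)$st $p$-adic digit $c_{j+1}$ of $\delta(M)$ vanishes, handled by a further Smith-factor reduction. You instead bound $\wdeg Q$ from below by directly exhibiting $l-i-1$ distinct decompositions $\lambda=\mu_r+p^{j+1}\xi_r$, which avoids the case split entirely and makes the role of the hypothesis $\delta(M)\ge p^{j+1}$ (as the negation of the collapsing-pairs obstruction in Lemma~\ref{tau}) more transparent. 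The price is the invocation of Premet's theorem on weight sets of $p$-restricted simple modules, which the paper never uses; your greedy decomposition $p^{j+1}=\sum p^k f_k$ with $f_k\le(c_1)^{(k)}$ does work, but the crucial point (worth making explicit) is that $p^{j+1}$ is divisible by $p^{k+1}$ for all $k\le j$, which forces the greedy remainder to vanish the moment the cap $c_k$ is not met. One caveat: the ``standard $SL_2$-string argument'' you use to place $\xi_r\in\Lambda(N)$ is not automatic in characteristic $p$ --- $\alpha$-strings of weights of an irreducible module can break (e.g.\ for $SL_2$ with $p=2$, $\Lambda(L(2\omega_1^1))=\{2\omega_1^1,-2\omega_1^1\}$ skips $0$). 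It is valid here only because $\omega(N)$ is $p$-restricted and hence, by Premet, $\Lambda(N)=\Lambda(V(\omega(N)))$ has unbroken strings; a cleaner route is to apply Premet to $\xi_r$ directly, noting that $\mathrm{dom}(\xi_r)\preceq\omega(N)$ by the same Robin--Hood majorization argument you give for the $\mu_r^{(k)}$.
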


\begin{proof}
We will consider the case where $\omega(M)=\sum_{t=1}^i a_t\omega_t^n$ and
$\omega(N)=\sum_{t=l}^nb_t\omega^n_t\ne 0$. The proof for the other case is similar.

Taking maximal possible $l$, we can suppose that $b_l\ne 0$. Put
$c=\delta(M)$ and write down the $p$-adic expansion
$c=\sum_{k=0}^uc_kp^k$ with  $0\leq c_k<p$.

(a) First assume that $c_{j+1}\neq0$. Set
$\Gamma=G_n(\alpha_1+\ldots+\alpha_i, \alpha_{i+1}, \ldots,
\alpha_n)$. Observe that $\Gamma$ is conjugate to $G_{n-i+1}$, the
group $G_n(i+1,\ldots,n)$ is conjugate to $G_{n-i}$ and
$G_n(i+1,\ldots,l)$ is conjugate to $G_{l-i}$. We have
$\langle \omega(M),\alpha_1+\ldots+\alpha_i\rangle=c$.
Then one easily concludes that $L(c\omega_1^{n-i+1})\in\Irr(M{\downarrow}\Gamma)=\Irr_{n-i+1}M$. By the Steinberg
tensor product theorem (\ref{Fr}),
$L(c\omega_1^{n-i+1})=\otimes^u_{k=0}L(c_k\omega_1^{n-i+1})^{[k]}$. By Corollary~\ref{csym},
$L(0)\in\Irr_{n-i} L(c_k\omega_1^{n-i+1})$ for $0\leq k\leq u$ and
$L(\omega_1^{n-i})\in\Irr_{n-i} L(c_{j+1}\omega_1^{n-i+1})$.
Hence
\[
L(p^{j+1}\omega_1^{n-i})\in\Irr_{n-i}L(c\omega_1^{n-i+1})\subset\Irr_{n-i}M.
\]
So by Theorem~\ref{Sm},   $L(p^{j+1}\omega_1^{l-i})\in\Irr_{l-i}M$.
Applying Theorem~\ref{Sm} to the restriction $N{\downarrow}G_n(i+1,\ldots,l)$, we get
that $L(b_l\omega_{l-i}^{l-i})\in\Irr_{l-i}N$. Consequently,
$F=L(p^{j+1}(\omega_1^{l-i}+b_l\omega_{l-i}^{l-i}))\in\Irr_{l-i}Q$.
Lemma~\ref{Kle} and Proposition~\ref{deg*} imply that $\wdeg Q\geq l-i-1$.

(b) Now let $c_{j+1}=0$. Then $\sum^i_{k=1}a_k=c\geq p^{j+2}$. Fix minimal $s$ with
$\sum^s_{k=1}a_k>\sum_{k=0}^jc_kp^k$. Put $\Sigma_s=a_1+\ldots+a_s$ and $c_s=c-\Sigma_s$. Since all $a_k<p^{j+1}$,
we get $\Sigma_s<\sum_{k=0}^jc_kp^k+p^{j+1}$ and hence $s<i$.
Write $\Sigma_s=\sum^u_{k=0}d_k p^k$ and $c_s=\sum^u_{k=0}g_k p^k$ with $0\leq d_k<p$ and $0\leq g_k<p$.
One can observe that either $\Sigma_s=\sum^j_{k=0}d_k p^k$ or $\Sigma_s=\sum^j_{k=0}d_k p^k+p^{j+1}$ with $\sum^j_{k=0}d_k p^k<\sum^j_{k=0}c_k p^k$. So in both cases $g_{j+1}=p-1$.

Set $H=G_n(s+1,\ldots,n)$. Then $H\cong G_{n-s}$. Let $M_s$ be the Smith factor of $M$ with respect to
$H$. Then $c_s$ is the value of $\omega(M_s)$ on the maximal root of $H$. Now we can proceed as in Part (a)
using $H$, $M_s$, and the Smith factor of $N$ with respect to $H$ rather than $G_n$, $M$, and $N$.
\end{proof}

\section{The Steinberg tensor product theorem for inductive systems}
\label{steinberg}

In this section we 
study  arbitrary inductive systems of representations for the sequence~(\ref{seq1})
and prove an analogue of the Steinberg product theorem for such systems.

Let $\Phi=\{\Phi_n\mid n\in\mathbb{N}\}$ be an inductive system. Put $\delta(\Phi_n)=\{\max \delta(\omega) \mid L(\omega)\in\Phi_n\}$.

\begin{lemma}
\label{delta} Assume that $n\in\mathbb{N}$ and
$n>2$ for $G_n=B_n(K)$. Then for
an inductive system $\Phi$ one has
$\delta(\Phi_{n+1})=\delta(\Phi_n)$.
\end{lemma}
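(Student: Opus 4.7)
The plan is to prove the two inequalities $\delta(\Phi_n)\le\delta(\Phi_{n+1})$ and $\delta(\Phi_{n+1})\le\delta(\Phi_n)$ separately.

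For $\delta(\Phi_n)\le\delta(\Phi_{n+1})$, I would pick $M\in\Phi_n$ realizing $\delta(M)=\delta(\Phi_n)$. By the defining property of an inductive system, there exists $Y\in\Phi_{n+1}$ with $M\in\Irr(Y{\downarrow}G_n)$. The hypothesis $n>2$ for type $B$ implies in particular $n>1$, so Corollary~\ref{ogr} applies (with the rank $k=n$ of the smaller group) and yields $\delta(M)\le\delta(Y)\le\delta(\Phi_{n+1})$.

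For the reverse inequality, the first step is to exhibit a positive root $\alpha$ that is long in $G_{n+1}$ and also lies in $G_n=G_{n+1,n}$ as a long root there. For types $A_{n+1}$, $C_{n+1}$, $D_{n+1}$ one can take $\alpha=\alpha_{n+1}$: it is automatically long in $G_{n+1}$ (trivially for $A$ and $D$; and for $C$ because $\alpha_{n+1}$ is the unique long simple root), and remains the last simple root of the embedded $G_n$, hence long there as well. In type $B_{n+1}$ one takes instead $\alpha=\alpha_n$, which is long in $G_{n+1}$ because only $\alpha_{n+1}$ is short, and, under the identification of $G_n$ with the subgroup generated by the root subgroups for $\alpha_2,\dots,\alpha_{n+1}$, is a long simple root of $G_n=B_n$; the hypothesis $n>2$ guarantees that $\alpha_n$ really is among these simple roots and sits strictly above the short end. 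Set $H=G_{n+1}(\alpha)\cong A_1$, so that $H\subset G_n\subset G_{n+1}$. For any $Y\in\Phi_{n+1}$, Corollary~\ref{cutv*} gives
\[
\delta(Y)=\max\{i\mid L(i\omega_1^1)\in\Irr(Y{\downarrow}H)\}.
\]
Viewing the restriction $Y{\downarrow}H$ as $(Y{\downarrow}G_n){\downarrow}H$, we obtain $\Irr(Y{\downarrow}H)=\bigcup_{M\in\Irr(Y{\downarrow}G_n)}\Irr(M{\downarrow}H)$. Applying Corollary~\ref{cutv*} again, this time to each composition factor $M\in\Irr(Y{\downarrow}G_n)\subset\Phi_n$ with respect to the same long root $\alpha$ of $G_n$, yields
\[
\delta(Y)=\max_{M\in\Irr(Y\downarrow G_n)}\delta(M)\le\delta(\Phi_n).
\]
Taking the maximum over $Y\in\Phi_{n+1}$ gives $\delta(\Phi_{n+1})\le\delta(\Phi_n)$.

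The main technical subtlety is the existence of the common long root $\alpha$: for types $A$, $C$, $D$ the choice $\alpha=\alpha_{n+1}$ is immediate, but for type $B$ one has to check explicitly which long simple roots of $G_{n+1}$ remain inside (and long in) the embedded $G_n$, and it is precisely the assumption $n>2$ that makes this work and also sidesteps the low-rank isomorphism $B_2\cong C_2$.
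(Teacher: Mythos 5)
Your proof is correct and is essentially the paper's argument: the key move in both is to restrict to the rank-one subgroup $H=G_{n+1}(\alpha)$ generated by a root $\alpha$ that is long in both $G_{n+1}$ and the embedded $G_n$ (namely $\alpha_{n+1}$, or $\alpha_n$ for type $B$), and then apply Corollary~\ref{cutv*} together with $\Irr(Y{\downarrow}H)=\bigcup_{M\in\Irr_n Y}\Irr(M{\downarrow}H)$. The paper packages this as the single equality of sets $I_n=I_{n+1}$, from which both directions fall out simultaneously; your use of Corollary~\ref{ogr} for the easy inequality is a harmless redundancy, since your second paragraph already yields $\delta(Y)=\max_{M\in\Irr_n Y}\delta(M)$, which together with the inductive-system axiom $\bigcup_{Y\in\Phi_{n+1}}\Irr_n Y=\Phi_n$ gives the equality directly.
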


\begin{proof}
Fix any $L(\lambda)\in\Phi_n$ and $L(\mu)\in\Phi_{n+1}$ with
$\delta(\Phi_n)=\delta(\lambda)$ and
$\delta(\Phi_{n+1})=\delta(\mu)$. Put $H=G_{n+1}(n)$ for
$G_{n+1}=B_{n+1}(K)$ and $H=G_{n+1}(n+1)$ in the other cases. Hence $H\cong A_1(K)$. Recall that $G_n$ is identified with $G_{n+1,n}=G_{n+1}(2,\ldots,n+1)$.
So we can assume that $H\subset G_n$. Set
\[
I_l=\cup_{\varphi\in\Phi_l} \Irr (\varphi{\downarrow}H)
\]
for $l=n$ and $n+1$. It is clear that
\[
\Irr (\varphi{\downarrow}H)=\cup_{\psi\in\Irr(\varphi{\downarrow}G_n)}\Irr (\psi{\downarrow}H)
\]
for $\varphi\in\Phi_{n+1}$.
Now it follows from the definition of an inductive system that $I_n=I_{n+1}$. Corollary~\ref{cutv*} implies that $\delta(\mu)=\max \{i \mid L(i\omega^1_1)\in I_{n+1}\}$ and
$\delta(\lambda)=\max \{i \mid L(i\omega^1_1)\in I_n\}$. Hence  $\delta(\Phi_{n+1})=\delta(\Phi_n)$.
\end{proof}

Set  $\delta(\Phi)=\delta(\Phi_n)$ for $n>2$. Lemma~\ref{delta} shows that
$\delta(\Phi)$ is well defined.

For the groups of type $A$ the previous lemma was proven
in~\cite[Lemma 2.4]{BarSupr}. Note that for any dominant
weight $\omega=a_1\omega_1^n+\dots+a_n\omega_n^n$ of $A_n(K)$ one has
$\delta(\omega)=a_1+a_2+\dots+a_n$.

\begin{lemma}
\label{pten} Let $\Phi$ and $\Psi$ be inductive systems of
representations. Then $\Fr(\Phi)$ and $\Phi\otimes\Psi$ are
inductive systems of representations.
\end{lemma}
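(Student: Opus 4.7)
The plan is to verify directly that each of $\Fr(\Phi)$ and $\Phi\otimes\Psi$ satisfies the defining restriction property of an inductive system, i.e. that the union of $\Irr(\chi{\downarrow}G_n)$ over members $\chi$ of level $n+1$ equals the set at level $n$. Both arguments come down to standard compatibilities: Frobenius twist commutes with restriction to the natural subgroup, and composition factors behave well with respect to tensor products.

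For $\Fr(\Phi)$, the key point is that the embedding $G_n\subset G_{n+1}$ is defined over the prime field, so the Frobenius morphism preserves $G_n$ and $(\varphi^{[1]}){\downarrow}G_n\cong(\varphi{\downarrow}G_n)^{[1]}$ as $G_n$-modules. Since twisting is an autoequivalence of the module category, it induces a bijection on composition factors, giving $\Irr((\varphi^{[1]}){\downarrow}G_n)=\{\eta^{[1]}\mid \eta\in\Irr(\varphi{\downarrow}G_n)\}$. Taking the union over $\varphi\in\Phi_{n+1}$ and using that $\Phi$ is an inductive system yields
\[
\bigcup_{\psi\in\Fr(\Phi)_{n+1}}\Irr(\psi{\downarrow}G_n)=\{\eta^{[1]}\mid \eta\in\Phi_n\}=\Fr(\Phi)_n,
\]
which is exactly what is required.

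For $\Phi\otimes\Psi$, I would prove the two inclusions separately. For the inclusion into $(\Phi\otimes\Psi)_n$, take $\chi\in\Irr(\varphi\otimes\psi)$ with $\varphi\in\Phi_{n+1}$, $\psi\in\Psi_{n+1}$. Then $\chi{\downarrow}G_n$ is a subquotient of $(\varphi\otimes\psi){\downarrow}G_n=(\varphi{\downarrow}G_n)\otimes(\psi{\downarrow}G_n)$. Choosing composition series of $\varphi{\downarrow}G_n$ and $\psi{\downarrow}G_n$ and tensoring produces a filtration of the restriction whose successive quotients are tensor products $\varphi_i\otimes\psi_j$ with $\varphi_i\in\Irr(\varphi{\downarrow}G_n)\subset\Phi_n$ and $\psi_j\in\Irr(\psi{\downarrow}G_n)\subset\Psi_n$, so every composition factor of $\chi{\downarrow}G_n$ belongs to $\Irr(\varphi_i\otimes\psi_j)\subset(\Phi\otimes\Psi)_n$. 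For the reverse inclusion, start with $\chi'\in\Irr(\varphi'\otimes\psi')$ for some $\varphi'\in\Phi_n$, $\psi'\in\Psi_n$. By the inductive system property applied separately to $\Phi$ and $\Psi$, we may lift to $\varphi\in\Phi_{n+1}$ and $\psi\in\Psi_{n+1}$ with $\varphi'\in\Irr(\varphi{\downarrow}G_n)$ and $\psi'\in\Irr(\psi{\downarrow}G_n)$. The same filtration argument shows $\chi'$ is a composition factor of $(\varphi\otimes\psi){\downarrow}G_n$, hence of $\chi{\downarrow}G_n$ for some composition factor $\chi$ of $\varphi\otimes\psi$; this $\chi$ lies in $(\Phi\otimes\Psi)_{n+1}$.

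There is no serious obstacle here; the only minor subtlety is justifying the passage from composition factors of a tensor product of modules to composition factors of tensor products of the respective composition factors, which is handled by filtering one factor at a time. Everything else is routine bookkeeping with unions over indexing sets.
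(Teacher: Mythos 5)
Your proof is correct and follows the same route as the paper's, which simply notes that Frobenius twist and tensor product each commute with restriction to $G_n$ and reads off the defining property; you have just fleshed out the composition-series-filtration bookkeeping that the paper leaves implicit. One tiny omission: the definition also requires each $(\Phi\otimes\Psi)_n$ to be a nonempty finite set, which the paper flags ("Clearly, the set $(\Phi\otimes\Psi)_n$ is finite") and you could note in passing, though it is immediate from finiteness of $\Phi_n,\Psi_n$ and finite-dimensionality of the modules.
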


\begin{proof}
The claim on $\Fr (\Phi)$ follows immediately from the definition
of an inductive system since for $M\in\Irr G_{n+1}$
\[
\Irr_n (M^{[1]}) =\{ \mu^{[1]} \mid \mu\in\Irr_n M \}.
\]
Clearly,  the set $(\Phi\otimes\Psi)_n$ is finite.
It remains to note that restricting  representations to subgroups commutes
with taking tensor products.
\end{proof}

\begin{lemma}
\label{par3} Let $T\subset\mathbb{N}$ be infinite. Assume that $R_t\subset\Irr G_t$ is nonempty for each $t\in T$ and
that there exists $k\in\mathbb{N}$ such that $\delta(\varphi)<k$ for all
$\varphi\in R_t$ and  all $t$. Denote by $\Pi_n$ the set of all
$\pi\in\Irr G_n$ such that $\pi$ is a composition factor of the
restriction $\mu\downarrow G_n$ for some $t>n$, $t\in T$, and $\mu\in
R_t$. Suppose also that  $R_t\subset \Pi_t$ for all $t$. Then
$\Pi=\{\Pi_n\mid n\in \mathbb{N}\}$ is an inductive system of representations.
\end{lemma}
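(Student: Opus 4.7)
The plan is to verify the two defining conditions of an inductive system for $\Pi=\{\Pi_n\mid n\in\mathbb{N}\}$: each $\Pi_n$ is a nonempty finite subset of $\Irr G_n$, and $\bigcup_{\varphi\in\Pi_{n+1}}\Irr(\varphi\downarrow G_n)=\Pi_n$ for every $n$.

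Nonemptyness is immediate since $T$ is infinite: for each $n$ pick $t\in T$ with $t>n$ and any $\mu\in R_t$; any composition factor of $\mu\downarrow G_n$ then belongs to $\Pi_n$. For finiteness I would use the uniform bound $\delta(\varphi)<k$ on $\bigcup_{t\in T}R_t$ together with iterated application of Corollary~\ref{ogr}: restricting one step at a time from $G_t$ down to $G_n$, the value of $\delta$ never increases, so every $\pi\in\Pi_n$ satisfies $\delta(\pi)<k$; since only finitely many dominant weights of $G_n$ satisfy such a bound, $\Pi_n$ is contained in a finite subset of $\Irr G_n$. The minor point to remember is the restriction $k>1$ in Corollary~\ref{ogr} in type $B$, which is automatic for $n\ge 2$ since we only ever restrict from $G_{m+1}$ to $G_m$.

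The inclusion $\bigcup_{\varphi\in\Pi_{n+1}}\Irr(\varphi\downarrow G_n)\subseteq\Pi_n$ is the easy direction: if $\varphi\in\Pi_{n+1}$ appears as a composition factor of $\mu\downarrow G_{n+1}$ for some $t>n+1$ and $\mu\in R_t$, then every $\pi\in\Irr(\varphi\downarrow G_n)$ is a composition factor of $\mu\downarrow G_n$ and hence lies in $\Pi_n$. For the reverse inclusion, pick $\pi\in\Pi_n$ together with witnesses $t>n$ and $\mu\in R_t$ such that $\pi$ is a composition factor of $\mu\downarrow G_n$. I would split on $t=n+1$ versus $t>n+1$: in the first case the standing hypothesis $R_{n+1}\subseteq\Pi_{n+1}$ puts $\mu$ itself into $\Pi_{n+1}$, and $\pi\in\Irr(\mu\downarrow G_n)$ is the required witness. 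In the second case I factor $\mu\downarrow G_n=(\mu\downarrow G_{n+1})\downarrow G_n$; since the composition factors of a restriction of a module $V$ are exactly the composition factors of the restrictions of the composition factors of $V$, there is some composition factor $\varphi$ of $\mu\downarrow G_{n+1}$ with $\pi\in\Irr(\varphi\downarrow G_n)$, and $\varphi\in\Pi_{n+1}$ by the very definition of $\Pi_{n+1}$.

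The only mildly delicate point is the case $t=n+1$ in the reverse inclusion: this is exactly where the hypothesis $R_t\subseteq\Pi_t$ is used, since otherwise the witness $\mu\in R_{n+1}$ of membership of $\pi$ in $\Pi_n$ might fail to lie in $\Pi_{n+1}$ and the coherence relation could break. Everything else is routine bookkeeping with transitivity of restriction of modules.
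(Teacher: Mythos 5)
Your argument is essentially the paper's, step for step: the forward inclusion $\bigcup_{\varphi\in\Pi_{n+1}}\Irr(\varphi{\downarrow}G_n)\subseteq\Pi_n$ by transitivity of restriction, the reverse inclusion by splitting on $t=n+1$ versus $t>n+1$ (with the $R_{n+1}\subseteq\Pi_{n+1}$ hypothesis doing exactly the work you identify), and finiteness via Corollary~\ref{ogr}.

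The one place you brush something aside is the $n=1$ case in type $B$. You note that the hypothesis $k>1$ of Corollary~\ref{ogr} is ``automatic for $n\ge 2$,'' which is true, but that leaves $\Pi_1$ unaddressed: the final restriction step from $G_2$ down to $G_1$ is precisely where the corollary is unavailable, so the argument that every $\pi\in\Pi_1$ satisfies $\delta(\pi)<k$ does not go through as stated. The paper closes this by observing $\Pi_1=\bigcup_{\rho\in\Pi_2}\Irr_1\rho$ (which follows from exactly the reverse-inclusion argument you already give for general $n$), so that finiteness of $\Pi_1$ is a consequence of finiteness of $\Pi_2$ rather than of another application of Corollary~\ref{ogr}. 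This is a small fix, but you should state it rather than label the $k>1$ constraint a non-issue.
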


\begin{proof} Let $\rho\in\Pi_{n+1}$. The construction of $\Pi$ implies that there exist $t>n+1$ and $\psi\in R_t$ with $\rho\in\Irr_{n+1}\psi$. So if $\varphi\in\Irr_n\rho$, then $\varphi\in\Irr_n\psi$ and hence $\varphi\in\Pi_n$. On the other hand, for each  $\mu\in\Pi_n$ there exist $u>n$ and $\nu\in R_u$ with $\mu\in\Irr_n\nu$. If $u>n+1$, the set $\Irr_{n+1}\nu\subset\Pi_{n+1}$ and, obviously, $\mu\in\Irr_n\lambda$ for some    $\lambda\in\Irr_{n+1}\nu$. Since $R_{n+1}\subset\Pi_{n+1}$ by the assumptions of the lemma, for $u=n+1$ the representation $\mu\in\Irr_n\lambda$ for some    $\lambda\in\Pi_{n+1}$ as well. It remains to show that $\Pi_n$ is finite. As   $\Pi_1=\bigcup_{\rho\in\Pi_2}\Irr_1\rho$, we can assume that $n>1$.  It follows from Corollary~\ref{ogr} that $\delta(\varphi)\leq k$. It is clear that the number of inequivalent irreducible representations of $G_n$ with this property is finite.
\end{proof}

\begin{corollary}
\label{cgen}  
Lemma~$\ref{par3}$ holds if we replace
the condition that $\delta(\varphi)<k$ for all $\varphi\in R_t$ and  all $t$, 
by the condition that there exists an inductive system $\Phi$ with  $R_t\subset\Phi_t$ for all $t$. 
\end{corollary}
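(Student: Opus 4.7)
The plan is to reduce Corollary~\ref{cgen} directly to Lemma~\ref{par3} by extracting a uniform $\delta$-bound from the inductive system $\Phi$. Inspection of the proof of Lemma~\ref{par3} shows that the hypothesis $\delta(\varphi)<k$ is used only in its very last step, where it combines with Corollary~\ref{ogr} to guarantee that each $\Pi_n$ contains only finitely many irreducible modules. Everything else in that proof (the stability under restriction to $G_n$, the inclusion $R_t\subset\Pi_t$, and the verification of both inductive-system inclusions) is independent of the particular bound assumed on $\delta$. So once an integer $k$ with $\delta(\varphi)<k$ for every $\varphi\in R_t$ and every $t\in T$ has been produced, Lemma~\ref{par3} applies verbatim to yield the corollary.

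To produce such a $k$, I would invoke Lemma~\ref{delta}. By the definition of an inductive system, each $\Phi_t$ is a finite nonempty set of simple $G_t$-modules, hence $\delta(\Phi_t)=\max\{\delta(\psi)\mid\psi\in\Phi_t\}$ is a well-defined finite nonnegative integer. Lemma~\ref{delta} shows that this quantity does not depend on $t$ (subject to the mild restriction $t>2$ in the $B_n$ case), so a common value $\delta(\Phi)$ exists. Taking $k=\delta(\Phi)+1$, enlarged if necessary to dominate the finitely many exceptional values $\delta(\psi)$ occurring for $\psi\in\Phi_t$ with small $t$ in type $B$, one obtains $\delta(\varphi)\le\delta(\Phi_t)<k$ for every $\varphi\in R_t\subset\Phi_t$ and every $t\in T$.

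There is essentially no obstacle: the whole content of the corollary is the observation that membership in an inductive system automatically forces the uniform boundedness of $\delta$ required by Lemma~\ref{par3}, and this is immediately supplied by Lemma~\ref{delta}. The only very minor subtlety is the $t>2$ restriction in type $B$, handled by a finite enlargement of $k$ as above.
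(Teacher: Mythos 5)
Your proof is correct and follows essentially the same route as the paper: extract a uniform bound $k$ on $\delta$ from the inductive system $\Phi$ and then invoke Lemma~\ref{par3} with that $k$. The paper's one-line proof cites Corollary~\ref{ogr} where you appeal to Lemma~\ref{delta}, but both serve to justify the same uniform estimate $\delta(\varphi)\le\delta(\Phi)$ for all $\varphi\in R_t$, so the difference is only in which nearby lemma is invoked.
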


\begin{proof} Corollary~\ref{ogr} implies that $\delta(\pi)<\delta(\Phi)$ for all $\pi\in R_t$. So we can apply Lemma~\ref{par3}.
\end{proof}

\begin{definition}\label{Di}
Let $\Psi\subset\Phi$ be inductive systems of representations and
the embedding be  proper. Put $\Xi_n=\Phi_n\setminus \Psi_n$. Denote
by $D(\Phi,\Psi)$ the inductive system of representations generated
by $\Xi_n$ and call it the {\em difference} of two inductive
systems.
\end{definition}

It is shown in \cite[Section 4]{BSA2rev} that $D(\Phi,\Psi)$ is well defined. (We emphasize that though \cite{BSA2rev} is devoted to general linear and special linear groups, the arguments on the difference of induction systems at the beginning of Section 4 of that paper hold for inductive systems for the sequence~(\ref{seq1}) for all four series of the classical groups.)  Since the embedding is proper, for any $n\in\mathbb{N}$ there exists
$n_0>n$ such that the set $\Xi_{n_0}\ne\varnothing$. Hence
$D(\Phi,\Psi)_n\ne\varnothing$ for all $n$. One obviously has
$\Phi=\Psi\cup D(\Phi,\Psi)$.

\begin{lemma}
\label{index} Let $\Phi$ be an indecomposable inductive system. Then
for each two representations $\varphi\in \Phi_k$ and $\psi\in \Phi_l$ there
exist $m>\max\{k,l\}$ and $\xi\in \Phi_m$ such that $\varphi\in\Irr_k\xi$ and  $\psi\in\Irr_l\xi$.
\end{lemma}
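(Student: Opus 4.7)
The plan is to fix $\varphi\in\Phi_k$, define a subsystem $\Phi(\varphi)$ of $\Phi$ consisting of those modules sharing a common $\Phi$-ancestor with $\varphi$, verify that $\Phi(\varphi)$ is an inductive subsystem of $\Phi$, and then use indecomposability to force $\Phi(\varphi)=\Phi$. Concretely, set
\[
\Phi(\varphi)_n=\{\mu\in\Phi_n\mid \exists\, m>\max\{k,n\},\ \xi\in\Phi_m\text{ with }\varphi\in\Irr_k\xi\text{ and }\mu\in\Irr_n\xi\}.
\]
Each level $\Phi(\varphi)_n$ is nonempty, since iterating the inductive-system property of $\Phi$ upward from $\varphi$ produces some $\xi\in\Phi_m$ with $m>\max\{k,n\}$, and any composition factor of $\xi{\downarrow}G_n$ then lies in $\Phi(\varphi)_n$. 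Downward closure $\bigcup_{\mu\in\Phi(\varphi)_{n+1}}\Irr_n\mu\subset\Phi(\varphi)_n$ is immediate from transitivity of restriction. Upward closure is more delicate: for $\rho\in\Phi(\varphi)_n$ witnessed by $\xi\in\Phi_m$, if $m>\max\{k,n+1\}$ one picks $\mu\in\Irr_{n+1}\xi$ with $\rho\in\Irr_n\mu$; otherwise necessarily $m=n+1$ and $k\leq n$, and one first replaces $\xi$ by an ancestor $\xi'\in\Phi_{m+1}$ furnished by the inductive-system property of $\Phi$, before choosing the desired $\mu$. Note also $\varphi\in\Phi(\varphi)_k$ by taking any $\xi\in\Phi_{k+1}$ containing $\varphi$ upon restriction.

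Next, assume for contradiction that $\Phi(\varphi)\subsetneq\Phi$ and consider the difference $D=D(\Phi,\Phi(\varphi))$ of Definition~\ref{Di}. Corollary~\ref{cgen} applies to $R_t=\Phi_t\setminus\Phi(\varphi)_t$: indeed, $R_t\subset\Phi_t$, and every $\xi\in R_n$ satisfies $\xi\in\Irr_n\mu$ for some $\mu\in R_{n+1}$, because any $\Phi$-ancestor $\mu\in\Phi_{n+1}$ of $\xi$ lying in $\Phi(\varphi)_{n+1}$ would, by transitivity through its own common ancestor with $\varphi$, force $\xi\in\Phi(\varphi)_n$, contradicting $\xi\in R_n$. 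Thus $D$ is a well-defined inductive system, $\Phi=\Phi(\varphi)\cup D$, and it remains to see that $D\neq\Phi$. If $\varphi\in D_k$, then $\varphi\in\Irr_k\eta$ for some $\eta\in R_m$ with $m>k$ (the alternative $\varphi\in R_k$ is excluded by $\varphi\in\Phi(\varphi)_k$); but such $\eta$ is by definition a $\Phi$-ancestor of $\varphi$, so $\eta\in\Phi(\varphi)_m$, contradicting $\eta\in R_m$. This violates the indecomposability of $\Phi$.

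Therefore $\Phi(\varphi)=\Phi$, and applying this equality to $\psi\in\Phi_l$ produces $m>\max\{k,l\}$ and $\xi\in\Phi_m$ with $\varphi\in\Irr_k\xi$ and $\psi\in\Irr_l\xi$, as required. The main obstacle is the twin verification that $\Phi(\varphi)$ is closed upward (requiring the one-step boost when $m=n+1$) and that the complementary system $D$ genuinely inherits the inductive-system structure via Corollary~\ref{cgen}; once both closure properties are in hand, indecomposability finishes the argument through the decomposition $\Phi=\Phi(\varphi)\cup D$ with both pieces proper.
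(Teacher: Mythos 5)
Your proof is correct and follows essentially the same strategy as the paper: the paper defines $P_n=\{\rho\in\Phi_n\mid\varphi\in\Irr_k\rho\}$ and takes $\mathcal{P}=\langle P_n\mid n>\max\{k,l\}\rangle$, which is precisely your $\Phi(\varphi)$ unpacked, then invokes Corollary~\ref{cgen} for the inductive-system check rather than verifying it by hand as you do. The use of $D(\Phi,\mathcal{P})$ and the observation that $\varphi$ cannot appear at level $k$ of that difference are likewise identical to the paper's argument.
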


\begin{proof}
Set $t=\max\{k,l\}$. For each $n>t$ put $P_n=\{\rho\in\Phi_n \mid \varphi\in\Irr_k
\rho\}$. It is clear that $P_n\ne\varnothing$ and for any $\mu\in
P_n$ there exists $\nu\in P_{n+1}$ such that $\mu\in\Irr_n \nu$.
Hence $\mathcal{P}=\langle P_n \mid n>t \rangle$ is an inductive
system by Corollary~\ref{cgen}. We claim that $\mathcal{P}=\Phi$. Indeed, otherwise
$D(\Phi,\mathcal{P})=\Phi$ as $\Phi$ is indecomposable. However,
$\varphi\notin\Irr_k \psi$ if $\psi\in\Phi_n\setminus P_n$  by the
construction of $P_n$. This yields a  contradiction as
$D(\Phi,\mathcal{P})$ is generated by the collection
$\Phi_n\setminus P_n$. Hence $\mathcal{P}=\Phi$. So there exists
$m>t$ such that $\psi\in\Irr_l \rho$ for $\rho\in P_m$.
\end{proof}

\begin{corollary}
\label{i} Let $\Phi$ be an indecomposable inductive system
and let $\varphi_1\in \Phi_{n_1},\ldots, \varphi_l\in \Phi_{n_l}$. Then there
exist $m>\max\{n_1,\ldots, n_l\}$ and $\xi\in \Phi_m$ such that $\varphi_j\in\Irr_{n_j}\xi$ for
$1\leq j\leq l$.
\end{corollary}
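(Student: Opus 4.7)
The plan is to deduce this from Lemma~\ref{index} by induction on $l$. The base case $l=1$ is immediate: by the definition of an inductive system, $\Phi_{n_1}=\bigcup_{\rho\in\Phi_{n_1+1}}\Irr_{n_1}\rho$, so there exists $\xi\in\Phi_{n_1+1}$ with $\varphi_1\in\Irr_{n_1}\xi$, and $m=n_1+1$ works.

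For the inductive step, assume the statement holds for $l-1$. First I would apply the induction hypothesis to the data $\varphi_1,\ldots,\varphi_{l-1}$ to produce an integer $m'>\max\{n_1,\ldots,n_{l-1}\}$ and a representation $\xi'\in\Phi_{m'}$ with $\varphi_j\in\Irr_{n_j}\xi'$ for all $j\le l-1$. Next I would apply Lemma~\ref{index} to the pair $\xi'\in\Phi_{m'}$ and $\varphi_l\in\Phi_{n_l}$, yielding $m>\max\{m',n_l\}$ and $\xi\in\Phi_m$ such that $\xi'\in\Irr_{m'}\xi$ and $\varphi_l\in\Irr_{n_l}\xi$. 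Automatically $m>\max\{n_1,\ldots,n_l\}$.

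The only subtlety is transitivity of the relation ``is a composition factor of the restriction''. Since $n_j\le \max\{n_1,\ldots,n_{l-1}\}<m'$, the subgroup $G_{n_j}$ is contained in $G_{m'}$ along the chain~(\ref{seq1}). If $\xi'$ is a composition factor of $\xi{\downarrow}G_{m'}$, then every composition factor of $\xi'{\downarrow}G_{n_j}$ appears among the composition factors of $\xi{\downarrow}G_{n_j}$; in particular, $\varphi_j\in\Irr_{n_j}\xi$ for $j\le l-1$. Combined with $\varphi_l\in\Irr_{n_l}\xi$, this gives the claim. This step is routine, so I do not anticipate any genuine obstacle beyond unwinding the definitions; the real work was done in Lemma~\ref{index}.
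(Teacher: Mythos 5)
Your proof is correct and follows exactly the strategy the paper indicates (induction on $l$ using Lemma~\ref{index}), with the transitivity of ``is a composition factor of the restriction'' spelled out, which is indeed a routine Jordan--H\"older refinement.
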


\begin{proof}
Use Lemma~\ref{index} and induction on $l$.
\end{proof}

\begin{proof}[Proof of Theorem~$\ref{anSt}$]
Since $\delta(\varphi)\leq \delta(\Phi)$ for all $n$ and all
$\varphi\in\Phi_n$, by the Steinberg tensor product theorem~(\ref{Fr})
there exists an integer $k=k(\Phi)$ such that $\varphi=\varphi_0 \otimes
\varphi_1^{[1]} \otimes \ldots \otimes \varphi_k^{[k]}$ with
$\varphi_j\in\Irr^p G_n$,  for all $n$ and all
$\varphi\in\Phi_n$. Fix minimal  such $k$. Then the representations $\varphi_j$, $0\le j\le k$,
are uniquely determined (some of them can be trivial). We will use this notation
until the end of the proof.

Set
\begin{eqnarray*}
 S_n&=&\{\varphi\in\Phi_n \mid \delta(\varphi)=\delta(\Phi) \}, \quad 
S=\cup_{n=1}^{\infty} S_n,\\
 S_n^0&=&\{ \varphi\in S_n \mid \delta(\varphi_0)=\max_{\psi\in S} \delta(\psi_0)\}, \quad  
S^0=\cup_{n=1}^{\infty} S_n^0,\\
 S_n^{0,\ldots, j}&=&\{ \varphi\in S^{0,\ldots, j-1}_n \mid \delta(\varphi_j)=
 \max_{\psi\in S^{0,\ldots, j-1}} \delta(\psi_j)\}, \quad
S^{0,\ldots, j}=\cup_{n=1}^{\infty} S_n^{0\ldots j}\\
\end{eqnarray*}
for $1\leq j\leq k-1$. Set $T_n=S_n^{0,1,\ldots, k-1}$ and $T_n^j=\{\varphi_j \mid \varphi\in T_n\}$ for $0\leq j\leq k$. The sets $T_n^j$ will be used to generate tensor factors for $\Phi$.

Since $\delta(\varphi)\leq \delta(\Phi)$ for all $\varphi\in\Phi_l$ and all $l$, it is clear that $T_n$ is well defined and  $T_n\ne \varnothing$ for some $n$. Choose minimal $n$ with this  property and denote it by $n_{\min}$. Now we shall prove the following  claim: if
$m>n\geq n_{\min}$,  $\varphi\in T_n$,  $\psi\in\Phi_m$,  and
$\varphi\in \Irr_n \psi$, then
\begin{equation}
\label{rescomp}
\delta(\varphi_j)=\delta(\psi_j),  \varphi_j\in\Irr_n \psi_j
\end{equation}
for $0\leq j\leq k$.  Hence such $\psi\in T_m$.

Fix  $\psi\in\Phi_m$ with $\varphi\in\Irr_n\psi$ (such $\psi$ do exists as $\Phi$ is an inductive system). Since restricting to subgroups commutes with the morphism $\Fr$ and taking tensor products, one can observe that
$$
\Irr_n\psi=\cup_{(\tau^0,\ldots,\tau^k)} \Irr (\otimes_{j=0}^k (\tau^j)^{[j]}),
$$
where the union is taken over all tuples
$(\tau^0,\ldots,\tau^k)$ with $\tau^j\in\Irr_n \psi_j$. Fix a tuple
$(\tau^0,\ldots,\tau^k)$ that yields $\varphi$ and set $\tau=\otimes_{j=0}^k (\tau^j)^{[j]}$. In fact, we shall show that all $\tau^j\in\Irr^p G_n$ and so $\tau^j=\tau_j$ for $0\leq j\leq k$, but this requires some explanations.  One  has $\tau=\tau^0_0\otimes\rho^{[1]}$, where $\rho$ is a representation of $G_n$ (not necessarily irreducible). The Steinberg tensor product theorem implies that each representation in $\Irr \tau$ has the form $\tau^0_0\otimes\lambda^{[1]}$ with $\lambda\in\Irr G_n$. Hence $\varphi_0=\tau^0_0$. Similar arguments yield that if $0<l\leq k$ and  $\tau^0,\ldots,\tau^{l-1}\in\Irr^p G_n$, then   $\Irr \tau$ consists of representations of the form
$(\otimes^{l-1}_{j=0}(\tau^j)^{[j]})\otimes(\tau_0^l)^{[l]}\otimes\mu^{[l+1]}$ with $\mu\in\Irr G_n$ and therefore in this case
\begin{equation}
\label{eql}
\tau^j=\varphi_j  \mbox{ for } 0\leq j\leq l-1, \quad \varphi_l=\tau_0^l.
\end{equation}
Obviously, we have $\delta(\rho)=\sum^k_{j=0}p^j\delta(\rho_j)$ for each $\rho\in\Irr G_l$ and all $l$. By Corollary~\ref{ogr}, $\delta(\varphi)\leq\delta(\psi)$ and $\delta(\tau^j)\leq\delta(\psi_j)$ for $0\leq j\leq k$.  This implies that $\delta(\varphi_0)\leq \delta(\psi_0)$ and $\psi\in S_m$ as $\varphi\in S_n$. Now we start proving (\ref{rescomp}) using  the induction on $j$. At each step we shall also show that $\tau^j\in\Irr^p G_n$. Since $\varphi\in S_n^0$ and $\varphi_0=\tau^0_0$, we conclude that $\delta(\varphi_0)=\delta(\tau^0)=\delta(\psi_0)$  and $\tau^0\in\Irr^p G_n$. So $\varphi_0\in\Irr_n \psi_0$ and (\ref{rescomp}) holds for $j=0$. It is clear that
$\psi\in S_m^0$. Now let $0<j<k$ and assume that for $0\leq l<j$ Formula~(\ref{rescomp}) holds and
$\tau^l\in\Irr^p G_n$. The construction of the sets $S^{0,\ldots ,t}$ yields  that $\psi\in S^{0,\ldots, j-1}$. By (\ref{eql}), $\varphi_j=\tau_0^j$. As $\varphi\in S_n^{0,\ldots, j}$ and
$\delta(\varphi_j)\leq\delta(\tau^j)\leq\delta((\psi_j)$, we can deduce that $\delta(\varphi_j)=\delta(\tau^j)=\delta(\psi_j)$  and $\tau^j\in\Irr^p G_n$. So $\varphi_j\in\Irr_n\psi_j$ and (\ref{rescomp}) holds for $j$. Finally, suppose that (\ref{rescomp}) is valid and $\tau^j\in\Irr^p G_n$ for $0\leq j<k$. The choice of $k$ shows that $\tau^k\in\Irr^p G_n$. Then $\tau^k=\varphi_k$ by (\ref{eql}) and hence $\varphi_k\in\Irr_n\psi_k$. Naturally, $\delta(\varphi_k)=\delta(\psi_k)$ since $\varphi$ and $\psi\in S$ and $\delta(\varphi_j)=\delta(\psi_j)$ for $0\leq j<k$. This completes the proof of the claim.

Now it is clear that $T_n$ and hence all $T_n^j\neq\varnothing$ for   $n\geq n_{\min}$. Let $\mu\in T_n^j$ with
$0\leq j\leq k$. Then $\mu=\rho_j$ for some $\rho\in T_n$.   We have shown above that  there exists
$\lambda\in T_{n+1}$ with $\rho\in\Irr_n \lambda$ and $\rho_j\in\Irr_n\lambda_j$. Naturally,
$\lambda_j\in T_{n+1}^j$. It is clear that $\delta(\mu)\leq \delta(\varphi)$. Now Lemma~\ref{par3} yields that  the collections $\Theta^j=\langle T_n^j\rangle$ are inductive systems. Put
$\Theta=\otimes_{j=0}^k \Fr^j(\Theta^j)$ and prove that
$\Phi=\Theta$. As $\Phi$ is indecomposable, Lemma~\ref{index}
implies that for every $\varphi\in\Phi_n$ and $\psi\in T_k$ with $k\geq n_{\min}$ there exists $m>\max\{n,k\}$ and
$\rho\in \Phi_m$ with $\varphi\in\Irr_n\rho$ and $\psi\in\Irr_k\rho$. It follows from Formula~(\ref{rescomp}) and the phrase just below this formula that $\rho\in T_m$. Hence the  construction of $\Theta$ yields  that $\Phi\subset\Theta$. By the definition of a tensor  product of inductive systems, now it suffices to prove the following: if $\rho=\otimes_{j=0}^k \rho_j^{[j]}$ with $\rho_j\in\Theta^j_n$, then $\Irr\rho\subset\Phi_n$. The construction of the systems $\Theta^j$ implies that there exist $m>n$ and representations
$\theta_j\in T_m^j$ with $\rho_j\in\Irr_n \theta_j$. Set
$\theta=\otimes_{j=0}^k \theta_j^{[j]}$. As $\Phi$ is an inductive
system, now it remains to show that $\theta\in\Phi_m$. By the
definition of $T_m^j$, there exist representations $\psi^j\in T_m$
with $\theta_j=\psi_j^j$. Since $\Phi$ is indecomposable,
Corollary~\ref{i} implies that for some $l>m$ there exists
$\zeta\in \Phi_l$ with $\psi^j\in \Irr_m\zeta$. By Formula~(\ref{rescomp}),   $\psi^j_j\in \Irr_m\zeta_j$ for
$0\leq j\leq k$. Hence $\theta\in\Irr_m\zeta\subset\Phi_m$ as desired.
\end{proof}

To describe BWM-systems, we also need the following lemma on tensor products of inductive systems that are generated by collections $R_n$ that consist of a single $p$-restricted representation of $G_n$.

\begin{lemma}
\label{LL} Let $j\in\mathbb{N}$ and  $M_{nt}\in\Irr^p G_n$ for $0\leq t\leq j$ and $n\geq d_t$. Assume that $\delta(M_{nt})\leq c$ for some constant $c$ and $M_{nt}\in\Irr_n M_{n+1,t}$ for $0\leq j\leq t$ and $n\geq d_t$. Set   $d=\max\{d_t\mid 0\leq t\leq j\}$ and $M_n=\otimes^j_{t=0}M_{nt}^{[t]}$ for $n\geq d$. Then
$\langle M_{nt}\mid n\geq d_t\rangle$ and $\langle M_n\mid n\geq d\rangle$ are inductive systems and
$$
\langle M_n\mid n\geq d\rangle=\otimes^j_{t=0}\Fr^t\langle M_{nt}\mid n\geq d_t\rangle.
$$

\end{lemma}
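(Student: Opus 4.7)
The plan is to verify the three assertions (the two inductive-system claims and the tensor-product identity) by appealing to Lemma~\ref{par3} together with the compatibility of restriction with Frobenius twists and tensor products.

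For the first claim, I will apply Lemma~\ref{par3} with $R_n=\{M_{nt}\}$ for each fixed $t$: the hypothesis $\delta(M_{nt})\le c$ supplies the required uniform bound on $\delta$, and $M_{nt}\in\Irr_n M_{n+1,t}$ is precisely the condition $R_n\subset\Pi_n$ in the notation of that lemma, so $\Phi^t:=\langle M_{nt}\mid n\geq d_t\rangle$ is well defined. For the second claim, restriction commutes with $\Fr$ and with tensor products, so
\[
M_{n+1}{\downarrow}G_n \;=\; \otimes_{t=0}^{j}\bigl(M_{n+1,t}{\downarrow}G_n\bigr)^{[t]}.
\]
The standard composition-series argument for tensor products, combined with $M_{nt}\in\Irr_n M_{n+1,t}$ for each $t$, shows that $M_n=\otimes_t M_{nt}^{[t]}$ appears as a composition factor of $M_{n+1}{\downarrow}G_n$. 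Since also $\delta(M_n)=\sum_t p^t\delta(M_{nt})\le c(1+p+\cdots+p^j)$ is bounded, Lemma~\ref{par3} again yields that $\Psi:=\langle M_n\mid n\geq d\rangle$ is an inductive system.

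It remains to prove $\Psi=\otimes_{t=0}^{j}\Fr^t(\Phi^t)$, which I plan to establish by double inclusion. For $\Psi\subset\otimes_t\Fr^t(\Phi^t)$: any $\sigma\in\Psi_n$ is a composition factor of some $M_m{\downarrow}G_n=\otimes_t(M_{m,t}{\downarrow}G_n)^{[t]}$ with $m>n$, and the same composition-series argument shows $\sigma\in\Irr(\otimes_t\mu_t^{[t]})$ for composition factors $\mu_t\in\Irr_n M_{m,t}\subset\Phi^t_n$, so $\sigma\in(\otimes_t\Fr^t(\Phi^t))_n$. For the reverse inclusion, any $\sigma\in(\otimes_t\Fr^t(\Phi^t))_n$ lies in $\Irr(\otimes_t\mu_t^{[t]})$ with $\mu_t\in\Phi^t_n$, so there exist indices $m_t>n$ with $\mu_t\in\Irr_n M_{m_t,t}$.

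The only step needing more than routine bookkeeping—and hence the main (modest) obstacle—is aligning these witnesses $m_t$ across the different $t$. I plan to first observe the monotonicity $\Irr_n M_{m,t}\supset\Irr_n M_{m',t}$ whenever $m\ge m'>n$, which follows by induction on $m-m'$ from transitivity of restriction together with $M_{m'',t}\in\Irr_{m''}M_{m''+1,t}$: composition factors of $M_{m',t}{\downarrow}G_n$ are composition factors of $(M_{m,t}{\downarrow}G_{m'}){\downarrow}G_n=M_{m,t}{\downarrow}G_n$. Setting $m=\max_t m_t$ then gives $\mu_t\in\Irr_n M_{m,t}$ for all $t$ simultaneously, so $\otimes_t\mu_t^{[t]}$ is a composition factor of $M_m{\downarrow}G_n$, and hence $\sigma\in\Irr_n M_m\subset\Psi_n$, completing the proof.
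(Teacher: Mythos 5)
Your proof is correct and takes essentially the same route as the paper's: Lemma~\ref{par3} for the two inductive-system claims; the forward inclusion from $M_n\in(\otimes_t\Fr^t\Phi^t)_n$ (via Steinberg irreducibility of $M_n$) and the fact that an inductive system is closed under passing to composition factors of restrictions; and the reverse inclusion by finding a single large $m$ that witnesses all the $\mu_t$ simultaneously. The only cosmetic difference is that you spell out the monotonicity $\Irr_n M_{m',t}\subset\Irr_n M_{m,t}$ for $m\ge m'$ explicitly, where the paper treats it as implicit in the phrase ``for $q$ large enough,'' and you work with the single tuple $(\mu_0,\dots,\mu_j)$ rather than bounding all of $\mathcal{M}^t_n$ at once; both are fine.
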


\begin{proof}
Set $\mathcal{M}=\langle M_n\mid n\geq d\rangle$ and $\mathcal{M}^t=\langle M_{nt}\mid n\geq d_t\rangle$. By the Steinberg tensor product theorem (\ref{Fr}), the modules $M_n$ are irreducible. Observe that $\delta(M_n)\leq c\sum_{t=0}^j p^t$ and $M_n\in\Irr_n M_{n+1}$. Now Lemma~\ref{par3} implies that $\mathcal{M}$ and $\mathcal{M}^t$ are inductive systems. Put $\mathcal{P}=\otimes^j_{t=0}\Fr^t\langle M_{nt}\mid n\geq d_t\rangle$. As $M_n\in\mathcal{P}_n$ and $\mathcal{P}$ is an inductive system, $\mathcal{M}\subset \mathcal{P}$.
Taking into account the definition of a tensor product of inductive systems, it remains to
prove that for each collection $(N_0,\ldots, N_j)$ with $N_t\in \mathcal{M}_n^t$ the set $S=\Irr(\otimes_{t=0}^j N_t^{[t]})\subset \mathcal{M}_n$. As $M_{nt}\in\Irr_n M_{n+1,t}$ and the sets $\mathcal{M}_n^t$ are finite,
the construction of the systems $\mathcal{M}^t$ implies that for $q$ large
enough $\mathcal{M}^t_n\subset\Irr_n M_{qt}$ for all $t$, $0\leq t\leq j$.
Hence $S\subset \Irr_n M_q\subset \mathcal{M}_n$. This completes the proof.
\end{proof}

\section{Inductive systems with bounded weight multiplicities for special linear groups}
\label{inductive}

In this section we classify the BWM-systems for $G_n=A_n(K)$.
We will denote by  $\mathbb{N}_j$ the set of integers $s$ with $ 0 \leq s \leq j$.

Recall the collections $\mathcal{L}^l$, $\mathcal{R}^l$,
$\mathcal{F}$, and $\mathcal{T}$ defined in the Introduction.

\begin{lemma}
\label{ind} The collections $\mathcal{L}^l$, $\mathcal{R}^l$
($l\in\mathbb{N}$), $\mathcal{F}$, and $\mathcal{T}$ are inductive
systems of representations for the groups  $A_n(K)$.
\end{lemma}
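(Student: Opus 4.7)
The plan is to verify for each of the four collections the defining condition of an inductive system, namely that
\[
\bigcup_{\varphi\in\Phi_{n+1}}\Irr(\varphi{\downarrow}G_n)=\Phi_n
\]
holds for all $n$; finiteness and nonemptiness of each $\Phi_n$ are immediate from the definitions.

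For $\mathcal{L}^d$ and $\mathcal{R}^d$, the plan is to exploit the fact that, under the identification $G_n=G_{n+1,n}$, the subgroup $G_n$ fixes the first basis vector of $V_{n+1}$ and acts in the natural way on the remaining ones, so that $V_{n+1}{\downarrow}G_n\cong V_n\oplus K$. Distributing tensors gives
\[
V_{n+1}^{\otimes j}{\downarrow}G_n\cong\bigoplus_{k=0}^{j}\binom{j}{k}\,V_n^{\otimes k},
\]
hence the composition factors of $V_{n+1}^{\otimes j}{\downarrow}G_n$ are precisely those occurring in $V_n^{\otimes k}$ for some $k\le j$. Combining this with the obvious identity $\Irr(M{\downarrow}G_n)=\bigcup_{\varphi\in\Irr M}\Irr(\varphi{\downarrow}G_n)$ and taking the union over $j\le d$ produces exactly $\mathcal{L}_n^d$. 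The argument for $\mathcal{R}^d$ is identical after replacing $V_n$ by $V_n^*$.

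For $\mathcal{F}$, the plan is to feed Lemma~\ref{o1}(i) directly into the definition: it yields $\Irr_n L(\omega_i^{n+1})=\{L(\omega_{i-1}^n),L(\omega_i^n)\}$ for $1\le i\le n+1$, while $L(\omega_0^{n+1})=L(0)$ restricts to $L(0)$. Using the conventions $\omega_0^n=\omega_{n+1}^n=0$ and taking the union over $0\le i\le n+1$ then gives precisely $\{L(0),L(\omega_1^n),\ldots,L(\omega_n^n)\}=\mathcal{F}_n$.

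For $\mathcal{T}$, I would invoke Lemma~\ref{o2} (with $n$ there replaced by $n+1$) in the edge case $m=0$, so that $H_1$ has rank zero and $H_2=G_{n+1}(2,\ldots,n+1)=G_{n+1,n}$ is exactly our $G_n$. The lemma then decomposes $L(c\omega_i^{n+1}+(p-1-c)\omega_{i+1}^{n+1}){\downarrow}G_n$ as a direct sum of modules $L(c_2\omega_{i_2}^n+(p-1-c_2)\omega_{i_2+1}^n)$, all lying in $\mathcal{T}_n$ (the trivial $H_1$-tensor factor contributes nothing). Conversely, given any parameters $(i_2,c_2)$ for an element of $\mathcal{T}_n$, the choice $(i_1,c_1)=(0,0)$ and $(i,c)=(i_2+1,c_2)$ satisfies all the constraints in Lemma~\ref{o2}, realising the prescribed module as a constituent of some restriction from $\mathcal{T}_{n+1}$. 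The only delicate point is the bookkeeping for the degenerate case $m=0$ of Lemma~\ref{o2}; once that is handled, everything reduces to a straightforward check of the index conditions.
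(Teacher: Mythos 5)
Your proof is correct and follows the same route as the paper: the authors likewise deduce $\mathcal{L}^l$ and $\mathcal{R}^l$ from Lemma~\ref{o1}(i) (in the form $\Irr_{n-1}V_n=\{L(0),V_{n-1}\}$, which is exactly your $V_{n+1}{\downarrow}G_n\cong V_n\oplus K$ restated at the level of composition factors), $\mathcal{F}$ from Lemma~\ref{o1}, and $\mathcal{T}$ from Lemma~\ref{o2}. You merely spell out the binomial decomposition of $V_{n+1}^{\otimes j}{\downarrow}G_n$ and the degenerate $m=0$ bookkeeping in Lemma~\ref{o2} explicitly, where the paper leaves these routine checks to the reader.
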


\begin{proof}
By Lemma~\ref{o1}(i), $\Irr_{n-1}
 V_n=\{L(0),  V_{n-1}\}$. Hence
$\Irr  V_n^{\otimes l}\subset \Irr_n  V_{n+1}^{\otimes l}$,
$\Irr_{n-1}  V_n^{\otimes
l}\subset \cup_{j\leq l} \Irr  V_{n-1}^{\otimes j}$
and
$\Irr_{n-1}\varphi\in \mathcal{L}^l_{n-1}$ for any $\varphi\in
\mathcal{L}^l_n$. Consequently, $\mathcal{L}^l$ is an inductive
system. The proof for $\mathcal{R}^l$ is similar.

For $\mathcal{F}$ and $\mathcal{T}$ the lemma follows from Lemma~\ref{o1} and
Lemma~\ref{o2}, respectively. This completes the proof.
\end{proof}

Recall the $G_n$-modules 
$M_{n,L}(a_1,\ldots,a_d)=L(a_1\omega_1^n+\ldots+a_d\omega_d^n)$
and
$M_{n,R}(a_1,\ldots,a_d)=L(a_d\omega_{n-d+1}^n+\ldots+a_1\omega_n^n)$ 
($n\geq d$) defined in the Introduction.

\begin{lemma}
\label{lLR} The systems
$$
C_L(a_1,\ldots,a_d)=\langle M_{n,L}(a_1,\ldots,a_d)\mid n\geq d\rangle
$$
and
$$
C_R(a_1,\ldots,a_d)=\langle M_{n,R}(a_1,\ldots,a_d)\mid n\geq d\rangle
$$
are well defined.
\end{lemma}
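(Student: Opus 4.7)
The plan is to invoke Lemma~\ref{par3} directly, taking $T = \{t \in \mathbb{N} \mid t \geq d\}$ and singleton generating families $R_t = \{M_{t,L}(a_1,\ldots,a_d)\}$ in one case and $R_t = \{M_{t,R}(a_1,\ldots,a_d)\}$ in the other. The conclusion of that lemma is precisely that $\Pi = \langle R_t \mid t \in T\rangle$ is an inductive system, which is exactly what is meant by ``well defined'' for the collections $C_L(a_1,\ldots,a_d) = \langle M_{n,L}(a_1,\ldots,a_d) \mid n \geq d\rangle$ and $C_R(a_1,\ldots,a_d)$. Thus the task reduces to verifying the two hypotheses of Lemma~\ref{par3}: a uniform bound on $\delta$ over the generators, and the inclusion $R_t \subset \Pi_t$ for every $t$.

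For the uniform $\delta$-bound, I will use the formula $\delta(\omega) = a_1 + a_2 + \ldots + a_n$ valid for any dominant weight $\omega = a_1\omega_1^n + \ldots + a_n\omega_n^n$ of $A_n(K)$, recorded just after Lemma~\ref{delta}. Applied to $M_{t,L}(a_1,\ldots,a_d)$ and $M_{t,R}(a_1,\ldots,a_d)$, this gives $\delta = a_1 + \ldots + a_d$, a constant independent of $t$, so I may take $k = a_1 + \ldots + a_d + 1$ and the first hypothesis is satisfied uniformly in $t \in T$.

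For the condition $R_t \subset \Pi_t$, I need each $M_{t,L}(a_1,\ldots,a_d)$ to appear as a composition factor of the restriction $M_{u,L}(a_1,\ldots,a_d){\downarrow}G_t$ for some $u > t$ with $u \in T$, and the analogous statement on the right. Taking the minimal choice $u = t+1$ reduces this to $M_{t,L}(a_1,\ldots,a_d) \in \Irr_t M_{t+1,L}(a_1,\ldots,a_d)$ and $M_{t,R}(a_1,\ldots,a_d) \in \Irr_t M_{t+1,R}(a_1,\ldots,a_d)$, which are exactly the ``in particular'' assertions of Lemma~\ref{Hn}. Both hypotheses of Lemma~\ref{par3} therefore hold, and well-definedness of $C_L(a_1,\ldots,a_d)$ and $C_R(a_1,\ldots,a_d)$ follows immediately.

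I do not expect any substantive obstacle in this proof: all of the structural work has been packaged in advance into Lemma~\ref{Hn} (which realises $M_{n,L}(a_1,\ldots,a_d)$ and $M_{n,R}(a_1,\ldots,a_d)$ as Smith factors inside their rank-$(n+1)$ counterparts and extracts the composition-factor statement with respect to $G_{n+1,n}$) and into the general criterion of Lemma~\ref{par3}. The remaining argument is essentially a bookkeeping step matching hypotheses to conclusions.
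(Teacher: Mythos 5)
Your proposal is correct and follows essentially the same route as the paper: compute $\delta(M_{n,L}(a_1,\ldots,a_d))=\delta(M_{n,R}(a_1,\ldots,a_d))=a_1+\dots+a_d$ to get a uniform bound, use Lemma~\ref{Hn} for $M_n\in\Irr_n M_{n+1}$ to verify $R_t\subset\Pi_t$, and conclude by Lemma~\ref{par3}. The only difference is that you spell out the bookkeeping in slightly more detail than the paper's terse three-line proof.
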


\begin{proof}
For $n\geq d$ set $M_n=M_{n,L}(a_1,\ldots,a_d)$ or $M_{n,R}(a_1,\ldots,a_d)$ (the index ``$L$'' or ``$R$'' is the same for all $n$). Obviously, $\delta(M_n)=a_1+\ldots+a_d$. By Lemma~\ref{Hn}, $M_n\in\Irr_n M_{n+1}$. It remains to apply Lemma~\ref{par3}.
\end{proof}

\begin{proposition}
\label{TF} Assume that $S_1\cup S_2\cup S_3=\mathbb{N}_j$,
$S_i\cap S_k=\varnothing$ for $i\ne k$, $S_3=\varnothing$ if
$p=2$, and $S_2\cup S_3\ne\varnothing$. If $S_1\ne\varnothing$,
for each $k\in S_1$ set $M_{n,k}=M_{n,L}(a_{1k},\ldots,a_{dk})$ or
$M_{n,R}(a_{1k},\ldots,a_{dk})$, where $0\leq
a_{1k},\ldots,a_{dk}< p$ and the index ``$L$'' or ``$R$'' and the sequence $a_{1k},\ldots,a_{dk}$ are the same
for all $n\geq d$. Put $\Psi^k=\langle M_{n,k} \mid n\geq d\rangle$ for
$k\in S_1$, $\Psi^k=\mathcal{F}$ for $k\in S_2$,
$\Psi^k=\mathcal{T}$ for $k\in S_3$, and $\Psi=\otimes_{k=0}^j
\Fr^k(\Psi^k)$. Let $\Phi$ be an inductive system. Assume that for
each $l$ there exist  $n$ and a module $\varphi=\otimes_{k=0}^j
\varphi_k^{[k]}\in\Phi_n$ with the following properties:
\begin{eqnarray}
\label{e1}
\varphi_k&=&M_{n,k} \mbox{ for } k\in S_1; \\
\varphi_k&\in&\mathcal{F}_n\mbox{ for } k\in S_2; \\
\varphi_k&\in&\mathcal{T}_n\mbox{ for } k\in S_3;\\
\varphi_k&\notin&\mathcal{L}_n^l\cup \mathcal{R}_n^l \mbox{ for } k\in S_2\cup S_3. \label{elast}
\end{eqnarray}
Then $\Psi\subset\Phi$.
\end{proposition}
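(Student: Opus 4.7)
Fix any $n_0$ and any $\psi \in \Psi_{n_0}$. The goal is to exhibit $\psi$ as a composition factor of $\theta\downarrow G_{n_0}$ for some $\theta \in \Phi_m$ with $m$ large; that will force $\psi \in \Phi_{n_0}$ since $\Phi$ is an inductive system. By construction and the Steinberg tensor product theorem, $\psi = \bigotimes_{k=0}^{j} \psi_k^{[k]}$ with $\psi_k$ a $p$-restricted irreducible of $G_{n_0}$ in $\Psi^k_{n_0}$: $\psi_k = M_{n_0,k}$ for $k \in S_1$, some $L(\omega_{i_k}^{n_0}) \in \mathcal{F}_{n_0}$ for $k \in S_2$, and some truncated symmetric power in $\mathcal{T}_{n_0}$ for $k \in S_3$.

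The strategy is to apply the hypothesis with $l$ chosen sufficiently large in terms of $n_0$ and $p$, obtaining some $m$ and $\theta = \bigotimes \theta_k^{[k]} \in \Phi_m$ satisfying~(\ref{e1})--(\ref{elast}). For $k \in S_2 \cup S_3$, Proposition~\ref{LR} translates $\theta_k \notin \mathcal{L}_m^l \cup \mathcal{R}_m^l$ into the statement that the relevant weight index of $\theta_k$ is bounded away from both $0$ and $m$, placing $\theta_k$ deep inside the middle range of weights of $G_m$. Componentwise I would then verify $\psi_k \in \Irr_{n_0}\theta_k$: for $k \in S_1$, iterating Lemma~\ref{Hn} from rank $m$ down to $n_0$ shows that $\psi_k = M_{n_0,k}$ appears as a Smith factor of $\theta_k = M_{m,k}$; for $k \in S_2$, the middle-range position of $\theta_k$'s index $i$ satisfies $n_0 < i \leq m - n_0 + 1$, so Lemma~\ref{o1}(ii) gives $\Irr_{n_0}\theta_k = \mathcal{F}_{n_0} \ni \psi_k$; for $k \in S_3$, Corollary~\ref{oc1} analogously yields $\Irr_{n_0}\theta_k = \mathcal{T}_{n_0} \ni \psi_k$.

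All composition factors produced in the previous step are $p$-restricted, so by Steinberg's theorem any tensor product $\bigotimes_k \mu_k^{[k]}$ with $\mu_k$ a composition factor of $\theta_k\downarrow G_{n_0}$ is irreducible. A direct character computation on $\theta \downarrow G_{n_0} = \bigotimes_k (\theta_k\downarrow G_{n_0})^{[k]}$ then shows that every such irreducible tensor product actually occurs as a composition factor. Specializing $\mu_k = \psi_k$ produces $\psi$ as a composition factor of $\theta\downarrow G_{n_0}$, and since $\Phi$ is an inductive system this yields $\psi \in \Phi_{n_0}$.

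The main technical point I expect to require care is the numerical calibration of $l$: the translation of $\theta_k \notin \mathcal{L}_m^l \cup \mathcal{R}_m^l$ into the middle-range location of the weight index proceeds through the polynomial-degree formulas of Proposition~\ref{LR}, and for $k \in S_3$ the factor $p-1$ in $\pdeg$ of a truncated symmetric power forces $l$ to grow roughly like $(p-1)n_0$ rather than simply $n_0$. A secondary subtlety is that the Steinberg assembly in the third step requires all the composition factors of each $\theta_k\downarrow G_{n_0}$ to be $p$-restricted; this is immediate from Lemmas~\ref{Hn} and~\ref{o1}(ii) and Corollary~\ref{oc1}, but should be verified explicitly to avoid the usual tensor-product subtleties in characteristic $p$.
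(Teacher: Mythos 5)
Your approach has the right overall shape (realize $\psi$ as a composition factor of a restriction from some $\theta\in\Phi_m$), but there is a genuine gap at the very first step. You claim that every $\psi\in\Psi_{n_0}$ decomposes as $\psi=\bigotimes_k\psi_k^{[k]}$ with $\psi_k=M_{n_0,k}$ for $k\in S_1$ and $\psi_k$ a $p$-restricted element of $\mathcal{F}_{n_0}$ or $\mathcal{T}_{n_0}$ otherwise. For $k\in S_1$ this is false in general: by Lemma~\ref{par3} the set $\Psi^k_{n_0}$ is the collection of \emph{all} composition factors of $M_{t,k}{\downarrow}G_{n_0}$ over $t>n_0$, not the singleton $\{M_{n_0,k}\}$. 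Even in the simplest case $M_{n,k}=L(\omega_1^n)$ one has $\Psi^k_{n_0}=\{L(0),L(\omega_1^{n_0})\}$, and for more complicated highest weights these restrictions can contain composition factors that are not even $p$-restricted (e.g.\ $L(\omega_1^2+\omega_2^2)$ for $p=2$ restricted to $G_1$ has $L(2\omega_1^1)=L(\omega_1^1)^{[1]}$ as a composition factor). Once the tensorands $\mu_k\in\Psi^k_{n_0}$ are allowed to be non-$p$-restricted, the tensor product $\bigotimes\mu_k^{[k]}$ is no longer simple, and a generic composition factor $\psi$ of it has no reason to inherit a clean decomposition with components in $\Psi^k_{n_0}$. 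Your subsequent step, ``iterating Lemma~\ref{Hn} from rank $m$ down to $n_0$ shows that $\psi_k=M_{n_0,k}$ appears as a Smith factor,'' therefore does not cover all the elements of $\Psi^k_{n_0}$ that could occur as $\psi_k$.

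The paper circumvents this exactly where your argument breaks: instead of decomposing $\psi$ at rank $t$, it first \emph{lifts} $\psi\in\Psi_t$ to a module $\pi=\bigotimes_k\pi_k^{[k]}$ of $G_m$ for some $m>\max\{d,t\}$ with $\pi_k=M_{m,k}$ for $k\in S_1$ (and $\pi_k$ an arbitrary element of $\mathcal{F}_m$ resp.\ $\mathcal{T}_m$ for $k\in S_2$ resp.\ $S_3$), such that $\psi\in\Irr_t\pi$. It then fixes $l=(p-1)(m+1)$, takes $\varphi\in\Phi_n$ from the hypothesis, and shows $\pi_k\in\Irr_m\varphi_k$ for each $k$, whence $\pi\in\Phi_m$ and hence $\psi\in\Irr_t\pi\subset\Phi_t$. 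At the level of the generator $\pi$, the $k\in S_1$ components are literally the Smith factors $M_{m,k}$, so Lemma~\ref{Hn} applies directly; this is the step your version collapses to a special case. To repair your proof you would either have to adopt this two-stage lifting, or prove separately that the sets $\Irr_{n_0}M_{m,k}$ stabilize for large $m$ to all of $\Psi^k_{n_0}$ and then take $m$ large enough in that separate sense --- which is essentially the same content expressed differently.

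Your numerical calibration of $l$ (growth like $(p-1)n_0$ because of the factor $p-1$ in $\pdeg$ of a truncated symmetric power) matches the paper's choice $l=(p-1)(m+1)$, and the final assembly via ``restriction commutes with tensor product and Frobenius twist'' is also in the spirit of the paper; those parts are fine once the decomposition issue is fixed.
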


\begin{proof}
The construction of $\Psi$ and the definition of a tensor
product of inductive systems imply that for each $\psi\in \Psi_t$
there exist $m>\max\{d,t\}$ and a $G_m$-module $\pi=\otimes_{k=0}^j
\pi_k^{[k]}$ with $\pi_k=M_{m,k}$ for $k\in S_1$,
$\pi_k\in\mathcal{F}_m$ for $k\in S_2$,  and
$\pi_k\in\mathcal{T}_m$ for $k\in S_3$, such that $\psi\in\Irr_t\pi$.
So it suffices to prove that all such modules $\pi\in\Phi_m$.
Put $l=(p-1)(m+1)$ and choose $n>m$ and $\varphi\in\Phi_n$ that
satisfies (\ref{e1})--(\ref{elast}) for this $l$. Then
Lemmas~\ref{o1} and~\ref{Hn} and Corollary~\ref{oc1} imply that $\pi_k\in\Irr_m
\varphi_k$ for all $k\in \mathbb{N}_j$. Hence $\pi\in\Irr_m
\varphi\subset\Phi_m$. This completes the proof.
\end{proof}

Note that $S_1$ can be empty.

\begin{corollary}
\label{CF} Set $n'=\left[\frac{n+1}{2}\right]$ and
$F_n=L(\omega_{n'}^n)\in\Irr G_n$. Then $\mathcal{F}= \langle
F_n \mid n\in\mathbb{N}\rangle$.
\end{corollary}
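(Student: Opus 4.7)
The plan is to verify the two inclusions $\langle F_n\mid n\in\mathbb{N}\rangle\subseteq \mathcal{F}$ and $\mathcal{F}\subseteq \langle F_n\mid n\in\mathbb{N}\rangle$ separately. Before doing so, I would first observe that the inductive system $\langle F_n\mid n\in\mathbb{N}\rangle$ is well defined. Since $\omega^n_{n'}$ is a fundamental weight of $A_n(K)$, the maximal root $\alpha_{\max}=\alpha_1+\ldots+\alpha_n$ satisfies $\langle\omega^n_{n'},\alpha_{\max}\rangle=1$, so $\delta(F_n)=1$ is bounded. The remaining hypothesis of Lemma~\ref{par3}, namely $F_t\in\Pi_t$ for all $t$, will be a by-product of the computation in the second inclusion below.

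For the inclusion $\langle F_n\mid n\in\mathbb{N}\rangle\subseteq \mathcal{F}$, I would note that $F_n\in\mathcal{F}_n$ by construction and that $\mathcal{F}$ is itself an inductive system by Lemma~\ref{ind}. Consequently, every composition factor of any restriction $F_t{\downarrow}G_n$ with $t>n$ lies in $\mathcal{F}_n$, and therefore $\langle F_n\mid n\in\mathbb{N}\rangle_n\subseteq \mathcal{F}_n$ for every $n$.

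The core of the argument is the reverse inclusion, which relies entirely on Lemma~\ref{o1}(ii). Given any $L(\omega^n_i)\in\mathcal{F}_n$, I want to find $t>n$ for which $\Irr_n F_t\supseteq\{L(\omega^n_i)\}$. Lemma~\ref{o1}(ii) gives $\Irr_n F_t=\mathcal{F}_n$ whenever $n<t'\leq t-n+1$, where $t'=[(t+1)/2]$. The specific choice $t'=[(t+1)/2]$ is designed precisely so that both inequalities $n<t'$ and $t'\leq t-n+1$ hold once $t\geq 2n+1$ (for such $t$, $t'$ lies essentially at the midpoint of $\{1,\ldots,t\}$). Hence for $t$ sufficiently large relative to $n$ we obtain $\mathcal{F}_n=\Irr_n F_t\subseteq\Pi_n$, which gives the desired inclusion and, applied with $n$ replaced by $t$, also verifies $F_t\in\Pi_t$ used above.

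I do not anticipate any serious obstacles; the only subtle point is making sure that Lemma~\ref{o1}(ii) can be invoked with $i=t'$, i.e.\ checking that $[(t+1)/2]$ really does satisfy $n<[(t+1)/2]\leq t-n+1$ for $t$ large. This is a small arithmetic verification and is exactly the reason the authors chose $n'=[(n+1)/2]$ in the definition of $F_n$.
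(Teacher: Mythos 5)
Your proof is correct, and it takes a different route from the paper. The paper cites the general Proposition~\ref{TF} (with $j=0$, $S_2=\{0\}$, $S_1=S_3=\varnothing$), checking its hypothesis via the observation that $F_n\notin\mathcal{L}_n^d\cup\mathcal{R}_n^d$ for $n$ large; Proposition~\ref{TF} then immediately yields $\mathcal{F}\subseteq\langle F_n\rangle$. You bypass \ref{TF} and apply Lemma~\ref{o1}(ii) directly to $F_t=L(\omega^t_{t'})$: for $t\geq 2n+1$ one has $n<t'=\lceil t/2\rceil\leq t-n+1$, giving $\Irr_n F_t=\mathcal{F}_n$ outright. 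This is essentially the special case of what the paper's proof of \ref{TF} does when all factors lie in $\mathcal{F}$, so the two arguments are morally equivalent, but yours is more self-contained (avoiding the machinery of twisted tensor factors that \ref{TF} handles) while the paper's is shorter once \ref{TF} is available. One small difference on the well-definedness point: the paper establishes $F_n\in\Pi_n$ via Lemma~\ref{o1}(i), which gives $F_n\in\Irr_n F_{n+1}$ by a short midpoint calculation; you instead recover $F_n\in\Pi_n$ as a by-product of the surjectivity computation with Lemma~\ref{o1}(ii), which works equally well since $\Pi_n$ is defined as a set regardless of whether $\Pi$ is yet known to be an inductive system.
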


\begin{proof}
Lemma~\ref{o1} implies that $F_n\in\Irr_n F_{n+1}$. Hence
$\langle F_n \mid n\in\mathbb{N}\rangle$ is an inductive system by Lemma~\ref{par3}.
Naturally, for each $d$ there exists $n$ with
$F_n\notin\mathcal{L}_n^d \cup \mathcal{R}_n^d$. Now apply
Proposition~\ref{TF}.
\end{proof}

\begin{corollary}
\label{cp2} Define $n'$ as in Corollary~$\ref{CF}$ and set $T_n=L((p-1)\omega_{n'}^n)$.
Then $\mathcal{T}= \langle T_n \mid n\in\mathbb{N}\rangle$.
\end{corollary}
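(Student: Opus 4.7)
The proof should follow the template of Corollary~\ref{CF}: first verify that $\langle T_n\mid n\in\mathbb{N}\rangle$ is a well-defined inductive system, then identify it with $\mathcal{T}$ via Proposition~\ref{TF}.

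For the first task, the plan is to show $T_n\in\Irr_n T_{n+1}$ and then invoke Lemma~\ref{par3}, which is available since $\delta(T_n)=p-1$ is uniformly bounded in $n$. The main subtlety, and the only place where care is needed, is that $n'=[(n+1)/2]$ either stays fixed or increments by one when $n$ is replaced by $n+1$, so the two parities of $n$ must be handled separately via the Jantzen--Smith theorem (Theorem~\ref{Sm}) applied to a Levi subgroup of type $A_n$ inside $G_{n+1}$. When $n+1$ is odd, so that $(n+1)'=n'+1$, the right choice is $H=G_{n+1,n}=G_{n+1}(\alpha_2,\ldots,\alpha_{n+1})$; since the restriction of $\omega_{(n+1)'}^{n+1}$ to $H$ equals $\omega_{n'}^n$, the Smith factor of $T_{n+1}$ is exactly $T_n$. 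When $n+1$ is even, so that $(n+1)'=n'$, this same subgroup yields the wrong Smith factor, and I would instead use the Weyl-conjugate Levi $H'=G_{n+1}(\alpha_1,\ldots,\alpha_n)$, on which $\omega_{n'}^{n+1}$ restricts directly to $\omega_{n'}^n$; conjugacy of $H'$ with $G_{n+1,n}$ then gives $T_n\in\Irr_n T_{n+1}$.

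For the second task, I would apply Proposition~\ref{TF} with $j=0$, $S_1=S_2=\varnothing$, $S_3=\{0\}$ (when $p>2$), and $\varphi_0=T_n\in\mathcal{T}_n$. The only hypothesis requiring verification is that for every $d$ one can find $n$ with $T_n\notin\mathcal{L}_n^d\cup\mathcal{R}_n^d$, which is immediate from~(\ref{pdegg}): both $\pdeg T_n=(p-1)n'$ and $\pdeg T_n^*=(p-1)(n-n'+1)$ tend to infinity with $n$. Proposition~\ref{TF} then delivers $\mathcal{T}\subset\langle T_n\mid n\in\mathbb{N}\rangle$, and the reverse inclusion is automatic since $\mathcal{T}$ is itself an inductive system containing every $T_n$. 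The degenerate case $p=2$ collapses to Corollary~\ref{CF}, since then $T_n=F_n$ and $\mathcal{T}=\mathcal{F}$; every other ingredient runs in parallel with the proof of Corollary~\ref{CF}.
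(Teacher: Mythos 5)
Your proposal is correct and follows the same overall template as the paper's one-line proof of Corollary~\ref{cp2}, which says to argue as in Corollary~\ref{CF} using Lemmas~\ref{o2} and~\ref{par3} and Propositions~\ref{LR} and~\ref{TF}. The one place you diverge is the verification that $T_n\in\Irr_n T_{n+1}$: the paper appeals to Lemma~\ref{o2} (the explicit branching rule for truncated symmetric powers, applied with $m=0$ so that $H_2=G_{n+1,n}$ and the relevant summand appears for either parity), whereas you instead invoke Theorem~\ref{Sm} directly, handling the two parities of $n$ by choosing either the right-hand Levi $G_{n+1,n}$ or its Weyl-conjugate $G_{n+1}(\alpha_1,\ldots,\alpha_n)$. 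Both arguments are sound; the Lemma~\ref{o2} route is uniform across parities and gives the full decomposition, while your route is marginally more elementary but needs the conjugacy remark and the parity split. The rest of your argument---the $\delta(T_n)=p-1$ bound for Lemma~\ref{par3}, the $\pdeg$ growth (which is precisely the content of Proposition~\ref{LR}) to feed Proposition~\ref{TF}, and the observation that $p=2$ degenerates to $\mathcal{T}=\mathcal{F}$---matches the paper exactly.
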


\begin{proof}
Argue as in the proof of Corollary~\ref{CF}, applying Lemmas~\ref{o2} and~\ref{par3}, and
Propositions~\ref{LR} and~\ref{TF}.
\end{proof}

\begin{proposition}
\label{t2} Let $\Phi\subset \mathcal{L}^d$ or $\mathcal{R}^d$.
Then $\Phi$ is a finite union of systems $C_L(a_1,\ldots,a_d)$ or
$C_R(a_1,\ldots,a_d)$,  respectively.
\end{proposition}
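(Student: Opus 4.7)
I will treat the case $\Phi \subset \mathcal{L}^d$; the case $\Phi \subset \mathcal{R}^d$ is entirely symmetric under the duality $M \mapsto M^*$. The plan is to show that $\Phi$ is determined by a finite subset of the finite parameter set
$$P = \{(a_1, \ldots, a_d) \in \mathbb{Z}_{\ge 0}^d \mid a_1 + 2a_2 + \cdots + d a_d \le d\}.$$

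First, I would observe that by Proposition \ref{LR} every module in $\mathcal{L}^d_n$ has polynomial degree at most $d$, so for $n \ge d$ its highest weight is supported on $\omega_1^n, \ldots, \omega_d^n$ and it is uniquely of the form $M_{n,L}(a_1, \ldots, a_d)$ with $(a_1, \ldots, a_d) \in P$. This allows me to define
$$A_n = \{(a_1, \ldots, a_d) \in P \mid M_{n,L}(a_1, \ldots, a_d) \in \Phi_n\}, \quad n \ge d.$$
The key structural fact is that $A_{n+1} \subseteq A_n$: if $(a_1, \ldots, a_d) \in A_{n+1}$ then $M_{n+1,L}(a_1, \ldots, a_d) \in \Phi_{n+1}$, the inductive system property places all of $\Irr_n M_{n+1,L}(a_1, \ldots, a_d)$ inside $\Phi_n$, and Lemma \ref{Hn} identifies $M_{n,L}(a_1, \ldots, a_d)$ as one of those constituents. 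Since $|A_n|$ is a non-increasing sequence of non-negative integers bounded by $|P|$, the sets $A_n$ stabilize: there is some $N \ge d$ with $A_n = A_\infty$ for all $n \ge N$.

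The bulk of the remaining work is then to verify the decomposition
$$\Phi = \bigcup_{(a_1, \ldots, a_d) \in A_\infty} C_L(a_1, \ldots, a_d),$$
which is finite as $|A_\infty| \le |P| < \infty$. For the inclusion $\supseteq$, given $\pi \in C_L(a_1, \ldots, a_d)_m$ with $(a_1, \ldots, a_d) \in A_\infty$, the definition supplies $t_0 > m$ with $\pi \in \Irr_m M_{t_0, L}(a_1, \ldots, a_d)$; iterating Lemma \ref{Hn} shows that $\pi$ is also a composition factor of $M_{t, L}(a_1, \ldots, a_d)$ restricted to $G_m$ for every $t \ge t_0$, and choosing $t \ge N$ places $M_{t, L}(a_1, \ldots, a_d)$ in $\Phi_t$, so $\pi \in \Phi_m$. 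For the inclusion $\subseteq$, given $\varphi \in \Phi_m$, I would repeatedly apply the inductive system property to build a sequence $\varphi_k \in \Phi_{m+k}$ with $\varphi_0 = \varphi$ and $\varphi_k \in \Irr_{m+k-1} \varphi_{k+1}$, so that $\varphi \in \Irr_m \varphi_k$ for every $k$. Once $m + k \ge \max(d, N)$, each $\varphi_k$ takes the form $M_{m+k, L}(b_1^k, \ldots, b_d^k)$ with parameter tuple in $A_\infty$; finiteness of $A_\infty$ and pigeonhole then yield a fixed $(a_1, \ldots, a_d) \in A_\infty$ matching $(b_1^k, \ldots, b_d^k)$ for infinitely many such $k$, and for any such $k \ge 1$ we obtain $\varphi \in \Irr_m M_{m+k, L}(a_1, \ldots, a_d) \subset C_L(a_1, \ldots, a_d)_m$.

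The argument is largely bookkeeping once the parameterization and the monotonicity $A_{n+1} \subseteq A_n$ are established. The one potentially confusing point is the compatibility between Lemma \ref{Hn}, which phrases the Smith-factor statement with respect to the Levi subgroup $H_{n,L}$, and the chain embedding $G_n = G_{n+1,n} \subset G_{n+1}$ used to define inductive systems; but the closing clause $M_n \in \Irr_n M_{n+1}$ of that lemma is already formulated for the standard chain embedding, so it can be invoked directly both in the monotonicity step and in verifying the inclusion $\supseteq$.
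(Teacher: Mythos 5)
Your proposal is correct and follows essentially the same route as the paper: both parameterize $\mathcal{L}_n^d$ by the finite set of $d$-tuples with $\sum k a_k\le d$ (via Proposition~\ref{LR}), use Lemma~\ref{Hn} to move between ranks, and apply pigeonhole over this finite set. The one cosmetic addition in your version is the explicit monotonicity $A_{n+1}\subseteq A_n$ and the resulting stabilization, which replaces the paper's contradiction argument with a direct construction; aside from the small index typo ($\varphi_k\in\Irr_{m+k}\varphi_{k+1}$, not $\Irr_{m+k-1}$), the bookkeeping is sound.
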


\begin{proof} We shall prove the claim for $\mathcal{L}^d$. The proof
for $\mathcal{R}^d$ is
similar. Assume that $\Phi\subset\mathcal{L}^d$. For a $d$-tuple
$s=(a_1,\ldots,a_d)$ with $a_j\in\mathbb{Z}_{\ge0}$ set
$M_{n,L}(s)=L(a_1\omega_1^n+\ldots+a_d\omega_d^n)$ and
$C_L(s)=C_L(a_1,\ldots,a_d)$. Denote by $S_d$ the set of all such
tuples with $a_1+2a_2+\ldots+d a_d\leq d$. Obviously,  the set
$S_d$ is finite. By Proposition~\ref{LR},
$\mathcal{L}_n^d=\{M_{n,L}(s)\mid s\in S_d \}$. Let $S(\Phi)=\{
s\in S_d \mid C_L(s)\subset\Phi \}$ and $\Psi=\cup_{s\in S(\Phi)}
C_L(s)$. We claim that $\Psi=\Phi$. Suppose this is not the case
and set $D_n=\Phi_n\setminus \Psi_n$. Then $D_n\ne \varnothing$
for large enough $n$. Hence there exists $\sigma\in S_d$ for
which the set $\{n\mid M_{n,L}(\sigma)\in D_n\}$ is infinite. Lemma~\ref{Hn} implies that $M_{n,L}(\sigma)\in\Irr_n M_{k,L}(\sigma)$ for $k>n$. Since $\Phi$ is an inductive system, this forces $C_L(\sigma)\subset \Phi$ and yields a contradiction. Hence $\Psi=\Phi$ as desired.
\end{proof}

\begin{corollary}
\label{ct2} If $\Phi\subset \mathcal{L}^d$ or $\mathcal{R}^d$ is an
indecomposable inductive system, then $\Phi=C_L(a_1,\ldots,a_d)$ or
$C_R(a_1,\ldots,a_d)$, respectively.
\end{corollary}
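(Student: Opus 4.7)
The plan is to deduce the corollary almost immediately from Proposition~\ref{t2}, using only the definition of an indecomposable inductive system. Without loss of generality treat the case $\Phi \subset \mathcal{L}^d$; the case $\Phi \subset \mathcal{R}^d$ is symmetric (or can be obtained by applying duality, since $\wdeg M = \wdeg M^*$).

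First I would invoke Proposition~\ref{t2} to write $\Phi$ as a finite union
$$\Phi = C_L(s_1) \cup C_L(s_2) \cup \ldots \cup C_L(s_r),$$
where $s_1, \ldots, s_r$ are distinct tuples in the finite set $S_d$ of $d$-tuples $(a_1,\ldots,a_d)$ with $a_1+2a_2+\ldots+d a_d \leq d$. Each $C_L(s_i)$ is itself an inductive system by Lemma~\ref{lLR}. Note that for each $n \geq d$ the modules $M_{n,L}(s_i)$ for distinct tuples $s_i$ have pairwise distinct highest weights, so the inductive systems $C_L(s_i)$ are pairwise distinct (and, in particular, none contains another).

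Next I would suppose for contradiction that $r \geq 2$, and set $\Psi = C_L(s_1)$ and $\Psi' = C_L(s_2) \cup \ldots \cup C_L(s_r)$. Both $\Psi$ and $\Psi'$ are inductive systems (the latter as a finite union of inductive systems, directly from the definition via Corollary~\ref{cgen} or Lemma~\ref{par3}), and their union equals $\Phi$. Because the tuples $s_i$ are distinct, the module $M_{n,L}(s_1)$ belongs to $\Psi_n$ but not to $\Psi'_n$ for all $n \geq d$, so $\Psi \neq \Phi$; symmetrically $\Psi' \neq \Phi$. This contradicts the indecomposability of $\Phi$.

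Hence $r = 1$, so $\Phi = C_L(a_1,\ldots,a_d)$ for some tuple $(a_1,\ldots,a_d)$, as required. The only step requiring any care is verifying that a finite union of inductive systems is again an inductive system, which is routine from the definition, and confirming that different tuples yield genuinely different systems—both of which are essentially immediate once one unpacks the definitions, so no serious obstacle arises.
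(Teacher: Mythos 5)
Your overall plan — deduce the corollary from Proposition~\ref{t2} via the definition of indecomposability — is the right one and is presumably what the paper intends (no explicit proof is given). However, there is a genuine gap in the parenthetical claim that the systems $C_L(s_i)$, being pairwise distinct, ``in particular'' have none contained in another. That implication is false: distinct tuples routinely give nested systems. For instance $C_L(1,0,\ldots,0)\subsetneq C_L(0,1,0,\ldots,0)$, because for $t-n\geq 2$ one has
$\Irr_n L(\omega_1^t)=\{L(0),L(\omega_1^n)\}$ while $\Irr_n L(\omega_2^t)=\{L(0),L(\omega_1^n),L(\omega_2^n)\}$ (Lemma~\ref{o1}(i)), so $C_L(1,0,\ldots,0)_n\subsetneq C_L(0,1,0,\ldots,0)_n$. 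Worse, the set $S(\Phi)$ produced in the proof of Proposition~\ref{t2} is by construction $\{s\in S_d \mid C_L(s)\subset\Phi\}$, which deliberately includes every such ``redundant'' tuple; for example if $\Phi=C_L(0,1)$ with $d=2$ then $S(\Phi)=\{(0,0),(1,0),(0,1)\}$ and so $r=3$ even though $\Phi$ is indecomposable. In that situation your $\Psi'=C_L(1,0)\cup C_L(0,1)$ equals all of $\Phi$, and your asserted conclusion $M_{n,L}(s_1)\notin\Psi'_n$ fails outright.

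The fix is straightforward but has to be said: before splitting, replace the union $\Phi=\bigcup_{s\in S(\Phi)}C_L(s)$ by an \emph{irredundant} sub-union — keep discarding any $C_L(s_j)$ with $C_L(s_j)\subset\bigcup_{i\neq j}C_L(s_i)$ until none can be removed (the process terminates since $S_d$ is finite). If the irredundant union still has $r\geq 2$ terms, set $\Psi=C_L(s_1)$ and $\Psi'=\bigcup_{i\geq 2}C_L(s_i)$; now irredundancy gives $C_L(s_1)\not\subset\Psi'$, hence $\Psi'\neq\Phi$, and also $C_L(s_2)\not\subset\bigcup_{i\neq 2}C_L(s_i)\supset\Psi$, hence $\Psi\neq\Phi$, contradicting indecomposability. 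Thus $r=1$. (Equivalently, one can pass to the $\subset$-maximal systems among the $C_L(s)$, $s\in S(\Phi)$, and use that each $C_L(s)_n$, viewed as a set of tuples, is eventually constant in $n$.) Also note the labels in your last inference are swapped: ``$M_{n,L}(s_1)$ lies in $\Psi_n$ but not $\Psi'_n$'' is an argument for $\Psi'\neq\Phi$, not for $\Psi\neq\Phi$.
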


\begin{lemma}
\label{chain}
Let $\Phi\subset\mathcal{L}^a \cup\mathcal{R}^b$, but
$\Phi\not\subset\mathcal{L}^a$ and $\Phi\not\subset\mathcal{R}^b$.
Then $\Phi=\Phi^L \cup\Phi^R$ where $\Phi^L$ and $\Phi^R$ are
proper subsystems of $\Phi$, $\Phi^L\subset\mathcal{L}^a$, and
$\Phi^R\subset\mathcal{R}^b$.
\end{lemma}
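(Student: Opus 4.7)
The plan is to exhibit $\Phi^L$ and $\Phi^R$ as subsystems of $\Phi$ witnessed by infinite towers of lifts staying inside $\mathcal{L}^a$ and $\mathcal{R}^b$ respectively. I would define $\Phi^L_n$ to consist of those $M\in\Phi_n$ for which there exists a sequence $M=M^{(n)},M^{(n+1)},M^{(n+2)},\ldots$ with $M^{(k)}\in\Phi_k\cap\mathcal{L}^a_k$ and $M^{(k)}\in\Irr_k M^{(k+1)}$ for every $k\geq n$, and define $\Phi^R_n$ symmetrically. Inductivity of $\Phi^L$ is then routine: the upward condition is delivered by the tail of the tower, and the downward condition by prepending any $N\in\Irr_n M'$ to the tower for $M'\in\Phi^L_{n+1}$, using that $\mathcal{L}^a$ is inductive by Lemma~\ref{ind} and $\Phi$ is inductive by hypothesis. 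Since $\Phi^L\subset\mathcal{L}^a$ and $\Phi^R\subset\mathcal{R}^b$ by construction, the hypotheses $\Phi\not\subset\mathcal{L}^a$ and $\Phi\not\subset\mathcal{R}^b$ give properness immediately.

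The main content is the equality $\Phi_n=\Phi^L_n\cup\Phi^R_n$, which I would derive from a rigidity principle. If $M^{(k)}\in\mathcal{L}^a_k\setminus\mathcal{R}^b_k$ and $M^{(k+1)}\in\Phi_{k+1}$ is a lift, then $M^{(k+1)}\notin\mathcal{R}^b$, for otherwise the inductivity of $\mathcal{R}^b$ (Lemma~\ref{ind}) would force $M^{(k)}\in\mathcal{R}^b$. Hence $M^{(k+1)}\in\mathcal{L}^a\setminus\mathcal{R}^b$ as well, and the same restriction argument applied downward keeps every earlier term in $\mathcal{L}^a$. The symmetric statement holds for $\mathcal{R}^b$. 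Consequently any tower in $\Phi$ is entirely in $\mathcal{L}^a$, entirely in $\mathcal{R}^b$, or entirely within the shared zone $\mathcal{L}^a\cap\mathcal{R}^b$ (which lies in both).

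For arbitrary $M\in\Phi_n$ I would build such a tower greedily. If $M\in\mathcal{L}^a\setminus\mathcal{R}^b$ or $M\in\mathcal{R}^b\setminus\mathcal{L}^a$, rigidity alone forces the whole tower into the appropriate pure zone. If $M\in\mathcal{L}^a\cap\mathcal{R}^b$, at each step I would pick a shared lift whenever one exists and otherwise take any lift, which by exclusion must be pure; rigidity then confines the remainder of the tower to a single class. When a shared lift is available at every step the resulting tower stays in $\mathcal{L}^a\cap\mathcal{R}^b$ and places $M$ in $\Phi^L_n\cap\Phi^R_n$. In every scenario $M\in\Phi^L_n\cup\Phi^R_n$, as required.

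The hard part, I expect, will be establishing the rigidity principle cleanly and verifying that the greedy procedure for shared $M$ always produces a well defined tower. The simplifying observation is that for $k\geq a+b$ the intersection $\mathcal{L}^a_k\cap\mathcal{R}^b_k$ collapses to $\{L(0)\}$, so at high levels sustaining a shared chain amounts to having $L(0)\in\Phi_k$, and inductivity of $\Phi$ always supplies at least one lift. Combined with the existence of pure $\mathcal{L}$- and pure $\mathcal{R}$-modules in $\Phi$ guaranteed by the two hypotheses, this also yields nonemptiness of $\Phi^L_n$ and $\Phi^R_n$ at every level.
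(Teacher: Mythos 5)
Your construction is correct and follows the same basic strategy as the paper's: split $\Phi$ by intersecting with $\mathcal{L}^a$ and $\mathcal{R}^b$, using that for $n\ge a+b$ these two systems become almost disjoint at level $n$. The implementation differs, and in one respect your version is actually more careful than the published one. The paper sets $\Pi_n=\Phi_n\cap\mathcal{L}_n^a$, $\Sigma_n=\Phi_n\cap\mathcal{R}_n^b$, asserts $\Pi_n\cap\Sigma_n=\varnothing$ for $n\ge a+b$, deduces that any lift in $\Phi_{n+1}$ of a member of $\Pi_n$ lands in $\Pi_{n+1}$, and then takes $\Phi^L=\langle\Pi_n\rangle$, $\Phi^R=\langle\Sigma_n\rangle$. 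But, as you yourself observe, $\mathcal{L}_n^a\cap\mathcal{R}_n^b=\{L(0)\}$ for $n\ge a+b$ (from $\pdeg M+\pdeg M^*=(n+1)\delta(M)$), so the asserted disjointness fails exactly when $L(0)\in\Phi_n$ --- which does happen (take $\Phi=C_L(1)\cup C_R(1)$ with $a=b=1$). Your rigidity principle for the pure modules, combined with the greedy choice of lifts once the shared zone collapses to $\{L(0)\}$, precisely handles this: a tower that ever leaves $\{L(0)\}$ is pinned to one pure class forever, while its shared initial segment lies in both $\mathcal{L}^a$ and $\mathcal{R}^b$, so the resulting tower certifies membership in $\Phi^L$ or $\Phi^R$ in every case. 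Thus your tower-based definitions, the routine inductivity check, and the exhaustion argument constitute a complete proof; the paper's shortcut only works verbatim when $L(0)\notin\Phi_n$ and otherwise needs exactly the repair you supply (equivalently, restrict the disjointness claim to nontrivial modules and place $L(0)$ in both generating collections).
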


\begin{proof}
Set $\Pi_n=\Phi_n\cap\mathcal{L}_n^a$,
$\Sigma_n=\Phi_n\cap\mathcal{R}_n^b$. Observe that $\Pi_n\cap
\Sigma_n=\varnothing$ for $n\geq a+b$. As  $\mathcal{L}^a$
and $\mathcal{R}^b$ are inductive systems, this implies the following: if  $n\geq a+b$, $\varphi\in\Pi_n$ or $\Sigma_n$, $\psi\in\Phi_{n+1}$, and $\varphi\in\Irr_n \psi$, then $\psi\in\Pi_{n+1}$ or $\Sigma_{n+1}$, respectively. Since $\Phi$ is an inductive system, we conclude  that  for every $\varphi\in\Pi_n$ or $\Sigma_n$ there exists $\rho\in\Pi_{n+1}$ or $\Sigma_{n+1}$, respectively, with $\varphi\in\Irr_n\rho$. Now Corollary~\ref{cgen} yields that the inductive systems
$\Phi^L=\langle \Pi_n \mid n\geq a+b \rangle$ and $\Phi^R=\langle
\Sigma_n \mid n\geq a+b \rangle$ are well defined. It is clear
that $\Phi_n=\Phi_n^L \cup\Phi_n^R$. Hence $\Phi=\Phi^L \cup\Phi^R$.
\end{proof}

Now we start describing BWM-systems for groups of type $A_n$. Note
that $\mathcal{F}=\mathcal{T}$ for $p=2$, but this does not
affect the proofs.

\begin{proposition}
\label{rgen} Let $\Phi$ be a $p$-restrictedly generated BWM-system.
Then one of the following holds:

$(1)$  $\Phi=\mathcal{F}$;

$(2)$  $\Phi=\mathcal{T}$;

$(3)$  $\Phi=\mathcal{F}\cup\mathcal{T}$;

$(4)$ $\Phi\subset \mathcal{L}^d\cup \mathcal{R}^d$;

$(5)$ $\Phi=\Phi'\cup\mathcal{T}$,  $\Phi=\Phi'\cup\mathcal{F}$, or
$\Phi=\Phi'\cup\mathcal{F}\cup\mathcal{T}$ with $\Phi'\subset
\mathcal{L}^d\cup \mathcal{R}^d$.

In all cases, if $\wdeg \Phi=k$, then $\Phi\subset \mathcal{L}^{k+2}\cup
\mathcal{R}^{k+2}\cup\mathcal{F}\cup\mathcal{T}$.
\end{proposition}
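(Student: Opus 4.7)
The plan is first to use Theorem~\ref{main1} together with the $p$-restricted generation hypothesis to squeeze $\Phi$ into $\mathcal{F}\cup\mathcal{T}\cup\mathcal{L}^{k+2}\cup\mathcal{R}^{k+2}$ (where $k=\wdeg\Phi$), which already yields the last assertion of the proposition; then to apply Proposition~\ref{TF} in contrapositive form to decide, for each of $\mathcal{F}$ and $\mathcal{T}$, whether it lies in $\Phi$ entirely, thereby splitting off the five listed cases.

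Concretely, write $\Phi=\langle\Lambda_t\mid t\in T\rangle$ with $\Lambda_t\subset\Irr^p G_t$ and $T$ infinite. Fix $N\geq\max\{16,\,p^2(k+1)^2\}$ so that $\sqrt{t}/p-1\geq k$ for $t\geq N$. For any $\mu\in\Lambda_t$ with $t\geq N$, the assumption $\wdeg\mu\leq k$ combined with Theorem~\ref{main1} forces one of two alternatives: either $\omega(\mu)\in\Omega_p(A_t(K))$, so $\mu\in\mathcal{F}_t\cup\mathcal{T}_t$ by the classification of $p$-restricted modules of weight degree $1$; or, by parts~(i)--(ii) of that theorem, $\min\{\pdeg\mu,\pdeg\mu^*\}\leq k+2$, whence $\mu\in\mathcal{L}^{k+2}_t\cup\mathcal{R}^{k+2}_t$ via Proposition~\ref{LR}. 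Since $\mathcal{F},\mathcal{T},\mathcal{L}^{k+2},\mathcal{R}^{k+2}$ are themselves inductive systems (Lemma~\ref{ind}), composition factors of $\mu\downarrow G_n$ remain inside the same system. Because $T$ is infinite, every $\varphi\in\Phi_n$ traces back to a composition factor of such a $\mu\downarrow G_n$ with $t\geq N$ (iterating the definition of an inductive system), so $\Phi\subset\mathcal{F}\cup\mathcal{T}\cup\mathcal{L}^{k+2}\cup\mathcal{R}^{k+2}$.

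For the case analysis I would invoke Proposition~\ref{TF} twice with $j=0$. Contrapositively, if $\mathcal{F}\not\subset\Phi$ then, taking $S_2=\{0\}$ there, one obtains an integer $l_0$ with $\Phi\cap\mathcal{F}\subset\mathcal{L}^{l_0}\cup\mathcal{R}^{l_0}$; and analogously if $\mathcal{T}\not\subset\Phi$, an integer $l_1$ with $\Phi\cap\mathcal{T}\subset\mathcal{L}^{l_1}\cup\mathcal{R}^{l_1}$ (for $p=2$ one has $\mathcal{F}=\mathcal{T}$ and the two conditions merge). Set $d=\max\{k+2,l_0,l_1\}$, dropping $l_0$ or $l_1$ when the corresponding subsystem is in $\Phi$. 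The four-way split on whether $\mathcal{F}$ and $\mathcal{T}$ are contained in $\Phi$ then yields the five listed cases: both absent gives~(4); only $\mathcal{T}$ absent gives~(1) or~(5); only $\mathcal{F}$ absent gives~(2) or~(5); both present gives~(3) or~(5).

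The main obstacle is producing, in case~(5), a bona fide inductive subsystem $\Phi'\subset\mathcal{L}^d\cup\mathcal{R}^d$ with $\Phi$ equal to the appropriate union $\mathcal{F}\cup\Phi'$, $\mathcal{T}\cup\Phi'$, or $\mathcal{F}\cup\mathcal{T}\cup\Phi'$. I would define $\Phi'$ via Corollary~\ref{cgen} from the $p$-restricted generators $\Lambda'_t=\Lambda_t\cap(\mathcal{L}^d_t\cup\mathcal{R}^d_t)$ for $t\geq N$. Checking that this really recovers all of $\Phi$ rests on the observation that any generator $\mu\in\Lambda_t\setminus\Lambda'_t$ must, by the Step~1 classification, lie in $\mathcal{F}_t\cup\mathcal{T}_t$; its restrictions then stay inside $\mathcal{F}$ or $\mathcal{T}$ by inductivity, so no composition factor of $\Phi$ escapes the intended three-piece partition. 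The remainder of the argument is immediate from the Step~1 containment and the contrapositive of Proposition~\ref{TF}.
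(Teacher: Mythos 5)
Your overall strategy mirrors the paper's: use Propositions~\ref{Cp} and~\ref{deg} (i.e.\ Theorem~\ref{main1}) to force each $p$-restricted generator into $\mathcal{F}_t\cup\mathcal{T}_t\cup\mathcal{L}^{k+2}_t\cup\mathcal{R}^{k+2}_t$, then run the contrapositive of Proposition~\ref{TF} to sort the remaining cases. Where you diverge technically is in producing $\Phi'$. The paper sidesteps the issue entirely: it sets $\Psi$ equal to whichever of $\mathcal{F},\mathcal{T},\mathcal{F}\cup\mathcal{T}$ sits inside $\Phi$, replaces $\Phi$ by the already-well-defined difference $D(\Phi,\Psi)$ (Definition~\ref{Di}, relying on \cite{BSA2rev}), checks $\mathcal{F},\mathcal{T}\not\subset D(\Phi,\Psi)$, and then applies Proposition~\ref{TF} just once. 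You instead try to manufacture $\Phi'$ directly.

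The gap is precisely in that manufacture. Defining $\Phi'=\langle\Lambda'_t\rangle$ with $\Lambda'_t=\Lambda_t\cap(\mathcal{L}^d_t\cup\mathcal{R}^d_t)$ requires, per Lemma~\ref{par3} (and hence Corollary~\ref{cgen}), the self-consistency condition $\Lambda'_t\subset\Pi'_t$: every $\mu\in\Lambda'_t$ must occur in $\Irr_t\mu''$ for some $\mu''\in\Lambda'_{t''}$ with $t''>t$. This is not automatic. If $\mu\in\Lambda'_t\cap(\mathcal{F}_t\cup\mathcal{T}_t)$ — which is possible, since e.g.\ $L(\omega^t_i)\in\mathcal{F}_t\cap\mathcal{L}^d_t$ for $i\le d$ — its witnesses inside $\Lambda_{t''}$ may all lie in $\mathcal{F}_{t''}\cup\mathcal{T}_{t''}$ outside $\mathcal{L}^d_{t''}\cup\mathcal{R}^d_{t''}$, hence outside $\Lambda'_{t''}$, and then $\mu\notin\Pi'_t$. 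Your proposal does not check this, and so $\langle\Lambda'_t\rangle$ may simply fail to be defined. The fix is small but not mentioned: set $\Lambda''_t=\Lambda'_t\setminus(\mathcal{F}_t\cup\mathcal{T}_t)$. Then for $\mu\in\Lambda''_t$ any witness $\mu''$ cannot lie in $\mathcal{F}_{t''}\cup\mathcal{T}_{t''}$ (since its restriction $\mu$ would then be in $\mathcal{F}_t\cup\mathcal{T}_t$), so by Step~1 it lies in $\mathcal{L}^{k+2}_{t''}\cup\mathcal{R}^{k+2}_{t''}\subset\Lambda''_{t''}$-accessible territory, and $\Lambda''_t\subset\Pi''_t$ holds; moreover $\Lambda_t\setminus\Lambda''_t\subset\mathcal{F}_t\cup\mathcal{T}_t$ still, so the three-piece decomposition of $\Phi$ survives. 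Without either this amendment or the paper's $D(\Phi,\Psi)$ device, case~(5) is not established.
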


\begin{proof} Assume that $\wdeg\Phi=k$. First suppose that
$\Phi\not\subset\mathcal{F}\cup\mathcal{T}$. Then
$\Phi_n\not\subset\mathcal{F}_n\cup\mathcal{T}_n$ for large enough
$n$. Set $m=(k+1)^2 p^2$, fix  $n>m$ and a $p$-restricted
$\varphi\in\Phi_n\setminus\{ \mathcal{F}_n\cup \mathcal{T}_n\}$.
Proposition~\ref{Cp} implies that $\pdeg\varphi$ or $\pdeg\varphi^*\leq
n$ since otherwise $\wdeg\varphi>\sqrt{n}/p-1>k$. Now
Proposition~\ref{deg} forces that $\pdeg \varphi$ or $\pdeg\varphi^*\leq
k+2$ and hence $\varphi\in\mathcal{L}_n^{k+2}$ or
$\mathcal{R}_n^{k+2}$ by Proposition~\ref{LR}. This yields the
last claim of the proposition.

Now we want to reduce the problem to the situation where both $\mathcal{F}\not\subset\Phi$
and $\mathcal{T}\not\subset\Phi$. Assume that this is not the case.
Put $\Psi=\mathcal{F}$ if
$\mathcal{F}\subset\Phi$, but $\mathcal{T}\not\subset\Phi$;
$\Psi=\mathcal{T}$ if $\mathcal{T}\subset\Phi$, but
$\mathcal{F}\not\subset\Phi$; and $\Psi=\mathcal{F}\cup\mathcal{T}$
if $\mathcal{F}\cup\mathcal{T}\subset\Phi$. If $\Psi=\Phi$, the
proposition is proved. Assume that $\Psi\ne \Phi$ and put $D=D(\Phi,\Psi)$.
We claim that both $\mathcal{F}\not\subset D$ and $\mathcal{T}\not\subset D$.

If $\Psi\ne \mathcal{T}\cup\mathcal{F}$, define an inductive system
$D'$ by the equality $\{\Psi,D'\}=\{\mathcal{F},\mathcal{T}\}$. The arguments in the first paragraph of the proof yield that if $n>m$ and $\varphi\in(\Phi_n\setminus\Psi_n)$, then $\varphi\in\mathcal{L}_n^{k+2}\cup \mathcal{R}_n^{k+2}$ or
$\mathcal{L}_n^{k+2}\cup \mathcal{R}_n^{k+2}\cup D'_n$. Since
$D=\langle \Phi_n\setminus\Psi_n\mid n>m\rangle$, we observe that
$D\subset\mathcal{L}^{k+2}\cup \mathcal{R}^{k+2}$ or
$D\subset\mathcal{L}^{k+2}\cup \mathcal{R}^{k+2}\cup D'$ which
yields our claim. Replacing $\Phi$ by $D$ if necessary, we assume
that both $\mathcal{F}\not \subset\Phi$ and $\mathcal{T}\not \subset\Phi$.

Proposition~\ref{TF} implies that for some $l$ the intersections
$\Phi_n\cap\mathcal{F}_n$ and $\Phi_n\cap\mathcal{T}_n\subset
\mathcal{L}_n^l\cup \mathcal{R}_n^l$ for all $n$. Put
$d=\max(l,k+2)$. Then the last claim of the proposition implies that $\Phi_n\subset \mathcal{L}_n^d\cup
\mathcal{R}_n^d$ and hence $\Phi\subset \mathcal{L}^d\cup\mathcal{R}^d$.
\end{proof}

\begin{proof}[Proof of Theorem~$\ref{t1A}$]
The theorem follows immediately from Propositions~\ref{t2} and~\ref{rgen},
Corollaries~\ref{CF}, \ref{cp2}, and~\ref{ct2},
 and Lemma~\ref{chain}.
\end{proof}

Let $\Phi$ be an inductive system  with $\delta(\Phi)<p^{j+1}$ for
some $j\in\mathbb{Z}_{\ge0}$. Then each $\varphi\in\Phi_n$ can be uniquely
represented in the form $\otimes_{k=0}^j\varphi_k^{[k]}$ with
$\varphi_k\in\Irr^p G_n$. This notation is used in
Proposition~\ref{pp}.

\begin{proposition}
\label{pp} Let $\Phi$ be a BWM-system with $\delta(\Phi)<p^{j+1}$.
Then there exists an integer $N=N(\wdeg\Phi, j)$ with the
following properties: if $d\geq N$, $U_1$,
$U_2\subset\mathbb{N}_j$, $U_1\cap U_2=\varnothing$,
$U_2=\varnothing$ for $p=2$, $\varphi\in\Phi_n$,
$\varphi_k\in\mathcal{F}_n$ for all $k\in U_1$,
$\varphi_k\in\mathcal{T}_n$ for all $k\in U_2$,
$\varphi_k\notin\mathcal{L}_n^d \cup \mathcal{R}_n^d$ for each $k\in
U_1 \cup U_2$, and $\varphi\in\Irr_n\psi$ with $\psi\in\Phi_q$,
$q>n$, then $\psi_k\in \mathcal{F}_q$ for $k\in U_1$,
$\psi_k\in\mathcal{T}_q$ for $k\in U_2$, and $\psi_k\notin
\mathcal{L}_q^d\cup\mathcal{R}_q^d$ for $k\in U_1\cup U_2$.
\end{proposition}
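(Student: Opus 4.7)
The plan is to combine the weight-degree bounds from Proposition~\ref{deg*} and Lemma~\ref{degtenz} with the classification of $p$-restricted modules of small weight degree (Theorem~\ref{main1}) and the inductive-system properties of $\mathcal{F}$, $\mathcal{T}$, $\mathcal{L}^d$, $\mathcal{R}^d$. Setting $w=\wdeg\Phi$, the constant I will produce is of the form $N=2(w+2)+1$, enlarged if necessary to absorb the rank threshold $n\ge 16$ in Theorem~\ref{main1}(i) and the finitely many exceptional situations with $q$ small.

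First I would write $\psi=\bigotimes_{k=0}^{j}\psi_k^{[k]}$ in Steinberg form. The hypothesis $\varphi\in\Irr_n\psi$ combined with Proposition~\ref{deg*} gives $\wdeg\psi\le w$, and Lemma~\ref{degtenz} then yields $\wdeg\psi_k\le w$ for every $k$. Theorem~\ref{main1} applied to each $p$-restricted $\psi_k$ places $\psi_k$ in $\mathcal{F}_q\cup\mathcal{T}_q\cup\mathcal{L}_q^{w+2}\cup\mathcal{R}_q^{w+2}$, with $\delta(\psi_k)\le p-1$ in the first two cases and $\delta(\psi_k)\le w+2$ in the last two. Following the identity used in the proof of Theorem~\ref{anSt}, $\varphi$ is a composition factor of $\bigotimes_k\tau_k^{[k]}$ for some $\tau_k\in\Irr_n\psi_k$. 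Steinberg-expanding $\tau_k=\bigotimes_l(\tau_k^l)^{[l]}$ and regrouping by total twist produces $\bigotimes_k\tau_k^{[k]}=\bigotimes_m Q_m^{[m]}$ with $Q_m=\bigotimes_{k+l=m}\tau_k^l$, and the Steinberg component $\varphi_m$ of $\varphi$ is a $p$-restricted composition factor of $Q_m$. Crucially, when $\psi_k\in\mathcal{F}_q\cup\mathcal{T}_q$ one has $\delta(\psi_k)<p$, hence $\delta(\tau_k)<p$ by Corollary~\ref{ogr}, so $\tau_k$ is $p$-restricted and $\tau_k^l=0$ for $l\ge 1$: no Frobenius mixing originates from that slot.

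The core argument is a contradiction for a fixed $k\in U_1$ (the case $k\in U_2$ being analogous with $\mathcal{F}$ and $\mathcal{T}$ swapped). Suppose $\psi_k\notin\mathcal{F}_q$. If $\psi_k\in\mathcal{T}_q\setminus\mathcal{F}_q$, then $\tau_k\in\mathcal{T}_n$ by the inductive-system property, and because the $\mathcal{F}\cup\mathcal{T}$ slots do not mix one obtains $\varphi_k\in\mathcal{T}_n$; combining with $\mathcal{F}_n\cap\mathcal{T}_n=\{L(0)\}$ for $p>2$ and $\varphi_k\in\mathcal{F}_n$ forces $\varphi_k=L(0)\in\mathcal{L}_n^d$, contradicting the hypothesis. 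If instead $\psi_k\in\mathcal{L}_q^{w+2}$, the Steinberg bound $\pdeg\tau_{k'}^l\le\pdeg\tau_{k'}/p^l\le(w+2)/p^l$ together with additivity of $\pdeg$ on tensor products gives
\[
\pdeg\varphi_k\le\pdeg Q_k=\sum_{k'+l=k}\pdeg\tau_{k'}^l\le(w+2)\sum_{l\ge0}p^{-l}\le\frac{p(w+2)}{p-1}\le 2(w+2)<d,
\]
so $\varphi_k\in\mathcal{L}_n^d$, again a contradiction; the case $\psi_k\in\mathcal{R}_q^{w+2}$ is symmetric. Finally, $\psi_k\notin\mathcal{L}_q^d\cup\mathcal{R}_q^d$ for $k\in U_1\cup U_2$ is automatic: if $\psi_k\in\mathcal{L}_q^d$ then by the inductive-system property $\varphi_k\in\Irr_n\psi_k\subset\mathcal{L}_n^d$, contrary to the hypothesis.

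The main technical obstacle will be the ``mixed'' subcase in which different slots $\psi_{k'}$ lie on opposite sides, some in $\mathcal{L}^{w+2}$ and others in $\mathcal{R}^{w+2}$: the uniform $\pdeg$-estimate above breaks because dualizing does not respect $\pdeg$. Resolving it will require tracking $\pdeg$ and $\pdeg^*$ simultaneously through the Steinberg expansion of $Q_k$ and exploiting that, by hypothesis, $\varphi_k$ must fail \emph{both} $\mathcal{L}_n^d$- and $\mathcal{R}_n^d$-membership, forcing at least one of the two degree constraints to exceed $d$. This bookkeeping is precisely what pins down the explicit form of $N=N(w,j)$.
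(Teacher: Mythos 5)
Your proof has a genuine gap at the central step, precisely where the real difficulty of the proposition lies.

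The crux of your argument is the chain of assertions in step 5: that $\varphi$ is a composition factor of $\bigotimes_k\tau_k^{[k]}$ for some $\tau_k\in\Irr_n\psi_k$, that after regrouping $\bigotimes_k\tau_k^{[k]}=\bigotimes_m Q_m^{[m]}$ with $Q_m=\bigotimes_{k'+l=m}\tau_{k'}^l$, and then that ``the Steinberg component $\varphi_m$ of $\varphi$ is a $p$-restricted composition factor of $Q_m$.'' This last assertion is not justified and is in fact the whole point. Each $Q_m$ is a tensor product of several $p$-restricted factors; its highest weight can have coordinates $\geq p$, so $Q_m$ is neither $p$-restricted nor irreducible. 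Taking a composition factor $\varphi$ of $\bigotimes_m Q_m^{[m]}$ and then Steinberg-decomposing $\varphi$, the $m$-th component $\varphi_m$ can be affected by ``carries'' from lower slots: a factor of $Q_{m'}$ with $m'<m$ whose highest weight has coordinates $\geq p$ contributes Frobenius twists that land in slot $m$. Your remark that ``no Frobenius mixing originates from that slot'' when $\psi_k\in\mathcal{F}_q\cup\mathcal{T}_q$ only controls what slot $k$ itself emits; it does not protect slot $k$ from mixing that originates in slots $k'<k$ where $\psi_{k'}$ lies in $\mathcal{L}_q^{w+2}$ or $\mathcal{R}_q^{w+2}$ and hence $\delta(\psi_{k'})$ can be up to $w+2\geq p$. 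Consequently the inequality $\pdeg\varphi_k\leq\pdeg Q_k$ has no justification, and the contradiction you draw for $k\in U_1$ does not go through.

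The paper's proof confronts exactly this carry phenomenon head-on. Its key technical move is to prove, by induction on $k$, that $\delta\bigl(\otimes_{s=0}^{k-1}\psi_s^{[s]}\bigr)<p^k$ for $k\in U_1\cup U_2$; this is the hypothesis of Lemma~\ref{tau} that forces the Steinberg components to ``line up'' without carries, and only then can one conclude $\varphi_k\in\Irr_n\psi_k$. Establishing the $\delta$-bound requires Proposition~\ref{tenz}, which produces a large $\wdeg$ whenever a module of the form $M\otimes N^{[s+1]}$ with $\delta(M)\geq p^{s+1}$ and $N$ supported on the opposite end of the Dynkin diagram appears; the chain $t_1=t(c+2,c)$, $t_k=t(t_{k-1},c)$ is built to make the contradiction propagate through all $j$ slots. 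This is also why the paper's $N$ genuinely depends on $j$: your proposed $N=2(w+2)+1$ does not, which is a symptom that the carry-control has been omitted. Finally, you yourself flag a remaining ``main technical obstacle'' (the mixed $\mathcal{L}$/$\mathcal{R}$ case) and leave it unresolved, so even on your own account the proposal is incomplete.
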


\begin{proof}
Let $\wdeg\Phi=c$. Proposition~\ref{tenz}  yields that for all $a$, $b\in\mathbb{N}$
there exists $t=t(a,b)$ such that the following holds: if $n>t$,
$M=\otimes_{k=0}^sM_k^{[k]}$ with $M_k\in\Irr^p G_n$,
all $M_k\in\mathcal{L}_n^a$ or all $M_k\in\mathcal{R}_n^a$,
$\delta(M)\geq p^{s+1}$, $F\in\mathcal{F}_n$ or
$\mathcal{T}_n$, and $F\notin\mathcal{L}_n^t$ or
$\mathcal{R}_n^t$, respectively, then $\wdeg(M\otimes(F^{[s+1]}))>b$.
One may assume that $t(a,b)\geq a+2b$. Now fix
\begin{equation}
\label{e2} t_1=t(c+2,c) \mbox{ and } t_k=t(t_{k-1},c) \mbox{ for }
1<k\leq j.
\end{equation}
Hence
\[
t_j>\ldots>t_1\geq 3c+2.
\]
Set $g=c+2+\sum_{k=1}^j t_k p^k$ and $N=\max(g,(c+1)^2 p^2+1)$.

Let $n>N$, $\varphi\in\Phi_n$ and satisfy the assumptions of the
proposition with this $N$ and some $d\geq N$. Assume that
$\psi\in\Phi_q$ and $\varphi\in\Irr_n\psi$. Arguing as in the first paragraph of the proof of
Proposition~\ref{rgen}, one can conclude that for all $k$
\begin{equation}
\label{e3} \psi_k\in\mathcal{L}_q^{c+2} \cup
\mathcal{R}_q^{c+2}\cup \mathcal{F}\cup\mathcal{T}.
\end{equation}

We claim that $\varphi_k\in\Irr_n\psi_k$ for $k\in U_1\cup U_2$. To prove this,
we shall show that $\delta(\otimes_{s=0}^{k-1} \psi_s^{[s]})<p^k$ if
$k\in U_1\cup U_2$ and $k>0$.
For $k>0$ and $l<k$ put $\pi(l,k)=\otimes_{s=l}^{k-1} \psi_s^{[s]}$,
$\pi(k)=\pi(0,k)$, $\rho(l,k)=\otimes_{s=l}^k
\psi_s^{[s]}$, and $\rho(k)=\rho(0,k)$. Assume that $\delta(\pi(k))\geq p^k$ for some $k\in U_1\cup U_2$. If there exists $i<k$ with $\delta(\pi(i))<p^i$, choose
maximal such $i$ and put $l=i$. Otherwise put $l=0$. Then
$\delta(\pi(l,k))\geq p^k$. One easily observes that
$\delta(\psi_l)\geq p$ since otherwise $\delta(\pi(l+1))< p^{l+1}$,
which contradicts the choice of $l$. Hence
$\omega(\psi_l)\notin\Omega_p(G_q)$. So
$\psi_l\in\mathcal{L}_q^{c+2} \cup \mathcal{R}_q^{c+2}$ by~(\ref{e3}).

Assume that $\psi_l\in\mathcal{L}_q^{c+2}$. Put $f_u=t_{u-l}$ for $l<u\leq
k$. We claim that $\psi_u\in\mathcal{L}_q^{f_u}$ for such $u$.
Using~(\ref{e3}), we conclude that $\psi_u\in\mathcal{L}_q^{c+2}\cup
\mathcal{R}_q^{c+2}$ if $\omega(\psi_u)\notin\Omega_p(G_q)$.
Recall that $t_{u-l}\geq t_1\geq 3c+2$. First let $u=l+1$. Obviously, $\psi_u\in \mathcal{L}_q^{t_1}$ if  $\psi_u\in\mathcal{L}_q^{c+2}$. Observe that $\wdeg \rho(l,u)=\wdeg (\psi_l \otimes (\psi_u^{[1]}))\leq c$.
Since $n>N>t_1\geq 3c+2$ and hence $n\geq3c+4$, Proposition~\ref{tenz} yields that $\psi_u\not\in\mathcal{R}_q^{c+2}$ if
$\omega(\psi_u)\neq 0$. Let $\psi_u\in\mathcal{F}_q \cup\mathcal{T}_q$. Then Formula~(\ref{e2}) and the arguments above that formula yield that $\psi_u\in\mathcal{L}_q^{t_1}$. This completes the proof of the claim for $u=l+1$.

Now assume that $u>l+1$ and
apply induction on $u$. Suppose that $\psi_s\in\mathcal{L}^{f_s}$
for $l<s<u$. Then $\psi_s\in\mathcal{L}^{f_{u-1}}$ for these $s$ as
$f_s<f_{u-1}$ if $s<u-1$. The choice of $l$ shows that
$\delta(\pi(l,u))\geq p^u$ since otherwise $\delta(\pi(u))<p^u$,
which yields a contradiction. Write $\rho(l,u)=\rho'^{[l]}$ and
observe that $\wdeg \rho'=\wdeg\rho(l,u)\leq c$. Applying
Proposition~\ref{tenz} and arguing as above, we conclude that
$\psi_u\notin\mathcal{R}_q^{c+2}$ and
$\psi_u\in\mathcal{L}_q^{f_u}$ if $\psi_u\in\mathcal{F}_q \cup
\mathcal{T}_q$. Here it is essential that $n>t_j>t_{u-l}\geq
t_{u-1-l}+2c$ and so $n>t_{u-1-l}+2c+2$. Put $g'=c+2+\sum_{h=1}^{k-l} t_h p^h$. Then
for $u=k$ one has $\rho'\in\mathcal{L}_q^{g'}$. Obviously,  $g'\leq g$ (the equality
holds only for $l=0$ and $k=j$).

If $\psi_l\in \mathcal{R}^{c+2}_q$,
similar arguments yield that $\rho(l,k)=\rho'^{[l]}$ with
$\rho'\in\mathcal{R}_q^{g'}$. Using the Steinberg tensor product theorem,
we conclude that $\otimes_{s=0}^{l-1} \varphi_s ^{[s]} \in\Irr_n \pi(l)$ if $l>0$
and in all  cases there exists $\mu\in\Irr_n \rho'$ with $\mu=
(\otimes_{s=0}^{k-l} \varphi_{s+l} ^{[s]}) \otimes (\chi ^{[k-l+1]})$,
$\chi\in\Irr G_n$. Since $\mathcal{L}^{g'}$ and
$\mathcal{R}^{g'}$ are inductive systems, this forces $\varphi_k\in
\mathcal{L}_n^{g'}$ or $\mathcal{R}_n^{g'}$ and yields a
contradiction as $\mathcal{L}^{g'} \cup \mathcal{R}^{g'} \subset
\mathcal{L}^N \cup \mathcal{R}^N$. Hence $\delta(\pi(k))<p^k$ if
$k>0$ and $k\in U_1 \cup U_2$.

For $k=0$ it follows from the Steinberg tensor product theorem that there exists
$\mu\in\Irr_n \psi_k$ with $\mu=\varphi_k \otimes (\mu'^{[1]})$,
where $\mu'\in\Irr G_n$. Since $\delta(\pi(k))<p^k$ if $k>0$ and $k\in U_1\cup U_2$,
one can conclude that the same holds for all such $k$. Obviously,
$\mu=\varphi_k$ if $\omega(\psi_k)\in\Omega_p(G_n)$. Assume this is not the
case. Then $\psi_k\in \mathcal{L}_q^{c+2} \cup \mathcal{R}_q^{c+2}$ by~(\ref{e3}).
But then $\varphi_k\in\mathcal{L}_n^{c+2} \cup \mathcal{R}_n^{c+2} \subset \mathcal{L}_n^N
\cup \mathcal{R}_n^N$ which yields a contradiction. Hence
$\psi_k\in\mathcal{F}^q \cup\mathcal{T}^q$ and $\varphi_k\in\Irr_n
\psi_k$. Naturally, $\psi_k\notin\mathcal{L}_q^d \cup
\mathcal{R}_q^d$ as otherwise  $\varphi_k\in\mathcal{L}_n^d \cup \mathcal{R}_n^d$ since $\mathcal{L}^d$ and $\mathcal{R}^d$ are inductive systems.
Now Lemmas~\ref{o1} and~\ref{o2} imply that $\psi_k\in\mathcal{F}^q$
if $\varphi_k\in\mathcal{F}^q$ and $\psi_k\in\mathcal{T}^q$ if
$\varphi_k\in \mathcal{T}^q$. This completes the proof.
\end{proof}

\begin{proof}[Proof of Theorem~$\ref{a}$] {\bf (1) Indecomposable systems.}
Recall that an inductive system $\Phi=\otimes_{k=0}^j
\Fr^k(\Phi^k)$ is special if each $\Phi^k=C_L(a_1,\ldots,a_s)$,
$C_R(a_1,\ldots,a_s)$, $\mathcal{F}$, or $\mathcal{T}$. Let $\Phi$ be special. We can
write $\Phi=\otimes_{f=0}^l \Psi^f$, where $\Psi^f$ are determined
as before the statement of this theorem in the Introduction. Define the parameters $i_f$
with $0\leq f\leq l$ as in~(\ref{Fr1ind}). Let $\delta(\Psi^f)<p^{i_f+1}$ for all $f<l$. If all systems
$\Phi^k\in\{\mathcal{F}, \mathcal{T}\}$, it is clear that $\wdeg \varphi=1$ for every  $\varphi\in\Phi_n$. Otherwise one can conclude that for some $d$ and $N\in\mathbb{N}$ the system $\Phi$ is generated by a collection
$\{R_n\mid n\geq N\}$ that consists of representations satisfying the assumptions of Theorem~\ref{CnonpA} for this $d$. Now Theorem~\ref{CnonpA} and Proposition~\ref{deg*} imply that
$\Phi$ is a BWM-system if $\delta(\Psi^f)<p^{i_f+1}$ for all $f<l$.

Next, suppose  that $\delta(\Psi^f)\geq p^{i_f+1}$ for some $f<l$. The definition of the systems $\Psi^f$ implies that one of the following holds: 
\begin{itemize}
\item[(a)] $\Phi^k=C_L(a_{1,k},\ldots,a_{d_k,k})$ for $i_{f-1}+1\leq k\leq i_f$ and $\Phi^{i_f+1}=C_R(b_1,\ldots, b_t)$, $\mathcal{F}$, or $\mathcal{T}$; 
\item[(b)] $\Phi^k=C_R(a_{1,k},\ldots,a_{d_k,k})$ for $i_{f-1}+1\leq k\leq i_f$ and $\Phi^{i_f+1}=C_L(b_1,\ldots, b_t)$, $\mathcal{F}$, or $\mathcal{T}$. 
\end{itemize}
Here $0\leq a_{i,j}<p$, $0\leq b_m<p$, and $\Phi^{i_f+1}$ is nontrivial. 
Consider Case (a). Set $i=i_{f-1}+1$, $h=i_f-i$, and $q=\max\{d_k\mid i\leq k\leq i_f\}$.  Let $n>q+t$ if $\Phi^{i_f+1}=C_R(b_1,\ldots, b_t)$ and $n>q+1$ otherwise. Put $M^u_n=M_{n,L}(a_{1,u+i},\ldots,a_{d_{u+i},u+i})$ for $0\leq u\leq h$ and $M_n=\otimes_{u=0}^h (M^u_n)^{[u]}$. Set $T_n=M_{n,R}(b_1,\ldots, b_t)$ if $\Phi^{i_f+1}=C_R(b_1,\ldots, b_t)$ and $T_n=L(\omega_n^n)$ otherwise. Let $Q_n=M_n \otimes T_n^{[h+1]}$. Obviously, $L(\omega_n^n)\in \mathcal{F}_n$ and $\mathcal{T}_n$. Hence in all cases
\[
Q_n^{[i]}\in \left(\otimes_{k=i}^{i_f+1} \Fr^k(\Phi^k)\right)_n.
\]
So if $f>0$, the set $\Phi_n$ contains a module of the form $L_n\otimes Q_n^{[i]}\otimes S_n^{[i_f+2]}$ with $L_n$, $S_n\in\Irr G_n$ and $\omega(L_n)=c_1\omega_1^n+\ldots+c_n\omega_n^n$ with $c_y<p^i$ (the module $S_n$ is trivial if $i_f+1=j$). If $f=0$, then $\Phi_n$ contains a module of the form $Q_n^{[i]}\otimes S_n^{[i_f+2]}$.

Now we estimate $\wdeg Q_n$. It is clear that
\[
\delta(M_n)=\sum_{u=0}^h p^u(a_{1,u+i}+\ldots+a_{d_{u+i},u+i}).
\]
It follows from the construction of the system $\Psi^f$ that $\delta(\Psi^f)=p^i\delta(M_n)$. Hence $\delta(M_n)\geq p^{h+1}$. Obviously, $\omega(M_n)=\sum_{r=1}^q g_r\omega_r^n$ and $\omega(Q_n)=\sum_{r=n-t+1}^n m_r\omega_r^n$ if $\Phi^{i_f+1}=C_L(b_1,\ldots, b_t)$. Hence Proposition~\ref{tenz} yields that $\wdeg Q_n\geq n-t-q$ if $\Phi^{i_f+1}=C_L(b_1,\ldots, b_t)$ and $\wdeg Q_n\geq n-q-1$ otherwise. So $\wdeg Q_n$ is not bounded. Now Lemma~\ref{degtenz} implies that $\Phi$ is not a BWM-system. In Case (b) the arguments are similar.

Lemmas~\ref{Hn} and~\ref{LL} and Corollaries~\ref{CF} and~\ref{cp2} yield that each special inductive system $\Phi$ has the form $\Phi=\langle \varphi_n \mid n\geq A\rangle$ where $\varphi_n\in\Irr G_n$, $A\in\mathbb{N}$. Hence
special systems are indecomposable.

Now we will show that every
indecomposable BWM-system is a special system with
$\delta(\Psi^f)<p^{i_f+1}$ for $f<l$.
Let $\Phi$ be an indecomposable inductive system and $\wdeg\Phi=c$. By
Theorem~\ref{anSt},  $\Phi=\otimes_{k=0}^j \Fr^k(\Phi^k)$, where
$\Phi^k$ are $p$-restrictedly generated inductive systems. It follows
from Lemma~\ref{degtenz} that $\wdeg \Phi^k\leq c$ for $0\leq k\leq
j$. One easily concludes that $\Phi^k$ are indecomposable. By
Theorem~\ref{t1A}, each $\Phi^k=C_L(a_1,\ldots,a_d)$,
$C_R(a_1,\ldots,a_d)$, $\mathcal{F}$, or $\mathcal{T}$, i.e. $\Phi$ is special. This
completes the proof of the theorem for indecomposable systems.

{\bf (2) Arbitrary systems.} Let $\mathcal{B}$ be an arbitrary BWM-system. We describe a procedure that allows one either to show that $\mathcal{B}\subset\mathcal{L}^d\cup \mathcal{R}^d$ for some $d$, or to construct explicitly a subsystem $\mathcal{S}\subset \mathcal{B}$ such that $\mathcal{S}$ is a finite union of indecomposable inductive systems and $D(\mathcal{B},\mathcal{S})\subset \mathcal{L}^d\cup \mathcal{R}^d$. Then Proposition~\ref{t2} and Lemma~\ref{chain} imply that $\mathcal{B}$ is a finite union of indecomposable BWM-systems.

Fix minimal $j$ with $\delta(\mathcal{B})<p^{j+1}$. Then for all $n$ and each
$\varphi\in\mathcal{B}_n$ we have $\varphi=\otimes_{k=0}^j \varphi_k ^{[k]}$
with $\varphi_k\in\Irr^p G_n$. Until the end of this proof for a
module  $\psi\in\mathcal{B}_n$ we denote by $\psi_k$, $0\leq
k \leq j$, the modules  in $\Irr^p G_n$ that occur in such
decomposition. Set
\[
\Delta_{n,k}=\{M\in\Irr^p G_n \mid M=\varphi_k \mbox{ for some }
\varphi\in\Phi_n \}, \quad 0\leq k\leq j.
\]
Assume that $\wdeg\mathcal{B}=c$. By Lemma~\ref{degtenz},
$\wdeg M\leq c$ for all $M\in\Delta_{n,k}$. Arguing as in the
proof of Proposition~\ref{rgen}, one concludes that
\begin{equation}
\label{e5} \Delta_{n,k}\subset \mathcal{F}_n \cup\mathcal{T}_n \cup
\mathcal{L}_n^{c+2} \cup \mathcal{R}_n^{c+2}
\end{equation}
for $n>(c+1)^2 p^2$ and $0\leq k\leq j$. First assume that
\begin{equation}
\label{e6} \mbox{for every $d$ there exist $n$ and $k$ with }
\Delta_{n,k}\cap(\mathcal{F}_n \cup \mathcal{T}_n)\not\subset
\mathcal{L}_n^d \cup \mathcal{R}_n^d.
\end{equation}
If $p\ne 2$,  denote by $\mathcal{C}$ the collection of pairs $(V_1,V_2)$, $V_i
\subset\mathbb{N}_j$ with the following properties:
\begin{itemize}
\item[(i)]
$V_1\cap V_2=\varnothing$, $V_1\cup V_2\ne \varnothing$;
\item[(ii)]
for each $d$ there exist $n$ and $\varphi\in\mathcal{B}_n$ such
that $\varphi_k\in\mathcal{F}_n$ for $k\in V_1$,
$\varphi_k\in\mathcal{T}_n$ for $k\in V_2$, and
$\varphi_k\notin\mathcal{L}_n^d \cup\mathcal{R}_n^d$ for $k\in V_1\cup
V_2$;
\item[(iii)]
there is no pair $(V'_1,V'_2)$ such that $V'_1$ and $V'_2$
satisfy (i) and (ii), $V_1\subset V'_1$, $V_2\subset
V'_2$, and $V'_1 \cup V'_2\neq V_1 \cup V_2$.
\end{itemize}
As $\mathcal{L}_n^d \cup \mathcal{R}_n^d\subset \mathcal{L}_n^m \cup \mathcal{R}_n^m$ if $d<m$, Formula~(\ref{e6}) yields that for certain fixed $k$ the following holds: for each $d$ there exists $n$ with $\Delta_{n,k}\cap\mathcal{F}_n\not\subset \mathcal{L}_n^d \cup \mathcal{R}_n^d$ or for each $d$ there exists $n$ with $\Delta_{n,k}\cap\mathcal{T}_n\not\subset \mathcal{L}_n^d \cup \mathcal{R}_n^d$. So $\mathcal{C}$ is nonempty.

If $(V_1,V_2)\in\mathcal{C}$ and $V_1\cup V_2=\mathbb{N}_j$, set
$\Psi(V_1,V_2)=(\otimes_{k\in V_1} \Fr^k(\mathcal{F})) \otimes
(\otimes_{k\in V_2} \Fr^k(\mathcal{T}))$. Assume that
$(V_1,V_2)\in\mathcal{C}$ and $V_1\cup V_2\ne \mathbb{N}_j$. Set
$V_0=\mathbb{N}_j \setminus (V_1\cup V_2)$. Fix $t\in V_0$.

The construction of $\mathcal{C}$ implies that there exist $u=u(t)$ with the following properties: if $\varphi\in\Phi_n$, $\varphi_k\in\mathcal{F}_n$ for $k\in V_1$, $\varphi_k\in\mathcal{T}_n$
for $k\in V_2$, $\varphi_k\notin \mathcal{L}_n^u \cup\mathcal{R}_n^u$
for $k\in V_1\cup V_2$, and
$\varphi_t\in\mathcal{F}_n\cup\mathcal{T}_n$, then $\varphi_t\in
\mathcal{L}_n^u \cup \mathcal{R}_n^u$ (otherwise (iii) would not hold for $(V_1,V_2)$).
These arguments and Formulas~(\ref{e5})
and (\ref{e6}) yield that there exists $d$ such that
$\varphi_k\in\mathcal{L}_n^d \cup \mathcal{R}_n^d$ if $\varphi\in\Phi_n$,
$n>(c+1)^2 p^2$, $k\in V_0$, $\varphi_a \in\mathcal{F}_n
\setminus(\mathcal{L}_n^d \cup \mathcal{R}_n^d)$ for all $a\in V_1$,
and $\varphi_b\in\mathcal{T}_n \setminus (\mathcal{L}_n^d \cup
\mathcal{R}_n^d)$ for all $b\in V_2$. Naturally, we can enlarge $d$ and guarantee that $n>(c+1)^2 p^2$ if $\varphi_s\not\in(\mathcal{L}_n^d \cup \mathcal{R}_n^d)$ for some $s$. Denote by $S=S(V_1,V_2)$ the
set of all inductive systems $\Pi=\otimes_{k\in V_0} \Fr^k(\Pi^k)$
with the following properties: $\Pi^k=C_L(a_{1k},\ldots, a_{dk})$ or
$C_R(a_{1k},\ldots, a_{dk})$, $0\leq a_{ik}<p$, $\Pi^k\subset
\mathcal{L}^d$ or $\mathcal{R}^d$, and for each $m$ there exist $n$
and $\varphi\in\Phi_n$ with $\varphi_k=M_{n,L}(a_{1k},\ldots, a_{dk})$
or $M_{n,R}(a_{1k},\ldots, a_{dk})$ if $k\in V_0$ and $\Pi^k=C_L(a_{1k},\ldots, a_{dk})$
 or $C_R(a_{1k},\ldots, a_{dk})$, respectively,
$\varphi_k\notin\mathcal{L}_n^m \cup \mathcal{R}_n^m$ for $k\in V_1\cup
V_2$, $\varphi_k\in\mathcal{F}_n$ for $k\in V_1$, and
$\varphi_k\in\mathcal{T}_n$ for $k\in V_2$. Since the number of inductive systems $C_L(a_{1k},\ldots, a_{dk})\subset \mathcal{L}^d$ and  $C_R(a_{1k},\ldots, a_{dk})\subset \mathcal{R}^d$ is finite and $(V_1,V_2)$ satisfies the assumptions (i)-(iii), one can observe that $S$ is nonempty and finite. For $\Pi\in S$ set
\[
\Psi(\Pi)=\Pi\otimes
(\otimes_{k\in V_1} \Fr^k(\mathcal{F})) \otimes (\otimes_{k\in V_2}
\Fr^k(\mathcal{T})).
\]
 Put $\Psi(V_1,V_2)=\cup_{\Pi\in S} \Psi(\Pi)$
and $\Psi=\cup_{(V_1,V_2)\in \mathcal{C}} \Psi(V_1,V_2)$.
Proposition~\ref{TF} implies that $\Psi(V_1,V_2)\subset \mathcal{B}$
if $V_1\cup V_2=\mathbb{N}_j$ and $\Psi(\Pi)\subset\mathcal{B}$ for
all $\Pi\in S(V_1,V_2)$ if $V_1\cup V_2\ne \mathbb{N}_j$. Hence
$\Psi\subset \mathcal{B}$.

For $p=2$ let $\mathcal{C}$ be the collection of all nonempty sets
$V$ such that for each $d$ there exist $n$ and $\varphi\in
\mathcal{B}_n$ with $\varphi_k\in\mathcal{F}_n$ for $k\in V$ and $V$ is
a maximal subset in $\mathbb{N}_j$ with this property. Using Formula~(\ref{e6}) as for $p>2$, we conclude that $\mathcal{C}$ is nonempty. If $\mathcal{C}$ consists of the set $\mathbb{N}_j$, put
$\Psi=\otimes_{k=0}^j \Fr^k(\mathcal{F})$. Assume this is not the
case. For each $V\in \mathcal{C}$ construct the set $S(V)$ and the
system $\Psi(V)$ in the same way as we have constructed the sets
$S(V_1,V_2)$ and the systems $\Psi(V_1,V_2)$ for $p\ne 2$. Put
$\Psi=\cup_{V\in\mathcal{C}} \Psi(V)$. Using Proposition~\ref{TF} as
before, one concludes that $\Psi\subset \mathcal{B}$ for $p=2$ as
well. It is clear that in all cases $\Psi$ is a finite union of
indecomposable BWM-systems. So we are done if $\Psi=\mathcal{B}$.

Assume that $\Psi\ne \mathcal{B}$ and set
$\mathcal{B}^1=D(\mathcal{B},\Psi)$. Obviously, $\wdeg \mathcal{B}^1
\leq c$. Denote by $\Delta_{n,k}^1$ the analogues of the sets
$\Delta_{n,k}$ for the system $\mathcal{B}^1$. It is clear
that~(\ref{e5}) holds for $\Delta_{n,k}^1$.

Assume that~(\ref{e6}) holds for $\Delta_{n,k}^1$. Then one can
define the collection $\mathcal{C}^1$ for the system $\mathcal{B}^1$
in the same way as we have defined $\mathcal{C}$ for $\mathcal{B}$.
Put $q(\mathcal{C})=\max \{ |V_1 \cup V_2| \mid
(V_1,V_2)\in\mathcal{C} \}$ for $p>2$, $q(\mathcal{C})=\max \{ |V|
\mid V\in\mathcal{C} \}$ for $p=2$, and define $q(\mathcal{C}^1)$
similarly. We claim that $q(\mathcal{C}^1)< q(\mathcal{C})$. Indeed,
let $p>2$ and $(U_1,U_2)\in\mathcal{C}^1$. We will show that there
exists a pair $(V_1,V_2)\in\mathcal{C}$ with $U_i\subset V_i$ and
$|V_1\cup V_2|> |U_1\cup U_2|$. First we will prove that
$(U_1,U_2)\notin\mathcal{C}$. Suppose that $(U_1,U_2)\in\mathcal{C}$
for some pair $(U_1,U_2)\in\mathcal{C}^1$. Let $U_1\cup U_2\ne
\mathbb{N}_j$. The  construction of the  subsystem
$\Psi(U_1,U_2)\subset\Psi$ above yields  that for some $m=m(U_1,U_2)$ if
$\varphi\in \Phi_n$, $\varphi_k\in\mathcal{F}_n$ for all $k\in U_1$,
$\varphi_k\in\mathcal{T}_n$ for every  $k\in U_2$, and
$\varphi_k\notin\mathcal{L}_n^m \cup \mathcal{R}_n^m$ for each $k\in
U_1 \cup U_2$, then $\varphi\in \Psi(U_1,U_2)_n$.

Let $N=N(c,j)$ be such as in Proposition~\ref{pp}. Let
$d\geq N$ if $U_1\cup U_2=\mathbb{N}_j$ and $d\geq \max \{N,
m(U_1,U_2) \}$ otherwise. Since $(U_1,U_2)\in\mathcal{C}^1$, some
$\mathcal{B}^1_n$ contains a representation $\varphi$ such that
$\varphi_k\in\mathcal{F}_n$ for $k\in U_1$, $\varphi_k\in\mathcal{T}_n$
for $k\in U_2$, and $\varphi_k\not\in\mathcal{L}_n^d \cup \mathcal{R}_n^d$
for each $k\in U_1\cup U_2$. The construction of $\mathcal{B}^1$
implies that for some $t>n$ there exists a representation
$\rho\in\mathcal{B}_t\setminus\Psi_t$ with $\varphi\in \Irr_n \rho$. By
Proposition~\ref{pp}, $\rho_k\in \mathcal{F}_t$ for $k\in U_1$,
$\rho_k\in \mathcal{T}_t$ for $k\in U_2$, and $\rho_k\notin
\mathcal{L}_t^d \cup \mathcal{R}_t^d$ for $k\in U_1 \cup U_2$. This
yields a contradiction. Indeed, if $U_1\cup U_2\ne \mathbb{N}_j$,
all such representations $\rho\in\Psi(U_1,U_2)_t$ by the arguments
above. If $U_1\cup U_2=\mathbb{N}_j$, the construction of
$\Psi(U_1,U_2)$ implies that for $\rho\notin\Psi(U_1,U_2)_t$ some
$\rho_k\notin\mathcal{F}_t$ with $k\in U_1$ or some
$\rho_s\notin\mathcal{T}_t$ for $s\in U_2$. Observe that in all
cases $\Psi(U_1,U_2)\subset\Psi$. Hence
$(U_1,U_2)\notin\mathcal{C}$.

The construction of $\mathcal{C}$ and $\mathcal{C}^1$ implies that
the pair $(U_1,U_2)$ satisfies the assumptions (i) and (ii) that we
used to define $\mathcal{C}$, but does not satisfy (iii). Hence there
exists a pair $(U'_1,U'_2)$ mentioned in (iii).

Take for $(V_1,V_2)$ such pair with the maximal $|U'_1 \cup U'_2|$.
For $p=2$ similar arguments yield that each
$U\subset\mathcal{C}^1$ is the proper subset of some
$M\subset\mathcal{C}$. Hence in all cases
$q(\mathcal{C}^1)<q(\mathcal{C})$.

Now construct an inductive system $\Psi^1\subset\mathcal{B}^1$ in the
same way as $\Psi$ was constructed for $\mathcal{B}$. If
$\Psi^1\ne \mathcal{B}^1$, set
$\mathcal{B}^2=D(\mathcal{B}^1,\Psi^1)$. Continue the process until
this is possible, constructing for a system $\mathcal{B}^i$ the
collection $\mathcal{C}^i$ and the subsystem $\Psi^i$ in the same
way as $\mathcal{C}^1$ and $\Psi^1$ were constructed. By the
arguments above, if $\mathcal{C}^i$ is determined, then
$q(\mathcal{C}^i)<q(\mathcal{C}^{i-1})< \ldots < q(\mathcal{C})$.
Hence for some $i$ either $\Psi^i=\mathcal{B}^i$ or~(\ref{e6})
does not hold for $\mathcal{B}^{i+1}$. Here our procedure is finished. In the first case
$\mathcal{B}=\Psi\cup(\cup_{1\leq k\leq i} \Psi^k)$ and hence is a
finite union of indecomposable BWM-systems. Now assume
that~(\ref{e6}) does not hold for $\mathcal{B}$ or
$\mathcal{B}^{i+1}$. Set $\Sigma=\mathcal{B}$ or
$\mathcal{B}^{i+1}$, respectively. As $\Sigma$ is an inductive system, Formula~(\ref{e5}) yields that
$\Sigma\subset\mathcal{L}^d \cup\mathcal{R}^d$ for some $d$. Therefore our goal is reached. The theorem is proved.
\end{proof}


\section{Inductive systems with bounded weight multiplicities for symplectic and spinor groups}
\label{inductive2}

In this section $G_n=B_n(K)$, $C_n(K)$, or $D_n(K)$. Recall the collections $\mathcal{S}$ and $\mathcal{L}$ defined in the Introduction.
By Lemma~\ref{l*}, $\mathcal{L}$ is an inductive system in all cases.

\begin{lemma}
\label{ind2} Let $p>2$ for $G_n\ne D_n(K)$. The collection $\mathcal{S}$
is an inductive system.
\end{lemma}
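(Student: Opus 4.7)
The plan is to verify directly, by a case-by-case check on the three series, that
\[
\bigcup_{\varphi\in\mathcal{S}_{n+1}}\Irr(\varphi{\downarrow}G_n)=\mathcal{S}_n
\]
for every $n\in\mathbb{N}$, using the branching lemmas that have already been established earlier in the excerpt. First I would reduce to showing, for each type, that the restriction of every module in $\mathcal{S}_{n+1}$ to $G_{n,n}\cong G_n$ has composition factors lying precisely in $\mathcal{S}_n$, and conversely that every element of $\mathcal{S}_n$ appears as such a composition factor for some $\varphi\in\mathcal{S}_{n+1}$. Finiteness of $\mathcal{S}_n$ is clear from the definition, so this is all that is required.

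For $G_n=B_n(K)$ with $p>2$, $\mathcal{S}_n=\{L(\omega_n^n)\}$, and Lemma~\ref{lBD} gives $\Irr_{n-1}L(\omega_n^n)=\{L(\omega_{n-1}^{n-1})\}$ for $n>2$, so the branching condition is immediate. For $G_n=D_n(K)$, Lemma~\ref{lBD} again gives
\[
\Irr_{n-1}L(\omega_n^n)=\Irr_{n-1}L(\omega_{n-1}^n)=\{L(\omega_{n-1}^{n-1}),L(\omega_{n-2}^{n-1})\}
\]
for $n>3$, so the union over $\varphi\in\mathcal{S}_n$ equals exactly $\mathcal{S}_{n-1}$. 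For $G_n=C_n(K)$ with $p>2$, Lemma~\ref{rC} states that the restriction of each of $M_1^n=L(\omega_{n-1}^n+\tfrac{p-3}{2}\omega_n^n)$ and $M_2^n=L(\tfrac{p-1}{2}\omega_n^n)$ to $G_{n-1}$ has composition factors exactly $\{M_1^{n-1},M_2^{n-1}\}=\mathcal{S}_{n-1}$, again giving the required equality.

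The only remaining point is to handle the small-rank boundary, where the branching lemmas require $n>2$ (type $B$, $C$) or $n>3$ (type $D$) in order to avoid exceptional isomorphisms between classical groups of small rank. I would either stipulate that inductive systems are being considered for $n$ at least this threshold, or note (as the paper does elsewhere) that for the finitely many exceptional small ranks the required branching can be checked directly from the known decompositions of spin modules for the relevant low-rank groups; in any case finitely many bottom terms can always be adjusted without affecting the inductive system structure. None of these steps presents a real obstacle: the statement is essentially a repackaging of Lemmas~\ref{lBD} and~\ref{rC} into the language of inductive systems.
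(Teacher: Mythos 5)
Your proof is correct and takes essentially the same approach as the paper, which simply cites Lemma~\ref{lBD} for types $B$ and $D$ and Lemma~\ref{rC} for type $C$; you have merely spelled out the branching verifications those lemmas encode. Your attention to the small-rank boundary is reasonable but not an issue the paper dwells on.
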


\begin{proof}
This follows from Lemma~\ref{lBD} for  $G_n=B_n(K)$ or $D_n(K)$ and Lemma~\ref{rC}
for $G_n=C_n(K)$.
\end{proof}

Now we state our results on the BWM-systems in the special case where $p=2$ and $G_n=C_n(K)$. These assumptions on $p$ and $G_n$ are valid until the proof of Theorems~$\ref{t1}$ and~$\ref{12new2}$.

Set $\mathcal{S}'_n=\{L(\omega^n_n)\}$, $\mathcal{S}'=\{\mathcal{S}'_n\}_{n\in\mathbb{N}}$,
\[
\mathcal{Q}_n=\{L(\omega_1^n+\omega_n^n),L(\omega_n^n)\}
\]
for $n>1$, $\mathcal{Q}_1=\Irr_1 \mathcal{Q}_2$, and  $\mathcal{Q}=\{\mathcal{Q}_n\}_{n\in\mathbb{N}}$.

\begin{lemma}
\label{ch2} Let $p=2$ and $G_n=C_n(K)$. Then $\mathcal{S}'$ and $\mathcal{Q}$ are inductive systems.
\end{lemma}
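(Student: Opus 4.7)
The plan is to verify the defining condition of an inductive system, namely that for each $n$ we have $\bigcup_{\varphi \in \Phi_{n+1}} \Irr(\varphi{\downarrow}G_n) = \Phi_n$, using the branching results already established. Both assertions reduce to direct applications of Lemma~\ref{lBD} and Corollary~\ref{cBD}.

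For $\mathcal{S}'$, the only module in $\mathcal{S}'_{n+1}$ is $L(\omega_{n+1}^{n+1})$. By Lemma~\ref{lBD} (the case $p = 2$, $G_n = C_n(K)$), we have $\Irr_n L(\omega_{n+1}^{n+1}) = \{L(\omega_n^n)\} = \mathcal{S}'_n$ for $n \geq 2$. This handles the condition for all $n \geq 2$; for $n = 1$ one notes that $\mathcal{S}'_1 = \{L(\omega_1^1)\}$ is obtained by restricting to $G_1 \cong A_1(K)$, and the result is still a single irreducible module, so the definition is satisfied.

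For $\mathcal{Q}$ and $n \geq 2$, I would compute $\bigcup_{\varphi \in \mathcal{Q}_{n+1}} \Irr(\varphi{\downarrow}G_n)$ by combining two restrictions. Corollary~\ref{cBD} gives
\[
\Irr_n L(\omega_1^{n+1} + \omega_{n+1}^{n+1}) = \{L(\omega_1^n + \omega_n^n),\, L(\omega_n^n)\},
\]
while Lemma~\ref{lBD} yields $\Irr_n L(\omega_{n+1}^{n+1}) = \{L(\omega_n^n)\}$. Their union is exactly $\{L(\omega_1^n + \omega_n^n), L(\omega_n^n)\} = \mathcal{Q}_n$, which is the required equality. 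For the base case $n = 1$, the set $\mathcal{Q}_1$ is defined directly as $\Irr_1 \mathcal{Q}_2$, so the condition holds by construction; one just notes that $\mathcal{Q}_1$ is a nonempty finite subset of $\Irr G_1$ as required.

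No step here is a serious obstacle: everything rests on the two branching statements already proved (Lemma~\ref{lBD} and Corollary~\ref{cBD}) for the spin representations and the tensor factor $L(\omega_1^n) \otimes L(\omega_n^n)$ in characteristic two for type $C$. The only minor point to watch is the low-rank case $n = 1$, which I handle by taking the definition $\mathcal{Q}_1 = \Irr_1 \mathcal{Q}_2$ at face value rather than by computing explicit highest weights.
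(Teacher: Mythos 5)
Your proof is correct and matches the paper's approach exactly: the paper's entire proof of Lemma~\ref{ch2} is the single sentence ``The result follows from Lemma~\ref{lBD} and Corollary~\ref{cBD},'' and you have simply spelled out the routine verification that those two branching statements yield the defining condition of an inductive system for $\mathcal{S}'$ and $\mathcal{Q}$.
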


\begin{proof}
The result follows from Lemma~\ref{lBD} and Corollary~\ref{cBD}.
\end{proof}

We need some notation to describe irreducible representations of $G_n$ with small weight multiplicities.
Put
\[
\Omega_2(G_n)=\{0, \omega^n_1,\omega^n_n\} \mbox{ and } \Omega'_2(G_n)=\Omega_2\cup \{\omega^n_1+\omega^n_n\}.
\]
For any dominant weight $\omega$ of $G_n$ we can write its "$2$-adic
expansion"
\[
\omega=\lambda_0+2\lambda_1 +\ldots+2^k \lambda_k,
\]
where weights $\lambda_i$ are $2$-restricted for $0\leq i\leq k$.
This expansion is uniquely determined if we assume that $k=0$
for $\omega=0$ and $\lambda_k\ne 0$ otherwise.
Set
\[
S(\omega)=(\lambda_0,\ldots,\lambda_k).
\]
Put
$$
\Omega(G_n)=\left\{\sum_{j=0}^k 2^j\lambda_j\mid
k\ge0,\ \lambda_j\in\Omega_2(G_n),\ (\lambda_j, \lambda_{j+1})\neq(\omega^n_n, \omega^n_1)\mbox{ for } j<k \right\}
$$
and
$$
\Omega'(G_n)=\left\{\sum_{j=0}^k 2^j\lambda_j\mid
k\ge0,\ \lambda_j\in\Omega'_2(G_n)\right\}.
$$

By~\cite[Proposition~2]{ZSVesti}, $\wdeg(L(\omega))=1$ if and only if $\omega\in\Omega(G_n)$. Thus, in this case a
connection between the sets $\Omega(G_n)$ and $\Omega_p(G_n)$ is more
complicated than for other classical groups or odd $p$.

\begin{theorem}[{\cite[Theorem 2]{C_small_char}}]
\label{12new} Let $p=2$, $G_n=C_n(K)$, $n\geq 8$, and let $M\in \Irr G_n$ with
$\omega(M)\notin\Omega(G_n)$. Then the following hold:

$(i)$ if  $\omega\in\Omega'(G_n)$, the weight $\omega^n_1+\omega^n_n$ occurs in
the sequence  $S(\omega)$ exactly $l$ times, and for $0\leq j<k$
\[
(\lambda_j,\lambda_{j+1})\notin \{ (\omega^n_n,\omega^n_1),
(\omega^n_1+\omega^n_n,\omega^n_1), (\omega^n_n,\omega^n_1+\omega^n_n),
(\omega^n_1+\omega^n_n,\omega^n_1+\omega^n_n)\},
\]
then $\wdeg M=2^l$;

$(ii)$ otherwise $\wdeg M\geq n-4-[n]_4$, where $[n]_4$ is the residue
of $n$ modulo $4$; in particular, $\wdeg M\geq n-7$.
\end{theorem}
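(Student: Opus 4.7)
The plan is to reduce to $2$-restricted factors via the Steinberg tensor product theorem and then perform a careful collision analysis in the style of Lemma~\ref{tau}. First I would write
$$M\cong\bigotimes_{j=0}^{s} L(\lambda_j)^{[j]}$$
with $(\lambda_0,\dots,\lambda_s)=S(\omega)$. By Lemma~\ref{degtenz}, $\wdeg M\ge\prod_j\wdeg L(\lambda_j)$, so the first task is to determine $\wdeg L(\lambda_j)$ for each $2$-restricted weight. Proposition~2 of~\cite{ZSVesti} gives $\wdeg L(\lambda_j)=1$ for $\lambda_j\in\Omega_2(G_n)=\{0,\omega_1^n,\omega_n^n\}$. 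For $\lambda_j=\omega_1^n+\omega_n^n$, the Steinberg factorization $L(\omega_1^n+\omega_n^n)\cong L(\omega_1^n)\otimes L(\omega_n^n)$ used in the proof of Corollary~\ref{cBD}, combined with the explicit descriptions of the weight sets of $L(\omega_1^n)$ and of the spin-type module $L(\omega_n^n)$ recorded in the proof of Lemma~\ref{lBD}, gives $\wdeg L(\omega_1^n+\omega_n^n)=2$. For any other $2$-restricted $\lambda_j$, I would invoke the classification of small weight degree $2$-restricted modules for $C_n(K)$ in characteristic~$2$ (the precise analog of Theorem~\ref{BCD} in this setting) to conclude that $\wdeg L(\lambda_j)\ge n-4-[n]_4$.

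The first sub-case of~(ii), $\omega\notin\Omega'(G_n)$, is then immediate: at least one $\lambda_j\notin\Omega'_2(G_n)$, and Lemma~\ref{degtenz} yields $\wdeg M\ge n-4-[n]_4$. For~(i) and the remaining half of~(ii) every $\lambda_j\in\Omega'_2(G_n)$, so every factor has weight degree at most $2$ and the product is exactly $2^l$. The next step would be to adapt Lemma~\ref{tau}: each weight of $M$ decomposes uniquely as $\sum 2^j\mu_j$ with $\mu_j\in\Lambda(L(\lambda_j))$ provided $\delta\bigl(\bigotimes_{j<k}L(\lambda_j)^{[j]}\bigr)<2^k$ for every $k$. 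A direct inspection of the four weights in $\Omega'_2(G_n)$ shows that this inequality fails exactly when some consecutive pair $(\lambda_j,\lambda_{j+1})$ is one of the four excluded pairs listed in~(i); indeed $\delta(L(\omega_n^n))$ and $\delta(L(\omega_1^n+\omega_n^n))$ are just large enough to create a coincidence with the Frobenius-twisted next factor whenever that factor has highest weight supported on $\omega_1^n$. Thus in the ``good'' case of~(i) the decomposition is unique and $\wdeg M=2^l$.

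In the bad-pair half of~(ii) I would exhibit an explicit family of $\Omega(n)$ distinct decompositions of a single weight. Fix a bad pair $(\lambda_j,\lambda_{j+1})$ and concentrate on the sub-tensor $L(\lambda_j)^{[j]}\otimes L(\lambda_{j+1})^{[j+1]}$: in each of the four bad pairs one can flip two coordinates of the spin-type weight appearing on the $j$-th level and compensate by a $2$-scaled $\pm\varepsilon_i^n$ shift drawn from the $(j+1)$-st level. Starting from an initial weight with sufficiently many free coordinates (available once $n\ge 8$) gives on the order of $n$ mutually independent such swaps and hence weight multiplicity at least $n-4-[n]_4$. Uniqueness of the decomposition at the other levels, where the $\delta$-inequality still holds, guarantees these collisions are not washed out when the sub-tensor is embedded back into the full product. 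The main obstacle will be precisely this combinatorial collision count: one must verify that the swaps are genuinely independent across all four types of bad pair, that the total number of distinct decompositions matches the sharp bound $n-4-[n]_4$, and that no further accidental coincidences are introduced by the ``good'' tensor factors sitting on either side of the bad pair inside the $2$-adic expansion.
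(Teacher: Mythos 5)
This theorem is imported by citation from \cite[Theorem~2]{C_small_char}; the paper contains no proof of it, so there is no ``paper's proof'' to compare against. On its own terms your proposal has a circularity and a technical gap that would stop it from working. The circularity is in your treatment of $2$-restricted factors with $\lambda_j\notin\Omega'_2(G_n)$: you ``invoke the classification of small weight degree $2$-restricted modules for $C_n(K)$ in characteristic~$2$,'' but that classification \emph{is} the $2$-restricted case of part~(ii) of the very theorem you are proving --- there is no independent statement of it in this paper or in Theorem~\ref{BCD} (which explicitly excludes $p=2$ for type~$C$). The entire technical weight of (ii) sits here, and it cannot be waved in.

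The technical gap is in the collision analysis. You propose to ``adapt Lemma~\ref{tau}'' and argue that the inequality $\delta\bigl(\bigotimes_{j<k}L(\lambda_j)^{[j]}\bigr)<2^k$ fails exactly at the bad pairs. That is false: for $C_n$ one has $\delta(L(\omega_n^n))=1$, so for the bad pair $(\omega_n^n,\omega_1^n)$ the cumulative $\delta$ up to level $j$ can satisfy $\delta<2^{j+1}$ and yet the tensor product $L(\omega_n^n)\otimes L(\omega_1^n)^{[1]}$ has weight multiplicity $\geq n$ (consider the weight $\varepsilon_1+\cdots+\varepsilon_n$, which decomposes as $\mu+2\varepsilon_i$ for every $i$ with $\mu$ obtained by flipping the $i$-th sign). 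More fundamentally, the proof of Lemma~\ref{tau} relies on the fact that a dominant weight $\xi$ with $\langle\xi,\alpha_{\max}\rangle=1$ cannot be radical; for $A_n$ this is true because no fundamental weight lies in the root lattice, but for $C_n$ the weights $\omega_2^n,\omega_4^n,\ldots$ are radical and $\langle\omega_i^n,2\varepsilon_1\rangle=1$ for all $i$, so the contradiction at the end of that proof never materialises. The uniqueness-of-decomposition argument needed for the exact equality $\wdeg M=2^l$ in part~(i) therefore requires a $C_n$-specific analysis that your proposal does not supply, and the bad pairs must be characterised by a genuinely different mechanism (rooted in the coincidence of short and long weight lattices under the special isogeny in characteristic~$2$), not by a $\delta$ threshold.
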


\begin{theorem}
\label{12new2}
Let $p=2$ and $G_n=C_n(K)$.
Set $\mathcal{P}=\{\mathcal{O}, \mathcal{L}, \mathcal{Q}, \mathcal{S}'\}$.  An
indecomposable inductive system $\Phi$ is a BWM-system  if and
only if $\Phi=\otimes_{j=0}^s \Fr^j(\Phi^j)$ with $\Phi^j\in
\mathcal{P}$ and $(\Phi^j,\Phi^{j+1})\notin\{(\mathcal{S}',\mathcal{L}),(\mathcal{Q},\mathcal{L}),
(\mathcal{S}',\mathcal{Q}),(\mathcal{Q},\mathcal{Q})\}$.  BWM-systems are finite unions of indecomposable ones.
\end{theorem}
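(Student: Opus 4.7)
The strategy parallels that of Theorem~\ref{t1} for odd~$p$, with Theorem~\ref{12new} replacing Theorem~\ref{BCD} and extra combinatorics coming from the exceptional series of weight-degree-$2^s$ modules in characteristic~$2$. The plan splits into three stages: classify the $2$-restrictedly generated indecomposable BWM-systems, apply the analogue of the Steinberg tensor product theorem (Theorem~\ref{anSt}) and determine the admissible gluings between consecutive Frobenius twists, then pass from indecomposable to arbitrary BWM-systems.

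For the first stage, Theorem~\ref{12new}(ii) forces any $2$-restricted simple $C_n(K)$-module with $\wdeg$ bounded independently of~$n$ to have highest weight in $\Omega'_2(G_n)=\{0,\omega_1^n,\omega_n^n,\omega_1^n+\omega_n^n\}$; otherwise $\wdeg M\geq n-7$. Adapting the proof of Proposition~\ref{rgen} and using Lemmas~\ref{l*},~\ref{lBD}, Corollary~\ref{cBD}, and Lemma~\ref{ch2} to check closure under $G_{n-1}$-restriction, I would show that every $2$-restrictedly generated indecomposable BWM-system coincides with one of $\mathcal{O},\mathcal{L},\mathcal{S}',\mathcal{Q}$. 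Indecomposability follows because each such system is presented as $\langle Y_n\mid n\geq n_0\rangle$ for a single coherent family of top generators $Y_n$.

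For the second stage, Theorem~\ref{anSt} gives $\Phi=\otimes_{j=0}^s\Fr^j(\Phi^j)$ with each $\Phi^j$ $2$-restrictedly generated, and Lemma~\ref{degtenz} implies each $\Phi^j$ is itself a BWM-system, hence $\Phi^j\in\mathcal{P}$. For any $M\in\Phi_n$, the $2$-adic expansion $S(\omega(M))=(\lambda_0,\ldots,\lambda_s)$ has $\lambda_j$ ranging independently over the weight set attached to $\Phi^j$ (namely $\{0\}$, $\{0,\omega_1^n\}$, $\{\omega_n^n\}$, $\{\omega_n^n,\omega_1^n+\omega_n^n\}$ respectively). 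Comparing this with the four forbidden pairs in Theorem~\ref{12new}(i), I will verify that a bad pair arises for some $M\in\Phi_n$ if and only if $(\Phi^j,\Phi^{j+1})$ equals one of $(\mathcal{S}',\mathcal{L}),(\mathcal{Q},\mathcal{L}),(\mathcal{S}',\mathcal{Q}),(\mathcal{Q},\mathcal{Q})$; when such a pair occurs, $\wdeg M\geq n-7$ by Theorem~\ref{12new}(ii), contradicting BWM. Conversely, when only allowed pairs occur, Theorem~\ref{12new}(i) bounds $\wdeg M$ by $2^l$ with $l=|\{j:\Phi^j=\mathcal{Q}\}|$, uniformly in~$n$, and Lemma~\ref{LL} packages the tensor product as a genuine BWM inductive system. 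Insertions of $\mathcal{O}$ never create bad pairs because $(\ast,0)$ and $(0,\ast)$ do not match the forbidden patterns.

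For the third stage I will decompose an arbitrary BWM-system as a finite union of indecomposable ones by following part~(2) of the proof of Theorem~\ref{a}: construct a collection of maximal index sets identifying which twist positions behave ``generically'' (that is, are $\mathcal{L},\mathcal{S}'$, or $\mathcal{Q}$ rather than $\mathcal{O}$), peel off the corresponding special subsystem, and iterate on the difference $D(\mathcal{B},\Psi)$; a maximality parameter strictly decreases, forcing termination after finitely many steps. The hard part will be the detailed bookkeeping in the second stage: because $\mathcal{L}$ and $\mathcal{Q}$ each contribute two irreducibles per rank, Frobenius tensoring mixes these choices freely across twists, and verifying that every resulting $S(\omega)$-sequence stays in the allowed region requires an analogue of Proposition~\ref{pp} tailored to the four forbidden characteristic-$2$ transitions.
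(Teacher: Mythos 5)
Your proposal correctly isolates the key ingredients---Theorem~\ref{12new}(ii) forcing the $2$-restricted factors to land in $\Omega'_2(G_n)=\{0,\omega_1^n,\omega_n^n,\omega_1^n+\omega_n^n\}$, the four forbidden transitions $(\omega_n^n,\omega_1^n)$, $(\omega_1^n+\omega_n^n,\omega_1^n)$, $(\omega_n^n,\omega_1^n+\omega_n^n)$, $(\omega_1^n+\omega_n^n,\omega_1^n+\omega_n^n)$, and the four candidate systems $\mathcal{O},\mathcal{L},\mathcal{S}',\mathcal{Q}$---and the pairing you set up between forbidden $(\Phi^j,\Phi^{j+1})$ and forbidden $(\lambda_j,\lambda_{j+1})$ is exactly right. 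But your route is noticeably heavier than the paper's and in fact mirrors the type-$A$ argument more than the type-$B/C/D$ one. You propose to first invoke Theorem~\ref{anSt}, then classify each Frobenius factor as a $2$-restrictedly generated indecomposable system, and finally run the peeling-off iteration of part~(2) of the proof of Theorem~\ref{a}, acknowledging you would need an analogue of Proposition~\ref{pp} adapted to the characteristic-$2$ symplectic transitions. The paper never uses Theorem~\ref{anSt} here and never iterates. Instead it exploits the finiteness of $\Omega'_2(G_n)$ head-on: once Lemma~\ref{delta} fixes $l$ and Theorem~\ref{12new} fixes $N$ so that $\omega(\varphi)\in\Omega'(G_n)$ for all $\varphi\in\Phi_n$ with $n>N$, every such $\varphi$ determines an \emph{admissible tuple} $\mathcal{A}$ (a partition of $\mathbb{N}_l$ into the positions where $\varphi_k$ equals $\tau_n,\lambda_n,\mu_n$ or $\xi_n$). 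Since there are only finitely many admissible tuples, a pigeonhole argument produces, for each $\varphi\in\Phi_n$, an infinite $S$ and a tuple $\mathcal{A}$ with $\psi_m(\mathcal{A})\in\Phi_m$ for all $m\in S$ and $\varphi\in\Irr_n\psi_m(\mathcal{A})$; the system $\Psi(\mathcal{A})=\langle\psi_m(\mathcal{A})\mid m>3\rangle$ is shown to be a well-defined indecomposable inductive system via Lemmas~\ref{par3} and~\ref{LL}, each $\Psi(\mathcal{A})$ is identified with some $\otimes_j\Fr^j(\Phi^j)$ with $\Phi^j\in\mathcal{P}$, the BWM characterization is read off from Theorem~\ref{12new}, and $\Phi=\bigcup_{\mathcal{A}\in I}\Psi(\mathcal{A})$ follows immediately. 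This direct enumeration entirely bypasses both the appeal to Theorem~\ref{anSt} and the difference-system iteration; the ``hard bookkeeping'' you flag in your second and third stages is precisely what becomes unnecessary. Also note that an analogue of Proposition~\ref{pp} would in turn require an analogue of Proposition~\ref{tenz} for type $C$ in characteristic $2$, which the paper does not develop; that is extra work your route would need to supply.
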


Though the description of BWM-systems is more complicated for $p=2$ and $G_n=C_n(K)$, the proofs of Theorems~\ref{t1} and~\ref{12new2} are based on similar arguments. So we prove them simultaneously.
\bigskip

\begin{proof}[Proof of Theorems~$\ref{t1}$ and~$\ref{12new2}$] In this proof we say that we are in a special case if $p=2$ and $G_n=C_n(K)$ and in the general case otherwise. Assume that $n>3$. Set $\tau_n=L(0)\in\Irr G_n$  and $\lambda_n=L(\omega_1^n)$ for all three types. Put
\[
\mu_n=
\begin{cases}
L(\frac{p-1}{2}\omega_n^n) & \mbox{ for } G_n=C_n(K), p>2,\\
L(\omega_n^n) & \mbox{ otherwise}.\\
\end{cases}
\]
In the special case also set $\xi_n=L(\omega_1^n+\omega_n^n)$.

Let $\Phi$ be a $BWM$-system. Lemma~\ref{delta}
implies that there exists $l\in\mathbb{N}$ such that for all $n\in\mathbb{N}$ and each  $\varphi\in\Phi_n$ the representation $\varphi=\otimes^l_{k=0}\varphi_k^{[k]}$ with $\varphi_k\in\Irr^p G_n$, $0\leq k\leq l$. Fix such $l$. Theorems~\ref{BCD} and~\ref{12new} imply that there exists a constant $N$ such that for $n>N$ and $\varphi\in\Phi_n$ the weight $\omega(\varphi)\in\Omega(G_n)$ in the general case and $\omega(\varphi)\in\Omega'(G_n)$ in the special case.

Now we construct a collection of inductive systems for the groups $G_n$ that actually yield all indecomposable $BWM$-systems. In the general case for a triple of subsets $A,B, C\subset\mathbb{N}_l$ such that $A\cup B\cup C=\mathbb{N}_l$ and
$A\cap B=A\cap C=B\cap C=\varnothing$ put  $\pi_n(A,B,C)=\otimes^l_{k=0}\varphi_k^{[k]}$ with
$\varphi_k=\tau_n$ for $k\in A$, $\varphi_k=\lambda_n$ for $k\in B$, and $\varphi_k=\mu_n$ for $k\in C$. In the special one for a quadruple of subsets $A,B, C, D\subset\mathbb{N}_l$ such that $A\cup B\cup C\cup D=\mathbb{N}_l$ and
$U\cap V=\varnothing$ for $U,V\in\{A,B,C,D\}$ with $U\neq V$ put $\rho_n(A,B,C,D)=\otimes^l_{k=0}\varphi_k^{[k]}$ with $\varphi_k=\tau_n$ for $k\in A$, $\varphi_k=\lambda_n$ for $k\in B$,  $\varphi_k=\mu_n$ for $k\in C$, and $\varphi_k=\xi_n$ for $k\in D$.

We need some notation to expose arguments common for the both cases. Let $\mathcal{A}=(A,B,C)$, $\psi_n(\mathcal{A})=\pi_n(A,B,C)$, $\mathcal{P}=\{\mathcal{O}, \mathcal{L}, \mathcal{S}\}$ in the general case and $\mathcal{A}=(A,B,C,D)$, $\psi_n(\mathcal{A})=\rho_n(A,B,C,D)$, $\mathcal{P}=\{\mathcal{O}, \mathcal{L}, \mathcal{Q}, \mathcal{S}'\}$ in the special one where a triple $(A,B,C)$ or a quadruple $(A,B,C,D)$ satisfies the relevant assumptions above. In what follows we shall call such tuples $\mathcal{A}$ admissible tuples. Using Lemmas~\ref{rC}, \ref{l*}, and  \ref{lBD} and Corollary~\ref{cBD}, one easily observes that $\psi_n(\mathcal{A})\in\Irr_n\psi_{n+1}(\mathcal{A})$. It is clear that
\[
\delta(\psi_n(\mathcal{A}))\leq
\begin{cases}
\frac{p^{l+1}-1}{2} & \mbox{ for } G_n=C_n(K) \mbox{ and } p>2,\\
2^{l+2}-2 & \mbox{ for } G_n=C_n(K) \mbox{ and } p=2,\\
1+p+\ldots+p^l & \mbox{ otherwise}.\\
\end{cases}
\]
Hence Lemma~\ref{par3} implies that the inductive system
$\Psi(\mathcal{A})=\langle\psi_n(\mathcal{A})\mid n>3\rangle$ is well defined. Lemmas~\ref{rC}, \ref{l*}, \ref{lBD}, and~\ref{LL} and Corollary~\ref{cBD} yield that
$$
\Psi(\mathcal{A})=\otimes^l_{k=0}\Fr^k(\Psi^k),    \quad
\Psi^k\in\mathcal{P}, \quad    0\leq k\leq l,
$$
and that each inductive system
$$
\Theta=\otimes^j_{k=0}\Fr^k(\Theta^k),    \quad
\Theta^k\in\mathcal{P}, \quad    0\leq k\leq j,
$$
coincides with $\Psi(\mathcal{A})$ for some admissible tuple $\mathcal{A}$.  Hence all these systems $\Theta$ are indecomposable.

In the general case for all admissible tuples $\mathcal{A}$ one has $\wdeg \psi_n(\mathcal{A})=1$ by Theorem~\ref{one}. In the special case for fixed $\mathcal{A}=(A,B,C,D)$ and $0\leq k<l$ we shall write $X(k)=(U,V)$ with $U,V\in \{A,B,C,D\}$ if $k\in U$ and $k+1\in V$. Theorem~\ref{12new} and~\cite[Proposition~2]{ZSVesti} force that $\wdeg\psi_n(\mathcal{A})\geq n-7$ if for some $k<l$ the pair $X(k)\in\{(C,B),(D,B), (C,D),(D,D)\}$ and $\wdeg \psi_n(\mathcal{A})\leq 2^{l+1}$ otherwise. Now Proposition~\ref{deg*} yields that in the general case all systems $\Theta$ introduced above are BWM-systems and in the special one such system is a BWM-system if and only if $(\Theta^k,\Theta^{k+1})\notin\{(\mathcal{S}',\mathcal{L}),(\mathcal{Q},\mathcal{L}),
(\mathcal{S}',\mathcal{Q}),(\mathcal{Q},\mathcal{Q})\}$ for all $k<j$.

Now assume that $n>N$. We claim that for every $\varphi\in\Phi_n$ there exists an admissible tuple $\mathcal{A}$ such that
\begin{equation}
\label{eqrho}
\psi_{n+1}(\mathcal{A})\in\Phi_{n+1} \quad \mathrm{and} \quad \varphi\in\Irr_n(\psi_{n+1}(\mathcal{A})).
\end{equation}
Indeed, since $\Phi$ is an inductive system, the representation
$\varphi\in\Irr_n\chi$ for some $\chi\in\Phi_{n+2}$. One has $\chi=\otimes^l_{k=0}\chi_k^{[k]}$ with
$\chi_k\in\Omega_p(G_{n+2})$ in the general case and $\chi_k\in\Omega'_2(G_{n+2})$ in the special one, $0\leq k\leq l$.

Lemmas~\ref{rC}, \ref{l*}, and \ref{lBD} and Corollary~\ref{cBD} imply the following: $\Irr_n \chi_k\subset \Irr^p G_n$ and hence $\phi_k\in\Irr_n \chi_k$; $\chi_k\in\mathcal{L}_{n+2}$ if $\varphi_k\in\mathcal{L}_n$, $\chi_k=\lambda_{n+2}$ for $\varphi_k=\lambda_n$, $\chi_k\in\mathcal{S}_{n+2}$ if $\varphi_k\in\mathcal{S}_n$; in the special case $\chi_k\in\mathcal{Q}_{n+2}$ if $\varphi_k\in\mathcal{Q}_n$ and $\chi_k=\xi_{n+2}$ if $\varphi_k=\xi_n$.
Then another application of those lemmas permits us to find an admissible tuple $\mathcal{A}$ such that $\psi_{n+1}(\mathcal{A})\in\Irr_{n+1}\chi$ and $\varphi\in\Irr_n(\psi_{n+1}(\mathcal{A}))$. Naturally, $\psi_{n+1}(\mathcal{A})\in\Phi_{n+1}$ as $\Phi$ is an inductive system. This proves the claim.

Since the set of admissible tuples is finite, Formula~(\ref{eqrho}) yields that for every $\phi\in\Phi_n$ there exist an infinite set $S\subset \mathbb{N}$ and an admissible tuple $\mathcal{A}$ such that $S$ consists of some integers greater than $N$, $\psi_m(\mathcal{A})\in \Phi_m$ for $m\in S$, and $\phi\in\Irr_n\psi_m(\mathcal{A})$.
Define by $I$ the collection of all tuples $\mathcal{A}$ that have this property for some $\phi$ and $n$, and set $\Sigma=\bigcup_{\mathcal{A}\in I}\Psi(\mathcal{A})$. Observe that $\Sigma=\Phi$. Naturally, $\Sigma\subset\Phi$  since $\Phi$ is an inductive system and $\Psi(\mathcal{A})=\langle\psi_m(\mathcal{A})\mid m\in S \rangle$ for every admissible $\mathcal{A}$ and infinite set $S\in\mathbb{N}$. On the other hand, the construction of $\Sigma$ yields that $\Phi_n\subset\Sigma_n$ for $n>N$ as $\Sigma$ is an inductive system. This completes the proof.
\end{proof}

\small

\end{document}